\documentclass[a4paper,10pt]{amsart}

\usepackage{amsmath, amssymb, amsthm, xcolor, mathtools}
\usepackage[shortlabels]{enumitem}
\usepackage{verbatim}
\usepackage[hidelinks]{hyperref}
\usepackage{graphicx}
\usepackage[normalem]{ulem}
\usepackage{tikz-cd} 
\usetikzlibrary{matrix}

\usepackage{cancel}
\usepackage{enumitem}

\usepackage[american]{babel}

\let\oldtocsubsection=\tocsubsection
\renewcommand{\tocsubsection}[2]{\hspace{.75cm}\oldtocsubsection{#1}{#2}}

\DeclareRobustCommand{\SkipTocEntry}[5]{}

\newtheorem{theorem}{Theorem}[section]
\newtheorem{lemma}[theorem]{Lemma}
\newtheorem{proposition}[theorem]{Proposition}
\newtheorem{corollary}[theorem]{Corollary}

\newtheorem*{claim}{Claim}
\newtheorem*{theorem*}{Theorem}

\theoremstyle{definition}
\newtheorem{definition}[theorem]{Definition}

\newtheorem{remark}[theorem]{Remark}

\newtheorem*{remark*}{Remark}

\newenvironment{claimproof}[1][\proofname]{\begin{proof}[#1]}{\end{proof}}

\newcommand{\Z}{\mathcal{Z}}
\newcommand{\R}{\mathbb{R}}
\newcommand{\M}{\mathcal{M}}
\newcommand{\K}{\mathcal{K}}

\newcommand{\Q}{\mathbb{Q}}
\newcommand{\C}{\mathbb{C}}
\newcommand{\N}{\mathbb{N}}

\renewcommand{\phi}{\varphi}

\newcommand{\e}{\epsilon}

\newcommand{\Cu}{\operatorname{{Cu}}}

\newcommand{\Aff}{\operatorname{{Aff}}}
\newcommand{\tr}{\operatorname{{tr}}}
\newcommand{\eps}{\epsilon}
\newcommand{\completion}[2]{\overline{#1}{}^{#2}}

\newcommand{\cstar}{\texorpdfstring{$C^*$}{C*}}
\newcommand{\wstar}{\texorpdfstring{$W^*$}{W*}}
\newcommand{\IIone}{\texorpdfstring{II$_1$}{II-1}}

\newcommand{\safeK}{\texorpdfstring{$K$}{K}}
\newcommand{\JiangSu}{\texorpdfstring{$\mathcal{Z}$}{Z}}

\makeatletter
\def\namedlabel#1#2{\begingroup
    #2%
    \def\@currentlabel{#2}%
    \label{#1}\endgroup
}
\makeatother

\usepackage{perpage}
\newcounter{mparcnt}
\MakePerPage{mparcnt}

\usepackage{xcolor}

\numberwithin{equation}{section}

\title{Regular simple nuclear $C^*$-algebras}
\author[]{Samuel Evington}
\address{Samuel Evington, School of Mathematics and Statistics, University College Dublin, Belfield, Dublin 4, D04 V1W8, Ireland}
\email{sam.evington@ucd.ie}

\author[]{Shanshan Hua}
\address{Shanshan Hua, Mathematical Institute, University of M\"unster, Einsteinstr.\ 62, 48149 M\"unster, Germany}
\email{shua@uni-muenster.de}

\author[]{Christopher Schafhauser}
\address{Christopher Schafhauser, Department of  Mathematics, University of Ne-\linebreak braska-Lincoln, Lincoln, Nebraska 68508, USA}
\email{cschafhauser2@unl.edu}

\author[]{Apurva Seth}
\address{Apurva~Seth, Mathematical Institute, University of Oxford, Oxford OX2 6GG, United Kingdom.}
\email{apurva.seth@maths.ox.ac.uk}

\author[]{Aaron Tikuisis}
\address{Aaron Tikuisis, Department of Mathematics and Statistics, University of
  Ottawa, 585 King Edward, Ottawa, ON, K1N 6N5, Canada}
\email{aaron.tikuisis@uottawa.ca}

\author[]{Stuart White}
\address{Stuart White, Mathematical Institute, University of Oxford,
  Oxford, OX2 6GG, United Kingdom}
\email{stuart.white@maths.ox.ac.uk}

\thanks{This work was funded by: the Deutsche Forschungsgemeinschaft (DFG, German Research Foundation) under Germany’s Excellence Strategy EXC 2044-390685587, Mathematics Münster: Dynamics–Geometry–Structure, by the SFB 1442 of the DFG (SE, SH); the Engineering and Physical Sciences Research Council [EP/X026647/1] (AS, SW); NSF grant DMS-2400178 (CS), an NSERC Discovery Grant (AT). SW thanks OpenAI for providing access to ChatGPT through their ChatGPT for Academic Researchers programme.  For the purpose of open access, the authors have applied a CC-BY license to any author accepted manuscript arising from this submission. } 

\begin{document}

\begin{abstract}
    We prove $\mathcal Z$-stability for all simple, separable, pure, nuclear $C^*$-algebras. Combining this with very recent work of Thiel completes the proof of the Toms--Winter conjecture. 

    We do this through the framework of II$_1$ factorial tracially complete $C^*$-algebras. We characterise regularity for these algebras in various terms. In particular they are regular if and only if they have real rank zero and the Murray--von Neumann equivalence classes of projections correspond to continuous affine functions from the  designated traces to $[0,1]$. As an application of our analysis of ultrapowers of tracial completions, we show simple pure $C^*$-algebras whose quasitraces are traces are $K$-stable.
    
    In the amenable setting we give a local quantisation result, \`a la Popa, for II$_1$ factorial amenable tracially complete $C^*$-algebras, which obtains hyperfiniteness from regularity conditions. Our $\Z$-stability results follow from this together with a theorem of Matui and Sato.
    
    An initial version of an essential lemma in the Bauer trace simplex case was obtained from ChatGPT Astra.
\end{abstract}

\maketitle

\tableofcontents
\renewcommand{\thetheorem}{\Alph{theorem}}

\section*{Introduction} 

\subsection*{The Toms--Winter conjecture} In operator algebras, as with many areas of mathematics, structure and classification go hand in glove.  In their foundational work on what we now call von Neumann algebras, Murray and von Neumann obtained a powerful uniqueness theorem in the 1940s for II$_1$ factors with a very strong internal finite-dimensional approximation structure known as \emph{hyperfiniteness} (\cite{MvN.4}).  While this theorem is extremely elegant, the required internal structure is hard to recognise explicitly in examples, and the power of the theorem wasn't fully unleashed until Alain Connes characterised hyperfiniteness abstractly in terms of \emph{injectivity} over 30 years later (\cite{Connes:Annals}).
Crucially, injectivity is readily verifiable in examples: a discrete group $G$ has an injective von Neumann algebra if and only if $G$ is amenable.  Through the marriage of Connes' structure theorem and Murray and von Neumann's uniqueness result, the von Neumann algebras of all countable discrete amenable groups with infinite non-trivial conjugacy classes are isomorphic.
In \cite{Haagerup:InjectiveHyperfinite,Popa:InjectiveHyperfinite}, Haagerup and Popa respectively provided insightful new proofs of Connes' result which bypass many intermediate steps.

The Elliott classification programme for $C^*$-algebras aims for structure and classification results for simple separable nuclear $C^*$-algebras which are analogous to the Connes--Murray--von Neumann uniqueness theorem, and the Connes--Haagerup classification of separably-acting injective type III factors (\cite{Connes:Annals,Haagerup:Acta}).  Early results for $C^*$-algebras with internal finite-dimensional approximations go back to \cite{Glimm:UHF,Bratteli:AF,Elliott.AF}, but the programme only started in earnest with Elliott's results for stably finite algebras in the early 90s (\cite{Elliott:Crelle}) and his ICM talk (\cite{Elliott:ICM}), together with the Kirchberg--Phillips theorem on the purely infinite side (\cite{Kirchberg:preprint,Phillips:Doc}; see also \cite{Rordam:Book,Gabe:MAMS,BouwenGabe}). 
A definitive classification theorem is now available also for stably finite $C^*$-algebras, with final steps combining \cite{EGLN,GLN1,GLN2,TWW,Winter:AJM,Winter:Crelle,CETWW}. An alternative approach containing a detailed history is given in \cite{CGSTW}.
 
While the analogies between von Neumann algebraic and $C^*$-algebraic classification run deep, there are two fundamental ways in which the $C^*$-algebra case is different.  
Firstly -- though not the subject of this paper -- one needs Rosenberg and Schochet's universal coefficient theorem (UCT) from \cite{RS:UCT} to control $KK$-theory in terms of $K$-theory. Indeed, it remains a major challenge to determine whether all separable nuclear $C^*$-algebras satisfy the UCT. Secondly -- and very much the point of this paper -- work of Villadsen, R{\o}rdam, and Toms (\cite{villadsen1998simple,Villadsen:JAMS,rordam_acta,Andrew_classification}) demonstrated that simple nuclear $C^*$-algebras can exhibit highly exotic behaviour. Therefore, an additional \emph{regularity} hypothesis is needed for classification.  

The Toms--Winter conjecture stems from Toms and Winter's detailed analysis in \cite{TomsWinter:JFA} of Villadsen's first family of examples in 2007, containing both classifiable and non-classifiable algebras. For these algebras, they found that conditions of very different natures -- finite nuclear dimension, $\mathcal Z$-stability, and having almost unperforated Cuntz semigroup -- are equivalent, and they conjectured that this equivalence holds for all simple separable nuclear non-elementary $C^*$-algebras.  That is, there should be one notion of regularity giving rise to classification, which should be accessible from different directions.  

The Toms--Winter conjecture became a central objective for the field in the 2010s, with the equivalence of \ref{tw1}, \ref{tw1prime} and \ref{tw2} in Theorem \ref{Intro:Structure} below being established across \cite{Winter:I1,winter2012nuclear,MS:Duke,SWW,BBSTWW,CETWW,CE:APDE}.  The implications \ref{tw2}$\Rightarrow$\ref{tw3prime}$\Rightarrow$\ref{tw3} really go back to R\o{}rdam in \cite{Rordam:IJM} (although condition \ref{tw3prime} was not identified explicitly until much later). Earlier this week, Hannes Thiel gave a proof that \ref{tw3}$\Rightarrow$\ref{tw3prime} (\cite{Thiel:arXiv}). Here, we provide the final step in the puzzle by proving \ref{tw3prime}$\Rightarrow$\ref{tw2}, completing the Toms--Winter conjecture which is now a unified structural theorem for simple nuclear $C^*$-algebras.

\begin{theorem}[The structure theorem for simple separable nuclear $C^*$-algebras]\label{Intro:Structure}
Let $A$ be a simple separable  nuclear non-elementary $C^*$-algebra. Then the following are equivalent:
\begin{enumerate}
\item\label{tw1} $A$ has finite nuclear dimension;
\item[\namedlabel{tw1prime}{(i')}] $A$ has nuclear dimension at most one;
\item\label{tw2} $A$ is $\mathcal Z$-stable;
\item\label{tw3} $A$ has almost unperforated Cuntz 
semigroup;
\item[\namedlabel{tw3prime}{(iii')}] $A$ is pure.
\end{enumerate}
\end{theorem}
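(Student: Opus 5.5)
The plan is to assemble the cycle of implications from the literature and supply the one missing arrow, (iii')$\Rightarrow$(ii). The implications (i)$\Leftrightarrow$(i')$\Leftrightarrow$(ii) are already established across \cite{Winter:I1,winter2012nuclear,MS:Duke,SWW,BBSTWW,CETWW,CE:APDE}. (ii)$\Rightarrow$(iii') and (iii')$\Rightarrow$(iii) go back to R\o{}rdam \cite{Rordam:IJM}, and (iii)$\Rightarrow$(iii') is Thiel's result \cite{Thiel:arXiv}. So everything reduces to showing that a simple, separable, nuclear, non-elementary, pure $C^*$-algebra $A$ is $\mathcal Z$-stable. I would first reduce to the case where $A$ is stably finite and has nonempty trace space. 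In the traceless case, a simple pure algebra has strict comparison with no traces, so it is purely infinite, and Kirchberg's $\mathcal O_\infty$-absorption gives $\mathcal Z$-stability. Since nuclearity makes quasitraces traces (Haagerup), I can work with the tracial completion $\completion{A}{T(A)}$ with respect to the set of traces.

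The core step is to verify the Matui--Sato criterion. Since $A$ has strict comparison (from purity), it suffices to show that $A$ is uniformly McDuff, i.e.\ uniform property $\Gamma$ together with tracial $\mathcal Z$-stability in the form of matrix embeddings into the uniform tracial central sequence algebra. I would pass to the tracial completion $\mathcal M=\completion{A}{T(A)}$, a II$_1$ factorial amenable tracially complete $C^*$-algebra. The plan is then:
\begin{enumerate}
\item Use purity (strict comparison and tracial divisibility) to show $\mathcal M$ is \emph{regular}: real rank zero, with projections classified up to Murray--von Neumann equivalence by continuous affine functions $T(A)\to[0,1]$. Concretely, I would lift positive elements with prescribed continuous tracial rank functions and use comparison in ultrapowers to obtain projections in $\mathcal M^\omega$ realising any continuous affine function.
\item Prove a Popa-style local quantisation: for finitely many elements of an amenable II$_1$ factorial regular $\mathcal M$, produce finite-dimensional subalgebras approximately containing them in $\|\cdot\|_{2,T}$. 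This gives hyperfiniteness, hence the McDuff property of $\mathcal M$ and uniform property $\Gamma$.
\item Transfer the McDuff property back from $\mathcal M^\omega\cap\mathcal M'$ to $A_\omega\cap A'$ modulo the trace-kernel ideal. Then apply Matui--Sato (in the CETWW form) to conclude that $A\cong A\otimes\mathcal Z$.
\end{enumerate}

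I expect the main obstacle to be step (1) combined with the quantisation in (2), when $\partial_e T(A)$ is not compact or not closed, i.e.\ outside the Bauer simplex case. In that setting projections in the ultrapower must realise continuous affine, rather than merely continuous, functions on the whole simplex. Local quantisation also requires producing approximately central projections with controlled traces uniformly over all traces. My first attempt would be a Bauer-case lemma that produces, from strict comparison, orthogonal positive contractions with near-projection behaviour and prescribed traces. I would then extend it to general simplices by approximating continuous affine functions by ones factoring through finite-dimensional simplices, combined with a partition-of-unity argument in the ultrapower.
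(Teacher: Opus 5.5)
Your architecture matches the paper's: reduce to (iii')$\Rightarrow$(ii), use Kirchberg in the traceless case, pass to the tracial completion $(\mathcal M,X)$, prove hyperfiniteness Popa-style, then McDuff and Matui--Sato. For non-unital $A$ you also need the Brown stable-isomorphism reduction to compact $T(A)$ (Lemma~\ref{lem:reduction-a-la-brown}) and a non-unital Matui--Sato theorem. However, the hard steps are only named, and the obstacle you anticipate in step (1) is not the real one. Projections in $\mathcal M^\omega$ with any prescribed continuous affine trace come cheaply, whatever the shape of $T(A)$: take a unital $M_n\subseteq\mathcal M^\omega$ (from divisibility) and apply tracial projectionisation to a Cuntz--Pedersen element.

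The first missing idea is getting from $\mathcal M^\omega$ down to $\mathcal M$. The quantisation must realise fibrewise approximants $\tilde F_\tau\subseteq\pi_\tau(\mathcal M)''$ by subalgebras of $\mathcal M$. It must also lift the finite-dimensional algebra built in $\mathcal M^\omega$ to genuine $^*$-homomorphisms into $\mathcal M$. Any intertwining for this needs unitaries of $\mathcal M^\omega$ to lift to unitaries of $\mathcal M$, which is not known. The paper first proves stable rank one of $\mathcal M^\omega$ (real rank zero plus cancellation). It then uses L.~Brown's theorem and Rieffel's $K_1$-bijectivity to correct the $K_1$-class of the conjugating unitary on an orthogonal corner, so that it lies in $U_0$ and lifts (Theorem~\ref{thm:classif-proj}, Proposition~\ref{Prop:ClassifyToM}). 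Real rank zero of $\mathcal M$ itself, which you list as a goal without an argument, is not needed here; only that of $\mathcal M^\omega$ is.

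The second missing idea is the partition of unity, and here your proposed fix would fail. Compactness gives an open cover $U_1,\dots,U_N$ of $X$ on which different $F_{\tau_n}\subseteq\mathcal M$ approximate the $x_\ell$. These sets are not convex and $\mathcal M$ may have trivial centre, so no subordinate partition of unity can be implemented by elements of $\mathcal M$. Approximating affine functions through finite-dimensional simplices does not produce the operators you need. The paper instead uses weak CPoU (Theorem~\ref{Thm:WCPoU}), which gives positive contractions $e_n\in\mathcal M^\omega$ with:
\begin{itemize}
\item each $e_n$ commuting with all the $F_{\tau_m}$;
\item $\sum_n e_n=1$;
\item $\tau(a_ne_n)\le\delta\tau(e_n)$, where $a_n=\sum_\ell(x_\ell-y_{\ell,\tau_n})^2$.
\end{itemize}
Let $p$ be the sum of the pieces already built and $E$ the conditional expectation onto $\mathcal M^\omega\cap F_{\tau_n}'$. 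Real rank zero of $\mathcal M^\omega$ gives a projection $q\in\mathcal M^\omega\cap F_{\tau_n}'$ with $qb\le\delta q$ and $(1-q)b\ge\delta(1-q)$ for $b=E(a_n)+C_{F_{\tau_n}}E(p)$. The weak CPoU inequality then yields $\tau(q)\ge\frac14(\tau(e_n)-\beta)$ for a small constant $\beta$. Riesz interpolation and classification of projections then give $z\le q$ whose trace lies between $\gamma\tau(e_n)-\gamma\beta$ and $\gamma\tau(e_n)$. Moving $z$ off the earlier pieces and adding a filler projection produces $G$ whose unit has \emph{constant} trace $\gamma$, with error at most $\epsilon\|1_G\|_{2,X}$ (Theorem~\ref{SmallLocalApproximations}). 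The constant trace is what makes the Connes--Popa maximality argument converge in $\|\cdot\|_{2,X}$. As you phrase it, step (2) restates the definition of hyperfiniteness rather than giving a route to it.
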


Although we are centring our discussion on the parallels with Connes' II$_1$ factor work, let us briefly point out that Theorem \ref{Intro:Structure} covers both finite and infinite $C^*$-algebras. In the infinite setting -- when there are no non-zero densely-defined lower-semicontinuous traces -- every 
 simple $C^*$-algebra satisfying \ref{tw3} or \ref{tw3prime}  is in fact purely infinite in the sense of Cuntz, and this case of Theorem \ref{Intro:Structure}\ref{tw2} is Kirchberg's celebrated $\mathcal O_\infty$-stability theorem (see \cite{K:ICM,KP:Crelle}),\footnote{Here $\mathcal O_\infty$ is the Cuntz algebra on infinitely many generators, which is already $\mathcal Z$-stable.} which has been the standard route to access the Kirchberg--Phillips classification theorem in examples.  Our task is to prove Theorem \ref{Intro:Structure}\ref{tw3prime}$\Rightarrow$\ref{tw2} in the tracial case.

\subsection*{\JiangSu-stability and pureness}

In hindsight, the conditions and implications of the Toms--Winter conjecture have strong parallels to Connes' work -- a viewpoint brought to the fore by Nate Brown in talks around 2010.  As an important intermediate step in his proof that an injective separably acting II$_1$ factor $\mathcal M$ is hyperfinite, Connes shows that $\mathcal M$ absorbs the hyperfinite II$_1$ factor $\mathcal R$ tensorially, i.e.\ $\mathcal M\cong\mathcal M\,\overline{\otimes}\,\mathcal R$ (a condition known as the McDuff property, due to the central sequence characterisation in \cite{McDuff}).  The $C^*$-algebraic counterpart of $\mathcal R$ from the perspective of tensorial absorption is the Jiang--Su algebra $\mathcal Z$ discovered in \cite{JiangSu}.\footnote{Classification shows that Elliott's earlier construction of an algebra with the same invariant in his work on the range of the Elliott invariant (\cite{Elliott:Invariant}), is isomorphic to $\mathcal Z$, though no particular attention appears to have been given to this algebra at the time.} It should be viewed as a non-commutative analogue of the complex numbers: $\mathcal Z$ is $KK$-equivalent to $\mathbb C$ and has unique trace. Moreover, $A$ and $A\otimes\mathcal Z$ have the same $K$-theory and traces for all $C^*$-algebras $A$.  This essentially makes \emph{$\mathcal Z$-stability} -- tensorial absorption of $\mathcal Z$ -- necessary for classification. In fact, it is the final regularity hypothesis needed for classification: the unital classification theorem classifies simple separable unital nuclear $C^*$-algebras which are $\mathcal Z$-stable and satisfy the UCT in terms of $K$-theory and traces. Not only does $\Z$-stability parallel the McDuff property, but, in a very precise sense, the passage from nuclearity and $\mathcal Z$-stability to finite nuclear dimension\footnote{Winter and Zacharias' nuclear dimension (\cite{WZ:Adv}) is a non-commutative topological covering dimension, and can be thought of as a higher-dimensional version of hyperfiniteness. Indeed $C^*$-algebras of nuclear dimension zero, are approximately finitely-dimensional.} mirrors Connes' passage from injectivity and the McDuff property to hyperfiniteness. 

Pureness is a much more basic property, which is akin to Murray and von Neumann's foundational comparison theory for projections on II$_1$ factors. Let $\M$ be a II$_1$ factor with trace $\tau$. Then $\M$ has:
\begin{enumerate}[label=\arabic*.]
    \item \emph{comparison of projections}: a projection $p$ is subequivalent to a projection $q$ if and only if $\tau(p)\leq \tau(q)$; \label{it:IIone-comp}
    \item \emph{existence of projections}: for each $t\in [0,1]$, there exists a projection $p\in \mathcal M$ with $\tau(p)=t$.\label{it:IIone-exist}
\end{enumerate}
The appropriate $C^*$-algebraic analogue of these properties uses positive elements rather than projections (projections may not tell much of the story, as, for example, there may be no non-trivial projections). In \cite{Cuntz:Semigroup}, Cuntz initiated a comparison theory for positive elements in a $C^*$-algebra $A$, which was further developed in \cite{BH82,Rordam92,Perera97,KirchbergRordam02,BPT}, before reaching its modern form as the Cuntz semigroup $\Cu(A)$ of equivalence classes of positive elements in the stabilisation $A\otimes\mathcal K$ (see \cite{CEI,APT:MAMS} and the surveys \cite{APT,GP_Cuntz_semi}). In this setup, positive elements $a,b\in A\otimes\mathcal K$ are \emph{Cuntz subequivalent} (written $a\precsim b$) if one can approximately compress $b$ onto $a$.  Subsequently, the Cuntz semigroup has been heavily studied both in its own right -- highlights include \cite{ERS:AJM,Thiel:CMP,APRT:Duke} -- and as an instrumental tool in work on regularity and classification (\cite{Andrew_classification,BPT,OPR:IMRN,Robert:Adv:NCCW}).

Unlike II$_1$ factors, simple stably finite $C^*$-algebras can have complex collections of traces. In fact, any metrisable Choquet simplex can arise as the tracial state space of a simple unital approximately finite-dimensional (AF) $C^*$-algebra (\cite{Blackadar:AF}). Therefore, the appropriate analogues of Murray and von Neumann's conditions must be formulated in a way that involves all (quasi)traces.\footnote{To get all functionals on the Cuntz semigroup one needs dimension functions from quasitraces rather than just traces. It is a huge open problem which arguably goes back to the foundations of the field whether quasitraces are traces.  Haagerup's breakthrough positive answer for exact $C^*$-algebras (\cite{Haagerup:Traces}) covers the case of nuclear $C^*$-algebras of interest in this paper.} The relevant tracial data for a positive element $a$ is the tracial rank (or ``dimension function'') $d_\tau(a)\coloneqq \lim_{n\to\infty}\tau(a^{1/n})$, which is just the trace of the support projection of $\pi_\tau(a)$ in the von Neumann algebra $\pi_\tau(A)''$. Note that, for fixed $a \in A$, the map $\tau\mapsto d_\tau(a)$ is a lower-semicontinuous affine function of $\tau$. With this setup, the following two conditions provide the analogues of Murray and von Neumann's conditions for simple $C^*$-algebras.
\begin{enumerate}
    \item[\namedlabel{almost-unperf}{(1)}] \emph{Almost unperforation}\footnote{As will be familiar to experts, the condition that follows is known as `strict comparison'. However there are various versions of strict comparison in the literature, and in this paper, we extensively use the subtly different `strict comparison with respect to $X$ (a specified set of bounded traces)'. To avoid confusion, we use the terminology `almost unperforation' here.} of the Cuntz semigroup, or equivalently (in the simple case), that for non-zero positive elements $a,b\in A\otimes\mathcal K$ such that $d_\tau(a)<d_\tau(b)$ for all lower-semicontinuous (quasi)traces $\tau:A_+\to[0,\infty]$ we have that $a\precsim b$.
    \item[\namedlabel{ranks}{(2)}] \emph{All ranks occur}: in the unital case,\footnote{In general, one must work with suitable $[0, \infty]$-valued functions on the cone of lower-semicontinuous (quasi)traces.} this says every strictly positive lower-semicontinuous affine function on the (quasi)tracial state space of $A$ arises as the rank function of a positive element in $A\otimes\mathcal K$.
\end{enumerate}
The need for a strict inequality in \ref{almost-unperf} in the $C^*$-algebraic setting -- in contrast to the von Neumann condition -- can be seen already with inequivalent projections in AF algebras that agree on all traces.\footnote{Take, for example, an AF algebra $A$ with $K_0(A)\cong \Q^2$ and $K_0(A)_+=\{(0,0)\} \cup (\Q_{>0} \times \Q$).}
This condition goes back to Blackadar's work on the fundamental comparison property of projections (\cite{Blackadar:Comparison}). For (quasi)traceless simple $C^*$-algebras, the second condition above is automatic so that almost unperforation is equivalent to pureness (and pure infiniteness, see \cite{Cuntz81}).  In contrast to the analogous von Neumann algebraic property, strict comparison is not automatic: it fails in all of the exotic $C^*$-algebras from \cite{villadsen1998simple,Villadsen:JAMS,rordam_acta,Andrew_classification}.  

Whether all ranks occur in every non-elementary simple stably finite $C^*$-algebra is an open problem, which we believe is due to Nate Brown (see \cite[Section 7 and Problem XXIII]{99} for further discussion). A major result of Thiel shows all ranks occur for all simple $C^*$-algebras of stable rank one (\cite{Thiel:CMP}),\footnote{The simplicity hypothesis was subsequently removed in \cite{APRT:Duke}.} and his recent preprint \cite{Thiel:arXiv} shows that all continuous ranks \emph{almost occur} on a simple unital non-elementary $C^*$-algebra $A$, i.e. the collection of rank functions is uniformly dense in the continuous affine functions on the trace space of $A$.

In the presence of almost unperforation, many different conditions have been long known to equivalent to \ref{ranks} for a simple $C^*$-algebra (see \cite[Proposition 5.2]{CETW:IMRN}, for example). In particular, almost unperforation closes the gap between all continuous ranks almost occuring, and all ranks occuring: by Thiel's \cite{Thiel:arXiv} for simple unital non-elementary $C^*$-algebras with almost unperforated Cuntz semigroup, condition \ref{ranks} above is automatic.

Another important concept which is equivalent to condition \ref{ranks} in the presence of almost unperforation -- and hence now automatic -- is the following notion of divisibility. Introduced by Winter in \cite{winter2012nuclear}, this represents the idea that a Cuntz class $x$ can almost be divided in the Cuntz semigroup.\footnote{There are various versions of this definition in the literature. Winter's original definition (\cite[Definition 3.5(i)]{winter2012nuclear}), omits the $\epsilon$ cut down from \ref{div} -- we have included it here, since this is the modern form of the definition used in the Cuntz semigroup literature. In the presence of almost unperforation \ref{div} and \cite[Definition 3.5(i)]{winter2012nuclear} are equivalent.  Note too that it is not reasonable to ask for an exact division, as there could be $K$-theoretic obstructions to dividing classes of projections. }
\begin{enumerate} 
    \item[\namedlabel{div}{(2$'$)}] \emph{Almost divisibility}: given a positive element $x\in A\otimes\mathcal K$, $\epsilon>0$, and $n\in\mathbb N$, there exists a positive element $y\in A\otimes \mathcal K$ such that $y^{\oplus n}$ is Cuntz subequivalent to $x$, and $(x-\epsilon)_+$ is Cuntz subequivalent to $y^{\oplus(n+1)}$.
\end{enumerate}

Simple $C^*$-algebras with almost unperforation and almost divisibility were coined \emph{pure} $C^*$-algebras by Winter in \cite{winter2012nuclear}, and both comparison and almost divisibility (and their higher-dimensional analogues) are crucial ingredients in Winter's second $\mathcal Z$-stability theorem (and its extension to the non-unital setting in \cite{Tikuisis:MathAnn}). This captures $\Z$-stability at the level of the Cuntz semigroup: $A$ is pure if and only if $\Cu(A)\cong\Cu(A\otimes\mathcal Z)$  
(see \cite[Proposition 23]{99}).   Since \cite{winter2012nuclear}, there has been extensive research into the structure of pure $C^*$-algebras, with new developments outside the simple setting (\cite{robert2017nuclear,antoine2024pure, perera2025extensions,PTV26}) and breakthroughs giving new examples of simple pure non-nuclear $C^*$-algebras (\cite{Robert:Adv:Selfless,Selfless,Ozawa:Selfless}) especially concerning natural $C^*$-simple groups.

The last piece of the Toms--Winter conjecture we provide in this paper is the $C^*$-algebraic analogue of the first part of Connes' work: squeezing out non-trivial central sequences (known as Murray and von Neumann's property $\Gamma$) from semidiscreteness (\cite[Sections 1 and 2]{Connes:Annals}) en route to the McDuff property by means of his highly delicate spectral gap characterisation of full II$_1$ factors. Subsequently Haagerup, and then Popa, gave direct routes to hyperfinitess from semidiscreteness and injectivity respectively (\cite{Haagerup:InjectiveHyperfinite,Popa:InjectiveHyperfinite}). Our main result parallels this part of Connes' theorem by obtaining $\mathcal Z$-stability -- a central sequence condition -- for simple $C^*$-algebras from the combination of nuclearity and fundamental information about the positive elements of $A$ -- pureness.

\begin{theorem}\label{Intro:pure-implies-Zstable}
Let $A$ be a simple separable nuclear pure $C^*$-algebra.  Then $A$ is $\mathcal Z$-stable.
\end{theorem}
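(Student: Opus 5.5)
The plan is to reduce to the unital (or at least algebraically simple, tracial) case and then run the established route to $\mathcal Z$-stability through the uniform tracial completion and the Matui--Sato framework. If $A$ has no non-zero densely defined lower-semicontinuous traces, pureness gives pure infiniteness, and Kirchberg's $\mathcal O_\infty$-stability theorem finishes the argument. Otherwise I pass to a hereditary subalgebra: I choose a non-zero positive $e$ with $d_\tau(e)<\infty$ for all lower-semicontinuous traces, so that $\overline{eAe}$ is algebraically simple. Brown's stable isomorphism theorem and the permanence of $\mathcal Z$-stability under stable isomorphism (Toms--Winter) let me assume $A$ is unital, or at least has compact trace space, with $T(A)$ nonempty; nuclearity means quasitraces are traces. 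I then form the uniform tracial completion $\completion{A}{T(A)}$, which is a II$_1$ factorial tracially complete $C^*$-algebra with designated trace set $T(A)$. Nuclearity of $A$ makes it amenable, and it is factorial because traces of $A$ extend.

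The key intermediate step is to show that $\completion{A}{T(A)}$ is \emph{regular}, i.e.\ has property $\Gamma$ / is McDuff in the uniform tracial sense, and is in fact hyperfinite. I will use the characterisation announced in the abstract: it suffices to show real rank zero, together with the statement that Murray--von Neumann classes of projections correspond exactly to the continuous affine functions $T(A)\to[0,1]$. Pureness supplies both. From almost divisibility and comparison, for any continuous affine $f$ with $0<f<1$ I obtain positive contractions $a\in A$ with $d_\tau(a)$ uniformly close to $f$, and also with $\tau(a)$ close to $d_\tau(a)$ by taking functional-calculus cut-downs. Passing to the tracial ultrapower, these yield projections with trace exactly $f$. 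Strict comparison transfers to comparison of projections in the ultrapower, where inequalities become non-strict after an $\epsilon$-room argument. Real rank zero follows by approximating spectral projections of self-adjoints in the von Neumann fibres $\pi_\tau(A)''$ by such projections, uniformly in $\tau$; here comparison glues the fibrewise data. Then the local quantisation result à la Popa stated in the abstract converts amenability plus these regularity conditions into hyperfiniteness of $\completion{A}{T(A)}$.

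Finally, hyperfiniteness gives uniform tracial McDuff: there are unital embeddings of matrix algebras $M_k$ into $A^\omega\cap A'$ modulo the trace-kernel ideal $J_A$. Uniform property $\Gamma$ in particular holds. Strict comparison, which follows from almost unperforation, together with uniform property $\Gamma$ and nuclearity gives $\mathcal Z$-stability by the Matui--Sato theorem in the form of \cite{CETWW}: complemented partitions of unity yield surjectivity of $A^\omega\cap A'\to A^\omega\cap A'/J_A$ on these matrix embeddings, producing order-zero maps with small defect, and hence $\mathcal Z$-stability.

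The main obstacle is the regularity step. It requires showing that projections in the tracial ultrapower realise \emph{every} continuous affine function, and that real rank zero holds uniformly over a possibly non-Bauer trace simplex, with no fibrewise finite-dimensional structure to lean on. I would first handle the Bauer case via a partition of unity over $\partial_e T(A)$, and then try to extend by working directly with rank functions of elements of $A$ in the general case.
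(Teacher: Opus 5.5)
Your outer shell agrees with the paper's. That is: Kirchberg's theorem in the traceless case; Brown's theorem to pass to a stably isomorphic algebra with compact $T(A)$ and strict comparison with respect to $T(A)$; the amenable factorial tracial completion; hyperfinite $\Rightarrow$ McDuff; then Matui--Sato. The gap is that the step carrying essentially all the content is the one you take as given, namely ``the local quantisation result \`a la Popa stated in the abstract''. That result is the main theorem being proved here, not an available tool. Your own plan stops exactly where the argument becomes hard: you handle the Bauer case with a partition of unity over $\partial_e T(A)$ and ``then try to extend''. Over a general simplex the tracial completion may have trivial centre, so there are no central partitions of unity. CPoU only becomes available once property $\Gamma$ is known.

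The missing idea is \emph{weak} CPoU (Theorem~\ref{Thm:WCPoU}), which every factorial tracially complete $C^*$-algebra has. In the paper the argument runs as follows. Connes' theorem gives fibrewise finite-dimensional approximations, and Lemma~\ref{Lem:LiftMatrix} moves them to subalgebras $F_{\tau_n}\subseteq\M$. Weak CPoU gives $e_1,\dots,e_N\in\M^\omega$ commuting with the $F_{\tau_n}$, summing to $1$, with $\tau(a_ne_n)\le\delta\tau(e_n)$. In each $\M^\omega\cap F_{\tau_n}'$ one takes an approximate spectral projection of $E(a_n)+C_{F_n}E(p)$ (Corollary~\ref{Cor:FakeSpectralProjection}). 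Riesz interpolation on $X^\omega=T(\M^\omega)$ and classification of projections then carve out a subprojection of trace between $\gamma\tau(e_n)-\gamma\beta$ and $\gamma\tau(e_n)$. This is moved orthogonal to the earlier pieces (Lemma~\ref{Polar}), and everything is padded to a unit of \emph{constant} trace $\gamma$ (Theorem~\ref{SmallLocalApproximations}). Constant trace is what makes Popa's maximality argument converge in $\|\cdot\|_{2,X}$.

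Second, your regularity step is aimed at the wrong object and skips the lifting problem. For this theorem the paper never needs regularity of $\completion{A}{T(A)}$ itself. It uses only that $\M^\omega$ has strict comparison with respect to $X^\omega$ and a divisible unit (Proposition~\ref{prop:sc_divisible_tracial_ultra}). From this, classification of projections, real rank zero and stable rank one of $\M^\omega$ follow (Proposition~\ref{S4.2:Prop}). Real rank zero there comes from building increasing projections inside $\overline{a\M^\omega a}$ via comparison (Lemma~\ref{Lemma:RealRank0IncreasingProject}), not from the fibres. Your ``comparison glues the fibrewise data'' gives no way to make fibrewise spectral approximations uniform in $\tau$. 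Real rank zero of $\completion{A}{T(A)}$ itself is a separate, more delicate result (Section~\ref{S:RegTCAlg}).

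Conversely, the projections you construct live in the ultrapower. Popa's argument needs finite-dimensional subalgebras of $\M$ with constant-trace units, and unitaries in $\M^\omega$ are not known to lift to $\M$. The paper handles this with stable rank one of $\M^\omega$, L.~Brown's theorem, and a $K_1$-correction in a corner using Rieffel's $K_1$-bijectivity (Theorem~\ref{thm:classif-proj}, Proposition~\ref{Prop:ClassifyToM}). Nothing in your proposal replaces this.

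Two smaller points:
\begin{itemize}
\item Getting $\tau(a)$ uniformly close to $d_\tau(a)$ by functional calculus forces $\tau\mapsto d_\tau(a)$ to be exactly continuous, so having $d_\tau(a)$ ``uniformly close to $f$'' does not suffice. The paper instead uses the divisible unit and tracial projectionisation.
\item In the Matui--Sato step the relevant map is $A_\omega\cap A'\to A^\omega\cap A'$. The defect of the lifted order zero maps is absorbed by property (SI), which comes from strict comparison, not by CPoU.
\end{itemize}
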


As we will explain in Section \ref{S:Outline}, the strategy behind Theorem \ref{Intro:pure-implies-Zstable} is directly connected to the von Neumann algebraic setting, and in particular to Popa's proof of injectivity implies hyperfiniteness, through his \emph{local quantisation technique} (\cite{Popa:InjectiveHyperfinite}). Popa's strategy already has a history of major impact on the structure and classification of nuclear $C^*$-algebras, especially with his $C^*$-algebraic version of local quantisation in \cite{Popa:PJM} in the presense of real rank zero, which in turn inspired Lin's notion of tracially approximately finite-dimensional $C^*$-algebras (\cite{Lin2001, Lin2001classification}). Here we develop these local quantisation ideas in the framework of tracially complete $C^*$-algebras. 

\subsection*{The \JiangSu-stability theorems of Winter and Matui--Sato} While Theorems \ref{Intro:Structure} and \ref{Intro:pure-implies-Zstable} give complete characteristions of $\Z$-stability for simple separable nuclear $C^*$-algebras in terms of positive elements, there are a number of important precursors. Firstly Winter's tour-de-force from 2010 not only found a route from finite nuclear dimension to $\mathcal Z$-stability for non-elementary simple separable unital\footnote{This hypothesis was removed by AT in \cite{Tikuisis:MathAnn},} nuclear $C^*$-algebras but also isolated the importance of the combination of almost unperforation and almost divisibility.

\begin{theorem*}[{Winter's second $\mathcal Z$-stability theorem; \cite{winter2012nuclear}}]
Let $A$ be a simple separable unital  nuclear $C^*$-algebra. Suppose that either
\begin{enumerate}
    \item\label{it:winter1}  $A$ is non-elementary and of finite nuclear dimension, or
    \item\label{it:winter2} $A$ is pure and of locally finite nuclear dimension.
\end{enumerate}
Then $A$ is $\mathcal Z$-stable.
\end{theorem*}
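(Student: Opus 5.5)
We would first reduce case \eqref{it:winter1} to case \eqref{it:winter2}, and then prove case \eqref{it:winter2} through the Rørdam--Winter criterion for $\mathcal Z$-stability. For the reduction, we would show that a simple unital non-elementary algebra of finite nuclear dimension $m$ satisfies a weakened form of comparison, namely $m$-comparison: if $d_\tau(a)<d_\tau(b_i)$ for all $\tau$ and $i=0,\dots,m$, then $a\precsim b_0\oplus\dots\oplus b_m$. It also satisfies tracial $m$-almost divisibility. Both properties come from pushing comparison and divisibility in finite-dimensional algebras through a system of $(m+1)$-coloured order zero approximations, using simplicity and non-elementarity to make the finite-dimensional pieces large. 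The Cuntz semigroup arguments below only need these weakened forms, or else we strengthen them to genuine pureness. So we concentrate on \eqref{it:winter2}, with finite nuclear dimension replaced by its local version.

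By Rørdam--Winter, since $A$ is separable and unital, it suffices to produce, for every $n$, a unital $^*$-homomorphism from the dimension drop algebra $\mathcal Z_{n,n+1}$ into $A_\omega\cap A'$. By their characterisation, this amounts to two pieces of data:
\begin{enumerate}
\item a c.p.c.\ order zero map $\phi\colon M_n\to A_\omega\cap A'$;
\item a contraction $v\in A_\omega\cap A'$ with $v^*v=1-\phi(1)$ and $vv^*\le\phi(e_{11})$.
\end{enumerate}
The plan has three steps, carried out in the following order.

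\emph{Step 1 (central divisibility).} We construct $\phi$ so that $\phi(1)$ is tracially almost $1$, uniformly over traces on $A_\omega$. To do this, we approximate finite subsets of $A$ by subalgebras $B$ of finite nuclear dimension. Using almost divisibility of $A$, we build order zero maps $M_n\to A$ that approximately commute with $B$. Here the coloured decomposition of $B$ lets us average over the colours, each colour being handled by a matrix-unit construction inside the relevant hereditary subalgebras. Reindexing in $A_\omega$ then gives genuinely central maps.

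\emph{Step 2 (central comparison).} We show that $1-\phi(1)\precsim\phi(e_{11})$ in $A_\omega\cap A'$, and not merely in $A_\omega$. The tracial inequality $d_\tau(1-\phi(1))<d_\tau(\phi(e_{11}))$ holds by Step 1, since the left side is tracially small while the right side is about $1/n$. The work is to realise the Cuntz subequivalence by a central element. Again we would use the finite nuclear dimension decomposition of $B$, applying comparison in $A$ colour by colour inside the images of the order zero pieces. The pieces are then glued using the order zero functional calculus, and centrality of the result comes from the near-commutation with $B$.

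\emph{Step 3 (assembly).} From Step 2 we obtain, via a standard functional-calculus and polar-decomposition argument in $A_\omega\cap A'$, a contraction $v$ satisfying the relations in (ii) up to a harmless cut-down. This produces the embedding of $\mathcal Z_{n,n+1}$, and hence $\mathcal Z$-stability.

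I expect Step 2 to be the main obstacle: transferring comparison from $A$ to the central sequence algebra. Comparison in $A$ yields elements that implement subequivalence but need not commute with anything. Recovering approximate centrality requires controlling the colours of the finite nuclear dimension approximation simultaneously, with $m$-comparison-type losses absorbed by choosing $n$ large relative to $m$.
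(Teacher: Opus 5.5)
Your overall framework (reduce (i) to (ii), then verify the Rørdam--Winter criterion with a central order zero map $\phi\colon M_n\to A_\omega\cap A'$ and a central $v$) is sound, and Step~3 is routine. But all of the difficulty sits in Steps~1 and~2, and neither is actually argued.

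\textbf{Step 1.} This is where the sketch concretely breaks. Asking for $\phi(1)$ to be tracially $1$ uniformly on limit traces amounts to a unital embedding $M_n\to A^\omega\cap A'$ (lifting through the surjection $A_\omega\cap A'\to A^\omega\cap A'$). That is the uniform McDuff property, which is exactly what this paper needs its whole tracially complete machinery to prove (Theorem~\ref{Thm:McDuff1}: real rank zero and classification of projections in the tracial ultrapower, weak CPoU, and a Popa-type maximality argument). ``Averaging over the colours'' cannot produce it, for three reasons:
\begin{enumerate}
\item The order zero pieces of a decomposable approximation of $B$ have non-orthogonal ranges, and sums or averages of order zero maps are not order zero in general.
\item A matrix-unit construction inside one colour's hereditary subalgebras need not commute with the other colours' contributions to $B$.
\item At a given trace a single colour may carry only about $1/(m+1)$ of the unit, and which colours dominate varies with $\tau$.
\end{enumerate}
So such a construction yields at best a coloured form of central divisibility. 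Gluing that into one order zero map with tracially full unit is a genuine local-to-global problem over a possibly complicated trace simplex. Winter gets round it only through the demanding central divisibility and dimension-reduction arguments in the central sequence algebra of \cite{winter2012nuclear}, which your sketch does not supply.

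\textbf{Step 2.} This step likewise has no mechanism. As you concede, comparison in $A$ gives non-central implementing elements, and near-commutation with $B$ of pieces glued from them is exactly what is not automatic. The tool that does this is Matui--Sato's property (SI), obtained from strict comparison and nuclearity via the trace-kernel extension \cite{MS:Acta}. Once $1-\phi(1)$ lies in the trace-kernel ideal (the strong form of Step~1), (SI) directly gives $s\in A_\omega\cap A'$ with $s^*s=1-\phi(1)$ and $\phi(e_{11})s=s$.

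\textbf{Reduction of (i) to (ii).} This is incomplete as stated. Steps~1 and~2 use genuine almost divisibility and comparison of $A$, so $m$-comparison and $m$-almost divisibility must first be upgraded to pureness. That upgrade is the non-trivial Cuntz-semigroup dimension reduction of \cite[Theorem 10.5]{APRT:JFA}.

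\textbf{The paper's route.} The paper does not reprove the statement directly. Part (ii) is a special case of Theorem~\ref{Intro:pure-implies-Zstable}, and locally finite nuclear dimension is not even needed. Part (i) follows by using \cite{Robert:MJM}, \cite[Section 4]{winter2012nuclear} and \cite[Theorem 10.5]{APRT:JFA} to reach pureness, then applying Theorem~\ref{Intro:pure-implies-Zstable}. The proof of that theorem is precisely your Step~1 in the strong form (the tracial completion is McDuff), followed by Matui--Sato's theorem in place of your Step~2.
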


In part \ref{it:winter2} of Winter's second $\Z$-stability theorem, the locally finite nuclear dimension hypothesis is an internal approximation condition designed to include all approximately subhomogeneous (ASH) $C^*$-algebras. It is an open question whether every simple stably finite nuclear $C^*$-algebra is ASH. In fact, it is not even known whether they have locally finite nuclear dimension.  At present, there seems to be no reasonable approach to tackling either problem. Instead, for examples of interest (such as those from group dynamics or groupoids) the systematic route to ASH structure runs through classification, and so requires $\mathcal Z$-stability as an input. So, while on the one hand one can view Theorem \ref{Intro:pure-implies-Zstable} as just removing the locally finite nuclear dimension hypothesis from Winter's theorem, on the other hand it provides a fully abstract approach to accessing classification from positive element data alone. Following this result, Winter (and others) started to speculate that, if almost unperforation alone was not enough information to give $\Z$-stability for non-elementary simple separable unital nuclear $C^*$-algebras, then surely pureness would suffice.

Theorem \ref{Intro:pure-implies-Zstable} also gives a faster route to part \ref{it:winter1} of Winter's theorem. It is not so difficult to show that simple non-elementary $C^*$-algebra of nuclear dimension at most $n$ satisfies higher-dimensional versions of conditions \ref{almost-unperf} and \ref{ranks} (\cite{Robert:MJM} and \cite[Section 4]{winter2012nuclear}). In \cite{APRT:JFA,antoine2024pure}, dimension-reduction techniques were developed in the Cuntz semigroup, showing that the combination of weaker comparison and divisibility conditions are equivalent to pureness (\cite[Theorem 10.5]{APRT:JFA} for the case of simple $C^*$-algebras needed here, and \cite[Theorem C]{antoine2024pure} for a much more general result extending to the non-simple setting). While tricky, these arguments are reasonably short and performing the dimension-reduction in the Cuntz semigroup is considerably less involved than the highly demanding (and impressive) central divisibility and dimension-reduction Winter performs in the central sequence algebra.  Once pureness is reached, Theorem \ref{Intro:pure-implies-Zstable} kicks in to give $\Z$-stability.

There have been a number of conditional abstract results getting from almost unperforation to $\Z$-stability, particularly with conditions on the trace space. Of particular importance to us is the following breakthrough result of Matui and Sato.  
\begin{theorem*}[Matui--Sato's $\Z$-stability theorem {(\cite{MS:Acta})}]
Let $A$ be a non-elementary simple separable unital  nuclear $C^*$-algebra with finitely many extremal traces.  Suppose that the Cuntz semigroup of $A$ is almost unperforated. Then $A$ is $\Z$-stable.
\end{theorem*}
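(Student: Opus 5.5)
The statement immediately above is the Matui--Sato theorem, which the paper quotes from \cite{MS:Acta}. My plan follows their strategy: first extract \emph{property (SI)} from nuclearity together with strict comparison, and then use the finiteness of the extremal boundary to build the tracially large order zero maps that witness $\mathcal Z$-stability.

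Write $A_\omega=\ell^\infty(A)/c_\omega(A)$ for the ultrapower, $J_A\subseteq A_\omega$ for the trace-kernel ideal of elements that are $2$-norm null uniformly over $T(A)$, and $A_\omega\cap A'$ for the central sequence algebra. The argument has three steps.

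\textbf{Step 1 (reduction to the tracial central sequence algebra).} Let $\tau_1,\dots,\tau_n$ be the extremal traces. The image of $A_\omega\cap A'$ in $A_\omega/J_A$ is the tracial central sequence algebra. Since the traces are finitely many, its $2$-norm is equivalent to the maximum of the $\|\cdot\|_{2,\tau_i}$. It is therefore the von Neumann central sequence algebra $\prod_{i=1}^n (\pi_{\tau_i}(A)'')^\omega\cap \pi_{\tau_i}(A)'$.

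Each $\pi_{\tau_i}(A)''$ is an injective II$_1$ factor, because $A$ is nuclear and non-elementary. By Connes' theorem each is $\mathcal R$, hence McDuff. Also $A_\omega\cap A'\to (A_\omega\cap A')/(J_A\cap A')$ is surjective onto this tracial central sequence algebra, by a Kaplansky density and reindexing argument. So for each $k$ we obtain a unital embedding $M_k\to (A_\omega\cap A')/(J_A\cap A')$. Lifting it gives a c.p.c.\ order zero map $\phi:M_k\to A_\omega\cap A'$ with $\tau(\phi(1))=1$ for all limit traces $\tau$; this lifting uses projectivity of the cone over $M_k$.

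\textbf{Step 2 (property (SI) from strict comparison and nuclearity).} Property (SI) says: if $e,f\in A_\omega\cap A'$ are positive contractions with $e\in J_A$ and $\sup_m\lim_\omega\min_\tau \tau(f^m)>0$, then there is $s\in A_\omega\cap A'$ with $s^*s=e$ and $fs=s$.

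The idea is to use almost unperforation in $A$ at each level of a representing sequence to obtain approximate Cuntz subequivalences $e_j\precsim f_j$. The obstacle is to make the implementing elements $s_j$ asymptotically central. For this I would use nuclearity via completely positive approximations and Haagerup's excision/decomposition ideas. One approximates the inner automorphism action by a finite-dimensional c.p.\ map $A\to F\to A$ and applies comparison in $M_N(A)$ to suitable amplifications of $e$ and $f$ built from the downward map. With only finitely many extremal traces, strict comparison then becomes a uniform condition checked trace-by-trace.

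I expect this step to be the main obstacle. In particular, producing a central $s$ from non-central comparison data is the heart of \cite{MS:Acta}.

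\textbf{Step 3 (from (SI) and tracial matrix embeddings to $\mathcal Z$).} Take $\phi$ from Step 1. Set $e=1-\phi(1)\in J_A$, and choose $f$ to be a positive contraction in $\phi(e_{11})$'s hereditary corner with traces bounded below by $1/k$; this is obtained using functional calculus on $\phi(e_{11})$. Property (SI) then gives $s$ with $s^*s=1-\phi(1)$ and $\phi(e_{11})s=s$.

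Then $\phi$ and $s$ generate a unital image of the dimension drop algebra $Z_{k,k+1}$ in $A_\omega\cap A'$. By the standard criterion of Toms--Winter and Rørdam--Winter, $A$ is $\mathcal Z$-stable.
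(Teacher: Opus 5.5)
Your three steps are the architecture of Matui--Sato's proof: tracial McDuffness from Connes' theorem, property (SI) from strict comparison and nuclearity, then R{\o}rdam--Winter's dimension-drop criterion. This is also the ``modern framing'' the paper uses, and the paper itself gives no proof; it quotes \cite{MS:Acta}. As a proof, however, there is a genuine gap, exactly where you expect it: Step 2 is not carried out, and your sketch lacks the mechanism that makes it work.

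Levelwise almost unperforation only applies after cutting down, since $e\in J_A$ makes $\tau(e_j)$ small but not $d_\tau(e_j)$; for $(e_j-\epsilon)_+$ one has $d_\tau\le\tau(e_j)/\epsilon$. More seriously, ``approximating the inner action by $A\to F\to A$ and comparing amplifications'' does not say how the implementing elements become asymptotically central, and that is the whole difficulty.

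The missing idea is Matui--Sato's excision in small central sequences. One builds an $s\in A_\omega$, not a priori central, with $fs=s$ and $s^*as=ae$ for all $a\in A$. Centrality is then automatic. Taking $a=1$ gives $s^*s=e$, and since $e$ is central,
\[
(as-sa)^*(as-sa)=a^*ae-a^*ea-a^*ae+a^*ea=0.
\]
Such an $s$ is produced roughly as follows:
\begin{itemize}
\item nuclearity (c.p.\ factorisations through matrix algebras) reduces $\mathrm{id}_A$ to maps built from states;
\item these states may be taken pure, since $A$ is simple and non-elementary;
\item pure states are excised via Akemann--Anderson--Pedersen;
\item strict comparison fits cut-downs of $e$ underneath $f$ and the excising elements.
\end{itemize}
None of this appears in your proposal.

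Three smaller points need correcting.
\begin{itemize}
\item Your statement of (SI) is false. Since $\tau(f^m)$ decreases in $m$, the condition with $\sup_m$ reduces to $m=1$. It is then met by $f=\tfrac12\cdot 1$, for which $fs=s$ forces $s=0$. The condition must use $\inf_m$, equivalently $\lim_m$.
\item In Step 3, take $f=\phi(e_{11})$ itself; no functional calculus is needed. Because $1-\phi(1)\in J_A$, one has $\pi_\tau(\phi(1))=1$ for limit traces, so $\tau(\phi(e_{11})^m)=1/k$ for all $m$. Applying (SI) to some other $f$ in the hereditary subalgebra would give only $fs=s$, not the relation $\phi(e_{11})s=s$ that R{\o}rdam--Winter's presentation of $Z_{k,k+1}$ requires.
\item In Step 1, Kaplansky density only identifies the tracial ultrapower with the von Neumann one, giving lifts that are central in $\|\cdot\|_{2,T(A)}$. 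Upgrading these to norm-central sequences in $A_\omega\cap A'$ needs a quasicentral approximate unit for $J_A$, or Matui--Sato's own argument; reindexing alone does not do it.
\end{itemize}
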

The main technical ingredient for this is an essential component of Theorem \ref{Intro:pure-implies-Zstable}. Matui and Sato's headline result was extended to more general trace simplices in \cite{Sato12,TWW:IMRN,KR14}: those where the extremal traces are compact and of finite covering dimension.  Ozawa introduced the theory of $W^*$-bundles and tracial completions in \cite{Ozawa} as a framework to explain this last set of results, and this is the starting point for the line of work \cite{BBSTWW,CETWW,CETW:IMRN,carrion2023tracially,E:JLMS,ET:RR0} on tracially complete $C^*$-algebras, which scaffolds our proof of Theorem~\ref{Intro:Structure}.

We give a detailed description of the tools and techniques behind the proof of Theorems~\ref{Intro:Structure} and~\ref{Intro:pure-implies-Zstable} in Section \ref{S:Outline}.

\subsection*{\safeK-stability} The methods behind Theorems \ref{Intro:Structure} and \ref{Intro:pure-implies-Zstable} involve a structural analysis of the uniform tracial ultrapower as we set out in Section \ref{S:Outline}. We expect these tools to have broader applications outside the regularity programme. One such application is to $K$-stability of pure $C^*$-algebras.

The notion of \emph{$K$-stability} was introduced by Thomsen (\cite{thomsen}) in the context of non-stable $K$-theory. Operator $K$-theory is defined considering all matrix amplifications: for the $K_1$-group of a unital $C^*$-algebra $A$ is defined as
\begin{equation}
    K_1(A)=\pi_0(\bigcup_{n=1}^\infty U(M_n(A))),
\end{equation}
where $u\in U(M_n(A))$ is included as $\mathrm{diag}(u,1)\in U(M_{n+1}(A))$. In many situations, it is important to know how $K_1(A)$ -- which can often be computed using powerful six term exact sequences -- relates to unitaries in the algebra. To this end, $A$ is said to be $K_1$-injective ($K_1$-surjective) if the canonical map $U(A)/U_0(A)\to K_1(A)$ is injective (or surjective).  Cuntz's work shows that all simple purely infinite $C^*$-algebras are $K_1$-bijective,\footnote{The definition of $K_1$-bijectivity will not come as a surprise.} and Rieffel's stable rank one provides a strong from of stable finiteness which gives rise to $K_1$-bijectivity (\cite{Rieffel83}; see \cite[V.3.1.26]{blackadar2006operator}). As a consequence of Lin's recent stable rank one theorem (\cite{LinStableRank}), all simple pure $C^*$-algebras are $K_1$-bijective. In general, simple $C^*$-algebras need not be $K_1$-bijective: Villadsen provided an example showing the failure of $K_1$-surjectivity (\cite{MR1916650}), and Toms has very recently announced an example showing failure of $K_1$-injectivity (\cite{toms2026simple}).

For a unital $C^*$-algebra $A$, Thomsen's non-stable $K$-theory studies the homology theory given by the higher homotopy groups of the unitary group of $A$,\footnote{We discuss how to define $K$-stability of non-unital $C^*$-algebras in Section \ref{sec:KStable}.} and $A$ is said to be \emph{$K$-stable} when the natural maps $U_n(A)\to U_{n+1}(A)$ induce isomorphisms $\pi_k(U_n(A))\cong \pi_k(U_{n+1}(A))$ for all $n$ and all $k$. This is precisely the condition that $C(S^k, M_{n}(A))$ is $K_1$-bijective for all $k\geq 0$ and $n\geq 1$.  For a $K$-stable $C^*$-algebra, the non-stable $K$-groups agree with the ordinary $K$-groups.  

Certainly not all $C^*$-algebras are $K$-stable -- indeed, no commutative or finite-dimensional $C^*$-algebra can be $K$-stable (\cite{thomsen}) -- but it turns out that quite often one can get $K$-stability from regularity conditions.  Essentially by definition, stable $C^*$-algebras are $K$-stable, and in \cite{thomsen}, Thomsen obtained $K$-stability in the presence of various tensor product decompositions. Further examples include simple purely infinite $C^*$-algebras (\cite{zhang2001}), non-elementary simple $C^*$-algebras with stable rank one and real rank zero (\cite{zhang}), and $\mathcal{Z}$-stable $C^*$-algebras (\cite{jiang1997}; see also \cite{hua2025k}). For us, Rieffel gave an important upgrade from $K_1$-bijectivity of $C^*$-algebras with stable rank one to $K$-stability under an additional divisibility assumption (\cite[Theorem 4.13]{rieffel87}).\footnote{A $C^*$-algebra $B$ is divisible in Rieffel's sense, if there are arbitrarily large $n\in\mathbb N$ such that $B\cong M_n(C_n)$ for some $C^*$-algebras $C_n$.} 
 
Moreover, $K$-stability is not restricted to regular $C^*$-algebras: more recently, it was shown that non-$\mathcal{Z}$-stable Villadsen algebras of the first kind (\cite{villadsen1998simple}) are $K$-stable (\cite{seth2026tensorial}), and outside the simple case there are results characterising $K$-stability in classes of inductive limits (\cite{seth2023, seth2026tensorial}).

Our first application is to show that for simple $C^*$-algebras, pureness is a sufficient regularity condition for $K$-stability (at least when quasitraces are traces).  Modulo the quasitrace problem, this extends the $K_1$-bijectivity of all simple pure $C^*$-algebras from Lin's stable rank one theorem to $K$-stability.

\begin{theorem}\label{Intro:KStable1}
Let $A$ be a unital simple pure $C^*$-algebra whose quasitraces are traces.  Then $A$ is $K$-stable.
\end{theorem}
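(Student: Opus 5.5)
We prove Theorem~\ref{Intro:KStable1} by reducing $K$-stability to $K_1$-bijectivity of the algebras $C(S^k, M_n(A))$, and obtaining the latter from a Rieffel-type argument: stable rank one together with divisibility. By Lin's stable rank one theorem, $A$ has stable rank one, and hence so does every $M_n(A)$. The difficulty is that $C(S^k)\otimes M_n(A)$ need not have stable rank one. Rieffel's upgrade \cite[Theorem 4.13]{rieffel87} handles this when $A$ is divisible in his sense, i.e.\ $A\cong M_m(C_m)$ for arbitrarily large $m$. A pure algebra need not have this exact matricial structure. The plan is therefore to replace exact divisibility by an approximate, tracial-ultrapower version, and to run Rieffel's homotopy-lifting argument inside the uniform tracial ultrapower $A^\omega$ or the tracial completion.

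The key steps, in order, are as follows.

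First, use that quasitraces are traces. Then pureness gives strict comparison with respect to $T(A)$, and Thiel's result shows that all ranks occur. This lets us identify the tracial completion $\completion{A}{T(A)}$ as a II$_1$ factorial tracially complete $C^*$-algebra which is regular: real rank zero, and projection classes matching continuous affine functions. This relies on the characterisation of regularity developed later in the paper, which I assume is available.

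Second, show that regularity yields unital embeddings $M_m\to A^\omega\cap A'$, at least modulo the trace-kernel ideal $J_A$. Then lift along the quotient $A_\omega\to A^\omega$. Here the relevant lifting is of order zero maps, in the spirit of the Matui--Sato technique, combined with strict comparison to handle the defect in $J_A$.

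Third, reproduce Rieffel's argument. Given a continuous family of unitaries $u\colon S^k\to U_n(A)$ that becomes null-homotopic in $U_{n+1}(A)$, we want a null-homotopy already in $U_n(A)$. To do this, use the approximately central matrix units from the second step to rotate the extra $1$ in $\mathrm{diag}(u,1)$ into a corner of size $1/m$. The remaining defect, which is tracially small, is then absorbed using stable rank one together with the vanishing of obstructions for unitaries close to $1$ on a tracially large corner. Surjectivity is handled the same way, by compressing $\mathrm{diag}(u,1)$ back into $M_n(A)$.

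I expect the main obstacle to be the third step. Rieffel's argument uses genuine matrix decompositions, and with only approximate, tracially defined ones we must control the error in norm, not merely in $2$-norm. Strict comparison must convert "tracially small" into "Cuntz-small". Stable rank one, in the form of Rieffel's estimates on the connected stable rank of hereditary subalgebras, then has to convert this into homotopy data.

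I would first try to use a hereditary subalgebra $B\subseteq A$ which is Cuntz-dominated by an arbitrarily small tracial amount. From stable rank one I would get that $U(\tilde B)\to K_1(B)$ is bijective uniformly over $S^k$-families. I would then glue this with the large "divisible" part, to which Rieffel's theorem applies verbatim.
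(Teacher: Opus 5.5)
There is a genuine gap, and it is in your second step. Theorem~\ref{Intro:KStable1} assumes neither nuclearity nor separability, and without amenability, regularity of the tracial completion does not produce central matrix units. In the paper, McDuffness follows from regularity only through the amenable structure theorem (Theorem~\ref{TC:Structure}).

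Concretely, take $A=C^*_r(\mathbb F_2)$. It is unital, simple, exact (so quasitraces are traces), and pure by the selflessness results cited in the introduction. Yet its tracial completion is the full factor $L(\mathbb F_2)$, so $A^\omega\cap A'=\mathbb C$. Hence there is no unital $M_2\to A^\omega\cap A'$, let alone a lift to $A_\omega\cap A'$.

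In the nuclear case, producing such embeddings is the paper's main $\mathcal Z$-stability result. After that, $K$-stability would follow from Jiang's theorem \cite{jiang1997} anyway, so this route is essentially circular.

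Your third step is also not yet an argument. You identify the norm-versus-$2$-norm problem as the main obstacle, but give no mechanism for turning tracially small, approximately central data into norm-controlled homotopies. Stable rank one of $A$ alone does not give $K_1$-bijectivity of $C(S^k,M_n(A))$.

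The missing idea is that you never need divisibility inside $A$, nor any centrality. The paper works with the trace-kernel extension $0\to J_A\to A_\omega\to A^\omega\to 0$.
\begin{enumerate}
\item The quotient $A^\omega$ has stable rank one, from real rank zero plus cancellation coming from classification of projections (Proposition~\ref{S4.2:Prop}).
\item $A^\omega$ contains a unital copy of $M_n$ for every $n$, not in any commutant. So $A^\omega\cong M_n(C_n)$ is divisible in Rieffel's sense, and \cite[Theorem 4.13]{rieffel87} applies verbatim (Proposition~\ref{2609_Aloweromega_kstable}).
\item Strict comparison makes $J_A$ separably stable, hence $K$-stable (Proposition~\ref{prop:J_A_K_stable}).
\item Thomsen's permanence of $K$-stability under extensions \cite{thomsen} then gives $K$-stability of $A_\omega$.
\item Lemma~\ref{2609_AloweromegatoA} passes this down to $A$ through $C(S^k,A)\to C(S^k,A_\omega)\to C(S^k,A)_\omega$. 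It uses that a homotopy to $1$ in an ultrapower is a finite product of exponentials, which lifts coordinatewise.
\end{enumerate}
The norm control you were worried about is absorbed entirely by the separable stability of $J_A$. Neither Lin's stable rank one theorem for $A$ nor Thiel's rank realisation is needed, since almost divisibility of the unit is already part of pureness.
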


More generally, this works under the hypotheses of strict comparison and almost divisible unit; simplicity is not required.

\begin{theorem}\label{Intro:KStable2}
Let $A$ be a unital $C^*$-algebra which has strict comparison with respect to bounded traces and almost divisible unit. Then $A$ is $K$-stable.
\end{theorem}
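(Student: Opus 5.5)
We must show that $C(S^k, M_n(A))$ is $K_1$-bijective for all $k\ge 0$ and $n\ge 1$. The plan is to pass through the uniform tracial completion and its ultrapower, where the hypotheses become real rank zero and divisibility statements. Then we feed these into a Rieffel-type argument, in the spirit of \cite[Theorem 4.13]{rieffel87}, which upgrades $K_1$-bijectivity to $K$-stability given divisibility.

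First we reduce to a statement about $B_k \coloneqq C(S^k)\otimes M_n(A)$. Strict comparison with respect to bounded traces and an almost divisible unit pass to matrix amplifications. They should also pass to $C(S^k)\otimes A$ in the form needed. Here the traces of $C(S^k)\otimes A$ are integrals of point-evaluation traces, and comparison can be checked fibrewise up to $\epsilon$ using continuity in the sphere variable. If this transfer fails as stated, I would instead work directly with maps $S^k\to U_n(A)$ and argue homotopies pointwise with uniform control.

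So it suffices to prove a general $K_1$-bijectivity statement: a unital $C^*$-algebra with strict comparison for bounded traces and almost divisible unit is $K_1$-injective and $K_1$-surjective.

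\textbf{Surjectivity.} Almost divisibility of the unit gives, up to a small Cuntz-defect, an almost embedding of matrices $M_m$ into $A$. Strict comparison then lets us realise $1_{M_N(A)}$ as Cuntz dominated by $1_A$ on the $\Q$-like tracial scale once the defect is controlled. The aim is to show that any unitary in $U_N(A)$ is homotopic to one of the form $\mathrm{diag}(v,1)$ with $v\in U(A)$. Concretely, I would use comparison to find partial isometries moving $e_{22}\otimes 1$ under a small corner of $e_{11}\otimes 1$, compressing the unitary into a smaller corner, and then iterate.

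\textbf{Injectivity.} This is the harder direction. If $\mathrm{diag}(u,1)$ is null-homotopic in $U_2(A)$, we must show $u\in U_0(A)$. Here I would use the analysis of uniform tracial ultrapowers developed in the paper. In the tracial completion, the ultrapower is a II$_1$ factorial-like object with real rank zero and projections realising continuous affine functions. There, every unitary is an exponential, with controlled lengths.

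The strategy is:
\begin{enumerate}
\item Write $u$ as a product of a unitary in $U_0(A)$ and a unitary that is trivial modulo the trace-kernel ideal.
\item Use divisibility inside the kernel part. Almost divisibility produces order-zero maps from matrix algebras that are approximately central, as in the Matui--Sato framework. These supply a Rieffel-type rotation trick: $u\oplus 1$ is homotopic to $u\oplus u^*$-type rotations realised inside $A$ itself, using a copy of $M_m$ sitting inside $A$ up to a small remainder.
\end{enumerate}

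\textbf{Main obstacle.} I expect the main obstacle to be the lifting from the uniform tracial ultrapower back to $A$. Exponential-length control in the tracial ultrapower must be converted into genuine norm homotopies in $A$. This requires strict comparison to dominate the trace-kernel remainder by a small corner, into which the remainder can be rotated. I would try first to mimic the proof that $\mathcal Z$-stable algebras are $K$-stable \cite{jiang1997}. In that argument, the two matrix summands of a dimension-drop algebra are replaced by the almost-divisible decomposition of the unit, together with comparison-provided partial isometries.
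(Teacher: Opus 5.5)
There is a genuine gap, and it sits exactly where information has to be transferred back to $A$.

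\textbf{The reduction step.} Your reduction to $K_1$-bijectivity of $C(S^k)\otimes M_n(A)$ needs $C(S^k)\otimes A$ to inherit strict comparison with respect to bounded traces. This does not follow from the hypotheses, and your fibrewise argument does not supply it. Fibrewise Cuntz subequivalences need not patch together over $S^k$, and neither do fibrewise homotopies in your fallback; that patching is precisely where topological obstructions live. The paper never needs this. It proves that the norm ultrapower $A_\omega$ is $K$-stable, for all homotopy degrees at once. It then factors $C(S^k,M_n(A))\to C(S^k,M_n(A_\omega))\to C(S^k,M_n(A))_\omega$. Unitaries that are homotopic in a matrix algebra over $D_\omega$ have $\omega$-many coordinates homotopic in the corresponding matrix algebra over $D$. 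So $K_1$-bijectivity descends from the middle algebra to $C(S^k,M_n(A))$.

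\textbf{The mechanism.} Your $K_1$ arguments lack the mechanism that makes the proof work. Rieffel's Theorem 4.13 needs stable rank one and divisibility in Rieffel's sense. In general $A$ has neither: $\mathcal Z$ satisfies your hypotheses but is projectionless, so it is not of the form $M_n(C)$ for $n\ge 2$. The theorem can only be applied to the tracial ultrapower $A^\omega$. There the unit is exactly divisible (by Kirchberg's epsilon test), giving Rieffel divisibility. Stable rank one of $A^\omega$ comes from real rank zero together with cancellation, which follows from classification of projections.

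\textbf{The bridge back.} This is the trace-kernel extension $0\to J_A\to A_\omega\to A^\omega\to 0$. Strict comparison makes $J_A$ separably stable, hence $K$-stable. Thomsen's permanence theorem for extensions then gives $K$-stability of $A_\omega$. This answers what you call the main obstacle: no conversion of exponential-length control into norm homotopies is ever carried out by hand.

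\textbf{Problems with your specific steps.}
\begin{itemize}
\item Your injectivity step, ``write $u$ as a product of an element of $U_0(A)$ and a unitary trivial modulo the trace-kernel ideal'', has no meaning in $A$, since the trace-kernel ideal lives in $A_\omega$.
\item The assertion that every unitary of the tracial ultrapower is an exponential is unproven. The paper deliberately works without knowing $K_1(\M^\omega)=0$, correcting $K_1$-classes on small corners instead.
\item Your surjectivity step of compressing unitaries into corners and iterating would need cancellation or stable rank one in $A$ itself, which is not available under these hypotheses.
\end{itemize}
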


These results address the problem, raised by Thiel, and recorded as \cite[Problem LVIII]{99}, as to whether all pure $C^*$-algebras might be $K$-stable. In the purely infinite setting, Kirchberg examined this problem recording progress in his unfinished manuscript \cite{Kirchberg:preprint}; see his squeezing property and \cite[Proposition 4.2.15]{Kirchberg:preprint}.

The proof strategy is to split the problem using the trace-kernel extension. This goes back to Matui and Sato's work (\cite{MS:Acta}) and has become a go-to tool in analysing the structure of simple $C^*$-algebras; indeed it underpins Theorems~\ref{Intro:Structure} and~\ref{Intro:pure-implies-Zstable} through the Matui--Sato theorem.  Related to these $K$-stability questions, Lin's stable rank one theorem from \cite{LinStableRank} also runs through the trace--kernel extension; very crudely, the idea is to obtain a strong form of approximations giving rise to stable rank one in the trace-kernel quotient, and then use strict comparison to lift this back to obtain a result for $A$. 

For a free ultrafilter $\omega\in\beta\mathbb N\setminus\mathbb N$ on the natural numbers, the norm-ultrapower $A_\omega$ of a tracial $C^*$-algebra fits into the short exact sequence
\begin{equation}
0\longrightarrow J_A\longrightarrow A_\omega\longrightarrow A^\omega\longrightarrow 0,
\end{equation}
where $J_A$ and $A^\omega$ are the trace-kernel ideal and quotient respectively (see Section~\ref{subsec:tracially-complete}).  While $J_A$ is not stable, it goes back to \cite{Schafhauser:Crelle} using \cite{HR:JFA} that it is \emph{separably stable} (separable subalgebras are contained in stable separable subalgebras) when $A$ behaves nicely enough, and that will give $K$-stability of $J_A$. In particular, comparison is enough for the separable stability of $J_A$ (this is essentially in \cite{CGSTW}).  Our work in Section~\ref{sec:KStable} will establish both stable rank one and Rieffel divisibility of $A^\omega$ from the assumptions in Theorem~\ref{Intro:KStable2}. We then get $K$-stability of $A_\omega$ by Thomsen's result on $K$-stability of extensions. This passes back to $K$-stability of $A$, in roughly the same way that $K_1$-bijectivity of $A_\omega$ gives $K_1$-bijectivity of $A$.

\subsection*{AI statement, author contribution statement and description of our route to the results} On 8 September 2026, and after discussion with some (but not all) authors on this paper, SW asked ChatGPT 6 Astra using codex whether a $\|\cdot\|_{2,u}$-separable $W^*$-bundle $\mathcal M$ over a compact metrisable space with classification of projections (Definition \ref{def:classification-of-projections} below, but specialised to the $W^*$-bundle framework) and real rank zero such that all fibres of $\mathcal M$ are isomorphic to the hyperfinite II$_1$ factor, must be trivial.\footnote{Initially, the separability condition was omitted from the prompt; this was swiftly added when ChatGPT started to produce non-separable counterexamples.}  After 27 minutes of computation, and no further human input, it produced a short 4 page proof of this statement, which will be available in the supplementary materials section of \url{https://sites.google.com/view/stuartwhite/research/publications} for transparency. A follow up prompt the same day asked Astra to ``Please carefully check your work, be particularly careful with the maximality argument in the setting of $W^*$-bundles." Its response to this is summarised in the pdf. 

Beyond the two prompts on 8 September described above, the authors have made no further active use of generative AI to produce this version of the paper (for developing the mathematical content, proofreading, or any other purpose). Instead we chose to investigate the consequences of these ideas ourselves, leading to this paper. As this is our first experiment with using generative AI as a research tool, some of the authors intend to investigate ChatGPT 6 Astra's capabilities on the research questions tackled in this paper during the time between submitting this paper to the arXiv and its public availability.  We will report on our findings in a subsequent revision.

The authorship of this paper was established between 9 and 13 September. The authors fall into 2 subsets with non-empty intersection:\footnote{We will stick to mathematical traditions and not identify these subsets.} 
\begin{itemize}
    \item a subset of the authors have been heavily involved in developing the theory of tracially complete $C^*$-algebras over the last 5 years for the purpose of tackling problems of this nature, by means of regularity conditions at the tracially complete level. Some of these authors had also considered various potential statements in the spirit of the `small projections lemma', inspired by reading of Connes' \cite[Theorem 2.1]{Connes:Annals}, at different times over many years but had never been able to make a result of this nature work.
    \item a subset of the authors were already finalising a paper including the $K$-stability results given here as Theorems~\ref{Intro:KStable1} and~\ref{Intro:KStable2}, which were obtained after the INI programme at Queen's University Belfast in July 2025, in the window August 2025--May 2026. Technical ingredients from the $K$-stability project regarding the structure of $A^\omega$ have been repurposed for this paper, as they play a significant role in our route to Theorem~\ref{Intro:pure-implies-Zstable}. For this reason, the two projects were merged.  
\end{itemize} 

At the time of making the 8 September prompt, some of the authors thought that a positive answer would give Theorem \ref{Intro:pure-implies-Zstable} in the case when $T(A)$ is a Bauer simplex. They believed they had a proof that the tracial completion $(\M,X)$ of a pure $C^*$-algebra $A$ is real zero, in a batch of unpublished work build up from ongoing attempts at Theorem~\ref{Intro:pure-implies-Zstable} using tracially complete $C^*$-algebras since at least 2021 and the $K$-stability project. In fact, as we discovered in one of a few quite stressful moments during this project, that `proof' was based on a misunderstanding of a key point in \cite{robert2017nuclear} and flawed.

Following the 8 September prompt, we quickly identified that the weak CPoU condition should be the right tool to extend the $W^*$-bundle lemma to the general tracially complete framework. A lot of trial and error was involved, before a strategy for this was obtained on 19 September. Simultaneously, we had quickly realised that we could get away with the weaker condition that $\M^\omega$ has real rank zero, which we found much easier to prove than $\M$ having real rank zero.  At this point we were trying to put together an argument for McDuffness of $\M$ using that $\M$ has classification of projections together with $\M^\omega$ having real rank zero, and had found a way of making the maximality argument work, but kept running into what felt like local problems in implementing a tracially complete version of the small matrix lemma. This came together on 2 October, via the lifting arguments making up Section \ref{Sec:CentralDivisible}.

Right up until 3 October, the main tool was our tracially complete version of the small matrix lemma; it was only as Section \ref{S:SmallMatricesOutline} was drafted did we realise the potential to prove hyperfiniteness directly in the spirit of Popa. The relevant sections were completely rewritten between 3--7 October. Finally, the introduction to the paper was completely rewritten on 6--7 October, following Thiel's \cite{Thiel:arXiv}, which we had no prior sight of, though some of us verified on 6 October. 

\subsection*{Acknowledgements} SE, AT and SW thank Wilhelm Winter for the extensive and inspiring conversations we have all had over years on the subject of the Toms--Winter conjecture. SE, CS, AT, and SW thank José Carrión, Jorge Castillejos, and Jamie Gabe for all the insights shared during the large scale collaboration on tracially complete $C^*$-algebras \cite{carrion2023tracially} which provides the framework for this paper.

CS would like to thank Ben Hayes for helpful conversations about tracially complete $C^*$-algebras during CS's visit to University of Virginia in April 2026.

SW would like to thank Andrew Toms for discussions on the Toms--Winter conjecture during his visit to Oxford in the second half of 2025, and David Jekel and Maxwell Ryder for the collaborative work \cite{JRW:inPrep} on selfless $W^*$-bundles (and apologise to them for delay in finalising \cite{JRW:inPrep} while this paper was completed). It was through the work on \cite{JRW:inPrep} that SW became convinced that the combination of real rank zero and classification of projections should be a fundamental regularity property for tracially complete $C^*$-algebras.  SW would also like to thank Shuoxing Zhou for helping him set up ChatGPT codex. The prompt mentioned above was the first test prompt he sent to the model following this set up.

\section{Discussion of methods}\label{S:Outline}

\renewcommand{\thetheorem}{\arabic{theorem}}
\numberwithin{theorem}{section}

\subsection{Tracially complete \cstar-algebras and Matui--Sato's theorem}\label{S2.1} 
Our route to Theorems \ref{Intro:Structure} and \ref{Intro:pure-implies-Zstable} crosses the tracially complete bridge from von Neumann algebras to $C^*$-algebras.  Consider a simple separable nuclear unital and non-elementary $C^*$-algebra $A$. When $A$ has a unique trace $\tau$, the GNS representation $\pi_{\tau}(A)$ of $A$ generates the hyperfinite II$_1$ factor as a von Neumann algebra by Connes' theorem.  In particular $\pi_{\tau}(A)''$ is McDuff (again by Connes).  In their breakthrough work \cite{MS:Acta}, Matui and Sato obtain a condition -- \emph{property (SI)} -- as a consequence of strict comparison. This is designed to lift the McDuff property from $\pi_\tau(A)''$ back to $\Z$-stability of $A$, and allows them to prove $\Z$-stability for simple separable unital and nuclear $C^*$-algebras with strict comparison and a unique trace.   See \cite[Section 6]{99} for a discussion of this central sequence trace-kernel lifting argument.

When $A$ has many traces, the McDuff property of each $\pi_\tau(A)''$ individually (as one gets from Connes' theorem)\footnote{This works just as well at non-extremal traces when $\pi_\tau(A)''$ is not a factor.} is not enough to run Matui and Sato's argument. Instead one needs the McDuff property to hold in a uniform fashion across all $\pi_\tau(A)''$ as $\tau$ varies (called \emph{uniformly McDuff} in \cite[Definition 4.2]{CETW:IMRN}).  This is a condition that lives at the level of the tracial completion.
Just as the enveloping von Neumann algebra $\pi_\tau(A)''$ may be obtained by completing the unit ball of $A$ in the $2$-norm associated to $\tau$, for many traces an analogous completion using the \emph{uniform $2$-norm} $\|x\|_{2,T(A)}\coloneqq \sup_{\tau\in T(A)}\|x\|_{2,\tau}$ gives the \emph{tracial completion} $\completion{A}{T(A)}$, introduced by Ozawa in \cite{Ozawa}. The level of structure $\completion{A}{T(A)}$ carries depends on the complexity of the Choquet simplex $T(A)$ of traces on $A$. 
\begin{itemize}
    \item When $T(A)$ is finite-dimensional (i.e.\ $A$ has finitely many extremal traces), $\completion{A}{T(A)}$ is a finite von Neumann algebra. In fact, it is $\oplus_{\tau\in\partial_eT(A)}\pi_\tau(A)''$. 
    \item When $T(A)$ is a \emph{Bauer} simplex, Ozawa showed that $\completion{A}{T(A)}$ naturally carries the structure of a $W^*$-bundle with factorial fibres, i.e.\ it is the section algebra of suitably continuous bundle of II$_1$ factors (the algebras $\pi_\tau(A)''$, for $\tau\in\partial_eT(A)$) over the base space $\partial_eT(A)$ (see \cite{Ozawa,Evington:PhD,BBSTWW} for the theory of these $W^*$-bundles).
    \item With no restrictions on $T(A)$, four of the authors together with Carrión, Castillejos, and Gabe worked out an abstract framework for these tracial completions, which we imaginatively called \emph{factorial tracially complete $C^*$-algebras} (\cite{carrion2023tracially}). These consist of pairs $(\mathcal M,X)$, where $\mathcal M$ is a unital $C^*$-algebra whose unit ball is closed in the norm $\|x\|_{2,X}\coloneqq\sup_{\tau\in X}\|x\|_{2,\tau}$ for a closed face $X\subseteq T(\mathcal M)$. Again for $\tau \in \partial_e X$, $\pi_\tau(\M)''$ is a factor. Using $X\coloneqq T(A)$, a tracial completion $\completion{A}{T(A)}$ is a factorial tracially complete $C^*$-algebra. 
\end{itemize}

The raison d'être of tracially complete $C^*$-algebras is to provide an intermediate step in the transfer of results from the von Neumann algebraic framework back to the $C^*$-level (see the discussion in \cite[Section 1.6]{carrion2023tracially}).  This was their role (a little in disguise) in both the passage from $\mathcal Z$-stability to finite nuclear dimension --- indeed, this happened in 3 distinct steps \cite{MS:Duke,SWW}, \cite{BBSTWW} and \cite{CETWW} separated according to the distinct levels of complexity of the tracial completion  --- and in the abstract approach to the unital classification theorem in \cite{CGSTW}. The performance is reprised here for the structure theorem, and this time tracial completions take centre stage. This is possible through the following modern framing of Matui and Sato's $\mathcal Z$-stability theorem.

\begin{theorem*}[{Matui and Sato's $\mathcal Z$-stability theorem \cite{MS:Acta}}]
Let $A$ be a unital simple separable nuclear $C^*$-algebra with strict comparison.  Suppose that $A$  is uniformly McDuff, or equivalently, its tracial completion $\completion{A}{T(A)}$ is McDuff.  Then $A$ is $\mathcal Z$-stable.
\end{theorem*}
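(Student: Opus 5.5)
The plan is to follow Matui and Sato's central sequence trace-kernel lifting strategy. We work with the extension $0\to J_A\to A_\omega\to A^\omega\to 0$ and its relative commutant version. The target is the standard criterion: $A$ is $\mathcal Z$-stable provided that, for each $k\in\mathbb N$, there is a unital $^*$-homomorphism from the dimension drop algebra $\mathcal Z_{k,k+1}$ into $A_\omega\cap A'$. Here we use Toms--Winter's characterisation together with the fact that the $\mathcal Z_{k,k+1}$ are the building blocks of $\mathcal Z$. By the Rørdam--Winter presentation, such a map amounts to a c.p.c.\ order zero map $\phi:M_k\to A_\omega\cap A'$ together with a contraction $s\in A_\omega\cap A'$ satisfying
\[
s^*s=1-\phi(1_k),\qquad \phi(e_{11})s=s.
\]

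We proceed in three steps.

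\textbf{Step 1 (McDuff in the tracial ultrapower).} First, dispose of the ``equivalently'' clause. Since $A$ is $\|\cdot\|_{2,T(A)}$-dense in $\completion{A}{T(A)}$ and the traces on $A$ extend to $X=T(A)$, a reindexing (Kaplansky density) argument identifies $A^\omega$ with $(\completion{A}{T(A)})^\omega$. This identification also matches the relative commutants of $A$ and of $\completion{A}{T(A)}$. McDuffness of the completion then means there are unital embeddings $M_k\to A^\omega\cap A'$ for all $k$, which is exactly uniform McDuffness.

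\textbf{Step 2 (lifting to $A_\omega$).} Next, lift such a unital embedding to a c.p.c.\ order zero map $\phi:M_k\to A_\omega\cap A'$. This uses projectivity of the cone over $M_k$ together with central surjectivity of $A_\omega\cap A'\to A^\omega\cap A'$. The latter is proved by taking any lift and averaging it against a quasicentral approximate unit of $J_A$, using Kirchberg's $\epsilon$-test. The resulting $\phi$ satisfies $\tau(1-\phi(1_k))=0$ for all limit traces $\tau$. Moreover, after perturbing with functional calculus, we may arrange that $\tau(\phi(e_{11})^m)\ge 1/k-\epsilon$ uniformly in $m$, i.e.\ $\phi(e_{11})$ is tracially large uniformly.

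\textbf{Step 3 (property (SI) and the contraction $s$).} Finally, establish property (SI) from strict comparison and nuclearity. The claim is: if $e,f\in A_\omega\cap A'$ are positive contractions, $e$ is in the trace kernel, and $f$ satisfies $\inf_m\inf_\tau\tau(f^m)>0$, then there is $s\in A_\omega\cap A'$ with $s^*s=e$ and $fs=s$. Applying this with $e=1-\phi(1_k)$ and $f=\phi(e_{11})$ gives the required $s$ and finishes the proof.

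The proof of (SI) is the main obstacle, and this is where nuclearity enters. My plan is to work at the level of $A$ with finite sets and $\epsilon$-test reindexing. For each $a\in A$, use nuclearity to excise the states $\rho(\,\cdot\,a)$: Kishimoto/Akemann--Anderson--Pedersen excision applied to a finite-dimensional c.p.\ approximation of the identity gives approximately central elements implementing the desired ``local'' compression. Then use strict comparison in matrices over $A$ to transfer the tracially negligible $e$ below the tracially non-negligible $f$ inside the central sequence algebra. More precisely, the key is the fact that $e\precsim f$ in $A_\omega\cap A'$, obtained via the Matui--Sato excision lemma: because the traces of $e$ vanish while those of $f$ are bounded below, strict comparison yields $(e-\epsilon)_+\precsim f$ at each level, with centrality controlled by the excision. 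A final reindexing upgrades this Cuntz subequivalence to the exact relations $s^*s=e$ and $fs=s$.
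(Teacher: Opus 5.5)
Your architecture is exactly the Matui--Sato argument, which the paper cites rather than reproves: the R{\o}rdam--Winter criterion via unital maps $\mathcal Z_{k,k+1}\to A_\omega\cap A'$, lifting a unital $M_k\to A^\omega\cap A'$ to an order zero map through the surjection $A_\omega\cap A'\to A^\omega\cap A'$, and then applying property (SI) to $e=1-\phi(1_k)$ and $f=\phi(e_{11})$. Steps 1 and 2 are fine. The perturbation in Step 2 is not even needed: limit traces factor through $A^\omega$, where $\phi(e_{11})$ is a projection of trace $1/k$.

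The gap is Step 3, which as written does not prove (SI). First, no reindexing turns $e\precsim f$ into $fs=s$. Cuntz subequivalence only sees the support of $f$: if $f$ is invertible in $A_\omega\cap A'$, then $e\precsim f$ holds for every $e$. By contrast, $fs=s$ forces $ss^*$ to sit under the spectral part of $f$ at $1$. What must be shown is $e\precsim (f-(1-\delta))_+$ for every $\delta>0$, with norm control on the witnesses. This is where the hypothesis on $\tau(f^m)$ is used, through $d_\tau((f-(1-\delta))_+)\geq\tau(f^m)-(1-\delta)^m$.

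More seriously, your mechanism for centrality does not work. Strict comparison applied to $e_n,f_n$ in $A$ gives witnesses with no control on their commutators with $A$. Akemann--Anderson--Pedersen excision does not produce approximately central elements: an element $c$ with $cac\approx\omega(a)c^2$ for a pure state $\omega$ is localised at $\omega$, not central. The missing idea is to excise the identity map rather than to make a comparison witness central:
\begin{itemize}
\item Using nuclearity, approximate $\mathrm{id}_A$ by c.p.\ maps through matrices, and realise these (by simplicity) through a single pure state $\omega$ and finitely many elements of $A$.
\item Excise $\omega$ by some $c$.
\item Use strict comparison to fit the trace-null $e$ under $c$ cut down by the part of $f$ near $1$. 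Its traces are bounded below, since $\tau(f^m)\leq K\tau(f^mc)$ by simplicity of $A$ and centrality of $f$.
\item Assemble the pieces, using centrality of $e$, into $s\in A_\omega$ with $fs=s$ and $s^*as=ae$ for all $a\in A$.
\end{itemize}
Then $s^*s=e$, and centrality comes for free:
\[
(sa-as)^*(sa-as)=a^*ea-a^*ae-a^*ea+a^*ae=0.
\]
This is Matui--Sato's excision in small central sequences. Either carry it out along these lines, or cite their theorem that strict comparison and nuclearity give (SI), as the paper effectively does.
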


The uniform McDuff property for $A$ means that $\completion{A}{T(A)}$ absorbs $\mathcal R$ tensorially, or equivalently that $M_2$ embeds into the uniform tracial central sequence algebra $A^\omega\cap A'$. We will prove the following, from which Theorems \ref{Intro:Structure} and \ref{Intro:pure-implies-Zstable} follow using the Matui--Sato $\mathcal Z$-stability theorem.

\begin{theorem}\label{Thm:McDuff1}
    Let $A$ be a separable and nuclear $C^*$-algebra such that $T(A)$ is compact and non-empty.  Suppose that $A$ has strict comparison with respect to $T(A)$ and an almost divisible strictly positive element.  Then its tracial completion $(\completion{A}{T(A)},T(A))$ is McDuff.
\end{theorem}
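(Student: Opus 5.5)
We will prove Theorem~\ref{Thm:McDuff1} by passing through the regularity theory of the factorial tracially complete $C^*$-algebra $(\M,X)\coloneqq(\completion{A}{T(A)},T(A))$, following the outline in the introduction.

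The first step is to extract two regularity properties of $(\M,X)$ from the hypotheses on $A$. The first is \emph{classification of projections}: Murray--von Neumann classes of projections in $\M$ are determined by, and in bijection with, continuous affine functions $X\to[0,1]$. The second is that the uniform tracial ultrapower $\M^\omega$ has real rank zero. For the latter, we work in the trace-kernel extension $0\to J_A\to A_\omega\to A^\omega\to 0$ and use the structural results on $A^\omega$ developed for the $K$-stability project. Strict comparison lets us replace a positive contraction in $A^\omega$ by an element whose spectral projections in $\pi_\tau(A)''$ can be approximated uniformly in trace. Almost divisibility of the strictly positive element, together with Thiel's result that all ranks occur in the presence of comparison, lets us produce positive elements with prescribed continuous rank functions. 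Reindexing (Kirchberg's $\eps$-test) then upgrades the approximate projections to genuine projections in $\M^\omega$, and we aim to deduce real rank zero of $\M^\omega$ from this. Classification of projections should follow similarly. Existence comes from lifting rank functions and functional calculus in $\M^\omega$. Comparison of projections with equal traces in $\M$ comes from strict comparison applied to cut-downs, plus a maximality and exhaustion argument in $\M$ to remove the strictness.

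The second step is a local quantisation result in the spirit of Popa. We want to show that a factorial amenable tracially complete $C^*$-algebra with these two regularity properties is hyperfinite in the tracially complete sense, i.e.\ locally $\|\cdot\|_{2,X}$-approximated by finite-dimensional subalgebras. Amenability of $\M$ comes from nuclearity of $A$; by Connes, each fibre $\pi_\tau(\M)''$ for $\tau\in\partial_eX$ is hyperfinite.

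The third step carries out the local quantisation itself. Given a finite set $F\subseteq\M$ and $\eps>0$, we take a finite-dimensional subalgebra $D$ which approximately contains $F$ at each extremal trace. Using the $\M^\omega$ real rank zero property, we choose projections and partial isometries implementing matrix units. We glue these over $X$ using (weak) complemented partitions of unity, which the paper's framework provides for factorial amenable tracially complete algebras. Classification of projections lets us adjust the units of the glued matrix blocks so that the traces match exactly, making the matrix units genuine rather than approximate.

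We then use a maximality argument, as in the ChatGPT $W^*$-bundle lemma: choose a maximal family of orthogonal "good" corners and show that the complement has trace zero uniformly. This yields hyperfiniteness, hence $\M\cong\M\bar\otimes\mathcal R$ in the tracially complete sense, which is the McDuff property. Equivalently, we obtain a unital embedding $M_2\to\M^\omega\cap\M'$ by placing the matrix units in the central sequence algebra via reindexing.

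We expect the main obstacle to be the gluing step. Local matrix-unit systems chosen near different traces must be patched across a general (non-Bauer) simplex while keeping exact relations. The plan there is to make each local piece exact using classification of projections first, then lift the partition of unity from $\M^\omega$ to genuine projections in $\M$ by a central divisibility and lifting argument.
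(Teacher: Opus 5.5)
Your overall architecture (regularity of $\M^\omega$ and $\M$, local quantisation via weak CPoU, Popa's maximality argument, then hyperfinite $\Rightarrow$ McDuff) matches the paper's. However, the step that carries the weight is missing, and the substitute you propose is circular.

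Weak CPoU only gives positive contractions $e_1,\dots,e_N\in\M^\omega\cap S'$ with $\sum_n e_n=1$ and $\tau(a_ne_n)\le\delta\tau(e_n)$. These are not orthogonal, so local matrix systems cannot be glued exactly. Replacing the $e_n$ by orthogonal projections with the same property (your ``lifting the partition of unity to genuine projections'') is exactly CPoU. For amenable II$_1$ factorial algebras, CPoU is equivalent to the McDuff property you are trying to prove (Corollary~\ref{TC:Structure:MainBody}).

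The missing idea is a \emph{small} local approximation (Theorem~\ref{SmallLocalApproximations}). This is a finite-dimensional $G\subseteq\M$ whose unit has small \emph{constant} trace $\gamma$, with $\|x_\ell-(1-1_G)x_\ell(1-1_G)-y_\ell\|_{2,X}\le\eps\|1_G\|_{2,X}$. The construction goes as follows.
\begin{enumerate}
\item From each $e_n$ one takes only a $\gamma$-fraction: a projection $z\in\M^\omega\cap F_{\tau_n}'$ with $\gamma\tau(e_n)-\gamma\beta\le\tau(z)\le\gamma\tau(e_n)$.
\item This $z$ is placed under a `spectral projection' $q$ of $b=E(a_n)+C_{F_{\tau_n}}E(p)$. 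Such $q$ exists by real rank zero of $\M^\omega\cap F_{\tau_n}'$ (Corollary~\ref{Cor:FakeSpectralProjection}).
\item This Namioka-type step gives $zbz\le\delta z$. Hence $\tau(a_nz)\le\delta\tau(z)$, so the errors are proportional to $\|z\|_{2,\tau}$. It also gives $zpz\le\delta z$, i.e.\ near-orthogonality to earlier pieces, which is then removed by polar decomposition.
\item The weak CPoU inequality gives $\tau(q)\ge\frac14(\tau(e_n)-\beta)$. Riesz interpolation in $\Aff(X^\omega)$ together with classification of projections in $\M^\omega$ then produce $z$.
\item A padding projection makes $\tau(1_G)=\gamma$ exactly.
\end{enumerate}
Constant trace is what makes orthogonal families $\|\cdot\|_{2,X}$-summable and lets the corner $(1-r)\M(1-r)$, $r=\sum_i 1_{G_i}$, inherit the hypotheses. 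Proportional error keeps the accumulated error below $\eps^2$. Your ``good corners'' specify neither.

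Two smaller points in this step. Real rank zero is not what produces the matrix units; these are fibrewise subalgebras moved into $\M$ (Lemma~\ref{Lem:LiftMatrix}). You also need approximations at \emph{all} $\tau\in X$ (every fibre is hyperfinite), not just extremal ones. The function $\tau\mapsto\min_n\tau(a_n)$ is concave, so control on $\partial_eX$ gives nothing on $X$ when $X$ is not Bauer.

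Your Step 1 also fails as written. Strict comparison is available in $A$ and in $\M^\omega\cong A^\omega$, but not a priori in $\M$; it is an output of the regularity theorem. So ``strict comparison applied to cut-downs'' in $\M$ is unavailable. An exhaustion argument in $\M$ also breaks down, because increasing projections need not converge in $\|\cdot\|_{2,X}$.

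The real obstruction is that unitaries in $\M^\omega$ are not known to lift to $\M$. This blocks both transferring classification of projections from $\M^\omega$ to $\M$ and lifting the constant-trace $G\subseteq\M^\omega$ back to $\M$, which you also need. The paper handles it inside intertwining arguments (Theorem~\ref{thm:classif-proj}, Proposition~\ref{Prop:ClassifyToM}):
\begin{itemize}
\item stable rank one of $\M^\omega$ is proved from real rank zero plus cancellation, via Blackadar--Handelman;
\item L.~Brown's theorem then lets one choose conjugating unitaries in $U_0(\M^\omega)$, and these lift;
\item Rieffel's $K_1$-bijectivity is used to correct $K_1$-classes on small constant-trace corners before lifting.
\end{itemize}

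Finally, Thiel's theorem requires simplicity, which is not assumed here, and it is not needed. Almost divisibility of the strictly positive element gives unital copies of every $M_n$ in $\M^\omega$, via order zero maps and Kirchberg's $\eps$-test. The projectionisation trick in the commutant of such a copy then realises every continuous affine $f\colon X^\omega\to[0,1]$ as the trace of a projection. Real rank zero of $\M^\omega$ follows by building increasing projections in hereditary subalgebras using strict comparison.
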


\subsection{Regular amenable tracially complete \cstar-algebras} In our opinion, there are two major advantages of working in the framework of tracially complete $C^*$-algebras and tracial completions, rather than trying to keep everything at the $C^*$-algebraic level. Firstly doing so avoids the need for quite elaborate functional calculus arguments which have the effect of replicating approximate versions of behaviour in the tracial completion in the original $C^*$-algebra. Arguably more important though is how this allows arguments to be conceptually organised using properties of the tracial completion.  Indeed, there are complete classification theorems for separable (in the uniform trace norm) factorial amenable\footnote{This can either be described by asking for each GNS representation associated to a designated trace to generate an injective von Neumann algebra, or by means of the completely positive approximation property in the $\|\cdot\|_{2,X}$-norm; see \cite[Section 4]{carrion2023tracially}.} McDuff tracially complete $C^*$-algebras. In the $W^*$-bundle framework, this is Ozawa's trivialisation theorem (\cite[Theorem 15]{Ozawa}), and in full generality it is \cite[Theorem C]{carrion2023tracially}.  These are complemented by structural results: the equivalence of property $\Gamma$, the McDuff property, and hyperfiniteness in both cases (again covered by Ozawa's trivialisation theorem for $W^*$-bundles, and \cite[Theorem B]{carrion2023tracially} in general). Until very recently it was unknown whether these conditions were automatic. However, Toms' recent announcement (\cite{Toms:Gamma}) of a non-elementary simple separable unital nuclear $C^*$-algebra which fails to have uniform property $\Gamma$ (i.e.\ its uniform tracial completion does not have $\Gamma$) shows that, in the tracially complete setting, just as with $C^*$-algebra classification, this regularity hypothesis is needed (though as Toms' example has a complex trace space it is still open whether regularity is genuinely needed in the $W^*$-bundle framework).  Accordingly, the remaining challenge (both for the structure theorem for simple nuclear $C^*$-algebras, and for its tracially complete counterpart) is to find a characterisation of regularity for factorial amenable tracially complete $C^*$-algebras in much more elementary terms. 

Known results for tracial completions of $\mathcal Z$-stable $C^*$-algebras guided our way (e.g.\ from \cite{carrion2023tracially,E:JLMS,ET:RR0}), by pointing to useful properties of these tracial completions that could be aimed for as intermediary lemmas.
Firstly, regular factorial amenable tracially complete $C^*$-algebras have well-behaved classes of projections.\footnote{It is open whether every II$_1$ factorial $W^*$-bundle has a non-trivial projection.} Indeed in a tracially complete $C^*$-algebra $(\mathcal M,X)$ with property $\Gamma$,\footnote{The real condition underpinning this result is called complemented partitions of unity (CPoU), which are obtained from property $\Gamma$; see \cite[Theorem 1.4]{carrion2023tracially}.} projections are \emph{classified by traces} in the sense that the following tracially complete analogues of Conditions \ref{it:IIone-comp} and \ref{it:IIone-exist} for projections in II$_1$ factors hold (see \cite[Theorems 7.17 and 7.18]{carrion2023tracially}).
\begin{enumerate}
    \item[1.] \emph{Comparison of projections}: if $p,q\in \mathcal M$ are projections, then $p\precsim q$ (Murray--von Neumann subequivalence) if and only if $\tau(p)\leq\tau(q)$ for all $\tau\in X$.
    \item[2.] \emph{Existence of projections}: given a continuous affine function $f\colon X\to [0,1]$, there exists a projection $p\in\mathcal M$ with $\tau(p)=f(\tau)$ for all $\tau\in X$.
\end{enumerate}

These conditions are very much akin to pureness, but to justify focusing on projections, rather than considering all positive elements through a Cuntz semigroup-like object, we also need to know there are enough projections.  This was recently achieved by SE and AT who showed that tracially complete $C^*$-algebras with property $\Gamma$ (again, the real condition underlying this is CPoU) have real rank zero (\cite[Theorem A]{ET:RR0}).  Forthcoming work of Jekel, Ryder and SW will obtain the combination of real rank zero and classification of projections for many $W^*$-bundles without property $\Gamma$ (\cite{JRW:inPrep}) and explores the consequences of these two conditions in the $W^*$-bundle setting.  All results of \cite{JRW:inPrep} were completed before the results of this paper. 

These developments collectively led to the belief that the combination of real rank zero and the classification of projections should provide an appropriate elementary form of regularity for tracially complete $C^*$-algebras.  We justify this viewpoint through the following two theorems, which are the main technical results of the paper. Firstly we prove the equivalence of the various potential forms of regularity for factorial tracially complete $C^*$-algebras $(\M,X)$ and their ultrapowers $(\M^\omega,X^\omega)$.

We first introduce some terminology, which can be thought of as a tracially complete version of Cuntz-subequivalence, strict comparison, rank realisation for tracially complete $C^*$-algebras.

\begin{definition}\label{def:traciallyPure}
    Let $(\M,X)$ be a tracially complete $C^*$-algebra. 
    \begin{enumerate}
        \item \label{tt:traciallypure-1} Let $a,b\in \M_+$. Say that $a$ is \emph{$(2,X)$-Cuntz subequivalent to $b$}, written $a\precsim_X b$, if there exists a sequence $(v_m)\in\M$ with $\|v_m^*bv_m-a\|_{2,X}\to 0$.  
        \item \label{tt:traciallypure-2} Say that $(\M,X)$ has \emph{strict $(2,X)$-comparison} if for all $n\in\mathbb N$ and $a,b\in M_n(\M)_+$ with $d_\tau(a)<d_\tau(b)$ for all $\tau\in X$,\footnote{Each $\tau \in X$ is viewed as a trace on $M_n(\M)$ with trace $n$.} we have $a\precsim_{X}b$ in $M_n(\M)$.
        \item \label{tt:traciallypure-3}Say that $(\M,X)$ has \emph{approximately constant rank functions} if for each $\lambda>0$ and $\epsilon>0$ there exists $n\in\mathbb N$ and a positive element $a\in M_n(\M)_+$ such that $\sup_{\tau\in X}|d_\tau(a)-\lambda|<\epsilon$.
    \end{enumerate}
\end{definition}

The combination of \ref{tt:traciallypure-2} and \ref{tt:traciallypure-3} can be thought of as a version of pureness of tracially complete $C^*$-algebras. We show below that it implies much stronger notions of regularity, including that $\mathcal M$ is pure as a $C^*$-algebra.

\begin{theorem}[The regularity theorem for factorial tracially complete $C^*$-algebras]\label{PurenessThm} Let $(\M,X)$ be a II$_1$ factorial tracially complete $C^*$-algebra.  Then the following are equivalent:
\begin{enumerate}
    \item \label{it:Pureness.1} $(\M,X)$ has strict $(2, X)$-comparison and has approximately constant rank functions.
    \item \label{it:Pureness.3}  $\mathcal M$ has strict comparison with respect to $X$ and $\mathcal M$ has divisible unit;
    \item \label{it:Pureness.4}  $(\M,X)$ has classification of projections and $\mathcal M$ has real rank zero.
\end{enumerate}
In this case, $T(\mathcal M) = X$ and $\mathcal M$ is pure and has stable rank one.
\end{theorem}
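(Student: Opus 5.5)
I would prove the cycle (iii)$\Rightarrow$(ii)$\Rightarrow$(i)$\Rightarrow$(iii), and extract the final assertions along the way.

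\textbf{(iii)$\Rightarrow$(ii).} Assume real rank zero and classification of projections. Given $a,b\in M_n(\M)_+$ with $d_\tau(a)<d_\tau(b)$ for all $\tau\in X$, I use real rank zero of $M_n(\M)$ to approximate $a$ in norm from below by a positive element of finite spectrum. Its spectral projections are dominated by the support of $a$. Inside the hereditary subalgebra of $b$, which again has real rank zero, I choose projections whose traces increase to $d_\tau(b)$. By Dini's theorem on the compact set $X$, lower semicontinuity of $d_\cdot(b)$ and continuity of the traces of projections give a single projection $q\in\overline{bM_n(\M)b}$ with $\tau(q)>\tau(p)$ uniformly, where $p$ is a spectral projection of $a$ with $(a-\epsilon)_+\precsim p$. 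Comparison of projections then gives $p\precsim q\precsim b$, hence $(a-\epsilon)_+\precsim b$ for all $\epsilon$, i.e.\ $a\precsim b$. The first point to check is that the relevant traces are only those in $X$; this is also where I would show $T(\M)=X$. Any trace on $\M$ is determined by its values on projections (real rank zero), and existence of projections with prescribed affine trace functions on $X$ forces agreement with a point of $X$. Divisibility of the unit follows by taking projections with trace $1/k$ and comparing.

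\textbf{(ii)$\Rightarrow$(i).} This direction is easy. $\precsim$ implies $\precsim_X$, so strict comparison gives strict $(2,X)$-comparison. Dividing the unit yields elements of rank approximately $1/k$, and taking direct sums gives approximately constant rank functions.

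\textbf{(i)$\Rightarrow$(iii).} This is the main obstacle. For real rank zero, I would pass to the ultrapower $(\M^\omega,X^\omega)$. There the $\|\cdot\|_{2,X}$-approximate subequivalences become exact, so strict $(2,X)$-comparison gives genuine Murray--von Neumann-type comparison of supports in the reindexed sense. I would then follow the spirit of \cite{ET:RR0}: for self-adjoint $h$, use approximately constant rank functions and comparison to build spectral cutdowns that are projections up to $\|\cdot\|_{2,X}$-small error. A key technical ingredient is replacing CPoU by a weak CPoU-type gluing obtained from comparison. I would then lift real rank zero from $\M^\omega$ back to $\M$ by a reindexing or maximality argument, using that the unit ball is $\|\cdot\|_{2,X}$-complete. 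If that lift fails, I would expect the paper to route through real rank zero of $\M^\omega$ only, as the AI statement suggests.

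Classification of projections proceeds as follows. Comparison of projections comes from $d_\tau(p)=\tau(p)$, strict comparison, and a halving trick to handle the non-strict inequality: write $q$ as the sum of a subprojection with small trace and its complement, using real rank zero. Existence of projections with a prescribed continuous affine $f$ I would build in stages. First approximate $f$ uniformly by $\sum_i \lambda_i$-multiples via approximately constant rank elements. Convert these to projections using real rank zero, and correct successive errors with an inductive sum of orthogonal projections of small trace, converging in $\|\cdot\|_{2,X}$.

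\textbf{Final assertions.} Pureness of $\M$ follows from strict comparison together with almost divisibility, which is inherited from projections of arbitrary trace. Stable rank one follows from real rank zero, comparison, and Lin's stable rank one theorem for pure algebras; alternatively, it follows from the standard argument that a real rank zero algebra with cancellation of projections has stable rank one.
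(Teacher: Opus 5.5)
Your directions (iii)$\Rightarrow$(ii)$\Rightarrow$(i) are essentially the paper's. One caveat: (iii) only gives comparison of projections in $\M$, but you apply it in $M_n(\M)$. That step needs the Riesz decomposition argument of Lemma~\ref{lem:RR0StableClassificationOfProjections}.

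The genuine gaps are in (i)$\Rightarrow$(iii), and the first is classification of projections in $\M$. Under (i) you only have $\precsim_X$ in $\M$. Exact subequivalence is available only in $\M^\omega$, and even there it needs two things you do not supply:
\begin{enumerate}
\item the norm control of Lemma~\ref{lem:controlled-subequiv1}, to pass bounded sequences to the ultrapower;
\item exact divisibility of $1_{\M^\omega}$, which must be extracted from approximately constant rank functions.
\end{enumerate}
Your halving trick cannot give an exact partial isometry in $\M$ from a non-strict inequality $\tau(p)\leq\tau(q)$. At traces with $\tau(p)=\tau(q)=0$, no subprojection of $p$ has strictly smaller trace than $q$, so strict comparison never applies. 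Any iterative construction in $\M$ must also be made $\|\cdot\|_{2,X}$-Cauchy. That requires conjugating unitaries in $\M^\omega$ that are both $\|\cdot\|_{2,X}$-close to $1$ and liftable to unitaries of $\M$. Whether unitaries of $\M^\omega$ lift is exactly the unknown point.

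The paper's fix is non-stable $K$-theoretic:
\begin{enumerate}
\item $\M^\omega$ has stable rank one, from real rank zero plus cancellation.
\item By Brown's theorem, conjugating unitaries can be taken in $U_0(\M^\omega)$, so they lift.
\item For uniqueness, the $K_1$-class of a close-to-identity unitary is corrected by a unitary in a small constant-trace corner, using Rieffel's $K_1$-bijectivity (Lemma~\ref{lem:proj-unique-induction}). This is iterated over an increasing sequence of constant-trace projections.
\end{enumerate}
Your existence argument also fails as stated: combinations of approximately constant-rank elements only approximate constant functions. The paper realises a general $f\in\Aff(X)$ in $\M^\omega$ by Cuntz--Pedersen plus tracial projectionisation in the commutant of a unital $M_n\subseteq\M^\omega$. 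It then intertwines into $\M$ using approximate uniqueness.

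The second gap is real rank zero of $\M$. This is a norm condition and does not descend from $\M^\omega$ to $\M$ by reindexing. Your fallback of proving it only for $\M^\omega$ does not give (iii). A ``weak CPoU-type gluing'' is not enough either: weak CPoU holds in every factorial tracially complete $C^*$-algebra, whereas the Evington--Tikuisis argument \cite{ET:RR0} needs genuine CPoU, which $(\M,X)$ need not have.

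The missing idea is to find CPoU in relative commutants:
\begin{enumerate}
\item For self-adjoint $b\in\M^\omega$, real rank zero of $\M^\omega$ and classification of matrix subalgebras in corners give a unital embedding $\iota\colon\mathcal R\to\M^\omega\cap\{b\}'$ (Lemma~\ref{lem:central-divide-interval}).
\item $\M^\omega\cap\iota(\mathcal R)'$ inherits strict comparison, so its trace space is $X^\omega$. It is therefore factorial and McDuff, hence has CPoU (Proposition~\ref{prop:commutant-cpou}).
\item Running the approximate-projection step of \cite{ET:RR0} there gives Lemma~\ref{lem:ETRR02.2} in $\M$. This is the only place CPoU enters their proof, and the rest of it then yields real rank zero of $\M$ (Theorem~\ref{thm:rr0}).
\end{enumerate}

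Two corrections to your final assertions. Lin's stable rank one theorem needs simplicity, which $\M$ lacks, so use your Blackadar--Handelman alternative. Also, pureness of the non-simple algebra $\M$ concerns all of $\Cu(\M)$ and does not follow directly from strict comparison with respect to $X$ plus divisibility. The paper computes $V(\M)\cong\Aff(X)_+$ and applies Proposition~\ref{prop:ETpure}.
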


We call a tracially complete $C^*$-algebra \emph{regular} if the conditions of Theorem~\ref{PurenessThm} hold.  Along the way to proving the regularity theorem, we will show that a II$_1$ factorial tracially complete $C^*$-algebra $(\mathcal M, X)$ is regular if and only if its ultrapower $(\mathcal M^\omega, X^\omega)$ is regular (Corollary~\ref{cor:regular}).
We will describe these conditions, and the route through this theorem in the next subsection, but henceforth we call all such factorial tracially complete $C^*$-algebras \emph{regular}.

Importantly but unsurprisingly tracial completions of simple unital pure $C^*$-algebras (whose quasitraces are traces) are regular, and in fact from  looking at (iii) one only needs comparison and almost divisibility of the unit.

\begin{proposition}\label{Pure:TracialCompletion}
    Let $A$ be a $C^*$-algebra such that $T(A)$ is compact and non-empty. Suppose that $A$ has strict comparison by $T(A)$ and the class of a strictly positive element of $A$ is almost divisible (as happens if $A$ is simple unital pure, and all quasitraces are traces). Then the tracial completion $(\completion{A}{T(A)},T(A))$ satisfies Theorem \ref{PurenessThm}(iii) and so is regular.  If $A$ is nuclear, then $(\completion{A}{T(A)},T(A))$ is amenable.
\end{proposition}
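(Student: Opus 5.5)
We verify condition \ref{it:Pureness.1} of Theorem~\ref{PurenessThm} for $(\M,X)\coloneqq(\completion{A}{T(A)},T(A))$, since of the three conditions it is the one most directly inherited from $A$; Theorem~\ref{PurenessThm} then supplies the projection statement and regularity. Factoriality of tracial completions is standard \cite{carrion2023tracially}. For every $a\in M_n(A)_+$ and $\tau\in X$, the value $d_\tau(a)$ is the same whether computed in $A$ or in $\M$, because $\pi_\tau(A)''=\pi_\tau(\M)''$. Almost divisibility of a strictly positive element $e$ rules out traces with atoms in their GNS representations, which gives the II$_1$ condition. For amenability when $A$ is nuclear, each $\pi_\tau(A)''$ is injective, and this is one of the equivalent definitions of amenability recalled in \cite[Section 4]{carrion2023tracially}.

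\emph{Approximately constant rank functions.} Since $e$ is strictly positive, $d_\tau(e)=1$ for all $\tau\in X$. The functions $\tau\mapsto d_\tau((e-\epsilon)_+)$ are lower semicontinuous and increase to $1$ as $\epsilon\searrow 0$. Each is bounded below by a continuous function $\tau(f_\epsilon(e))$ that also increases to $1$, so Dini's theorem on the compact set $X$ makes the convergence uniform. Almost divisibility gives $y$ with $y^{\oplus n}\precsim e$ and $(e-\epsilon)_+\precsim y^{\oplus(n+1)}$, hence
\[
\frac{d_\tau((e-\epsilon)_+)}{n+1}\le d_\tau(y)\le \frac1n
\]
uniformly in $\tau$. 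Taking direct sums of $k$ copies of $y$ then approximates any $\lambda>0$ uniformly.

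\emph{Strict $(2,X)$-comparison.} This is the main step and the expected obstacle, because $d_\tau$ is not continuous in $\|\cdot\|_{2,X}$ and the hypothesis $d_\tau(a)<d_\tau(b)$ carries no uniform gap. Let $a,b\in M_n(\M)_+$ with $d_\tau(a)<d_\tau(b)$ for all $\tau\in X$, and fix $\delta>0$. Since $\|a-a^{1/2}h_\delta(a)a^{1/2}\|\le\delta$ in norm for a suitable cut-off $h_\delta$ supported on $[\delta/2,\infty)$, and $\precsim_X$ is closed under $\|\cdot\|_{2,X}$-limits, it suffices to show $g(a)\precsim_X b$ for a continuous $g$ vanishing near $0$. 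Choose $g\le g'\le 1$ with $g'$ continuous, equal to $1$ on the support of $g$, and vanishing near $0$. The function $\tau\mapsto\tau(g'(a))$ is continuous and dominates $d_\tau(g(a))$, and it is strictly below the lower semicontinuous function $d_\tau(b)$. Writing $d_\tau(b)$ as the increasing limit of $\tau(f_\eta(b))$ and using compactness of $X$, we obtain $\eta,\gamma>0$ with
\[
\tau(f_\eta(b))>\tau(g'(a))+\gamma \quad\text{for all } \tau\in X .
\]

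Now use Kaplansky density for tracial completions to pick $a',b'\in M_n(A)_+$, norm-bounded and $\|\cdot\|_{2,X}$-close to $a,b$. Then $\tau(k(a'))$ and $\tau(k(b'))$ are uniformly close to $\tau(k(a))$ and $\tau(k(b))$ for the finitely many continuous functions $k$ involved. With suitable cut-off functions this yields
\[
d_\tau(g(a'))\le\tau(g'(a'))+\tfrac{\gamma}{3}<\tau(f_\eta(b'))\le d_\tau(b')
\]
for all $\tau\in T(A)$. Strict comparison of $A$ gives $g(a')\precsim b'$ in $M_n(A)$, realised by some $v$ with $v^*b'v$ norm-close to $g(a')$, and the same $v$ works up to small $\|\cdot\|_{2,X}$-error in $M_n(\M)$. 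The remaining technical point is to control $\|v\|$ so that replacing $b'$ by $b$ costs little in $\|\cdot\|_{2,X}$. We handle this by first passing to $h(b')$ for a function $h$ supported away from $0$, which gives a uniform bound on $v$, and then letting the approximations tend to zero. This gives $g(a)\precsim_X b$, and hence condition \ref{it:Pureness.1}.
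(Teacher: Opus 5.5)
Your proposal is correct in substance, but it follows a different route from the paper's proof.

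\textbf{The paper's route.} The paper never checks condition (i) for $\completion{A}{T(A)}$. It passes straight to the tracial ultrapower. There, Proposition~\ref{prop:sc_divisible_tracial_ultra} shows that $A^\omega\cong\M^\omega$ has strict comparison with respect to $\overline{T_\omega(A)}$, inherited from the norm ultrapower. The same proposition gives an exactly divisible unit: order zero maps $M_n\to A$ come from almost divisibility of the strictly positive element, and Kirchberg's epsilon test upgrades them. These are exactly the hypotheses of Theorems~\ref{thm:classif-proj} and~\ref{thm:rr0}, which give classification of projections and real rank zero for $\M$ (cf.\ Corollary~\ref{cor:regular}). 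Amenability is quoted from the monograph.

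\textbf{Your route.} You stay inside $\M$. You get strict $(2,X)$-comparison from strict comparison of $A$ by Kaplansky density; this is essentially the proposition in Section~\ref{Sect8} that $A$ has strict $(2,X)$-comparison if and only if $\completion{A}{X}$ does. You get approximately constant rank functions by a Dini argument. You then invoke (i)$\Rightarrow$(iii) of Theorem~\ref{PurenessThm}. This is not circular, since that implication does not use the present proposition.

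\textbf{Comparison.} Your route makes explicit the intrinsic fact that tracial completions satisfy the ``tracially pure'' condition of Definition~\ref{def:traciallyPure}. Its cost is $2$-norm bookkeeping and reliance on the full regularity theorem. Moreover, the proof of (i)$\Rightarrow$(iii) converts your condition back into norm strict comparison and a divisible unit for $\M^\omega$, via Theorem~\ref{Thm:StrictComparisonEquivalences} and Proposition~\ref{prop:sc_divisible_tracial_ultra}. That is precisely the paper's starting point, so your argument relocates the ultrapower step rather than avoiding it.

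\textbf{Details to tighten.}
\begin{enumerate}
\item The divisor $y$ lives in $\Cu(A)$, not in a matrix algebra. Take a representative $c\in(A\otimes\K)_+$ and replace it by an element of some $M_m(A)_+$ sandwiched in Cuntz order between $(c-\eta)_+$ and $c$.
\item $\tau(f_\epsilon(e))$ is not a lower bound for $d_\tau((e-\epsilon)_+)$. Either use a function supported in $(\epsilon,\infty)$, or apply Dini directly to lower semicontinuous functions increasing to a continuous limit.
\item In the norm-control step, the cut-off $h$ must be supported in $[\eta',\infty)$ and satisfy $\tau(h(b))>\tau(g'(a))+\gamma$ on $X$. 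It must be fixed from $a,b$ before $a',b'$ are chosen, so that the comparing element has norm bounded in terms of $\eta'$ alone. Your $f_\eta$ is not supported away from $0$, so it does not serve this purpose.
\end{enumerate}
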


Throughout the body of the paper, our main regularity assumption will be that $(\mathcal M^\omega, X^\omega)$ satisfies \ref{it:Pureness.3} of the regularity theorem, as this is most readily obtained for tracial completions of simple unital pure $C^*$-algebras (Proposition~\ref{prop:sc_divisible_tracial_ultra}) and leads to the properties needed for Theorem~\ref{Thm:McDuff1}.

As a counterpart to the regularity theorem, we upgrade the classification theorem for amenable tracially complete $C^*$-algebras in \cite[Theorem C]{carrion2023tracially} to a structure theorem.
The equivalence of (i), (ii) and (ii$'$) is \cite[Theorem B]{carrion2023tracially}, and it is straightforward to get from these properties to (iii). The novel implication here is (iii)$\Rightarrow$(ii), which we outline in Section \ref{S:Outline}, showing how it relates to Connes' work.

\begin{theorem}[Structure theorem for factorial amenable II$_1$ tracially complete $C^*$-algebras]\label{TC:Structure}
    Let $(\mathcal M,X)$ be type II$_1$ factorial amenable tracially complete $C^*$-algebra. The following are equivalent:
    \begin{enumerate}
        \item $(\mathcal M,X)$ is hyperfinite;
        \item $(\mathcal M,X)$ is McDuff;
        \item[(ii$'$)] $(\mathcal M,X)$ has property $\Gamma$;
        \item $(\mathcal M,X)$ is regular.
    \end{enumerate}
\end{theorem}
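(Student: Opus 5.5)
The equivalences (i)$\Leftrightarrow$(ii)$\Leftrightarrow$(ii$'$) are \cite[Theorem B]{carrion2023tracially}, so I take them as known. The implication (ii$'$)$\Rightarrow$(iii) also follows from earlier work. Property $\Gamma$ gives CPoU by \cite[Theorem 1.4]{carrion2023tracially}. CPoU gives classification of projections by \cite[Theorems 7.17 and 7.18]{carrion2023tracially}, and real rank zero of $\mathcal M$ by \cite[Theorem A]{ET:RR0}. This is exactly condition (iii) of Theorem~\ref{PurenessThm}, so $(\M,X)$ is regular. The substance is therefore (iii)$\Rightarrow$(i), i.e.\ regular implies hyperfinite. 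I would prove it by a local quantisation argument in the spirit of Popa.

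First I pass to the ultrapower. By Corollary~\ref{cor:regular}, regularity of $(\M,X)$ gives regularity of $(\M^\omega,X^\omega)$. Hence $\M^\omega$ has real rank zero, projections are classified by traces in $X^\omega$, and strict comparison and divisible unit hold there. Hyperfiniteness of $(\M,X)$ amounts to the following: for finite $F\subset\M$ and $\epsilon>0$, there is a finite-dimensional subalgebra $D\subseteq\M$ with $\operatorname{dist}_{\|\cdot\|_{2,X}}(x,D)<\epsilon$ for $x\in F$. By a standard reindexing argument, it suffices to find such $D$ inside $\M^\omega$ approximating $F\subset\M\subset\M^\omega$ exactly in $\|\cdot\|_{2,X^\omega}$. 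So I work in the ultrapower throughout, where the projections needed exist exactly.

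The local step is a small projections lemma, a tracially complete version of Connes' \cite[Theorem 2.1]{Connes:Annals} and Popa's local quantisation. Given $F$ and $\epsilon$, I want a non-zero projection $p\in\M^\omega$ and a unital finite-dimensional $D_p\subseteq p\M^\omega p$ with two properties. First, $pxp$ lies within $\epsilon\|p\|_{2,\tau}$ of $D_p$ in $\|\cdot\|_{2,\tau}$ for every $\tau\in X^\omega$. Second, $\|[x,p]\|_{2,\tau}\le\epsilon\|p\|_{2,\tau}$ for each such $\tau$.

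To get this I use amenability fibrewise. For each $\tau\in\partial_eX$ the fibre $\pi_\tau(\M)''$ is the hyperfinite II$_1$ factor, where Popa's lemma applies. I then glue finitely many such local solutions across $X$ using the weak CPoU condition. That condition is what I expect regularity in the ultrapower to supply, via real rank zero of $\M^\omega$ and classification of projections. Classification of projections lets me choose the glued $p$ with a prescribed continuous trace function, so the estimate holds \emph{uniformly} in $\tau$ rather than only on average.

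Then I run a maximality argument, as in the AI-produced $W^*$-bundle lemma. I take a maximal family of orthogonal projections $p_i$ with local algebras $D_{p_i}$, which is countable by separability. Classification of projections and comparison control the trace of the leftover projection $1-\sum p_i$. If it is non-zero, I apply the local lemma in the corner $(1-\sum p_i)\M^\omega(1-\sum p_i)$, which is again regular. This contradicts maximality unless the leftover has uniformly small trace. Setting $D=\bigoplus D_{p_i}$ on a finite subfamily, plus a scalar on the small remainder, gives the required approximation, with error controlled by $\sum_i\epsilon\|p_i\|^2_{2,\tau}$.

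I expect the main obstacle to be the uniform local quantisation step. Fibrewise Popa approximations have no reason to be coherent as $\tau$ varies over a non-Bauer simplex $X$. Turning them into a single projection whose errors are bounded by $\epsilon\tau(p)$ for \emph{all} $\tau$ at once is where both real rank zero and exact existence of projections in $\M^\omega$ must be used, together with a lifting argument of the type in Section~\ref{Sec:CentralDivisible}. My first attempt would be to express the commutator and approximation defects as positive elements of $\M^\omega$. I would then use real rank zero to cut down by a spectral projection of their sum, and compare traces using classification of projections.
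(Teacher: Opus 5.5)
Your treatment of (i)$\Leftrightarrow$(ii)$\Leftrightarrow$(ii$'$) and of (ii$'$)$\Rightarrow$(iii) is fine. Going via CPoU to classification of projections and real rank zero of $\M$ itself is a legitimate alternative to the paper, which instead checks strict comparison and a divisible unit in $\M^\omega$. The gap is in (iii)$\Rightarrow$(i), and the most concrete failure is the exhaustion step.

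Your local lemma puts no constraint on $\tau\mapsto\tau(p)$, and you run the maximality argument inside $\M^\omega$. Neither works.
\begin{itemize}
\item For orthogonal projections with arbitrary continuous trace functions, $\sum_i\tau(p_i)$ can converge pointwise but not uniformly. So $\sum_ip_i$ need not converge in $\|\cdot\|_{2,X^\omega}$, and the ``leftover'' $1-\sum_ip_i$ need not exist. Also, no separability is assumed, so countability of the family cannot come from separability either.
\item Even granted a leftover $r\in\M^\omega$, the local lemma cannot be applied in $r\M^\omega r$ to the elements $rx r\in\M^\omega$. Your lemma rests on hyperfiniteness of the fibres of $\M$, and fibres of $\M^\omega$ are typically not hyperfinite (already $\mathcal R^\omega$ is not).
\end{itemize}
The paper's fix is to force the unit of the local algebra to have \emph{constant} trace $\gamma$ (Theorem~\ref{SmallLocalApproximations}). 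Since the glued pieces only have trace approximately $\gamma\tau(e_n)$, this needs an extra filler projection $d$ with $\tau(d)=\gamma-\sum_n\tau(v_nv_n^*)$. The resulting $G\subseteq\M^\omega$ is then lifted to $\M$ with the same constant trace, and Popa's maximality is run in $\M$. There, constant traces make all sums $\|\cdot\|_{2,X}$-convergent, make the families countable, and make $(1-q)\M(1-q)$ again an amenable algebra satisfying the hypotheses.

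That lifting is not a ``standard reindexing'': unitaries in $\M^\omega$ are not known to lift. Proposition~\ref{Prop:ClassifyToM}\ref{it:LiftingToG} corrects the $K_1$-class of the conjugating unitary inside the complementary corner of trace $1-\gamma>0$, using stable rank one of $\M^\omega$. This is a second place where constant trace $\gamma<1$ is indispensable.

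The gluing step is also missing its key device. A spectral projection of the defect $\sum_\ell(x_\ell-y_\ell)^2$ does not commute with the approximating finite-dimensional algebra. So its compression by that projection is not an algebra, and the commutators $[x_\ell,p]$ are not controlled. In the paper the argument runs as follows.
\begin{itemize}
\item Fibrewise finite-dimensional approximations are first transported into $\M$ (Lemma~\ref{Lem:LiftMatrix}, via classification of finite-dimensional subalgebras of $\M$ and Kaplansky density). Only Connes' hyperfiniteness of the fibres is used, not Popa's lemma.
\item This is done over \emph{all} of $X$, not just $\partial_eX$. Weak CPoU needs $\sup_{\tau\in X}\min_n\tau(a_n)<\delta$, and convex open sets covering $\partial_eX$ need not cover $X$.
\item Weak CPoU (Theorem~\ref{Thm:WCPoU}) holds for every factorial tracially complete $C^*$-algebra; it is not supplied by regularity.
\item One then averages $a_n$ and the previously built pieces $p$ into $\M^\omega\cap F_n'$ with the conditional expectation $E$.
\item In that commutant one takes a fake spectral projection $q$ of $E(a_n)+C_{F_n}E(p)$ (Corollary~\ref{Cor:FakeSpectralProjection}).
\item By Riesz interpolation on $X^\omega=T(\M^\omega)$, one picks $z\le q$ in the commutant with trace between $\gamma\tau(e_n)-\gamma\beta$ and $\min\{\tau(q),\gamma\tau(e_n)\}$.
\item Finally $z$ is moved under the earlier pieces with Lemma~\ref{Polar}.
\end{itemize}
Exact commutation of $z$ with $F_n$ is what converts $\tau(a_nz)\le\delta\tau(z)$ into both the commutator and the approximation estimates. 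The lower bound on $\tau(q)$ is exactly where the weak CPoU inequality $\tau(a_ne_n)\le\delta\tau(e_n)$ enters.
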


Once we have all these results in place, Theorem \ref{Thm:McDuff1} is an immediate consequence of Theorem \ref{TC:Structure} and Proposition \ref{Pure:TracialCompletion}.

\subsection{The regularity theorem for factorial tracially complete \cstar-algebras}

In this section we describe some aspects of the proof of Theorem \ref{PurenessThm}.

\subsubsection{Real rank zero of the tracial ultrapower and tracial approximate oscillation} Real rank zero of the ultrapower $(\M^\omega,X^\omega)$ plays an essential role in our argument.  In the last few years, the importance of this condition has become evident through work of both Fu and Lin (\cite{fu2025tracial,LinStableRank,fu2025stablerankrealrank,fu2026divisibilityrealrankzero}) on $C^*$-algebras with tracial approximate oscillation zero, Elliott and Niu (\cite{ElliottNiu.SmallBoundary.1,ElliottNiu.SmallBoundary.2}) on $C^*$-algebraic characterisations of the small boundary property, and Vaccaro's recent work \cite{Vaccaro:arXiv}.

One of the fundamental difficulties in working with positive elements $a$ in a $C^*$-algebra with tracial state space $T(A)$ is that the rank function $\tau\to d_\tau(a)$ is only lower-semicontinuous; the support projection of a positive element will lie in the tracial completion $\M\coloneqq \completion{A}{T(A)}$ when its rank function is continuous. Fu and Lin introduced the notion of tracial approximate oscillation zero in \cite{fu2025tracial}, roughly speaking to mean that positive elements in $A$ can be tracially approximated (inside the hereditary subalgebra they generate) by elements whose rank function is almost continuous; we will not get further in to the specifics of this definition and its various modifications -- they are quite technical. This concept played an important role in Lin's stable rank one theorem (\cite{LinStableRank}). 

 In \cite[Section 6]{fu2025tracial}, Fu and Lin set out that for the class of $C^*$-algebras $A$ they call compact,\footnote{This means that $A$ is equal to its Pedersen ideal (see \cite[Definition 2.8]{fu2025tracial}).} tracial approximate oscillation zero implies that $A^\omega$ is real rank zero (see \cite[Corollary 6.5]{fu2025tracial}). We believe that this could be applied to $A\coloneqq\M$ (under the assumption that $(\M,X)$ has strict comparison with respect to $X$) to get real rank zero for $\M^\omega$. However, Fu and Lin's work does not to our knowledge classify projections in $\M^\omega$ -- a result we need anyway -- and once we have such a classification in place, we are able to obtain real rank zero for $\M^\omega$ more directly. Moreover,  for simple $C^*$-algebras, Fu shows tracial approximate oscillation zero is equivalent to real rank zero of $A^\omega$ (see \cite{fu2026divisibilityrealrankzero}, which also gives other characterisations).
 
 From the viewpoint of emphasising the tracial completion and ultrapower that we described in Section \ref{S2.1}, we feel that the route we have taken from Cuntz semigroup conditions to real rank zero of the tracial ultrapower in this section has the advantage of being conceptual, as well as being able to go beyond the simple case, to our framework of comparison by bounded traces and almost divisible unit.

\subsubsection{Lifting unitaries}\label{SS1.3.3} It is a standard theme in $C^*$-algebra classification that existence and uniqueness results for maps into ultrapowers give rise to classification results for maps into the underlying algebra by means of intertwining arguments.  A subtle point here is that a unitary $u$ in an ultrapower $A_\omega$ of a unital $C^*$-algebra $A$ always lifts to a sequence of unitaries in $A$, and likewise a unitary in an ultrapower $\M^\omega$ of a II$_1$ factor $\M$ lifts to a sequence of unitaries in $\M$. In the framework of tracially complete $C^*$-algebras the lifting corresponding result is not known (as we do not know of a $\|\cdot\|_{2,X}$-stability result for unitaries). Accordingly the corresponding intertwining argument from \cite[Theorem 5.11]{carrion2023tracially} at the tracially complete level has a hypothesis that unitaries lift from reduced products to sequences of unitaries in the underlying algebra.\footnote{For technical reasons that intertwining argument is set up using the sequence algebra $(\M^\infty,X^\infty)$ rather than the ultrapower.}  

For similar reasons, we need certain unitaries to lift in our argument, though we were unable to prove that the do a priori.\footnote{As a consequence of Theorem~\ref{TC:Structure} and a CPoU argument, it turns out that all unitaries lift. Results in \cite{JRW:inPrep} also give techniques for lifting unitaries in ultrapowers of trivial $W^*$-bundles.}
We circumvent this difficulty by correcting the $K$-theory of the unitaries we need to lift by adjusting them on tracially small corners, crucially using K$_1$-injectivity from stable rank one of $\M^\omega$ by Rieffel (\cite{rieffel87}). This part of the argument is reminiscent of how Lin's classification results into tracially AF algebras (\cite{Lin2001,Lin2001classification}), as well as arguments using more general forms of tracial approximations, involve adjusting the $K$-theory in tracially small corners.

\subsubsection{Organisation of the argument}\label{subsec:organisation}

Let $A$ be a $C^*$-algebra whose trace space $T(A)$ is compact and $A$ has strict comparison with respect to $T(A)$ and divisible unit.  Let $(\mathcal M, X) = (\completion{A}{T(A)}, T(A))$ denote the tracial completion of $A$.  Our starting point is to show Theorem~\ref{PurenessThm}\ref{it:Pureness.3} holds with $(\mathcal M^\omega, X^\omega) = (A^\omega, \overline{T_\omega(A)})$ in place of $(\mathcal M, X)$.  Under the conditions on $A$, $(\mathcal M^\omega, X^\omega)$ satisfies \ref{it:Pureness.3} of Theorem~\ref{PurenessThm} by Proposition~\ref{prop:sc_divisible_tracial_ultra}.  For this reason we view condition \ref{it:Pureness.3} for the ultrapower of the main starting point of the work in the regularity theorem for tracially complete $C^*$-algebras.

Let $(\M,X)$ be a factorial II$_1$ tracially complete $C^*$-algebra whose ultrapower satisfies Theorem~\ref{PurenessThm}\ref{it:Pureness.3}. As a consequence of comparison $T(\M^\omega)=X^\omega$ (Proposition~\ref{prop:computing-traces}).  We describe in the steps below, the route first to \ref{it:Pureness.4} for $(\mathcal M^\omega, X^\omega)$ and then to \ref{it:Pureness.3} and \ref{it:Pureness.4} for $(\mathcal M, X)$ itself, as well as the additional ingredients we need for the structure theorem.
\begin{enumerate}[(1)]
    \item Comparison of projections for $\M^\omega$ is almost an immediate consequence of strict comparison (with small constant trace projections coming from the divisible unit being added to create a small gap).\label{Step.1}
    \item Using the projectionisation ideas from \cite{CETWW}, use the divisible unit of $\M^\omega$ show that $(\M^\omega,X^\omega)$ has existence of projections. Note that only the divisible unit is required to realise all continuous ranks on $X^\omega$ by projections in $\M^\omega$.\label{Step.2}
    \item Real rank zero of $\M^\omega$ is obtained by combining the existence of projections with comparison techniques to build increasing sequences of projections inside a hereditary subalgebra generated by a positive contraction $a$ which converge to $a$ in $\|\cdot\|_{2,X^\omega}$. This completes the proof of \ref{it:Pureness.4} for the ultrapower.\label{Step.3}
    \item Since we can apply the previous steps at all matrix levels, the Murray--von Neumann semigroup of $\M^\omega$ is identified with the positive continuous affine functions on $X$, and real rank zero then shows that $\M^\omega$ is pure. Stable rank one also follows from real rank zero and the classification of projections, using a result of Rieffel (\cite{rieffel87}).  This proves the final statement in Theorem~\ref{PurenessThm} for the ultrapower.\label{Step.4}
    \item We obtain  existence of projections in $(\M,X)$ by intertwining the existence and uniqueness of projections in $(\M^\omega,X^\omega)$. The liftability of the unitaries we need here comes from stable rank one of $\M^\omega$, via a theorem of Larry Brown.  \label{Step.5}
    \item Uniqueness of projections in $(\M,X)$ is also obtained by intertwining the results from $(\M^\omega,X^\omega)$ to build a $\|\cdot\|_{2,X}$-Cauchy sequence of partial isometries converging to the required partial isometry. Here we correct the $K$-theory in small corners of the unitaries arising from uniqueness in $\M^\omega$ to ensure liftability as described in Subsection \ref{SS1.3.3}. \label{Step.6}
    \item As an technical ingredient in the structure theorem for II$_1$ factorial amenable tracially complete $C^*$-algebras, we also need to lift certain $^*$-homomorphisms from finite-dimensional $C^*$-algebras into $\M^\omega$ to a sequence of maps into $\M$. Again this is powered by stable rank one as described in Subsection \ref{SS1.3.3}. \label{Step.7}
    \item 
    At this point we can prove the structure theorem (Theorem \ref{TC:Structure}) and obtain all its consequences for $\Z$-stability of simple separable nuclear $C^*$-algebras set out in the introduction. \label{Step.7b}
    \item Given a unital embedding $\iota\colon \mathcal R\to\mathcal M^\omega$, we show the relative commutant $\M^\omega\cap\mathcal \iota(R)'$ has strict comparison with respect to $X^\omega$, and hence $T(\M^\omega\cap\iota(\mathcal R)')=X^\omega$.  As a consequence it is both factorial and McDuff. Accordingly, while in general $\M^\omega$ may not have complemented partitions of unity, its relative commutants $\M^\omega\cap\iota(\mathcal R)'$ do.  In this way, we can perform local-to-global arguments in such relative commutants.  \label{Step.8}
    \item To show $\M$ has real rank zero and reach \ref{it:Pureness.4} (and hence \ref{it:Pureness.3} by a small Riesz decomposition argument), we follow the recent work of SE and AT showing that II$_1$ factorial tracially complete $C^*$-algebras with CPoU have real rank zero. A careful analysis of their work shows that it is sufficient to have CPoU inside  certain self-adjoint elements of $\mathcal M^\omega$. By the previous step, this can be achieved by building a copy of $\mathcal R$ in the commutant of self-adjoint elements in $\mathcal M^\omega$ via real rank zero.\label{Step.9}
    \item Finally,  Section~\ref{Sect8} examines strict $(2,X)$-comparison using standard functional calculus arguments.  In particular, we show that these conditions coincide for $\mathcal M$ and $\mathcal M^\omega$ and moreover agree with operator-norm strict comparision for $\mathcal M^\omega$.  This is used to show the equivalence with the other condition \ref{it:Pureness.1} and the other conditions, both for $\mathcal M$ and $\mathcal M^\omega$.\label{Step.10} 
\end{enumerate}
Steps \ref{Step.1}--\ref{Step.4} are proved in Section~\ref{sec:tracial-ultrapowers}, followed by the $K$-stability theorems in Section \ref{sec:KStable}.  We then come back to Steps \ref{Step.5}--\ref{Step.7} in Section \ref{Sec:CentralDivisible}. Section \ref{S:LocalHyperfinite} proves the structure theorem for II$_1$ factorial amenable tracially complete $C^*$-algebras (Step \ref{Step.7b}), which is followed in Section \ref{S:StructureNuclear} by the $C^*$-algebraic counterparts (Theorems \ref{Intro:Structure} and \ref{Intro:pure-implies-Zstable}).  We conclude the proof of the regularity theorem with Steps \ref{Step.8}, and \ref{Step.9} in Section \ref{S:RegTCAlg}, followed by Step~\ref{Step.10} in Section~\ref{Sect8}.

\subsection{The structure theorem for factorial amenable tracially complete \cstar-algebras}\label{S:SmallMatricesOutline}

Finally, to complete these expository sections, we come back to Connes' proof that injective II$_1$ factors have property $\Gamma$ and how it relates to the implication (iii)$\Rightarrow$(ii) of Theorem \ref{TC:Structure}. We do this in the increasing levels of trace simplex complexity.

\subsubsection{\IIone\ factors: Connes and Popa}

Recall that a \IIone\ factor $\M$ has property $\Gamma$ if for any finite collection $u_1,\dots,u_n$ of unitaries in $\M$ and $\epsilon>0$, there exists a projection $p\in \M$ with $\tau(p)=1/2$ and $\|[u_i,p]\|_2<\epsilon$ for $i=1,\dots,n$ (this characterisation is due to Dixmier in \cite{Dixmier69}).  In his spectral gap characterisation of fullness (\cite[Theorem 2.1]{Connes:Annals}), Connes gives a route to property $\Gamma$ by means of a maximality argument: considering approximately central projections $p$, where the tolerance in the commutation estimates is controlled by the size of $p$. Precisely, let $\M$ be a \IIone\ factor with the property that the $C^*$-algebra generated by $\M$ and $\M'$ on $L^2(\M)$ does not meet the compacts (this is a straightforward consequence of semidiscreteness). In \cite[Theorem 2.1(d)$\Rightarrow$(b) and Lemma 2.4]{Connes:Annals}, Connes shows that given unitaries $u_1,\dots,u_n$ and $\epsilon>0$, there is a non-zero projection $p\in\M$ with
\begin{equation}\label{Connes:SmallProjection}
    \|[p,u_i]\|_2\leq\epsilon\|p\|_2,\quad i=1,\dots,n.
\end{equation}
Moreover, this can also be achieved in all corners of $\M$ (\cite[Lemma 2.5]{Connes:Annals}).  Via a maximality argument, Connes then concludes that there is a trace $1/2$ projection satisfying \eqref{Connes:SmallProjection}, giving property $\Gamma$.

Popa's local quantisation argument for injectivity implies hyperfiniteness (\cite{Popa:InjectiveHyperfinite}) uses a very similar maximality argument at the very last step. In his framework, for a fixed finite subset $\mathcal F$ of an injective \IIone\ factor $\M$ and $\epsilon>0$, Popa shows (\cite[Proposition 2.2]{Popa:InjectiveHyperfinite}) that one can find a matrix subalgebra $F$ of $\M$ with unit $1_F$ such that
\begin{equation}\label{E:Popa:MatrixApprox}
    \|E_F(1_Fx1_F)-(x-(1_{\M}-1_F)x(1_{\M}-1_F))\|_{2}<\epsilon\|1_F\|_2,\quad x\in\mathcal F.
\end{equation}
Again, a maximality argument gives hyperfinitess (see \cite[Section 3]{Popa:InjectiveHyperfinite}): one considers a maximal family $(F_i)_i$ of matrix subalgebras with pairwise othogonal unit, such that \eqref{E:Popa:MatrixApprox} holds for $F\coloneqq\bigoplus_iF_i$. By applying his lemma on a corner, it follows that $1_F=1$, giving rise to the local approximations needed to witness hyperfiniteness.

In both cases, a crucial point is Namioka's trick for replacing an approximately central positive element by an approximately central projection, given by a suitable spectral projection of the given element.

\subsubsection{Small approximately central constant trace projections in \wstar-bundles}\label{RobotSection}

Now let us consider Connes' idea in the framework of $W^*$-bundles. Recall that property $\Gamma$ for $W^*$-bundles is defined by asking for a globally approximately central projection of trace $1/2$. Precisely, a $\|\cdot\|_{2,u}$-separable $W^*$-bundle $\mathcal M$ over the compact metrisable space $K$ has property $\Gamma$ if there exists a projection $p\in\M^\omega\cap \M'$ with $\tau(p)=1/2$ for all $\tau\in K$ (see \cite[Definition 4.3.1]{Evington:PhD}). 

In contrast to the II$_1$ factor setting, in the framework of $W^*$-bundles, increasing sequences (or nets) of projections need not converge in $\|\cdot\|_{2,u}$.  Therefore in order to run a maximality argument, one needs to force convergence. This is cleanly done by working with constant trace projections, as any increasing family of constant trace projections will be $\|\cdot\|_{2,u}$-Cauchy.  

Like for Connes, the main challenge is to produce the small approximately central constant trace projections.  This is achieved in the following result -- which we reiterate was provided by ChatGPT 6 Astra.

\begin{lemma}\label{RobotLemma}
    Let $\M$ be a $\|\cdot\|_{2,u}$-separable $W^*$-bundle over $K$ whose fibres are all isomorphic to copies of the  hyperfinite II$_1$ factor. Suppose $\M$ has classification of projections and real rank zero. Then $\M$ has the property that for all finite subsets $\mathcal F\subset\mathcal M$ and $\epsilon>0$, and $0<\gamma_0$, there exists a projection $p\in\M$ with constant trace $\gamma_p$ for some $0<\gamma_p<\gamma_0$ such that
 \begin{equation}\label{2.2:E1}
     \sup_{\tau\in K}\|[x,p]\|_{2,\tau}^2\leq \epsilon^2\gamma_p,\quad x\in\mathcal F. 
 \end{equation}
\end{lemma}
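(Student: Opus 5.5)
Fix $\mathcal F$ (self-adjoint contractions, without loss of generality), $\epsilon>0$ and $\gamma_0>0$. The plan is to build the projection locally on $K$ from the hyperfiniteness of the fibres, and then to glue and correct it globally using real rank zero and classification of projections. The key point is that at a single fibre $\tau\in K$ the hyperfinite II$_1$ factor $\pi_\tau(\M)''\cong\mathcal R$ has a matrix subalgebra $M_k\subseteq\mathcal R$, with $1/k<\gamma_0$, that approximately contains $\pi_\tau(\mathcal F)$ in $\|\cdot\|_{2,\tau}$. Its commutant then contains projections of trace $1/k$ that commute approximately with $\mathcal F$. In fact, any minimal projection $e_{11}$ of a matrix algebra almost containing $\mathcal F$ is useless, since it does not commute with $\mathcal F$. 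Instead, take $\mathcal F\approx$ elements of $M_k\otimes 1$ inside $M_k\otimes M_m\subseteq\mathcal R$, and use $p=1\otimes e_{11}^{(m)}$ with $1/m<\gamma_0$. This $p$ commutes exactly with the approximants, so $\|[x,p]\|_{2,\tau}\le 2\|x-x'\|_{2,\tau}$. Choosing the approximation error $\delta\ll\epsilon\sqrt{1/m}$ gives \eqref{2.2:E1} at $\tau$ with $\gamma_p=1/m$.

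Next, pass from a fibre to a neighbourhood and then to all of $K$. Using the definition of a $W^*$-bundle ($\|\cdot\|_{2,u}$-density of sections, and continuity of $\tau\mapsto\|x\|_{2,\tau}$), lift the matrix units of $M_k\otimes M_m$ at $\tau$ to elements of $\M$ that are approximate matrix units on an open neighbourhood $U_\tau$. Real rank zero, together with a standard perturbation of approximate matrix units via functional calculus, will let me replace them by genuine projections and partial isometries on a slightly smaller neighbourhood. By compactness, finitely many neighbourhoods cover $K$. The difficulty in gluing is that in a $W^*$-bundle without CPoU one cannot simply multiply by a partition of unity, and the local projections have trace $1/m$ only locally.

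At this point I would use classification of projections. Real rank zero should give projections $q_j\in\M$ supported over the pieces of a suitable partition of the base. Because $K$ need not be zero-dimensional, exact central projections may not exist. So I would instead work with a fixed common $m$ and use existence of projections to produce a global projection $p$ of constant trace $1/m$. Comparison (Murray--von Neumann equivalence determined by traces) then lets me conjugate the local candidates to agree on overlaps: on $U_i\cap U_j$ both candidates have trace $1/m$ and are approximately central, and classification of projections yields partial isometries implementing the equivalence. These partial isometries should be chosen approximately in the relative commutant of $\mathcal F$, by applying comparison inside $1\otimes M_m$-type subalgebras, which is where the fibre being $\mathcal R$ (and so McDuff) is used.

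The main obstacle is this gluing step: controlling the commutator estimate uniformly in $\tau$ while patching local approximately central projections over a base of possibly infinite dimension. I would attempt it via a maximality argument rather than a finite cover. Consider increasing families of pairwise orthogonal constant-trace projections $p$ satisfying $\|[x,p]\|_{2,\tau}^2\le\epsilon^2\tau(p)$ for all $\tau$, where constant trace makes increasing families $\|\cdot\|_{2,u}$-Cauchy. If the trace is below $\gamma_0$ and cannot be enlarged, derive a contradiction by applying the fibrewise construction in the corner $1-p$, which is again a $W^*$-bundle with hyperfinite fibres and classification of projections. The separability hypothesis would be used to make the maximal family countable, so that its supremum exists in $\M$.
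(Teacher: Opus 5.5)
There is a genuine gap, and it is exactly where you locate the main obstacle. Your first two steps only produce projections that are good near a single fibre. The local part is fine in spirit; the paper gets global finite-dimensional subalgebras $F_i\subseteq\M$ approximating $\mathcal F$ on open sets $U_i$ directly from classification of projections and continuity of $\sigma\mapsto\|\cdot\|_{2,\sigma}$, which is simpler than perturbing $2$-norm approximate matrix units.

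Neither of your gluing devices works. The maximality argument is circular. To enlarge a family you need, in the corner $1-p$, a non-zero constant-trace projection $p'$ with $\sup_{\tau\in K}\|[x,p']\|_{2,\tau}^2\le\epsilon^2\tau(p')$. That is the lemma itself (for a corner), whereas your fibrewise construction controls only fibres near one $\tau$. Maximality is how one \emph{uses} this lemma (\`a la Connes/Popa) to reach trace $1/2$ or hyperfiniteness. Here only a small trace is required, so the whole difficulty is producing the first such projection.

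Conjugating local candidates on overlaps is no better. $P_i$ and $P_j$ are global projections of trace $1/m$, each good only over its own $U_i$, and a unitary conjugating one to the other yields nothing good over $U_i\cup U_j$. Interpolating across the overlap would need a path of uniformly approximately central projections (say $e^{i\phi h}P_ie^{-i\phi h}$ with $\phi\in C(K)$ and $h$ approximately central). That is homotopy control of approximately central unitaries, which nothing in the hypotheses provides. The McDuff property of individual fibres gives nothing uniform in $\tau$.

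The missing idea is that the trace must be distributed with weights that vary over $K$, and this forces an error estimate that scales with the trace of the projection. Your candidate $1\otimes e_{11}$ has fixed trace $1/m$ and commutator bounded by $2\|x-x'\|_{2,\tau}$, independently of that trace. The paper instead takes a partition of unity $f_1,\dots,f_n\in C(K)$, central in the $W^*$-bundle and subordinate to $(U_i)$. It seeks pairwise orthogonal $p_i\in\M\cap F_i'$ with $\tau(p_i)=\gamma f_i(\tau)$, so that $p=\sum_ip_i$ has constant trace $\gamma$.

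Since $\gamma f_i(\tau)$ tends to $0$ at the edge of $U_i$, one needs $\|(x-y_{x,i})p_i\|_{2,\tau}^2\le\delta\tau(p_i)$, i.e.\ operator-norm control of $x-y_{x,i}$ on the corner under $p_i$. This comes from a Namioka-type trick:
\begin{itemize}
\item Set $a_i=\sum_{x\in\mathcal F}(x-y_{x,i})^2$ and let $E_i$ be the conditional expectation onto $\M\cap F_i'$.
\item Choose $p_i$ under an approximate spectral projection $q_i\in\M\cap F_i'$ of $E_i(a_i)$ for $[0,\delta]$, available via real rank zero. Then $\tau(a_ip_i)=\tau(E_i(a_i)p_i)\le\delta\tau(p_i)$.
\item A lower bound such as $\tau(q_i)\ge 1/4$ on $\mathrm{supp}(f_i)$ leaves room to realise $\gamma f_i$ under $q_i$, using classification of projections in $\M\cap F_i'$.
\item Orthogonality to $p_1,\dots,p_{i-1}$ comes from adding $\dim(F_i)E_i(p_1+\dots+p_{i-1})$ to $E_i(a_i)$ before taking the spectral projection (with $\gamma$ small), followed by a polar-decomposition correction.
\end{itemize}
Summing these estimates gives the required bound with $\gamma_p=\gamma$.
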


This result will not be used in the paper, but we sketch the proof ideas below, giving ourself the extremely generous hypothesis that self-adjoints admit spectral projections for intervals.\footnote{This will not be the case, but using real rank zero instead just amounts to keeping track of more $\delta$'s.}

\begin{proof}[Sketch proof]
Fix a finite set $\mathcal F \subseteq \mathcal M$ consisting of positive contractions and $\delta > 0$.  For each $\tau \in K$, $\pi_\tau(\mathcal M) \cong \mathcal R$, so classification of projections in $\mathcal M$ allows us to find a finite-dimensional matrix algebra $F \subseteq \mathcal M$ which $(2,\tau)$-approximately contains $\mathcal F$ in the sense that for all $x \in \mathcal F$, there is $y \in F$ such that $\|x - y\|_{2, \tau} < \epsilon$.
A standard compactness argument provides a finite open cover $K = \bigcup_{i=1}^n U_i$ and finite-dimensional matrix algebras $F_1, \ldots, F_n \subseteq \mathcal M$ such that for each $i = 1, \ldots, n$ and $\tau \in U_i$, $\mathcal F$ is $\|\cdot\|_{2,\tau}$-approximately contained in $F_i$.  Fix a partition of unity $f_1, \ldots, f_n \in C(K)_+$ subordinate to $(U_i)_{i=1}^n$, i.e. $\sum_{i=1}^n f_i = 1$ and $\mathrm{supp}(f_i) \subseteq U_i$.  

The goal is to produce a small constant $\gamma>0$ and mutually orthogonal projections $p_1, \ldots, p_n \in \mathcal M$ such that $p_i$ commutes with $F_i$ and $\tau(p_i) = \gamma f_i(\tau)$ for all $\tau \in K$.  Once this has been done, $p \coloneqq \sum_{i=1}^n p_i \in \mathcal M$ is a projection with $\tau(p) = \gamma$ for all $\tau \in K$.  Further, since $p_i$ commutes with $F_i$ and $\mathcal F$ is $(2, \tau)$-approximately contained in $F_i$ whenever $\tau(p_i) = \gamma f_i(\tau) \neq 0$, one expects $p$ to approximately commute with $\mathcal F$.  A computation shows that if $x \in \mathcal F$ and $y_i \in F_i$ for $i = 1, \ldots, n$, then
\begin{equation}
    \|[x, p]\|_{2, \tau}^2 = \sum_{i=1}^n \big( \|(x - y_i) p_i\|_{2, \tau}^2 + \|p_i (x - y_i)\|_{2, \tau}^2 \big), \quad \tau \in K. 
\end{equation}
\emph{If} for each $i=1,\dots,n$, we could obtain the bound 
\begin{equation}\label{E1.5}
    \|(x - y_i)p_i\|_{2, \tau}^2 \leq \|x - y_i\|_{2, \tau}^2 \tau(p_i) = \gamma f_i(\tau) \|(x - y_i)\|_{2, \tau}^2,\quad \tau\in K 
\end{equation}
then it would follow that $p$ is approximately central.

The crucial ingredient is choosing $p_i$ (and the $y_i$) so that $x_i - y_i$ has small operator norm on the corner under $p_i$ allowing us to achieve an estimate of the form \eqref{E1.5}.  To do this, we refine the approach above slightly.  For each $x \in \mathcal F$, let $y_{x, i} \in \mathcal F_i$ be positive contractions chosen to be $(2, \tau)$-close to $x$ whenever $\tau \in U_i$ and set
\begin{equation}
    a_i = \sum_{x \in \mathcal F} (x - y_{x, i})^2,
\end{equation}
so that $\tau(a_i)$ is small if and only if $\mathcal F$ is $(2, \tau)$-approximately contained in $F_i$. Adjusting the initial estimates as necessary, we can arrange
\begin{equation}
    \sup_{\tau \in X} \min_{1 \leq i \leq n} \tau(a_i) < \delta/32.
\end{equation}

For a sufficiently small $\gamma > 0$, the projections $p_1, \ldots, p_n$ are constructed inductively as follows.  Let $E_1 \colon \mathcal M \rightarrow \mathcal M \cap F_1'$ denote the canonical conditional expectation.  Since $\mathcal M \cap F_1'$ is isomorphic to a corner of $\mathcal M$, our generous assumption allows us to take the spectral projection $q_1 \in \mathcal M \cap F_1'$ of $E_1(a_1)$ on the interval $[0, \delta/2]$.  Then one computes $\tau(q_1) \geq 1/4$ for all $\tau \in U_i$, so the classification of projections in $\mathcal M \cap F_1'$ allows one to construct a projection $p_1 \in \mathcal M \cap F_1'$ such that $\tau(p_1) = \gamma f_1(\tau)$ for all $\tau \in K$ (assuming $\gamma < 1/4)$.  The key estimate that will allow us to show $p_1$ (and eventually $p$) is approximately central is $p_1 a_1 p_1 \leq \delta p_1$.

Assuming $p_1, \ldots, p_{i-1}$ have been constructed, one repeats the construction above replacing $E_1(a_1)$ with 
\begin{equation}
    E_i(a_i) + \dim(F_i) E_i(p_1 + \ldots p_{i -1}).
\end{equation}
So long as $\gamma$ is sufficiently small, the resulting projection $\tilde p_i\in\mathcal M\cap F_i$ will be (operator norm) approximately orthogonal to $p_1, \ldots, p_{i-1}$, and a standard polar decomposition argument produces a partial isometry $v_i$ with $v_i^*v_i=\tilde{p}_i$ and $v_i\approx \tilde{p_i}$ and a projection $p_i\coloneqq v_iv_i^* \in \mathcal M$ close to $\tilde p_i$ and approximately orthogonal to $p$.  Note that $\gamma$ depends on the integer $n \in \mathbb N$ coming from the compactness argument and the dimension of the approximately finite-dimensional subalgebras $F_i$.
\end{proof}

Connes' maximality argument then shows that amenable II$_1$ factorial $W^*$-bundles with real zero and classification of projections have property $\Gamma$. In the $(2,u)$-separable case, Ozawa's trivialisation theorem \cite[Theorem 15]{Ozawa} gives McDuffness and hyperfiniteness. Using real rank zero from Theorem~\ref{PurenessThm}, this therefore proves Theorem \ref{TC:Structure} under the additional assumption that $X$ is Bauer.

Let us make two brief remarks on this argument:
\begin{itemize}
    \item Firstly, the choice of $p_i$ under a small spectral projection in the proof of Lemma \ref{RobotLemma} to get operator norm control of $x_i-y_i$ on the corner under $p_i$ is reminiscent of the Namioka trick used by both Connes and Popa.  Of course, we had the advantage of their deep work to produce local finite-dimensional approximations as the starting point, so we can  averaging the appropriate positive element into the commutant of $F$ and taking the spectral projection there. In this way we obtain the analogous approximately central projection which behaves approximately like a spectral projection for the given positive element.
\item Secondly, staring at this proof more carefully reveals that at the very last step, instead of conjugating $\tilde{p}_i$ to $p_i$, we could instead have moved the finite-dimensional subalgebra $F_i\tilde{p}_i$ underneath $1-(p_1+\dots p_{i-1})$. Taking care with the estimates, this would produce a finite-dimensional subalgebra $F$ of $\M$ whose unit has constant trace $\gamma$, and $(2,u)$-approximately contains $\mathcal F$. Through Popa's maximality argument we can directly obtain hyperfiniteness, and avoid using the trivialisation theorem.
\end{itemize}

\subsubsection{Small approximately central matrices in tracially complete \cstar-algebras}

In order to prove the regularity theorem in full, we will need to modify the argument above to handle the general framework of tracially complete $C^*$-algebras. The overarching strategy is the same, but there are a number of technical challenges which must be overcome. It would not be as straightforward to use property $\Gamma$ directly as in the $W^*$-bundle case because the definition of property $\Gamma$ for a factorial II$_1$ tracially complete $C^*$-algebra $(\mathcal M,X)$ is subtly different compared to the $W^*$-bundle framework.\footnote{It is not a priori enough to have a projection $p\in\mathcal M^\omega\cap \M'$ which has $\tau(p)=1/2$ for all $\tau \in X^\omega$; instead $p$ must divide the trace of all elements and satisfy $\tau(pa)=\tau(p)=\tau(a)=\tau(a)/2$ for all $a\in A$, and $\tau\in X^\omega$ (see \cite[Section 3]{CETW:IMRN}).}
It would be more sensible to head for the McDuff property (which has the advantage of being exactly the ingredient needed by Matui--Sato's theorem for our $C^*$-algebraic consequences), or even better, to implement Popa's strategy and obtain hyperfiniteness directly.

It is much more obvious how to perform partition-of-unity arguments in $W^*$-bundles compared with tracially complete $C^*$-algebras.  Indeed, in the previous section, we could embed the functions $f_1,\dots,f_n\in C(K)_+$ centrally into $\M$, and the fact that $f_i$ is supported on $U_i$ enters into the computation that $\tau(q_i)\geq 1/4$. However a factorial tracially complete $C^*$-algebra might have trivial centre, and we can only expect to perform partition-of-unity arguments using positive elements when we have \emph{convex open covers} of $X$ using open sets of the form $\{\tau\in X:f_n(\tau)<\alpha\}$ for some $f_1,\ldots,f_N \in\Aff(X)$ and $\alpha\in\mathbb R$. Assuming the $f_n$ are strictly positive, and using the Cuntz--Pedersen realisation result (Proposition \ref{prop:CP}), one can describe a finite convex open cover in the form $U_n\coloneqq \{\tau\in T(A):\tau(a_n)<\delta\}$ for positive elements $a_1,\dots,a_N\in\M$ and $\delta>0$. The fact that this is a cover is expressed in the condition
\begin{equation}
\sup_{\tau\in T(A)}\min_{1\leq n\leq N}\tau(a_n)<\delta.
\end{equation}

A similar problem was faced in the proof that $\Z$-stable simple separable nuclear $C^*$-algebras have finite nuclear dimension, where the Bauer simplex case (\cite{BBSTWW}) was handled years before the general case (\cite{CETWW}) due to the difficulty of performing partition-of-unity arguments. The tool resolving this issue in \cite{CETWW} -- complemented partition of unities (CPoU) -- is too strong; in our context these will only arise once we have proved property $\Gamma$. But, a weaker version always exists, called weak CPoU. Up to now it has only been a technical ingredient on the route from property $\Gamma$ to CPoU (it is used in similar ways in equivariant and relative settings in \cite{SW:APDE,KLTV:TAMS}).  The partition-of-unity intuition plays out by using weak CPoU to perform an affine partition-of-unity argument without first obtaining property $\Gamma$ (or CPoU) in the tracially complete setting. Modulo considerable technicalities, this enables us to prove a `small local approximation result' (Theorem~\ref{SmallLocalApproximations}), which is exactly what is needed to run Popa's maximality argument.

While our sketch in the previous section, and indeed our initial prompt, assumes $\M$ has real rank zero, in fact it is very little extra trouble to work with the a priori weaker hypothesis that $\M^\omega$ has real rank zero. We set up Theorem~\ref{SmallLocalApproximations} in this way.

\section{Preliminaries}

\subsection{The Cuntz semigroup and traces} 
Let $A$ be a $C^*$-algebra and let $a,b \in (A \otimes \K)_+$. By definition, $a$ is \emph{Cuntz subequivalent} to $b$ (notation $a \precsim b$) if there exists a sequence $(r_n)_{n=1}^\infty$ in $A \otimes \K$ such that $a = \lim_{n\to\infty} r_nbr_n^*$; however, we shall freely make use of the equivalent formulations of Cuntz subequivalent given by R{\o}rdam's Lemma (\cite[Proposition 2.4]{Rordam92}; see the account in the survey \cite[Theorem 2.7]{GP_Cuntz_semi}). 
By definition, $a$ is Cuntz equivalent to $b$ (notation $a \sim b$) if both $a \precsim b$ and $b \precsim a$. We recall that projections are Cuntz subequivalent if and only if they are Murray von Neumann subequivalent.
When $A$ is stably finite, Cuntz equivalence is the same as Murray--von Neumann equivalence for projections. 

The \emph{Cuntz semigroup} is the set of Cuntz equivalence class $\Cu(A) = (A \otimes \K)_+ / \sim$ equipped with the natural semigroup operation defined by direct sum (using that $M_2(\K) \cong \K$).  We write $[a]$ for the Cuntz equivalence class of $a \in (A \otimes \K)_+$.
The Cuntz subequivalence relation descends to an order on the semigroup $\Cu(A)$.
Every increasing sequence $(x_n)_{n=1}^\infty$ in $\Cu(A)$ has a supremum $\sup_n x_n \in \Cu(A)$ by \cite{CEI}. For $x, y \in \Cu(A)$, the relation $x \ll y$ holds if for every sequence $(y_n)_{n=1}^\infty$ with $y \leq \sup_n y_n$ there exist $n \in \N$ such that $x \leq y_n$. Note that $[(a-\eps)_+] \ll [a]$ for all $a \in (A \otimes \K)_+$ and $\eps > 0$.  We occasionally use the earlier `incomplete' Cuntz semigroup $W(A)$, consisting of equivalence classes of all positive elements from $\bigcup_{n=1}^\infty M_n(A)$. This fails to have suprema of increasing sequences.

We now recall the key properties of the Cuntz semigroup that we will use in this paper (see \cite{winter2012nuclear,antoine2024pure,GP_Cuntz_semi}).

\begin{definition}\label{def:cuntz-omnibus} 
Let $A$ be a $C^*$-algebra.
\begin{enumerate}
    \item An element $x \in \Cu(A)$ is said to be \emph{almost divisible} if for any $x' \in \Cu(A)$ such that $x' \ll x$ and any integer $k \geq 1$ there exists $y \in \Cu(A)$ with $ky \leq x$ and $x' \leq (k+1)y$.\footnote{Note that in some references the ``cutdown'' from $x$ to $x'$ in the definition of almost divisibility is omitted, leading to the formally stronger requirement on $y$ that $ky \leq x \leq (k+1)y$; see \cite[Remark 7.3.5]{APT:MAMS} for a discussion of the reasons behind this.}
    \item If all $x \in \Cu(A)$ are almost divisible, then $\Cu(A)$ is said to be \emph{almost divisible}.
    \item The Cuntz semigroup $\Cu(A)$ is said to be \emph{almost unperforated} if for all $x,y \in \Cu(A)$, the implication
    \begin{equation}
        (k+1)x \leq ky\text{ for some }k\in\mathbb N\Longrightarrow x\leq y
    \end{equation}holds.  
    \item The $C^*$-algebra $A$ is said to be \emph{pure} if $\Cu(A)$ is both almost unperforated and almost divisible.
\end{enumerate}
\end{definition}

We now turn our attention to the traces on a $C^*$-algebra. By default, when we say ``trace'' we shall mean tracial state. We write $T(A)$ for the tracial state space of the $C^*$-algebra $A$ (and, occasionally, $T_{\leq 1}(A)$ for the set of tracial positive functionals of norm $\leq 1$). 

Given $\tau \in T(A)$, the corresponding \emph{rank function} $d_\tau:(A \otimes \K)_+ \rightarrow [0,\infty]$ is defined by $d_\tau(a) = \lim_{n\to\infty} \tau(a^{1/n})$, where $a \in A \otimes \K$ and  $\tau$ is implicitly extended to a lower-semicontinuous densely-defined tracial weight on $A \otimes \K$ by using $\tau \otimes \mathrm{Tr}_\K$. All states (suitably defined) on $\Cu(A)$ arise in this way where $\tau$ is allowed to be a lower-semicontinuous quasitrace; see \cite[Proposition~4.2]{ERS:AJM}, which goes back to \cite[Theorem~II.2.2]{BH82}. In the exact case, all quasitraces are traces by Haagerup \cite{Haagerup:Traces}.
By abuse of notation, we also write $d_\tau$ for the induced functional on $\Cu(A)$ given by $d_\tau([a]) = d_\tau(a)$.
For any $a,b \in (A \otimes \K)_+$, it is true that $a \precsim b$ implies that $d_\tau(a) \leq d_\tau(b)$ for all $\tau \in T(A)$. While the converse fails in general, for a simple $C^*$-algebra $A$, $\Cu(A)$ is almost unperforated if and only if whenever $a,b\in (A\otimes\K)_+$ with $d_\tau(a)<d_\tau(b)$ for all lower-semicontinuous quasitraces $\tau\colon A\to[0,\infty]$ we have $a\precsim b$ (see \cite[Proposition 6.2]{ERS:AJM} or \cite[Proposition 3.2]{Rordam:IJM}).

In this paper, our main focus is on the situation where all traces are bounded, and the tracial space is compact (allowing the algebra to be potentially non-simple). In this case, as with we prefer the following version of strict comparison (which is not necessarily equivalent to almost unperforation), which only uses bounded traces (instead of the cone of all densely-defined lower-semicontinuous quasitraces) and works with all matrix amplifications $M_n(A)$ instead of the stabilisation $A \otimes \K$.  To emphasise this form of strict comparison we will always specify the relevant set of bounded traces.

\begin{definition}[Strict comparison with respect to tracial states] 
\label{sc_bdd_trace_W}   
Let $A$ be a $C^*$-algebra and $X$ be a compact subset of $T(A)$. Then we say $A$ has \emph{strict comparison with respect to $X$} if 
\begin{equation}
\bigl(\forall \tau \in X,\ d_\tau(a) < d_\tau(b)\bigr)
\implies a \precsim b  
\end{equation}
for all $n \in \mathbb{N}$ and non-zero $a,b \in M_n(A)_+$.
\end{definition} 

We shall access this version of strict comparison via the following theorem, which follows directly from the equivalence of the usual notion of strict comparison for a simple $C^*$-algebra and almost unperforation (\cite[Proposition~3.2]{Rordam:IJM}) together with the fact that every state on $\Cu(A)$ is of the form $d_\tau$ for a densely-defined lower-semicontinuous quasitrace $\tau$ (cf.\ \cite[Corollary~4.6]{Rordam:IJM}).
\begin{theorem}\label{thm:comparison}
    Let $A$ be a simple $C^*$-algebra with $T(A)$ non-empty and compact such that all densely-defined lower-semicontinuous quasitraces on $A \otimes \K$ are densely-defined traces and are bounded on $A$.
    Suppose $\Cu(A)$ is almost unperforated.
    Then $A$ has strict comparison with respect to $T(A)$.
\end{theorem}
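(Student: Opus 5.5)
The plan is to reduce to the classical characterisation of almost unperforation for simple $C^*$-algebras, and then pass from the cone of all lower-semicontinuous (quasi)traces to the compact set $T(A)$ of tracial states. Fix $n\in\N$ and non-zero $a,b\in M_n(A)_+$ with $d_\tau(a)<d_\tau(b)$ for all $\tau\in T(A)$. Regarding $a,b$ as elements of $(A\otimes\K)_+$, by \cite[Proposition~3.2]{Rordam:IJM} (equivalently \cite[Proposition 6.2]{ERS:AJM}) it suffices to check that $d_\tau(a)<d_\tau(b)$ for every lower-semicontinuous densely-defined quasitrace $\tau$ on $A\otimes\K$ for which this comparison is relevant. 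By hypothesis each such quasitrace is a trace. Write $\tau_0$ for its restriction to $A$; this is bounded by hypothesis.

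The next step is to identify all these traces with multiples of tracial states. If $\tau_0=0$, then simplicity of $A$ forces $\tau=0$ on the ideal generated by $A$, which is all of $A\otimes\K$; such a trivial functional is excluded in the standard formulation of strict comparison. Otherwise $\tau_0=\lambda\sigma$ with $\lambda=\|\tau_0\|>0$ and $\sigma\in T(A)$. Moreover, $\tau$ agrees with $\lambda(\sigma\otimes\mathrm{Tr}_\K)$ on $A\otimes\K$, since both are lower-semicontinuous traces agreeing on $A\otimes e_{11}$, and a trace on $A\otimes\K$ is determined by its restriction to a full corner. Hence $d_\tau=\lambda d_\sigma$, and the strict inequality $d_\tau(a)<d_\tau(b)$ follows from the hypothesis at $\sigma$. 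This case also covers infinite values: $d_\sigma(a)\le n<\infty$, since $a\in M_n(A)$ and $\sigma$ is a state.

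Almost unperforation then gives $a\precsim b$ in $A\otimes\K$. Since $a,b\in M_n(A)$, which is a hereditary subalgebra of $A\otimes\K$, Rørdam's lemma gives $a\precsim b$ in $M_n(A)$.

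The main subtlety I expect is the bookkeeping in the middle step: the precise version of the ERS/Rørdam statement being invoked. One must check that the quasitraces it quantifies over are exactly the densely-defined lower-semicontinuous ones, rather than $[0,\infty]$-valued ones that are infinite on $A$, and that excluding these is legitimate. Here simplicity helps: a lower-semicontinuous trace that is finite on a non-zero positive element is densely defined. An infinite-valued trace gives $\infty$ for both $a$ and $b$ and is excluded in the cited characterisation; alternatively one can use \cite[Corollary~4.6]{Rordam:IJM}, which shows that states on $\Cu(A)$ normalised at $[b]$ are of the form $d_\tau$. Compactness of $T(A)$ is not needed beyond ensuring that the hypothesis at every tracial state makes sense.
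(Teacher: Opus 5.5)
Your proposal is correct and is essentially the paper's argument, which combines Rørdam's characterisation of almost unperforation for simple algebras (\cite[Proposition~3.2]{Rordam:IJM}) with the identification of the relevant states on $\Cu(A)$ as $d_\tau$ for densely-defined lower-semicontinuous quasitraces (cf.\ \cite[Corollary~4.6]{Rordam:IJM}); the hypothesis then turns these into positive multiples of $d_\sigma$ with $\sigma\in T(A)$. Your extra bookkeeping (discarding the zero and infinite traces, identifying $\tau$ with $\lambda(\sigma\otimes\mathrm{Tr}_\K)$ via the full corner $A\otimes e_{11}$, and descending from $A\otimes\K$ to the hereditary subalgebra $M_n(A)$) makes explicit what the paper leaves implicit, but your density remark should be stated for quasitraces rather than traces, since you need density before you can invoke the hypothesis that they are traces (the same simplicity argument works).
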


A first use of strict comparison is the following, the proof of which is borrowed from a forthcoming paper by three authors along with Jos\'e Carri\'on and Jamie Gabe (\cite{CGSTW.2}; thanks!).

\begin{proposition}\label{prop:computing-traces}
    Let $A$ be a unital $C^*$-algebra and let $X \subseteq T(A)$ be a compact convex set.  If $A$ has strict comparison with respect to $X$, then $X = T(A)$.
\end{proposition}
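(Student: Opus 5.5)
The plan is to argue by contradiction: separate a trace $\sigma\in T(A)\setminus X$ from $X$ by a self-adjoint element, and then convert that trace inequality into a rank inequality that contradicts strict comparison. The inclusion $X\subseteq T(A)$ holds by hypothesis, so only $T(A)\subseteq X$ needs proof.

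\emph{Separation.} Suppose $\sigma\in T(A)\setminus X$. Since $X$ is compact and convex in the weak$^*$ topology, the Hahn--Banach separation theorem gives a weak$^*$-continuous real-linear functional separating $\sigma$ from $X$. Such functionals on the self-adjoint part of $A^*$ are evaluations at self-adjoint elements of $A$, so there is $h=h^*\in A$ with $\sup_{\tau\in X}\tau(h)<\sigma(h)$. Each element of $X\cup\{\sigma\}$ is a tracial state, so adding a multiple of $1_A$ and rescaling preserves the strict inequality. We may therefore assume $h$ is a positive contraction and
\begin{equation*}
s\coloneqq\max_{\tau\in X}\tau(h)<\sigma(h),\qquad \delta\coloneqq\sigma(h)-s>0,
\end{equation*}
where the maximum is attained by compactness of $X$.

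\emph{From traces to ranks.} Let $\mu_\tau$ be the spectral measure of $h$ under $\tau$. Then $d_\tau((h-t)_+)=\mu_\tau((t,1])$ and $\tau(h)=\int_0^1\mu_\tau((t,1])\,dt$. Fix $n\in\mathbb N$ and put
\begin{equation*}
a\coloneqq\bigoplus_{k=1}^{n}(h-k/n)_+\in M_n(A)_+ .
\end{equation*}
Since $t\mapsto\mu_\tau((t,1])$ is decreasing and bounded by $1$, comparing with left and right Riemann sums gives
\begin{equation*}
n\tau(h)-1\leq d_\tau(a)\leq n\tau(h)\qquad\text{for all }\tau\in T(A).
\end{equation*}
Choose $n$ with $n\delta>2$, and pick an integer $m$ with $ns<m<n\sigma(h)-1$. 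Then $m\geq1$ because $s\geq0$. For every $\tau\in X$ we get $d_\tau(a)\leq ns<m=d_\tau(1_{M_m(A)})$. On the other hand $d_\sigma(a)\geq n\sigma(h)-1>m>0$, so in particular $a\neq0$.

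\emph{Contradiction.} Pad $a$ and $1_{M_m(A)}$ with zeros so that both lie in $M_N(A)$ with $N=\max(n,m)$. Strict comparison with respect to $X$ (Definition \ref{sc_bdd_trace_W}) then gives $a\precsim 1_{M_m(A)}$. Cuntz subequivalence implies $d_\sigma(a)\leq d_\sigma(1_{M_m(A)})=m$, which contradicts $d_\sigma(a)>m$. Hence $T(A)\subseteq X$, and so $X=T(A)$.

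The main obstacle is the passage from the affine inequality in traces to a strict inequality of rank functions. The Riemann-sum estimate and the gap condition $n\delta>2$ handle this: they absorb the error of at most $1$ on each side and still leave room for an integer $m$ strictly between the two bounds. Everything else is a formal consequence of Hahn--Banach and the definition of strict comparison.
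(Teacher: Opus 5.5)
Your argument is correct, but it takes a different route from the paper.

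\textbf{The paper's route.} The paper works at the level of the Cuntz semigroup:
\begin{enumerate}
\item It passes to the Grothendieck group $G(A)$ of $W(A)$.
\item It shows that strict comparison with respect to $X$ forces $h\le k$ in $G(A)$ whenever $\bar d_\tau(h)<\bar d_\tau(k)$ for all $\tau\in X$.
\item It invokes Goodearl's result on states of partially ordered groups to conclude that every state on $G(A)$ is of the form $\bar d_\tau$ with $\tau\in X$.
\item Given $\sigma\in T(A)$, it finds $\tau\in X$ with $d_\sigma=d_\tau$ on $W(A)$, and recovers $\sigma=\tau$ from $\sigma(a)=\int_0^\infty d_\sigma((a-t)_+)\,dt$.
\end{enumerate}

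\textbf{Your route.} You do the separation directly in $A^*$: Hahn--Banach gives a positive contraction $h$ with $\max_{\tau\in X}\tau(h)<\sigma(h)$. You then discretise the same layer-cake formula. The Riemann-sum bound $n\tau(h)-1\le d_\tau\bigl(\bigoplus_{k=1}^n(h-k/n)_+\bigr)\le n\tau(h)$ turns the affine gap into a rank gap wide enough to contain an integer $m$. Comparison against $1_{M_m(A)}$ then contradicts the rank bound at $\sigma$. Your details all check out: the reduction to a positive contraction, the direction of both Riemann-sum estimates, $m\ge 1$, $a\neq 0$, and the padding to a common matrix size.

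\textbf{Comparison.} Both proofs are, at heart, a separation theorem combined with the integral formula for a trace in terms of ranks of cut-downs.
\begin{itemize}
\item Your version is self-contained and explicitly quantitative. It uses strict comparison only in the special form of a diagonal element against a multiple of the unit.
\item The paper's version is shorter given the reference to Goodearl. It also records the stronger intermediate fact that every normalised state on $G(A)$ comes from $X$.
\item Because the paper uses $\sigma$ only through $d_\sigma$ and the integral formula, its argument would carry over verbatim to objects for which linear separation in $A^*$ is unavailable, such as normalised quasitraces. Your Hahn--Banach step needs $\sigma$ to be linear.
\end{itemize}
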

\begin{proof}
    Let $W(A)$ denote the uncompleted Cuntz semigroup on $A$, let $G(A)$ denote its Grothendieck group, and let $g \colon W(A) \rightarrow G(A)$ denote the canonical map.  Define
    \begin{equation}
        G(A)_+ = \{g(y) - g(x) : x, y \in W(A),\ x \leq y \} \subseteq G(A)
    \end{equation}
    and $1_{G(A)} = g([1_A])$.  With this structure, $G(A)$ is an ordered abelian group with order unit.

    Note that every $\tau \in T(A)$ induces a state $d_\tau \colon W(A) \rightarrow [0,\infty)$, which in turn induces a state $\bar d_\tau \colon G(A) \rightarrow \mathbb R$.  Suppose $h, k \in G(A)$ are such that $\bar d_\tau(h) < \bar d_\tau(k)$ for all $\tau \in X$.  Let $x, y, x', y' \in W(A)$ be such that $h = x - x'$ and $k = y - y'$.  
    Then $d_\tau(x + y') < d_\tau(x' + y)$ for all $\tau \in X$.  Since $A$ has strict comparison with respect to $X$, we have $x + y' \leq x' +y$, so $h \leq k$.

    By \cite[Corollary~7.11]{Goodearl:Book}, every state on $G(A)$ has the form $\bar d_\tau$ for some $\tau \in X$.  Now fix $\sigma \in T(A)$.  Then there is $\tau \in X$ such that $\bar d_\sigma = \bar d_\tau$.  Then $d_\sigma = d_\tau$ as states on $W(A)$.  Now, for all $a \in A_+$,
    \begin{equation}
        \sigma(a) = \int_0^\infty d_\sigma((a - t)_+) \, dt = \int_0^\infty d_\tau((a-t)_+) \, dt = \tau(a),
    \end{equation}
    so $\sigma = \tau \in X$.
\end{proof}

\subsection{Real rank zero and stable rank one}\label{subsec:RR0-SR1} The approximation properties of real rank zero and stable rank one and their implications for $K$-theory and the structure of positive elements play an important role in our analysis of tracially complete $C^*$-algebras and their ultrapowers. We collect the main facts we use below.

\subsubsection{Real rank zero}\label{SS:RR0} Brown and Pedersen defined a unital $C^*$-algebra $A$ to have \emph{real rank zero} if the invertible self-adjoint elements are norm dense in the self-adjoint elements (a non-unital $C^*$-algebra has real rank zero if its minimal unitisation does); (\cite{BrownPedersen:RR0}). Real rank zero is a condition which ensures a wealth of projections, and it passes to direct sums (trivially), matrix amplifications (\cite[Theorem 2.10]{BrownPedersen:RR0}), hereditary subalgebras (\cite[Theorem 2.10]{BrownPedersen:RR0}), and inductive limits (\cite[Proposition 3.1]{BrownPedersen:RR0}). A lot of early work on these $C^*$-algebras was done by Zhang (\cite{Zhang:Riesz-Docomposition,Zhang:PJM}), often using the formulation (equivalent to real rank zero by \cite[Theorem 2.6]{BrownPedersen:RR0}) of property (FS): the set of self-adjoint elements with finite spectrum is is norm dense in $A_{\mathrm{sa}}$.

For stably finite $C^*$-algebras with real rank zero, the Murray von Neumann semigroup of projections fully determines the Cuntz semigroup, and one only needs to study projections not all positive elements.  This goes back to Perera's \cite{Perera97} (see \cite[Corollary 5]{CEI} and \cite[Section 5.5]{APT:MAMS}), and is essentially why the combination of real rank zero and classification of projections gives rise to pureness in the pureness theorem.

We will need the following result along these lines.  The proof is standard, but we have been unable to find a reference in the literature.

\begin{proposition}\label{prop:rr0-comparison}
	Let $A$ is a $C^*$-algebra real rank zero and let $X \subseteq T(A)$ be a compact non-empty set of traces.  The following are equivalent:
	\begin{enumerate}
		\item\label{it:rr0-comp-pos} $A$ has strict comparison with respect to $X$;
		\item\label{it:rr0-comp-proj} For all $n \in \mathbb N$ and projections $p, q \in M_n(A)$, if $\tau(p) < \tau(q)$ for all $\tau \in X$, then $p \precsim q$.
	\end{enumerate}
\end{proposition}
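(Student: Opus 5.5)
The direction (i)$\Rightarrow$(ii) is immediate. A projection $p$ satisfies $p^{1/n}=p$, so $d_\tau(p)=\tau(p)$. The hypothesis of (ii) is therefore exactly the hypothesis of strict comparison applied to $a=p$ and $b=q$. The case $p=0$ is trivial, and $q\neq 0$ is forced by $\tau(q)>\tau(p)\geq 0$.

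For (ii)$\Rightarrow$(i), I would reduce from positive elements to projections using real rank zero. Since real rank zero passes to matrix amplifications and hereditary subalgebras (\cite[Theorem 2.10]{BrownPedersen:RR0}), it suffices to handle $a,b\in M_n(A)_+$ nonzero with $d_\tau(a)<d_\tau(b)$ for all $\tau\in X$, working inside $M_n(A)$ (which also has real rank zero), with each $\tau$ extended to $M_n(A)$ as the paper does. By R\o{}rdam's lemma it is enough to show $(a-\epsilon)_+\precsim b$ for every $\epsilon>0$. I would use two standard facts about real rank zero hereditary subalgebras (Zhang; Brown--Pedersen):
\begin{itemize}
\item[$\bullet$] $\overline{bM_n(A)b}$ has an approximate unit of projections $(q_\lambda)$, and $q_\lambda\precsim b$.
\item[$\bullet$] There is a projection $p\in\overline{aM_n(A)a}$ with $(a-\epsilon)_+\precsim p$. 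For instance, take $p$ in the hereditary subalgebra generated by $(a-\epsilon/2)_+$ which nearly acts as a unit on $(a-\epsilon)_+$, and then apply R\o{}rdam's lemma.
\end{itemize}
Then $p\precsim a$ gives $\tau(p)\leq d_\tau((a-\epsilon/2)_+)\leq d_\tau(a)<d_\tau(b)$.

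The key step is to choose a single $q=q_\lambda$ with $\tau(p)<\tau(q)$ for all $\tau\in X$ simultaneously; then (ii) gives $p\precsim q\precsim b$, hence $(a-\epsilon)_+\precsim b$. For each $\tau$, $\tau(q_\lambda)\nearrow d_\tau(b)$, because $q_\lambda$ is an approximate unit of the hereditary algebra and so converges strongly to the support projection in $\pi_\tau(M_n(A))''$. The function $\tau\mapsto\tau(q_\lambda)-\tau(p)$ is continuous on $X$, and the net is increasing with pointwise limit strictly positive. Since $X$ is compact, a Dini-type argument applies: the open sets $\{\tau:\tau(q_\lambda)>\tau(p)\}$ increase in $\lambda$ and cover $X$, so one of them is all of $X$. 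If the approximate unit is only a sequence, the same argument works with an increasing sequence of projections, which one can arrange.

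This uniformity step is the main obstacle. The subtle point is that the strict inequality must survive replacing $a$ by a projection beneath it. I handle this by using $\tau(p)\leq d_\tau(a)$ rather than any continuity of $d_\tau(a)$. I also need the approximate unit of projections to be increasing, which holds in the real rank zero setting, so that the covering argument goes through.
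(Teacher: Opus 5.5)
Your proposal is correct and is essentially the paper's argument: the paper cites \cite[Theorem 5]{CEI} to write $[a]=\sup_i[p_i]$ and $[b]=\sup_j[q_j]$ with increasing sequences of projections, then uses compactness of $X$ and a Dini-type argument to find $j$ with $\tau(p_i)<\tau(q_j)$ for all $\tau\in X$ before applying (ii). You unpack that supremum description by hand, using an increasing approximate unit of projections for $\overline{bM_n(A)b}$ and R\o{}rdam's $\epsilon$-cut-down on the $a$ side, and your open-cover form of the Dini step, which needs no uniform gap, is fine.
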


\begin{proof}
	The implication \ref{it:rr0-comp-pos}$\Rightarrow$\ref{it:rr0-comp-proj} is clear.  Conversely, suppose \ref{it:rr0-comp-proj} holds.
    By replacing $A$ with $M_n(A)$, it suffices to show that if $a, b \in A_+$ and $d_\tau(a) < d_\tau(b)$ for all $\tau \in X$, then $a \precsim b$.  
    By \cite[Theorem 5]{CEI},\footnote{Although this result states that $A$ has stable rank one, the proof that real rank zero implies every element is the supremum of classes of projections does not use this hypothesis.} there exist projections $p_i,q_i$ such that $[a]=\sup_i [p_i]$ and $[b]=\sup_i [q_i]$, with the sequences increasing.
    For each $i$, since $\tau(p_i) < d_\tau(a)$ for all $\tau\in X$, by lower semicontinuity and compactness, there exists $\e>0$ such that $\tau(p_i)+\e<d_\tau(a)$ for all $\tau \in X$.
    Then since $d_\tau(a) = \sup_j \tau(q_j)$, by Dini's Theorem, there exists $j$ such that $\tau(p_i) +\e < \tau(q_j)$ for all $\tau \in X$.
    Applying comparison of projections, we have $[p_i] \leq [q_j] \leq [b]$.
    Since $i$ is arbitrary, we may conclude $[a]\leq[b]$.
\end{proof}

\subsubsection{Stable rank one}\label{SS:SR1}

Following Rieffel's seminal paper \cite{Rieffel83}, a unital $C^*$-algebra $A$ is said to have \emph{stable rank one} if the set of invertible elements is norm dense in $A$ (a non-unital $C^*$-algebra has stable rank one if its minimal unitisation does). Stable rank one can be described in terms of the existence of approximate unitary polar decompositions (see \cite[Theorem 5]{Pedersen:JOT}).
\begin{proposition}\label{it:sr1PolarDecomp}
A unital $C^*$-algebra $A$ has stable rank one if and only if for any $a \in A$ and $\e>0$ there exists a unitary $u\in A$ such that $\|a-u|a|\,\|<\e$. 
\end{proposition}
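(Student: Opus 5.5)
The plan is to prove both directions directly from the definitions. The only analytic input is the norm continuity of the map $x \mapsto |x| = (x^*x)^{1/2}$ on bounded subsets of $A$.

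\emph{Stable rank one $\Rightarrow$ approximate unitary polar decomposition.} Fix $a \in A$ and $\epsilon > 0$, and set $R \coloneqq \|a\| + 1$.
\begin{enumerate}
\item The key step is uniform continuity of the absolute value on the ball of radius $R$. Choose a polynomial $P$ with $\sup_{t\in[0,R^2]}|P(t) - t^{1/2}| < \epsilon/6$, which exists by Weierstrass. For $x, y$ of norm at most $R$, functional calculus gives $\||x| - P(x^*x)\| < \epsilon/6$, and similarly for $y$. Since $x \mapsto P(x^*x)$ is a polynomial map, it is uniformly continuous on bounded sets. Hence there is $\delta \in (0,1)$ with $\||x|-|y|\| < \epsilon/2$ whenever $\|x\|,\|y\| \leq R$ and $\|x-y\|<\delta$.
\item Using stable rank one, pick an invertible $b \in A$ with $\|a-b\| < \min(\delta, \epsilon/2)$. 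Then $\|b\| \leq R$.
\item Since $b$ is invertible, $|b|$ is invertible and $u \coloneqq b|b|^{-1}$ is a unitary with $b = u|b|$. Indeed $u^*u = |b|^{-1}b^*b|b|^{-1} = 1$, and $u$ is invertible as a product of invertibles, so $uu^* = 1$ as well.
\item Finally, estimate
\begin{equation}
\|a - u|a|\| \leq \|a - b\| + \|u\|\,\||b| - |a|\| < \epsilon/2 + \epsilon/2 = \epsilon.
\end{equation}
\end{enumerate}

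\emph{Approximate unitary polar decomposition $\Rightarrow$ stable rank one.} Fix $a \in A$ and $\epsilon>0$, and choose a unitary $u$ with $\|a - u|a|\| < \epsilon/2$. Put $b \coloneqq u(|a| + \tfrac{\epsilon}{2}1_A)$.
\begin{enumerate}
\item The element $|a| + \tfrac{\epsilon}{2}1_A$ is positive with spectrum contained in $[\epsilon/2,\infty)$, so it is invertible.
\item Hence $b$ is invertible, being a product of a unitary and an invertible element.
\item We have $\|a - b\| \leq \|a - u|a|\| + \tfrac{\epsilon}{2}\|u\| < \epsilon$.
\end{enumerate}
Thus the invertible elements are norm dense in $A$, i.e.\ $A$ has stable rank one.

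The only point requiring care is the uniform continuity of $x\mapsto|x|$ in step 1 of the first direction. Everything else is formal.
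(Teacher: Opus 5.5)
Your proof is correct in both directions. The paper does not prove this statement itself; it records it as a known characterisation and cites Pedersen \cite[Theorem 5]{Pedersen:JOT}. So your argument differs from the paper in the sense that it is a self-contained verification where the paper gives only a reference.

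For the forward direction, you approximate $a$ by an invertible $b$ and use the exact unitary polar decomposition $b=(b|b|^{-1})|b|$ that invertibles admit. You then control $\||b|-|a|\|$ through norm-continuity of $x\mapsto|x|$ on bounded sets, and your polynomial approximation of $t^{1/2}$ on $[0,R^2]$ handles this cleanly. For the converse, you perturb $u|a|$ to the invertible element $u(|a|+\tfrac{\epsilon}{2}1_A)$.

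The citation buys brevity and places the statement within Pedersen's more detailed study of polar decompositions, none of which the paper needs. Your route buys self-containment. It also makes plain that, at the level of the whole algebra, the equivalence is essentially formal: continuity of the absolute value is the only analytic input.
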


Stable rank one passes to direct sums (trivially), matrix amplifications (\cite[Theorem 3.3]{Rieffel83}), stable isomorphism (\cite[Theorem 3.6]{Rieffel83}), ideals (\cite[Theorem 4.4]{Rieffel83}), quotients (\cite[Theorem 4.3]{Rieffel83}), and inductive limits (\cite[Theorem 5.1]{Rieffel83}). We shall also need that fact that stable rank one passes to corners. On unpacking the notation, this is contained in \cite[Proposition 4.1]{Blackadar:StableRankCorner} and is based on a computation that goes back to \cite{Rieffel83}.\footnote{We found this reference in a comment made by Leonel Robert on Mathoverflow \url{https://mathoverflow.net/questions/381637/stable-rank-one-and-corners-of-c-ast-algebras}. On the same page, the following intuitive argument in the separable case was given by Jamie Gabe: if $p$ is a projection in a separable stable rank one $C^*$-algebra $A$, the ideal $I$ generated by $p$ has stable rank one, and $pAp$ is stably isomorphic to $I$ by Brown's theorem, so has stable rank one.} 

Let $A$ be a unital $C^*$-algebra with stable rank one. Then $A$ is stably finite. 
Moreover, it follows from Rieffel's work (\cite{Rieffel83}) that the Murray--von Neumann semigroup $V(A)$ has cancellation, i.e.\ $p \sim_{\mathrm{MvN}} q$ whenever $p \oplus r \sim_{\mathrm{MvN}} q \oplus r$ for projections $p,q,r \in M_n(A)$ for any $n \in \N$; see \cite[Proposition V.3.1.24]{blackadar2006operator}. Importantly for us, in the presence of real rank zero the converse holds as follows.

\begin{proposition}[Blackadar--Handelman, {\cite[Proposition III.2.4]{BH82}; see also the proof sketched in \cite[Proposition V.3.2.15]{blackadar2006operator}}] \label{prop:RR0CancellationSR1}
    If $A$ has real rank zero and cancellation of projections then $A$ has stable rank one.
\end{proposition}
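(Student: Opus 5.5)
The strategy is the classical Blackadar--Handelman one: approximate an arbitrary element by one that is invertible, using real rank zero to produce projections and cancellation to match them up. We may assume $A$ is unital. If not, pass to $\tilde A$. This still has real rank zero by definition, and cancellation in $\tilde A$ should follow from cancellation in $A$ by a standard $K$-theoretic argument, or we simply work with the unital case, which is all that is needed later.

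Fix $x\in A$ and $\epsilon>0$. By Proposition~\ref{it:sr1PolarDecomp} it suffices to find a unitary $u$ with $\|x-u|x|\|<\epsilon$. The plan has four steps.
\begin{enumerate}
\item Use property (FS), which is equivalent to real rank zero, to approximate $|x|$, and hence $x^*x$, by a self-adjoint element with finite spectrum. From this we extract a projection $p$ such that $x^*x$ is small on $1-p$ and $x^*x\geq\delta p$ approximately, i.e.\ $xp$ is bounded below on $p$. Passing to $pAp$, which has real rank zero, lets us adjust $x$ to $x'=xe$ for a suitable spectral-type projection $e\approx p$, with $\|x-x'\|<\epsilon/2$ and $x'$ ``invertible on $e$''. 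Concretely, $(x')^*x'$ is invertible in $eAe$.
\item Take the polar decomposition $x'=v|x'|$. Here $v=x'((x')^*x')^{-1/2}$ is a genuine partial isometry in $A$, since $(x')^*x'$ is invertible in $eAe$, with $v^*v=e$ and $vv^*=f$.
\item So $e\sim_{\mathrm{MvN}} f$. Hence $e\oplus(1-e)\sim 1\sim f\oplus(1-f)$, and cancellation gives $1-e\sim_{\mathrm{MvN}}1-f$ via some partial isometry $w$.
\item Then $u\coloneqq v+w$ is a unitary with $u|x'|=x'$. This gives $\|x-u|x'|\|<\epsilon/2$, so $x$ is within $\epsilon$ of the invertible element $u(|x'|+\tfrac{\epsilon}{2}1)$.
\end{enumerate}

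The main obstacle is step~1: cutting $x$ down to an element whose modulus is invertible in a corner $eAe$. Only $x^*x$ is self-adjoint, so we first apply (FS) to it, getting $h=\sum_j\lambda_je_j$ with $\|h-x^*x\|<\eta$ and orthogonal projections $e_j$. We take $e=\sum_{\lambda_j\geq 2\eta}e_j$ and set $x'=xe$. Then $\|x(1-e)\|^2=\|(1-e)x^*x(1-e)\|\leq 3\eta$. Also $e(x^*x)e\geq e(h-\eta)e\geq\eta e$, so $(x')^*x'=ex^*xe$ is invertible in $eAe$. Finally, $\|x-x'\|\leq\sqrt{3\eta}$, and $|x'|$ is close to $|x|$ by continuity of functional calculus. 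Choosing $\eta$ small relative to $\epsilon$ makes all estimates fit.
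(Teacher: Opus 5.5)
Your argument for unital $A$ is correct, and it is the standard Blackadar--Handelman proof that the paper cites; the paper gives no proof of its own. You take $e$ to be a spectral projection of a finite-spectrum approximant of $x^*x$, so that $\|x-xe\|\leq\sqrt{3\eta}$ and $|xe|$ is invertible in $eAe$. This gives a partial isometry $v$ with $e\sim f$, cancellation gives $1-e\sim 1-f$, and $v$ extends to a unitary $u$ with $u|xe|=xe$. Your final estimate already exhibits the invertible $u(|x'|+\tfrac{\epsilon}{2}1)$ within $\epsilon$ of $x$. So you need neither Proposition~\ref{it:sr1PolarDecomp}, nor the closeness of $|x'|$ to $|x|$, nor the detour through $pAp$ in your first description of step~1.

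The one real gap is the non-unital reduction. Cancellation does not pass from $A$ to $\tilde A$ by any formal $K$-theoretic argument. For example, $C_0(\mathbb R^5)$ has no non-zero projections in any matrix algebra, so it has cancellation vacuously, yet its unitisation $C(S^5)$ fails cancellation.

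The reduction has to use real rank zero, as follows.
\begin{enumerate}
\item Real rank zero gives $A$ an approximate unit of projections $(p_i)$, so $\|a-p_iap_i\|\to 0$ for each $a\in A$.
\item Each corner $pAp$ inherits real rank zero. It also inherits cancellation, because a partial isometry between subprojections of $p\otimes 1_n$ automatically lies in $M_n(pAp)$.
\item Given $\lambda 1+a\in\tilde A$, perturb $\lambda$ if necessary so that $\lambda\neq 0$. By the unital case, choose $y$ invertible in $pAp$ close to $\lambda p+pap$.
\item Then $\lambda(1-p)+y$ is invertible in $\tilde A$, with inverse $\lambda^{-1}(1-p)+y^{-1}$, and it is close to $\lambda 1+a$.
\end{enumerate}

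The paper only applies the proposition to the unital algebras $\mathcal M^\omega$ and $\mathcal M$, so this does not affect anything downstream.
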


Stable rank one gives rise to good non-stable $K$-theoretic behaviour. We use the following two facts. The first is very well-known and goes back to Rieffel's original work.  The second is due to Brown, and the final part of \ref{it:BrownTrick} uses a standard functional calculus argument (\cite[Proposition 2.2.6]{rordam:KtheoryBook}).

\begin{theorem}\label{thm:sr1facts}
    Let $A$ be a unital $C^*$-algebra with stable rank one. Then
    \begin{enumerate}
        \item (Rieffel, \cite[Theorem 2.10]{rieffel87}) \label{it:sr1Rieffel} $A$ is $K_1$-bijective, i.e.\ the canonical mapping $U(A)/U_0(A) \to K_1(A)$ is bijective.
        \item (Brown, \cite[Theorem~1]{Brown95}) \label{it:BrownTrick} If $p,q\in A$ are projections that are Murray--von Neumann equivalent, then they are homotopic and therefore unitarily equivalent via a unitary in the path component of the identity.
    \end{enumerate}
\end{theorem}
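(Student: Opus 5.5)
Both parts are essentially citations, so the plan is to reduce each statement to the cited theorem and supply only the short bridging arguments the paper attributes to standard techniques.

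For \ref{it:sr1Rieffel}, I would invoke \cite[Theorem 2.10]{rieffel87} directly, recalling the mechanism only to check the hypotheses. \emph{Surjectivity} is the easier half. Let $u\in U(M_{n+1}(A))$. Stable rank one of $A$ makes the top-left entry of the last column approximable by invertibles. Row and column operations by elementary matrices, which lie in the connected component of the identity, then reduce $u$ to $\mathrm{diag}(v,1)$ with $v\in GL_n(A)$. Polar decomposition and induction on $n$ show that every class in $K_1(A)$ comes from $U(A)$.

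\emph{Injectivity} is the more substantial half. It amounts to the cancellation statement: if $\mathrm{diag}(u,1)\in U_0(M_2(A))$ then $u\in U_0(A)$. Here I would quote Rieffel's estimate on the connected stable rank, $\mathrm{csr}(A)\le \mathrm{sr}(A)+1=2$, which gives $GL_1(A)/GL_1(A)_0\cong GL_2(A)/GL_2(A)_0$. Stable rank one passes to matrix algebras (\cite[Theorem 3.3]{Rieffel83}), so the same reasoning applies at every matrix level. The stabilisation maps are therefore all bijective, and so $U(A)/U_0(A)\to K_1(A)$ is a bijection.

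For \ref{it:BrownTrick}, the homotopy statement is exactly \cite[Theorem~1]{Brown95}, which I would quote. It gives a norm-continuous path of projections $(p_t)_{t\in[0,1]}$ with $p_0=p$ and $p_1=q$. To upgrade this to unitary equivalence by a unitary in $U_0(A)$, I would follow \cite[Proposition 2.2.6]{rordam:KtheoryBook}. By uniform continuity, choose $0=t_0<t_1<\dots<t_k=1$ with $\|p_{t_{j}}-p_{t_{j-1}}\|<1/2$. For each $j$, set $x_j=p_{t_j}p_{t_{j-1}}+(1-p_{t_j})(1-p_{t_{j-1}})$. This element is within distance $1$ of $1$, hence invertible, and the segment from $1$ to $x_j$ consists of invertibles. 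Its unitary part $w_j=x_j|x_j|^{-1}$ therefore lies in $U_0(A)$ and satisfies $w_jp_{t_{j-1}}w_j^*=p_{t_j}$. The product $w=w_k\cdots w_1\in U_0(A)$ then satisfies $wpw^*=q$.

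The only genuine difficulty is the content of Brown's theorem itself, namely producing the homotopy rather than merely some conjugating unitary. A naive approach takes $v$ with $v^*v=p$ and $vv^*=q$, uses cancellation from stable rank one to find $w$ with $w^*w=1-p$ and $ww^*=1-q$, and forms the unitary $v+w$. This only gives a unitary whose $K_1$-class need not vanish. Correcting that class by a unitary of the corner $(1-p)A(1-p)$ requires control of $K_1$ of corners, which is precisely what Brown's argument handles. I would therefore rely on the citation for this step rather than reprove it.
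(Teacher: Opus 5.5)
Your plan matches the paper's, which gives no argument of its own. It cites \cite[Theorem 2.10]{rieffel87} for (i), and \cite[Theorem~1]{Brown95} together with \cite[Proposition 2.2.6]{rordam:KtheoryBook} for (ii). Your treatment of (ii) is exactly the cited argument: since $x_j-1=(2p_{t_j}-1)(p_{t_{j-1}}-p_{t_j})$, you have $\|x_j-1\|<1$. Your surjectivity sketch for (i) is also the standard one.

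The step that fails is your explanation of injectivity in (i). The estimate $\mathrm{csr}(A)\le\mathrm{sr}(A)+1=2$ says that $GL_m(A)_0$ acts transitively on the unimodular columns $Lg_m(A)$ for $m\ge 2$. Through the last-column map $GL_2(A)\to Lg_2(A)$, this shows only that $GL_1(A)/GL_1(A)_0\to GL_2(A)/GL_2(A)_0$ is \emph{surjective}. Injectivity would require loops in $Lg_2(A)$ to lift to loops in $GL_2(A)$, and connectedness does not give that.

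The implication you use is in fact false. Take $A=M_2(C(S^4))$.
A unimodular column of length $m$ over $A$ is a map from $S^4$ into the injective linear maps $\mathbb C^2\to\mathbb C^{2m}$. That space is homotopy equivalent to the Stiefel manifold $V_2(\mathbb C^{2m})$, which is $(4m-4)$-connected. So $Lg_m(A)$ is connected for $m\ge 2$. The orbits of $GL_m(A)_0$ are open, so this gives transitivity and hence $\mathrm{csr}(A)\le 2$. Nevertheless $U(A)/U_0(A)\cong\pi_4(U(2))\cong\mathbb Z/2$ maps to $U(M_2(A))/U_0(M_2(A))\cong\pi_4(U(4))=0$, so the level-one map is not injective. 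This is no counterexample to the theorem, since $\mathrm{sr}(A)=2$ here.

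Level-one injectivity is exactly the extra content of Rieffel's 1987 theorem, and it uses stable rank one more deeply than the connected stable rank bound does. That half of (i) has to rest on the citation, as in the paper, and your reduction via $\mathrm{csr}$ should be dropped.
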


The second part of Theorem \ref{thm:sr1facts} will be used in conjunction with the following well known lifting result (which follows as one can write a unitary in the path component of the identity as a product of exponentials $e^{ih}$, where $h$ is self-adjoint).
\begin{proposition}\label{Prop:LiftingPathComponent}
    Let $I$ be an ideal in a unital $C^*$-algebra $A$.  If $u\in U(A/I)$ lies in the path component of the identity, then there exists a unitary $v\in A$ lifting $u$.
\end{proposition}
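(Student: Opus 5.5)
The statement to prove is Proposition~\ref{Prop:LiftingPathComponent}: if $u \in U(A/I)$ lies in the path component of the identity, then $u$ lifts to a unitary in $A$. The plan is to reduce to a finite product of exponentials and lift each factor separately. Let $\pi\colon A\to A/I$ denote the quotient map, which is a unital surjective $^*$-homomorphism.

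The first step is to write $u = e^{ih_1}e^{ih_2}\cdots e^{ih_k}$ with each $h_j \in (A/I)_{\mathrm{sa}}$. This is the standard description of $U_0$ of a unital $C^*$-algebra (see \cite{rordam:KtheoryBook}). For completeness, here is the argument. Take a continuous path $(u_t)_{t\in[0,1]}$ from $1$ to $u$. By uniform continuity, choose a partition $0=t_0<t_1<\dots<t_k=1$ such that $\|u_{t_{j}}-u_{t_{j-1}}\|<2$ for each $j$. Then $w_j \coloneqq u_{t_{j-1}}^*u_{t_j}$ is a unitary with $\|w_j-1\|<2$, so $-1\notin\mathrm{sp}(w_j)$. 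Continuous functional calculus with a branch of the logarithm defined on the circle minus $\{-1\}$ then gives $h_j \coloneqq -i\log(w_j)$, which is self-adjoint and satisfies $e^{ih_j}=w_j$. Since $u_{t_0}=1$, the product telescopes to $u = w_1w_2\cdots w_k$.

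The second step is to lift each self-adjoint element. Choose any lift $a_j\in A$ of $h_j$ and replace it by $k_j\coloneqq (a_j+a_j^*)/2$. Since $\pi$ preserves adjoints, $k_j$ is self-adjoint with $\pi(k_j)=h_j$.

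The third step is to define $v\coloneqq e^{ik_1}\cdots e^{ik_k}$. This is a unitary in $A$, because each factor is the exponential of $i$ times a self-adjoint element. A $^*$-homomorphism commutes with continuous functional calculus; concretely, $e^{ik}$ is a norm-convergent power series in $k$ and $\pi$ is continuous. Hence $\pi(e^{ik_j})=e^{ih_j}$, and therefore $\pi(v)=u$, as required.

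None of these steps is genuinely difficult. The only point needing care is the first step, namely the passage from path-connectedness to a finite product of exponentials, and this is handled by the spectral gap $\|w_j-1\|<2$, which permits taking a continuous logarithm.
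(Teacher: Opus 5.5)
Your proof is correct and follows the same route the paper indicates: write $u$ as a finite product of exponentials $e^{ih_j}$ with $h_j$ self-adjoint, lift each $h_j$ to a self-adjoint element of $A$, and exponentiate. Your telescoping argument with $\|w_j-1\|<2$ and a continuous logarithm simply fills in the standard fact that the paper quotes without proof.
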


\subsection{Choquet theory}\label{subsec:Choquet}
Given a compact convex subset $X$ of a locally convex topological vector space, we write $\Aff(X)$ for the space of a continuous affine functions $X \rightarrow \R$. We write $\Aff(X)_+$ for the non-negative valued functions in $\Aff(X)$. 
The \emph{barycentre} of a Radon probability measure $\mu \in \mathrm{Prob}(X)$ is the uniquely determined $x \in X$ such that  
\begin{equation}
	\int_X f \, d\mu = f(x), \qquad f \in \Aff(X).
\end{equation}
(These exist by \cite[Proposition~I.2.1]{Alf71}.)
We write $\partial_e X \subseteq X$ for the set of extreme points of $X$.
A Radon measure $\mu$ is said to be \emph{supported} on $\partial_e X$ if for every Baire measurable set $E \subseteq X$ with $\partial_e X \subseteq E$, we have $|\mu|(X\setminus E) = 0$.\footnote{When $X$ is metrisable, the set $\partial_e X$ is a $G_\delta$-set, so $\mu \in \mathrm{Prob}(X)$ is supported on $\partial_e X$ if and only if $\mu(\partial_e X) = 1$.}  
A compact convex set $X$ is called a \emph{Choquet simplex} if for every $x \in X$ there is a unique $\mu \in \mathrm{Prob}(X)$ supported on $\partial_e X$ that has barycentre $x$.\footnote{The Choquet--Bishop--de Leeuw theorem ensures there is always at least one such $\mu$; see  \cite[Theorem~I.4.8 and Corollary~1.4.12]{Alf71}.} 

We now recall the fundamental result connecting Choquet theory and $C^*$-algebras, which we will use without citing further.
\begin{theorem}[{cf.\ \cite[Theorem~3.1.18]{Sak98}}]\label{thm:traces-choquet}
	Let $A$ be a unital $C^*$-algebra. Then $T(A)$ is a Choquet simplex.
\end{theorem}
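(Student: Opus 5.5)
The plan is to verify the Choquet--Meyer criterion: a compact convex set $X$ is a Choquet simplex if and only if the cone $\R_{\geq 0}X$, ordered by its own positive cone, is a lattice (\cite[Theorem~II.3.6]{Alf71}). Here $T(A)$ is a weak$^*$-closed, bounded, convex subset of the state space, so it is a compact convex base. It is the base of the cone $T^+(A)$ of positive tracial functionals on $A$, because the norm of a positive functional on a unital algebra is its value at $1_A$. So it suffices to show that $T^+(A)$, with $\sigma\le\tau$ meaning $\tau-\sigma\in T^+(A)$, is a lattice. I will do this by identifying $T^+(A)$ order-isomorphically with the positive part of the predual of an abelian von Neumann algebra. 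Such a predual is an $L^1$-space and hence a lattice.

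\textbf{Step 1 (pass to $A^{**}$).} Each $\tau\in T^+(A)$ extends uniquely to a normal positive functional $\tilde\tau$ on $A^{**}$. By weak$^*$-density of $A$ in $A^{**}$ and separate weak$^*$-continuity of multiplication, $\tilde\tau(uxu^*)=\tilde\tau(x)$ for unitaries $u\in A$ and all $x\in A^{**}$. By Kaplansky density for unitaries this extends to all unitaries of $A^{**}$, so $\tilde\tau$ is a normal trace. Let $z\in Z(A^{**})$ be the central projection onto the finite summand, so $zA^{**}$ is the largest finite direct summand. The support of a normal trace is a central projection on which the trace is faithful, so that summand is finite. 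Hence every $\tilde\tau$ is supported under $z$.

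\textbf{Step 2 (centre-valued trace).} The finite von Neumann algebra $N=zA^{**}$ has a normal, faithful, unital centre-valued trace $\mathrm{T}\colon N\to Z(N)$. It is the identity on $Z(N)$, and every normal trace $\phi$ on $N$ satisfies $\phi=\phi|_{Z(N)}\circ\mathrm{T}$. Consider the map
\[
\Phi\colon T^+(A)\to Z(N)_{*,+},\qquad \Phi(\tau)=\tilde\tau|_{Z(N)}.
\]
This map is additive and positively homogeneous. It has an inverse $\psi\mapsto(\psi\circ\mathrm{T}\circ(x\mapsto zx))|_A$; this is a positive tracial functional because $\mathrm{T}$ is positive and tracial. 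The two maps are mutually inverse: one direction uses the factorisation through $\mathrm{T}$ together with uniqueness of normal extensions, and the other uses $\mathrm{T}|_{Z(N)}=\mathrm{id}$. Both maps are positive, so $\Phi$ is an order isomorphism of cones. Note that the difference of two traces is positive on $A$ if and only if it is positive on $A^{**}$.

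\textbf{Step 3 (conclude).} Write $Z(N)\cong L^\infty(\mu)$ for some (possibly non-$\sigma$-finite) localisable measure. Then $Z(N)_*\cong L^1(\mu)$, whose positive cone is a lattice under pointwise max and min. Transporting this lattice structure through $\Phi$ makes $T^+(A)$ a lattice, and Choquet--Meyer gives that $T(A)$ is a Choquet simplex.

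The step I expect to require the most care is Step 2. One must check that the factorisation $\phi=\phi|_{Z}\circ\mathrm{T}$ holds for all normal traces, not only faithful ones; this is done by cutting down by the central support. One must also check that order, not just the bijection, is preserved. That follows because differences of traces are again traces, to which the same factorisation applies. Everything else is standard bookkeeping.
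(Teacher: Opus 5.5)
Your argument is correct. The paper gives no proof of this statement; it is quoted as a standard fact with a pointer to Sakai's book. So your proposal is a self-contained route rather than a reconstruction of anything in the text.

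A more economical, local argument is also available. For $\rho\in T^+(A)$, a Radon--Nikodym argument in $\pi_\rho(A)''$ identifies the order interval $\{\sigma\in T^+(A):\sigma\le\rho\}$ with $\{h\in Z(\pi_\rho(A)''):0\le h\le 1\}$. The lattice operations can then be read off there, since any finitely many traces lie in a common such interval.

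You instead work globally in $A^{**}$. This costs the finite/properly infinite decomposition and the centre-valued trace of a typically non-$\sigma$-finite von Neumann algebra. In return it gives the cleaner statement that $T^+(A)\cong Z(zA^{**})_{*,+}$ as ordered cones, i.e.\ $T(A)$ is affinely isomorphic to the normal state space of an abelian von Neumann algebra.

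This isomorphism is affine but in general not a weak$^*$-homeomorphism. Already for $A=C([0,1])$, taking $c\in A^{**}$ to be the minimal projection corresponding to a point, the map $\tau\mapsto\tilde\tau(c)$ is not weak$^*$-continuous. That is harmless: compactness comes from $T(A)\subseteq A^*$, and the Choquet--Meyer criterion is purely order-theoretic.

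Two minor points. First, the factorisation $\phi=\phi|_{Z(N)}\circ\mathrm{T}$ holds for every bounded tracial functional on $N$, because $\mathrm{T}(x)$ lies in the norm-closed convex hull of $\{uxu^*:u\in U(N)\}$ (Dixmier). So the cutting down by central supports that you anticipate in Step 2 is unnecessary. Second, Choquet--Meyer literally gives uniqueness of \emph{maximal} representing measures. This matches the paper's boundary-measure definition at least when $T(A)$ is metrisable.
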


In the remainder of this subsection, we collect some preliminary results about continuous affine functionals on a Choquet simplex. We recall that a convex subset $Y$ of a compact convex set $X$ is called a \emph{face} if for all $x_1,x_2 \in X$ with $\tfrac{1}{2}(x_1+x_2) \in Y$ it follows that $x_1,x_2 \in Y$. 

The following theorem of Edwards is the fundamental tool for building affine functionals on Choquet simplex.
\begin{theorem}[{see for example \cite[Theorem II.3.10]{Alf71}}]\label{thm:edwards}
	Let $X$ be a Choquet simplex $X$.
    Let $F,G:X \rightarrow \R$. 
    Suppose that $F$ is upper-semicontinuous and convex, $G$ is lower-semicontinuous and concave, and $F \leq G$.
    Then there exists $h \in \Aff(X)$ such that $F \leq h \leq G$. 
\end{theorem}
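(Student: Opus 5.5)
The statement to prove is Edwards' separation theorem, and I will argue by Hahn--Banach separation in $\Aff(X)\oplus\R$ combined with the simplex property. The idea is that the simplex property, in its lattice/Riesz interpolation form, is exactly what allows a continuous affine function to be squeezed between a convex u.s.c.\ function and a concave l.s.c.\ one.

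\textbf{Step 1: reduction to the continuous case.} First reduce to the case where $F$ is continuous convex and $G$ is continuous concave with $F<G$ strictly. An u.s.c.\ convex $F$ is the pointwise infimum of the continuous convex functions above it, since the continuous affine functions $\ge F$ already have infimum $F$ by a Hahn--Banach/Choquet upper envelope argument. Dually, $G$ is the supremum of the continuous affine functions below it. By compactness and Dini-type arguments, this reduction is routine but fiddly.

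\textbf{Step 2: the main step, for $F$ a finite max of affines and $G$ a finite min of affines.} Write $F=\max_i a_i$ and $G=\min_j b_j$ with $a_i,b_j\in\Aff(X)$ and $a_i< b_j$ everywhere. I then need $h\in\Aff(X)$ with $a_i\le h\le b_j$ for all $i,j$. This is precisely Riesz interpolation in the ordered space $\Aff(X)$, and $\Aff(X)$ has the Riesz interpolation property exactly when $X$ is a Choquet simplex. I would prove this via the lattice characterisation of simplices: the cone $\{(\lambda x):\lambda\ge0,\ x\in X\}$ is a lattice (equivalently, measures on $X$ with the same barycentre can be compared via maximal measures), and dualising gives interpolation for $\Aff(X)$. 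I expect this to be the main obstacle. Concretely, I would show the set $\{h\in\Aff(X): h\ge a_i\ \forall i\}$ and the condition $h\le b_j$ can be met by a Hahn--Banach separation on $C(X)$. Suppose no such $h$ exists; a separating functional yields measures $\mu_i,\nu_j\ge0$ with $\sum_i\mu_i$ and $\sum_j\nu_j$ having the same action on $\Aff(X)$. Using the simplex property to refine these (decomposing $\sum\mu_i$ and $\sum\nu_j$ via uniqueness of maximal representing measures), I obtain a common refinement $\rho_{ij}$ with $\sum_j\rho_{ij}\sim\mu_i$ and $\sum_i\rho_{ij}\sim\nu_j$ (equivalent on affine functions). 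Integrating $a_i<b_j$ against $\rho_{ij}$ then gives a contradiction.

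\textbf{Step 3: passing to general $F\le G$.} Finally, pass from the strict finite case to general $F\le G$. Using compactness, for each $\epsilon>0$ I find finite families with $F\le\max a_i\le G+\epsilon$ and $F-\epsilon\le \min b_j\le G$, and interpolate to get $h_\epsilon$ with $F-\epsilon\le h_\epsilon\le G+\epsilon$. To get exact bounds I iterate in the standard way: choose $h_1$ between $F-2^{-1}$ and $G+2^{-1}$, then interpolate between $\max(F,h_1-2^{-1})-2^{-2}$ and $\min(G,h_1+2^{-1})+2^{-2}$ (still convex u.s.c.\ and concave l.s.c.\ respectively, with the former below the latter). This produces a uniformly Cauchy sequence $h_n$ whose uniform limit $h\in\Aff(X)$ satisfies $F\le h\le G$.
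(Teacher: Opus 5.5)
You have a legitimate overall architecture: Riesz interpolation in $\Aff(X)$ from uniqueness of maximal measures, then compactness, then the $2^{-n}$ iteration. The paper gives no proof of its own and only cites Alfsen. It even treats Riesz interpolation as partly a \emph{consequence} of Edwards' theorem (remark before Corollary~\ref{cor:riesz-properties}), so your route reverses that order. That is fine, because your Step~2 is sound. Replace $\mu_i,\nu_j$ by maximal measures $\tilde\mu_i,\tilde\nu_j$ that agree with them on $\Aff(X)$. Uniqueness then forces $\sum_i\tilde\mu_i=\sum_j\tilde\nu_j\eqqcolon\sigma$, and $\rho_{ij}\coloneqq\frac{d\tilde\mu_i}{d\sigma}\frac{d\tilde\nu_j}{d\sigma}\,\sigma$ is the required refinement. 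Your final iteration is also correct. The problems lie in how you pass from $F,G$ to finitely many affine functions.

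\textbf{Step 1 is wrong and should be dropped.} The infimum of the continuous affine functions above a convex u.s.c.\ $F$ is its concave envelope $\widehat F$, not $F$: for $F(t)=t^2$ on $[0,1]$ it is $t$. Dually, the supremum of affine functions below a concave $G$ is the convex envelope $\check G$. Affine envelopes recover concave u.s.c.\ and convex l.s.c.\ functions, the opposite pairing. The Dini reduction also fails, because minima of convex functions are not convex, so the approximating families are not directed.

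\textbf{The essential gap is in Step~3: your families need not be interpolable.} The conditions $F\le\max_i a_i\le G+\eps$ and $F-\eps\le\min_j b_j\le G$ give no inequality $a_i\le b_j$, which Step~2 requires. For example, $F\equiv0$, $G\equiv1$, $a_1\equiv1$, $b_1\equiv0$ satisfy both conditions. The families must be built in sequence.
\begin{itemize}
\item First choose $a_1,\dots,a_n\in\Aff(X)$ with every $a_i\le G$ and $\max_i a_i>F-\eps$. This works because $F\le\check G$: by Hahn--Banach, $\check G(x)=\inf\{\mu(G)\}$ over probability measures $\mu$ with barycentre $x$, and Jensen gives $F(x)\le\mu(F)\le\mu(G)$. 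Compactness then finishes this step, since the sets $\{a-F>-\eps\}$ are open.
\item Next put $\phi\coloneqq\max_i a_i$, which is continuous, convex and $\le G$. Then $\widehat\phi(x)=\sup_\mu\mu(\phi)\le G(x)$ by Jensen for $G$. Compactness, via the open sets $\{b-G<\eps\}$, gives $b_1,\dots,b_m\in\Aff(X)$ with every $b_j\ge\phi$ and $\min_j b_j<G+\eps$.
\item Now $a_i\le b_j$ for all $i,j$. Step~2 applied to $a_i-\eps<b_j$ gives $h$ with $F-2\eps<h<G+\eps$, and your iteration completes the proof.
\end{itemize}
Jensen's inequality for u.s.c.\ convex and l.s.c.\ concave functions needs a word of justification, for instance by approximating $\mu$ weak$^*$ by discrete measures with the same barycentre.
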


As a corollary, we obtain the following extension results with control given by a pre-selected affine function.
The proof is a minor adaptation of the proof of \cite[Theorem~II.5.19]{Alf71} (which is essentially the case that $g$ is constant).

\begin{corollary}[{cf.\  proof of \cite[Theorem~II.5.19]{Alf71}}]\label{cor:extending-affine-2}
    Let $X$ be a Choquet simplex and $Y$ be a closed face of $X$. Let $f\in \Aff(Y)_+$ and $g\in \Aff(X)_+$ be such that $f\leq g|_Y$. Then there exists $h\in \Aff(X)_+$, such that $h|_{Y}=f$ and $h\leq g$.
\end{corollary}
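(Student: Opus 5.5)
We adapt the proof of \cite[Theorem~II.5.19]{Alf71}, building the extension $h$ by a successive approximation scheme driven by Edwards' separation theorem (Theorem~\ref{thm:edwards}).

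\emph{Reduction to a one-step extension.} First, Theorem~\ref{thm:edwards} already handles an extension with a small error. Given $f' \in \Aff(Y)$ and $\epsilon > 0$, suppose
\begin{equation}
	-\epsilon \le f' \le g|_Y + \epsilon .
\end{equation}
Then we seek $h' \in \Aff(X)$ with $-\epsilon \le h' \le g+\epsilon$ on $X$ and $h'|_Y = f'$ up to a small error. To obtain it, define $F$ to equal $\max(f',0)$ on $Y$ and $0$ off $Y$. Define $G$ to equal $\min(f',g)$ on $Y$ and $g$ off $Y$. We must check that $F$ is upper-semicontinuous and convex and that $G$ is lower-semicontinuous and concave. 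The key input is that $Y$ is a closed face.
\begin{itemize}
	\item $Y$ is closed, so $F$ is upper-semicontinuous and $G$ is lower-semicontinuous.
	\item $Y$ is a face, so a convex combination lies in $Y$ only if all its points do. This is exactly what makes the "$0$ off $Y$" and "$g$ off $Y$" extensions convex and concave respectively.
\end{itemize}
Concavity of $\min(f',g)$ on $Y$ is fine, since it is a minimum of affine functions.

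There is one subtlety. On $Y$ we need $F \le G$, i.e.\ $\max(f',0) \le \min(f',g)$. This requires $0 \le f' \le g$ exactly, not merely up to $\epsilon$. So the first step works cleanly only in the exact case $0 \le f \le g|_Y$. In that case it yields $h_0$ with $0 \le h_0 \le g$ and $\max(f,0) \le h_0 \le \min(f,g)$ on $Y$, hence $h_0|_Y = f$ directly.

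\emph{Checking whether iteration is needed.} Applying Edwards with $F$ and $G$ as above gives $h \in \Aff(X)$ with $F \le h \le G$. On $Y$ we have $f \le h \le f$, so $h|_Y = f$. Off $Y$ we have $0 \le h \le g$, and these bounds also hold on $Y$. So $h \in \Aff(X)_+$, $h \le g$ and $h|_Y = f$, and it seems no iteration is needed at all.

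The main obstacle is therefore the semicontinuity and convexity verification.
\begin{itemize}
	\item \emph{Convexity of $F$.} Take $x = tx_1 + (1-t)x_2$ with $t\in(0,1)$. If $x \notin Y$, then $F(x) = 0 \le tF(x_1)+(1-t)F(x_2)$, since $F\ge0$. If $x \in Y$, the face property puts $x_1,x_2 \in Y$, and then convexity of $\max(f,0)$ gives the inequality. Here $f \ge 0$, so $\max(f,0) = f$ is affine.
	\item \emph{Concavity of $G$.} If $x\notin Y$, then $G(x)=g(x)=tg(x_1)+(1-t)g(x_2)\ge tG(x_1)+(1-t)G(x_2)$, since $G \le g$ everywhere. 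If $x\in Y$, then $x_1,x_2\in Y$ and $\min(f,g)=f$ is affine, by the hypothesis $f\le g|_Y$.
	\item \emph{Upper-semicontinuity of $F$.} Since $f \ge 0$ is continuous on the closed set $Y$ and $F$ vanishes off $Y$, the superlevel sets $\{F \ge c\}$ are closed: for $c>0$ they lie in $Y$, and for $c\le 0$ they are all of $X$.
	\item \emph{Lower-semicontinuity of $G$.} $G = g$ off $Y$ and $G = f \le g$ on the closed set $Y$. We need each set $\{G \le c\} = \{g\le c\}\cup\{y\in Y: f(y)\le c\}$ to be closed. This holds because it is a union of two closed sets.
\end{itemize}
Once these checks are done, Theorem~\ref{thm:edwards} produces the required $h$.
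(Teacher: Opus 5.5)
Your argument is correct and is the same as the paper's: since $f\ge 0$ and $f\le g|_Y$, your functions reduce to $F=f$ on $Y$ and $0$ off $Y$, and $G=f$ on $Y$ and $g$ off $Y$, so Edwards' theorem (Theorem~\ref{thm:edwards}) produces $h$ exactly as in the paper. Your sublevel-set check of the lower-semicontinuity of $G$ is a slightly cleaner version of the paper's neighbourhood argument, and the opening discussion of $\epsilon$-errors and iteration is unnecessary and can be deleted.
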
 
\begin{proof}
    Define $F,G:X \rightarrow \R$ by
    \begin{equation}
        F(x) \coloneqq \begin{cases}
            f(x), & x \in Y, \\
            0,   & x \not\in Y,
        \end{cases}
        \quad \text{and} \quad
        G(x) \coloneqq \begin{cases}
            f(x), & x \in Y, \\
            g(x), & x \not\in Y.
        \end{cases}       
    \end{equation}
    Then $F$ is upper-semicontinuous and convex, $G$ is lower-semicontinuous and concave, and $0 \leq F \leq G$. 
    \footnote{Proof that $G$ is concave and lower-semicontinuous (similar proofs work for $F$): Let $x_1,x_2 \in X$ and $\lambda \in (0,1)$. Set $x_3 = \lambda x_1+(1-\lambda)x_2$. We claim that $G(x_3) \geq \lambda G(x_1)+ (1-\lambda)G(x_2)$. This is clear if $x_3 \not\in Y$ as then $G(x_3) \geq G(x_1), G(x_2)$. If $x_3 \in Y$, then $x_1,x_2 \in Y$ as $Y$ is a face, so we have $G(x_3) = \lambda G(x_1)+ (1-\lambda)G(x_2)$ as $G|_Y = f$ and $f$ is affine.  Clearly $G$ is continuous on $X \setminus Y$. Let $x_0 \in Y$ and suppose $G(x_0) > c$. Then $g(x_0) \geq f(x_0) > c$. By continuity of $g$ and $f$, there is $U$ open in $Y$ such that $f(x) > c$ for $x \in U$ and also $V$ open in $X$ such that $g(x) > c$ for $x \in V$. Choose an open neighbourhood $W$ of $x_0$ in $X$ such that $W \subseteq V$ and $W \cap Y \subseteq U$. Then $G(x) > c$ for $ x \in W$.  
    } 
    By Theorem~\ref{thm:edwards}, there exists $h \in \operatorname{Aff}(X)$ such that $F(x) \leq h(x) \leq G(x)$ for all $x \in X$.
    As $f\leq g|_Y$, we have $G \leq g$, and so $h \leq g$. As $F \geq 0$, we have $h \geq 0$. Since $F(y) = G(y) = f(y)$ for all $y \in Y$, it follows that $h|_Y = f$. 
\end{proof} 

We will apply the previous result in the following form to lift continuous affine maps into the tracial state space of finite-dimensional $C^*$-algebras.

\begin{lemma}\label{lem:aff-lift-valuesp}
    Let $X$ be a Choquet simplex, $Y$ be a closed face of $X$, and let $F$ be a finite-dimensional $C^*$-algebra.  Every continuous affine map $\alpha\colon Y\to T(F)$ admits an extension to a continuous affine map $\beta\colon X\to T(F)$.
    \end{lemma}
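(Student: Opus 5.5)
We plan to reduce the statement to Corollary~\ref{cor:extending-affine-2} by coordinatising $T(F)$. Write $F\cong\bigoplus_{k=1}^m M_{n_k}$. The trace space $T(F)$ is then affinely homeomorphic to the standard simplex
\begin{equation}
\Delta_m=\Bigl\{(t_1,\dots,t_m)\in[0,1]^m:\textstyle\sum_k t_k=1\Bigr\}.
\end{equation}
The identification sends a trace $\sigma$ to $(\sigma(1_{M_{n_k}}))_k$; in the other direction, a point $t$ is sent to $\sum_k t_k\,\mathrm{tr}_{n_k}$, where $\mathrm{tr}_{n_k}$ is the normalised trace on the $k$-th summand. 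Under this identification, a continuous affine map $\alpha\colon Y\to T(F)$ is exactly an $m$-tuple $(f_1,\dots,f_m)$ with $f_k\in\Aff(Y)_+$ and $\sum_k f_k=1$ on $Y$. So the task is to extend such a tuple to $h_1,\dots,h_m\in\Aff(X)_+$ with $\sum_k h_k=1$ on $X$.

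We would extend the coordinates one at a time, using $g$ in Corollary~\ref{cor:extending-affine-2} to record the remaining budget.

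First, apply Corollary~\ref{cor:extending-affine-2} with $f=f_1$ and $g=1$. This is allowed since $f_1\le 1$ on $Y$, and it gives $h_1\in\Aff(X)_+$ with $h_1|_Y=f_1$ and $h_1\le 1$.

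Inductively, suppose $h_1,\dots,h_{k-1}$ have been constructed. Set $g_k\coloneqq 1-\sum_{j<k}h_j$, which lies in $\Aff(X)_+$ by the inductive bound. On $Y$ we have $g_k|_Y=\sum_{j\ge k}f_j\ge f_k$. Corollary~\ref{cor:extending-affine-2} then produces $h_k\in\Aff(X)_+$ with $h_k|_Y=f_k$ and $h_k\le g_k$. This is precisely the statement that $\sum_{j\le k}h_j\le 1$, which is the inductive bound needed at the next stage.

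For the last coordinate we do not invoke the corollary. Instead we simply set $h_m\coloneqq 1-\sum_{j<m}h_j$. It is non-negative by the inductive bound, and its restriction to $Y$ is $1-\sum_{j<m}f_j=f_m$. Finally, define $\beta(x)\coloneqq\sum_k h_k(x)\,\mathrm{tr}_{n_k}$. This map is continuous and affine, takes values in $T(F)$ because the $h_k$ are non-negative and sum to $1$, and restricts to $\alpha$ on $Y$.

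The only point needing real care is maintaining the sum constraint $\sum_k h_k=1$ while extending the coordinates individually. This is exactly why the upper-bound control $g$ in Corollary~\ref{cor:extending-affine-2} is needed, rather than just a bare extension result. The degenerate case $m=1$ is trivial, since then $T(F)$ is a single point.
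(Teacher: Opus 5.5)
Your proposal is correct and is essentially the paper's own proof. The paper likewise writes $\alpha=\sum_k \alpha_k\,\mathrm{tr}_{n_k}$ and extends the first $m-1$ coefficients inductively via Corollary~\ref{cor:extending-affine-2}, using the remaining budget $1-\sum_{j<k}\beta_j$ as the upper bound $g$, then defines the last coefficient as the remainder.
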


\begin{proof}
    Write $F=\oplus_{k=1}^mM_{n_k}$ and let $\alpha\colon Y \rightarrow T(F)$ be given. Then we can decompose $\alpha$ as a sum $\sum_{k=1}^m \alpha_k \mathrm{tr}_{n_k}$, where $\mathrm{tr}_{n_k}$ is the normalised trace on $M_{n_k}$ and for each $k=1,2,\hdots,m$,  $\alpha_k\colon Y\rightarrow [0,1]$ is a continuous affine function such that $\sum_{k=1}^m \alpha_k=1$.
    Inductively, for each $k=1,\dots,m-1$, using Corollary~\ref{cor:extending-affine-2}, we may find $\beta_k \in \Aff(X)_+$ such that $\beta_k|_Y=\alpha_k$ and $\beta_k \leq 1-(\beta_1+\cdots+\beta_{k-1})$. 
    Then set $\beta_m\coloneqq 1-(\beta_1+\cdots+\beta_{m-1})$, which will automatically satisfy $\beta_m|_Y=\alpha_m$, and finally set $\beta\coloneqq \sum_{k=1}^m \beta_k \mathrm{tr}_{n_k}$.
\end{proof}
    
We next record that $\Aff(X)$ has the Riesz interpolation and decomposition properties if (and only if) $X$ is a Choquet simplex (also partly a consequence of Edwards' Theorem). 

\begin{corollary}[{\cite[Proposition II.3.1 and Corollary II.3.11]{Alf71}}]\label{cor:riesz-properties}
    Let $X$ be a compact convex set. Then the following are equivalent.
    \begin{enumerate}
        \item $X$ is a Choquet simplex;
        \item Given $a_1,a_2,b_1,b_2 \in \Aff(X)$ with $a_i \leq b_i$ for $i \in \{1,2\}$. There exists $h \in \Aff(X)$ such that $a_i \leq h \leq b_i$ for for $i \in \{1,2\}$ (Riesz interpolation).
        \item Given $a, b_1,b_2 \in \Aff(X)$ with $0 \leq a \leq b_1 + b_2$. There exist $a_1, a_2 \in \Aff(X)$ such that $a = a_1 + a_2$ and $0 \leq a_i \leq b_i$ for $i \in \{1,2\}$ (Riesz decomposition).
    \end{enumerate}
\end{corollary}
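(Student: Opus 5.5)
The plan is to prove the cycle (i)$\Rightarrow$(ii)$\Rightarrow$(iii)$\Rightarrow$(i). The first two implications are formal. The last is the genuine Choquet-theoretic content.

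\emph{(i)$\Rightarrow$(ii).} I would apply Edwards' Theorem~\ref{thm:edwards} directly. Given $a_1,a_2,b_1,b_2\in\Aff(X)$ with $a_i\le b_j$ for all $i,j$, set $F\coloneqq\max(a_1,a_2)$ and $G\coloneqq\min(b_1,b_2)$. Then $F$ is continuous and convex, being a maximum of affine functions. Likewise $G$ is continuous and concave. The hypotheses give $F\le G$, so Theorem~\ref{thm:edwards} produces $h\in\Aff(X)$ with $F\le h\le G$, which is exactly the interpolant. I read the hypothesis of (ii) as $a_i\le b_j$ for all $i,j\in\{1,2\}$, as is standard for Riesz interpolation.

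\emph{(ii)$\Rightarrow$(iii).} This is the standard ordered-group argument. Given $0\le a\le b_1+b_2$ with $b_1,b_2\ge 0$, consider the four inequalities
\[
0\le a,\qquad 0\le b_1,\qquad a-b_2\le a,\qquad a-b_2\le b_1.
\]
They hold because $b_2\ge0$ and $a\le b_1+b_2$. The non-negativity of the $b_i$ is implicit in (iii), since $0\le a_i\le b_i$ forces it. Riesz interpolation then gives $h\in\Aff(X)$ with $\max(0,a-b_2)\le h\le\min(a,b_1)$. Setting $a_1\coloneqq h$ and $a_2\coloneqq a-h$ gives $a=a_1+a_2$, $0\le a_1\le b_1$ and $0\le a_2\le b_2$.

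\emph{(iii)$\Rightarrow$(i).} This is the step I expect to be the main obstacle. The plan is to dualise.

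First, by Kadison's representation theorem, $X$ is affinely homeomorphic to the state space of the order unit space $(\Aff(X),1)$ via point evaluations. The cone $C\coloneqq\mathbb R_+X$ is then the cone of positive functionals on $\Aff(X)$.

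Second, show that Riesz decomposition in $\Aff(X)$ implies Riesz interpolation for $C$ in the dual sense. Concretely, for positive functionals $\phi_1,\phi_2,\psi_1,\psi_2$ with $\phi_1+\phi_2=\psi_1+\psi_2$, one seeks a refinement matrix of positive functionals $\theta_{ij}$ with row sums $\phi_i$ and column sums $\psi_j$. The natural tool is the Riesz--Kantorovich type formula $\theta_{11}(a)\coloneqq\inf\{\phi_1(a_1)+\psi_1(a_2): a=a_1+a_2,\ a_i\ge0\}$ for $a\ge0$. Riesz decomposition is exactly what makes this formula additive on the positive cone, after which it extends linearly.

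Third, a cone with this refinement property is a lattice cone in its own order. Since $C$ is a lattice cone with compact base $X$, the standard Choquet--Meyer characterisation gives that $X$ is a Choquet simplex. The uniqueness of maximal representing measures follows from the lattice property via the usual argument comparing two maximal measures.

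The verification of additivity of the infimum formula is the delicate point. If it proves cumbersome, I would fall back on citing \cite[Proposition~II.3.1 and Corollary~II.3.11]{Alf71} for this direction.
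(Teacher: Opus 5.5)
Your proposal is correct and in line with the paper. The paper gives no argument of its own beyond citing Alfsen and remarking that part of the statement follows from Edwards' theorem. Your three steps are the standard proof behind that citation: (i)$\Rightarrow$(ii) via Theorem~\ref{thm:edwards}, using the intended reading $a_i\le b_j$ for all $i,j$; the interpolation trick for (ii)$\Rightarrow$(iii); and dualisation for (iii)$\Rightarrow$(i).

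Two small points could be tightened.
\begin{enumerate}
\item Once you have checked additivity, your functional $\theta_{11}$ is already the greatest lower bound $\phi_1\wedge\psi_1$ in the dual order. Indeed, any linear $\rho\le\phi_1,\psi_1$ satisfies $\rho(a_1+a_2)\le\phi_1(a_1)+\psi_1(a_2)$ for $a_1,a_2\ge 0$. So the lattice property is immediate, and you do not need the claim that refinement forces a lattice. That claim fails for general cones and would here need the compactness of the base.
\item The Choquet--Meyer step gives uniqueness of \emph{maximal} representing measures. This matches the paper's ``supported on $\partial_eX$'' formulation when $X$ is metrisable. In general, it is the Choquet--Meyer notion that Alfsen's results concern.
\end{enumerate}
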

\begin{remark}\label{Rem:MoreRieszDecomp}
    Note that from the Riesz decomposition property, if $X$ is a Choquet simplex and $a_1,\dots,a_n,b_1,\dots,b_m\in\Aff(X)_+$ satisfy $\sum_{i=1}^na_i\leq\sum_{j=1}^mb_j$, then there exist $a_{i,j}\in \Aff (X)_+$ for $i=1,\dots,n$ and $j=1,\dots,m$ such that 
    \begin{equation} a_i=\sum_{j=1}^ma_{i,j} \text{ for all }i\text{ and } \sum_{i=1}^na_{i,j}\leq b_j \text{ for all }j. \end{equation}
    See the proof of \cite[Proposition 2.2]{Goodearl:Book}, for a similar argument.
\end{remark}

\subsection{Tracially complete \cstar-algebras and ultrapowers} \label{subsec:tracially-complete}
In this section, we briefly recall the key definitions and constructions in the theory of tracially complete $C^*$-algebras. 
We refer the reader to the monograph \cite{carrion2023tracially} for the full story. We also discuss the various relevant notions of ultrapower that will be used in this paper. To that end we fix once and for all a free ultrafilter $\omega$ on $\N$.

\subsubsection{Tracially complete \cstar-algebras}\label{subsubsec:tracially-complete}
We begin with the definition of a tracially complete $C^*$-algebras and the uniform 2-norm.
\begin{definition}[{\cite[Definition~3.4]{carrion2023tracially}}]\label{def:TC}
	A \emph{tracially complete $C^*$-algebra} is a pair $(\M,X)$ where $\M$ is a unital $C^*$-algebra and $X \subseteq T(\M)$ is a non-empty compact convex set such that
	\begin{enumerate}
		\item the uniform 2-seminorm $\|\cdot\|_{2,X}$ is a norm on $\M$, 
		\item the $\|\cdot\|$-closed unit ball of $\M$ is $\|\cdot\|_{2,X}$-complete,
	\end{enumerate}
    where $\|\cdot\|_{2,X}:\M \rightarrow [0,\infty)$ is given by $\|a\|_{2,X} \coloneqq \sup_{\tau \in X} \tau(a^*a)^{1/2}$ for $a \in \M$.
\end{definition}
We write $\pi_\tau:\M \rightarrow \mathcal B(L^2(\M,\tau))$ for GNS representation corresponding to the trace $\tau \in X$.  We call the von Neumann algebra $\pi_\tau(\M)''$ the \emph{fibre} of $\M$ at $\tau \in X$. Note that $\tau$ descend to a faithful normal trace on $\pi_\tau(\M)''$. In particular, the fibres of $\M$ are finite von Neumann algebras.

A tracially complete $C^*$-algebra $(\M, X)$ is said to be \emph{factorial} if $X$ is a face in $T(\M)$ (\cite[Definition 3.13]{carrion2023tracially}). This happens if and only if the extreme fibres are factors, i.e.\ $\pi_\tau(\M)''$ is a factor for all $\tau\in\partial_e(X)$ (\cite[Proposition 3.14]{carrion2023tracially}).  We say that $(\M, X)$ is \emph{type} II$_1$ if all its fibres are type II$_1$ von Neumann algebras.

\begin{remark}\label{2909_rem_amplification}
    In a couple of places in this paper, we will make use of matrix amplifications of tracially complete $C^*$-algebras. Given a tracially complete $C^*$-algebra $(\M, X)$ and $n \in \N$, we identify $M_n \otimes \M$ and $M_n(\M)$ in the usual way and write $X_n$ for the set of traces on $M_n(\M)$ of the form $\mathrm{tr}_n \otimes \tau$ where $\tau \in X$. Then $(M_n(\M), X_n)$ is a tracially complete $C^*$-algebra. (Indeed, this is a special case of tensor products; see \cite[Section 3.5]{carrion2023tracially}.) Since $\mathrm{tr}_n$ is the normalised trace on $M_n$, we have that $\|x\|_{2,X} = \sqrt{n}\|x \oplus 0_{n-1}\|_{2,X_n}$ for all $x \in \M$ where we view $\M \subseteq M_n(\M)$ by embedding $\M$ in the top left corner.
\end{remark}

\subsubsection{Uniform tracial completions}\label{subsubsec:completions-tracial}
Let $A$ be a $C^*$-algebra with $T(A)$ non-empty and compact.
The \emph{uniform tracial completion} of $A$ with respect to $T(A)$ is the $C^*$-algebra
\begin{equation}\label{eq:completions-tracial}
	\completion{A}{T(A)} \coloneqq \frac{\{(a_n)_{n=1}^\infty \in \ell^\infty(A): (a_n)_{n=1}^\infty \text{ is }\|\cdot\|_{2,T(A)}\text{-Cauchy}\}}{\{(a_n)_{n=1}^\infty \in \ell^\infty(A): (a_n)_{n=1}^\infty \text{ is }\|\cdot\|_{2,T(A)}\text{-null}\}}.
\end{equation}
There is a \emph{canonical map} $A \rightarrow \completion{A}{T(A)}$ defined by considering the image of constant sequences in the quotient \eqref{eq:completions-tracial}.
This construction goes back to Ozawa in his seminal work on $W^*$-bundles (\cite{ozawa2008weak}). 

Every $\tau \in T(A)$ extends continuously to a trace $\tilde{\tau}$ on $\completion{A}{X}$ given by $\tilde{\tau}((a_n)_{n=1}^\infty) \coloneqq \lim_{n\rightarrow\infty} \tau(a_n)$. 
We identify $\tau$ with $\tilde{\tau}$ and view $T(A)$ as a subset of $T(\completion{A}{T(A)})$. The pair $(\completion{A}{T(A)}, T(A))$ is a factorial tracially complete $C^*$-algebra (see \cite[Proposition~3.23]{carrion2023tracially}).

\subsubsection{Ultrapowers of tracially complete \cstar-algebras}\label{subsubsec:ultrapowers-tc}
The \emph{ultrapower} $(\M^\omega,X^\omega)$ of the tracially complete $C^*$-algebra $(\M,X)$ is given by
\begin{equation}
    \M^\omega \coloneqq \frac{\ell^\infty(\M)}{\{(a_n)_{n=1}^\infty \in \ell^\infty(A): \lim_{n\to\omega}\|a_n\|_{2,X} = 0\}}.
\end{equation}

For every sequence of traces $(\tau_n)_{n=1}^\infty$ in $X$, there is a \emph{limit trace} on $\M^\omega$ given by $(a_n)_{n=1}^\infty \mapsto \lim_{n\to\omega} \tau_n(a_n)$, and we set $X^\omega \subseteq T(\M)$ to be the weak$^*$-closure of the set of all limit traces in $T(\M^\omega)$. The theory of ultrapowers (and more general reduced products) of tracially complete $C^*$-algebras is set out in \cite[Section 5.1]{carrion2023tracially}.  

\subsubsection{Ultrapowers and tracial ultrapowers of \cstar-algebras}\label{subsubsec:ultrapowers-tracial}
Given a $C^*$-algebra $A$. The (norm) \emph{ultrapower} of $A$ is given by 
\begin{equation}
    A_\omega \coloneqq \frac{\ell^\infty(A)}{\{(a_n)_{n=1}^\infty: \lim_{n\to\omega} \|a_n\| = 0 \}}. 
\end{equation}
Ultrapowers with respect to uniform trace norms were studied before the theory of tracially complete $C^*$-algebras was developed (as the quotient of the norm-ultrapower by the trace--kernel ideal; see \cite{KR14}, which builds on Matui and Sato's \cite{MS:Acta}, and become explicit in \cite{CETWW}). Given a $C^*$-algebra $A$ with $T(A)$ non-empty and compact, one can define the \emph{tracial ultrapower} of $A$ by
\begin{equation}
    A^\omega \coloneqq \frac{\ell^\infty(A)}{\{(a_n)_{n=1}^\infty: \lim_{n\to\omega} \|a_n\|_{2,T(A)} = 0 \}}. 
\end{equation}
Analogously to the previous subsection, one can also define set of limit traces $T_\omega(A) \subseteq T(A_\omega)$ and its weak$^*$ closure $\overline{T_\omega(A)}$. By abuse of notation, we also write $T_\omega(A)$ and $\overline{T_\omega(A)}$ for the corresponding set of traces on $A^\omega$. 

The uniform tracial ultrapower is naturally a quotient of the norm-ultrapower by the \emph{trace-kernel ideal} 
\begin{equation} 
 J_A:=\{a\in A_\omega: \tau(a^*a)=0\,\,\text{for all }\tau\in T_\omega(A)\},   
\end{equation} 
giving the \emph{trace-kernel extension} 
\begin{equation}\label{2609_deftrkerEx}
    0 \rightarrow J_A \xrightarrow{j_A} A_\omega \xrightarrow{q_A} A^\omega\rightarrow 0. 
\end{equation} 

The connection between uniform tracial ultrapowers and the theory of tracially complete $C^*$-algebras is given by the following proposition. 
\begin{proposition}[{\cite[Proposition 5.6]{carrion2023tracially}}]\label{prop:tracial-ultrapower-is-TC}
    Let $A$ be a $C^*$-algebra with $T(A)$ non-empty and compact. 
    Let $(\M,X)$ be the tracial completion of $A$ with respect to $T(A)$.
    Then canonical map $A \rightarrow \M$ induces an isomorphism $(A^\omega, \overline{T_\omega(A)}) \cong (\M^\omega, X^\omega)$.
    In particular, $A^\omega$ is unital and $(A^\omega, \overline{T_\omega(A)})$ is a tracially complete $C^*$-algebra.
\end{proposition}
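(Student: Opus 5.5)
The plan is to build the isomorphism at the level of sequence algebras and then pass to quotients. Write $\iota\colon A\to\M$ for the canonical map of \S\ref{subsubsec:completions-tracial}. Since every $\tau\in T(A)$ extends to $\tilde\tau$ on $\M$ by $\tilde\tau((a_n)_n)=\lim_n\tau(a_n)$, and $X=T(A)$ under this identification, we have
\begin{equation}
\|\iota(a)\|_{2,X}=\sup_{\tau\in T(A)}\tilde\tau(\iota(a)^*\iota(a))^{1/2}=\|a\|_{2,T(A)},\qquad a\in A.
\end{equation}
Apply $\iota$ coordinatewise to get a $^*$-homomorphism $\ell^\infty(A)\to\ell^\infty(\M)$, and compose with the quotient map to $\M^\omega$. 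By the displayed isometry, a bounded sequence $(a_n)$ lies in the kernel exactly when $\lim_{n\to\omega}\|a_n\|_{2,T(A)}=0$. So the map descends to an injective $^*$-homomorphism $\Phi\colon A^\omega\to\M^\omega$, which is automatically isometric.

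Surjectivity is the step with real content, because it needs an approximation of elements of $\M$ by elements of $A$ with control on the operator norm, not just the $2$-norm. I will argue directly from \eqref{eq:completions-tracial}. An element $x\in\M$ is the class of a bounded $\|\cdot\|_{2,T(A)}$-Cauchy sequence $(b_k)$. The norm of $\M$ is the quotient norm, so we can pick a representative with $\sup_k\|b_k\|\le\|x\|+\epsilon$. The Cauchy condition then gives $\|\iota(b_k)-x\|_{2,X}\to0$ as $k\to\infty$. Hence $\iota$ maps the ball of radius $r+\epsilon$ in $A$ onto a $\|\cdot\|_{2,X}$-dense subset of the ball of radius $r$ in $\M$ (a Kaplansky-type density).

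Now take a bounded sequence $(x_n)\in\ell^\infty(\M)$ with $\sup_n\|x_n\|\le C$. For each $n$ choose $a_n\in A$ with $\|a_n\|\le C+1$ and $\|\iota(a_n)-x_n\|_{2,X}<1/n$. Then $(a_n)\in\ell^\infty(A)$ and $\Phi([(a_n)])=[(x_n)]$, so $\Phi$ is a $^*$-isomorphism. In particular $A^\omega$ is unital, since $\M$ is unital and hence so is $\M^\omega$.

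It remains to match the traces. For a sequence $(\tau_n)$ in $T(A)$, the limit trace $\lim_{n\to\omega}\tau_n$ on $A^\omega$ is carried by $\Phi$ to the limit trace $\lim_{n\to\omega}\tilde\tau_n$ on $\M^\omega$. This is because $\tilde\tau_n(\iota(a_n))=\tau_n(a_n)$, and in the surjectivity step $|\tilde\tau_n(x_n)-\tilde\tau_n(\iota(a_n))|\le\|x_n-\iota(a_n)\|_{2,X}\to0$. Since $X=T(A)$, every limit trace on $\M^\omega$ arises this way, so $\Phi$ restricts to a bijection between the two sets of limit traces. A $^*$-isomorphism induces a weak$^*$-homeomorphism of tracial state spaces, so the weak$^*$-closures also correspond: $\Phi$ identifies $\overline{T_\omega(A)}$ with $X^\omega$. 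Finally, $(\M^\omega,X^\omega)$ is a tracially complete $C^*$-algebra by the theory of ultrapowers in \cite[Section~5.1]{carrion2023tracially}, and this property transfers to $(A^\omega,\overline{T_\omega(A)})$ through $\Phi$.
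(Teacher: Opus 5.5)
Your proposal is correct and follows the route the paper indicates. The paper only cites \cite[Proposition~5.6]{carrion2023tracially}, remarking that the content is surjectivity via a $\|\cdot\|_{2,X}$-Kaplansky density theorem; you get injectivity because the canonical map is isometric for the uniform $2$-norms, and surjectivity from density with norm control. The only difference is that you derive that density directly from the Cauchy-sequence construction of $\completion{A}{T(A)}$, with a harmless $\epsilon$ loss in the norm bound, instead of invoking \cite[Proposition~3.28]{carrion2023tracially}; note that what you prove and use is that the radius-$r$ ball of $\M$ lies in the $\|\cdot\|_{2,X}$-closure of the image of the radius-$(r+\epsilon)$ ball of $A$, not that this image lies inside the radius-$r$ ball.
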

 The proof of Proposition \ref{prop:tracial-ultrapower-is-TC} amounts to proving surjectivity of the induced map  $A^\omega \rightarrow \M^\omega$, which follows from a $\|\cdot\|_{2,X}$-variant of the Kaplansky density theorem.

\subsubsection{Traces on tracial ultrapowers}\label{subsubsec:ultrapowers-traces}
In general, not every trace on an tracial ultrapower will be a weak$^*$ limit of limit traces (the example we know of is Vaccaro's \cite{Vaccaro:MJM}, which is in the type I setting; we do not know of an example where this fails for ultrapowers of factorial II$_1$ tracially complete $C^*$-algebras).  In our situation, we will control the traces on tracial ultrapowers, and ultrapowers of tracially complete $C^*$-algebras through strict comparison, via the following, essentially standard, result.

\begin{proposition} 
\label{strict_compa_A_ultrap} 
    Let $A$ be a $C^*$-algebra with $T(A)$ non-empty and compact. Suppose $A$ has strict comparison with respect to $T(A)$. 
    Then $A^\omega$ has strict comparison with respect to $\overline{T_\omega(A)}$, and $T(A^\omega)=\overline{T_\omega(A)}$.  In particular if $(\M,X)$ is a factorial II$_1$ tracially complete $C^*$-algebra and $\M$ has strict comparison with respect to $X$, then $\M^\omega$ has strict comparison with respect to $X^\omega$.
\end{proposition}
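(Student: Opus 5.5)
We first prove strict comparison of $A^\omega$ with respect to $\overline{T_\omega(A)}$ by lifting to representing sequences. Then $T(A^\omega)=\overline{T_\omega(A)}$ follows from Proposition~\ref{prop:computing-traces}, since $A^\omega$ is unital by Proposition~\ref{prop:tracial-ultrapower-is-TC} and $\overline{T_\omega(A)}$ is compact and convex. The tracially complete statement is the special case $A=\M$. Here $T(\M)=X$ because $X$ is a face and $\M$ has strict comparison with respect to $X$, again by Proposition~\ref{prop:computing-traces}. Hence $(\M^\omega,X^\omega)=(\M^\omega,\overline{T_\omega(\M)})$.

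For comparison, replace $A$ by $M_n(A)$; the tracial ultrapower commutes with matrix amplification. Take nonzero $a,b\in A^\omega_+$ with $d_\tau(a)<d_\tau(b)$ for all $\tau\in\overline{T_\omega(A)}$. It suffices to show $(a-\e)_+\precsim b$ for every $\e>0$.

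\textbf{Gap.} Choose a continuous $g_\e$ vanishing near $0$ and equal to $1$ on $[\e/2,\infty)$, so that $\tau(g_\e(a))\le d_\tau(a)$. Use lower semicontinuity of $\tau\mapsto d_\tau(b)=\sup_k\tau(b^{1/k})$ and compactness of $\overline{T_\omega(A)}$. Then there are $k$ and $\eta>0$ with $\tau(g_\e(a))+\eta<\tau(f_k(b))$ on $\overline{T_\omega(A)}$, where $f_k(b)$ is a continuous function of $b$ vanishing near $0$.

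\textbf{Lift.} Take positive contraction lifts $(a_m)$ and $(b_m)$. The traces $\tau(g_\e(a))$ and $\tau(f_k(b))$ are represented by $\lim_{m\to\omega}\tau_m(g_\e(a_m))$ and $\lim_{m\to\omega}\tau_m(f_k(b_m))$. A routine contradiction argument with limit traces gives, for $\omega$-many $m$, $\tau(g_\e(a_m))+\eta/2<\tau(f_k(b_m))$ for all $\tau\in T(A)$. Consequently $d_\tau((a_m-\e/2)_+)\le\tau(g_\e(a_m))<\tau(f_k(b_m))\le d_\tau((b_m-\delta)_+)$, where $\delta>0$ is chosen so that $f_k$ vanishes on $[0,\delta]$. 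Strict comparison in $A$ now gives $(a_m-\e/2)_+\precsim(b_m-\delta)_+$.

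\textbf{Main obstacle: uniformity.} The elements $v_m$ witnessing this subequivalence must be norm bounded in $m$ to define an element of $\ell^\infty(A)$. We handle this with Rørdam's lemma. Since $(b_m-\delta)_+$ is cut off below, we may take $v_m$ with $v_m^*bv_m\approx (a_m-\e)_+$ and $\|v_m\|\le \delta^{-1/2}\|a_m\|^{1/2}$. Concretely, $(a_m-\e)_+=w_m^*(b_m-\delta)_+w_m$ with $w_m$ bounded by functional calculus: compose with $h(b_m)$, where $h(t)=t^{-1/2}$ on $[\delta,1]$.

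The resulting sequence defines $v\in A^\omega$ with $v^*bv=(a-\e)_+$ exactly, up to error controlled in $\|\cdot\|_{2}$. This is actually an equality in the quotient, because the norm-level identities hold for each $m$ and the passage to $A^\omega$ is a $^*$-homomorphism. So $(a-\e)_+\precsim b$ for all $\e$, hence $a\precsim b$, and the proposition follows.
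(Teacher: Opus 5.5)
Your proof is correct, and it differs from the paper's only in how the comparison statement is organised. The paper never lifts into $A$ directly. It quotes \cite[Lemma~1.23]{BBSTWW} for strict comparison of the \emph{norm} ultrapower $A_\omega$ with respect to $\overline{T_\omega(A)}$. It then passes to the quotient $A_\omega\to A^\omega$, using that positive lifts have the same rank functions because these traces factor through $A^\omega$. You instead run the underlying argument straight into $A^\omega$: a uniform gap from compactness and lower semicontinuity, transfer of the gap to $\omega$-many coordinates via limit traces, coordinatewise strict comparison in $A$, and uniformly bounded witnesses. This is essentially the proof of the cited lemma, carried out in the tracial rather than the norm ultrapower. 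Your version is self-contained. The paper's is shorter and also records comparison in $A_\omega$, which it needs again in Section~\ref{sec:KStable}. The trace computation via Proposition~\ref{prop:computing-traces} and the reduction to $A=\M$ agree with the paper; the face hypothesis plays no role in getting $T(\M)=X$.

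One step should be written more precisely. R{\o}rdam's lemma gives $(a_m-\e)_+=w_m^*(b_m-\delta)_+w_m$ with no bound on $\|w_m\|$. The bounded witness is $v_m\coloneqq h(b_m)(b_m-\delta)_+^{1/2}w_m$ with $h(t)=\max\{t,\delta\}^{-1/2}$. Since $h(t)^2t(t-\delta)_+^{1/2}=(t-\delta)_+^{1/2}$, this gives $v_m^*b_mv_m=(a_m-\e)_+$ exactly and $\|v_m\|\le\delta^{-1/2}\|a_m\|^{1/2}$. So no $2$-norm error arises at any stage, and $v^*bv=(a-\e)_+$ holds on the nose in $A^\omega$.
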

\begin{proof}
    Note that since $T(A)$ is compact, $A^\omega$ is unital, and so $\overline{T_\omega(A)}$ is a compact subset of $T(A^\omega)$. It is standard that the norm ultrapower $A_\omega$ has strict comparison with respect to $\overline{T_\omega(A)}$ (see \cite[Lemma 1.23]{BBSTWW}, which only uses the unitality assumption through the compactness of $\overline{T_\omega(A)}$). This passes to the quotient $A^\omega$.\footnote{If $a,b\in M_n(A^\omega)_+$ have $d_\tau(a)<d_\tau(b)$ for all $\tau\in \overline{T_\omega(A)}$, then  positive lifts $\tilde{a},\tilde{b}\in M_n(A_\omega)_+$ and abusing notation to regard ($T_\omega(A)$ as traces on $A_\omega$) will have $d_\tau(\tilde{a})<d_\tau(\tilde{b})$ for all $\tau\in\overline{T_\omega(A)}$. Strict comparison of $A_\omega$ with respect to $\overline{T_\omega(A)}$ gives $\tilde{a}\precsim \tilde{b}$ in $A_\omega$, and hence $a\precsim b$ in $A^\omega$.}  Then $T(A^\omega)=\overline{T_\omega(A)}$ is a consequence of Proposition~\ref{prop:computing-traces}.
    The second part follows from applying the first part to $A=\M$, which will have $T(\M)=X$ by Proposition~\ref{prop:computing-traces}, and hence $X^\omega=\overline{T_\omega(\M)}$.   
    The fact that $T(A^\omega)=\overline{T_\omega(A)}$ is a consequence of Proposition~\ref{prop:computing-traces}.
\end{proof}

\subsubsection{Kirchberg's epsilon test}
A powerful tool in the study of ultrapowers is the ability to convert various approximate statements into exact statements.  This can be formulated through reindexing arguments, or through Łoś's theorem (viewing things through the lens of continuous model theory). In the operator algebra context a standard way to access such results is through the following lemma of Kirchberg known as Kirchberg's epsilon test. 

\begin{lemma}[Kirchberg's epsilon test, {\cite[Lemma A.1]{Kir06}}]
	\label{lem:EpsTest}
	Let $(\Omega_n)_{n=1}^\infty$ be a sequence of non-empty sets, and for $k,n\in\mathbb N$, let $h^{(k)}_n\colon \Omega_n\rightarrow [0,\infty]$ be a function.  Define functions $h_\omega^{(k)}\colon \prod_{n=1}^\infty \Omega_n \to [0,\infty]$ by
	\begin{equation}
		\label{eq:EpsTest1}
		h_\omega^{(k)}(x_1,x_2,\dots) \coloneqq \lim_{n\to\omega} h^{(k)}_n(x_n). 
	\end{equation}
	If for every $\epsilon>0$ and $k_0\in\mathbb N$, there exists $x\in \prod_{n=1}^\infty \Omega_n$ such that $h^{(k)}(x)<\epsilon$ for all $k\in\{1,\dots,k_0\}$, then there exists $y\in \prod_{n=1}^\infty \Omega_n$ such that $h^{(k)}(y)=0$ for all $k\in\mathbb N$.
\end{lemma}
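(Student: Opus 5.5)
The plan is a diagonal argument: use the hypothesis at tolerance $1/m$ to get, for each $m$, a set in $\omega$ of good indices $n$, and then pick $y_n$ at each $n$ using the best tolerance available there. Throughout, $h^{(k)}$ in the hypothesis is read as $h^{(k)}_\omega$.

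\emph{Step 1: good index sets.} For $m\in\mathbb N$ put
\begin{equation}
X_m\coloneqq\{n\in\mathbb N:\ \exists\, x\in\Omega_n \text{ with } h^{(k)}_n(x)<1/m \text{ for all } k=1,\dots,m\}.
\end{equation}
I claim $X_m\in\omega$. Apply the hypothesis with $\epsilon=1/m$ and $k_0=m$ to get $x=(x_n)_n\in\prod_n\Omega_n$ with $\lim_{n\to\omega}h^{(k)}_n(x_n)<1/m$ for each $k\le m$. For each such $k$, the set $\{n: h^{(k)}_n(x_n)<1/m\}$ lies in $\omega$; this uses only the definition of the ultralimit in $[0,\infty]$, so infinite values cause no trouble. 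The intersection of these finitely many sets is in $\omega$ and is contained in $X_m$.

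The sets $X_m$ are decreasing in $m$, since a witness for $m+1$ is a witness for $m$. Replacing $X_m$ by $X_m\cap[m,\infty)$ keeps them in $\omega$, because $\omega$ is free and so contains every cofinite set.

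\emph{Step 2: diagonal choice.} For $n\in\mathbb N$ let $m(n)\coloneqq\max\{m\le n: n\in X_m\}$, with $m(n)=0$ if this set is empty.
\begin{itemize}
\item If $m(n)\ge1$, choose $y_n\in\Omega_n$ with $h^{(k)}_n(y_n)<1/m(n)$ for all $k\le m(n)$; such a $y_n$ exists because $n\in X_{m(n)}$.
\item If $m(n)=0$, choose $y_n\in\Omega_n$ arbitrarily, using non-emptiness of $\Omega_n$.
\end{itemize}
Set $y\coloneqq(y_n)_n$.

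\emph{Step 3: verification.} Fix $k$ and let $m\ge k$. For $n\in X_m\cap[m,\infty)$ we have $m(n)\ge m\ge k$, so $h^{(k)}_n(y_n)<1/m(n)\le1/m$. This set of $n$ is in $\omega$, hence $h^{(k)}_\omega(y)\le1/m$. Since $m\ge k$ was arbitrary, $h^{(k)}_\omega(y)=0$ for every $k$.

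The only point needing care is Step 2. The bound obtained at index $n$ must be usable simultaneously for all $k\le m(n)$, which is why $X_m$ demands all $k\le m$ at once. It must also grow along $\omega$, which is arranged by capping $m(n)\le n$ (so the maximum exists) and using cofiniteness of $\omega$-large sets. I expect no genuine obstacle beyond this bookkeeping.
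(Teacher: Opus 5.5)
Your diagonal argument is correct: the index sets $X_m\cap[m,\infty)$ lie in $\omega$, $m(n)$ is well defined, and Step 3 goes through (the monotonicity of the $X_m$ is never actually used). The paper does not reprove this lemma but cites Kirchberg's Lemma A.1, whose standard proof is exactly this diagonal choice along $\omega$-large index sets, so your approach is essentially the same.
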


The following elementary observation (which we will not mention explicitly when we use) is particularly useful when reformulating uniform tracial conditions in preparation for an application of Kirchberg's epsilon test.\footnote{The reference \cite[Lemma 5.3]{carrion2023tracially} is actually just the final statement of the following, but its proof instantly adapts.}

\begin{lemma}[{cf.\ \cite[Lemma 5.3]{carrion2023tracially}}]\label{lem:trace-trick}
    Let $A$ be a $C^*$-algebra, $X$ be a non-empty compact subset of $T(A)$, and $X^\omega$ be the weak$^*$ closure of the limit traces coming from $X$.
    Let $(a_n)_{n=1}^\infty \in \ell^\infty(A)$. Then 
    \begin{equation}
        \lim_{n \to \omega} \sup_{\tau \in X} |\tau(a_n)| = \sup_{\tau_\omega \in X^\omega} |\tau_{\omega}((a_n)_{n=1}^\infty)|. 
    \end{equation}
    Moreover, $\lim_{n \to \omega} \|a_n\|_{2,X} = \|(a_n)_{n=1}^\infty\|_{2,X^\omega}$. 
\end{lemma}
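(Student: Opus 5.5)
The statement is the elementary Lemma~\ref{lem:trace-trick}: for $(a_n)_{n=1}^\infty\in\ell^\infty(A)$ we need $\lim_{n\to\omega}\sup_{\tau\in X}|\tau(a_n)| = \sup_{\tau_\omega\in X^\omega}|\tau_\omega((a_n))|$, together with the 2-norm version. I will prove the two inequalities separately. Write $s_n\coloneqq\sup_{\tau\in X}|\tau(a_n)|$ and $s\coloneqq\lim_{n\to\omega}s_n$, which exists since $(s_n)$ is bounded by $\sup_n\|a_n\|$.

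For ``$\geq$'', first take a limit trace $\tau_\omega=\lim_{n\to\omega}\tau_n$ with $\tau_n\in X$. Then $|\tau_\omega((a_n))| = \lim_{n\to\omega}|\tau_n(a_n)|\leq \lim_{n\to\omega}s_n = s$. Now let $\tau_\omega\in X^\omega$ be arbitrary. It is a weak$^*$ limit of limit traces, and evaluation at the fixed element $(a_n)$ is weak$^*$ continuous. Hence the bound $|\tau_\omega((a_n))|\leq s$ passes to the closure.

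For ``$\leq$'', fix $\epsilon>0$ and choose, for each $n$, a trace $\tau_n\in X$ with $|\tau_n(a_n)|\geq s_n-\epsilon$; this uses only the definition of the supremum. The limit trace $\tau_\omega\coloneqq\lim_{n\to\omega}\tau_n$ lies in $X^\omega$. It satisfies $|\tau_\omega((a_n))|=\lim_{n\to\omega}|\tau_n(a_n)|\geq s-\epsilon$, using continuity of $|\cdot|$ and of ultralimits. Letting $\epsilon\to0$ gives the inequality. One should check that the limit functional is well defined on $\ell^\infty(A)$ and is a trace of norm one there; since the statement is phrased for sequences in $\ell^\infty(A)$, no quotient is needed, and this check is routine.

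For the ``moreover'' statement, apply the first part to the bounded sequence $(a_n^*a_n)$. Here $\tau(a_n^*a_n)\geq 0$, so the absolute values are harmless. This gives $\lim_{n\to\omega}\|a_n\|_{2,X}^2=\sup_{\tau_\omega\in X^\omega}\tau_\omega((a_n)^*(a_n))=\|(a_n)\|_{2,X^\omega}^2$, and taking square roots finishes, since $t\mapsto t^{1/2}$ is continuous and so commutes with the ultralimit. I do not expect a real obstacle. The only point needing care is that suprema over $X^\omega$ reduce to suprema over genuine limit traces, which is exactly the weak$^*$ continuity of evaluation.
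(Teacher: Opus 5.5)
Your proof is correct and is essentially the argument the paper has in mind. The paper gives no separate proof, deferring to \cite[Lemma 5.3]{carrion2023tracially}, whose proof likewise bounds limit traces and passes to the weak$^*$ closure via continuity of evaluation for one inequality, and builds a limit trace from near-maximising traces $\tau_n\in X$ for the other; applying this to $(a_n^*a_n)$ for the 2-norm statement is exactly the intended adaptation.
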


\begin{remark}
    We shall typically suppress the details of our applications of Kirchberg's epsilon test. Many examples can be found in the literature (particularly in \cite{BBSTWW}). For an example application in the tracially complete framework, see \cite[Lemma 5.4]{carrion2023tracially}.   However, we would like to make a crucial observation. The output of Kirchberg's epsilon test when working with tracial ultrapowers will always be estimates that hold for all \emph{limit traces}, and then by continuity for the closure of this set as well. If $T(A^\omega)$ contains traces not in $\overline{T(A_\omega)}$ this argument says nothing about those traces (notwithstanding that a positive element that vanishes on $T_\omega(A)$ is zero in the tracial ultrapower).
    In our applications we will typically use the last part of Proposition~\ref{strict_compa_A_ultrap} ensuring that there are no such traces.
\end{remark}

\subsubsection{Continuous affine functions on the trace space}
Let $A$ be a $C^*$-algebra with $T(A) \neq \emptyset$. Every $a \in A$ induces a continuous affine function $\hat{a}:T(A) \rightarrow \C$ via $\tau \mapsto \tau(a)$. Note that $\hat{a}$ is real-valued whenever $a$ is self-adjoint, and $\hat{a}$ is positive whenever $a$ is positive. The following well-known result, which goes back to the work of Cuntz and Pedersen in \cite{CP79}, asserts that every continuous affine function arises in this manner.

\begin{proposition}[{\cite[Proposition 2.7]{carrion2023tracially}}, cf.\ {\cite[Proposition 2.7]{CP79}} and {\cite[Lemma~6.2]{KR14}}]
	\label{prop:CP}
	Let $A$ be a unital $C^*$-algebra and let $f\colon T(A) \to \mathbb R$ be a continuous affine function.
	Then for any $\epsilon>0$, there is a self-adjoint element $a \in A$ such that
	\begin{equation} \label{eqn:CP}
        \|a\| \leq \|f\|_\infty + \epsilon \quad \text{and} \quad \tau(a) = f(\tau), \quad \tau \in T(A).
    \end{equation}
	If $f$ is strictly positive, i.e.\ $f(\tau) > 0$ for all $\tau \in T(A)$, then we may assume $a \geq 0$.
\end{proposition}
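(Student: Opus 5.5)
The plan is to identify $A_{\mathrm{sa}}$ modulo the kernel of all traces isometrically with $\Aff(T(A))$, and then lift $f$ with a norm-controlled representative. Set
\begin{equation}
    A_0 \coloneqq \{a\in A_{\mathrm{sa}} : \tau(a)=0 \text{ for all } \tau\in T(A)\},
\end{equation}
a norm-closed real subspace of $A_{\mathrm{sa}}$. Consider the real-linear map
\begin{equation}
    \Phi\colon A_{\mathrm{sa}}/A_0\to\Aff(T(A)),\qquad a+A_0\mapsto\hat a .
\end{equation}
It is well-defined and injective by construction. I aim to show that $\Phi$ is isometric for the quotient norm and has dense range. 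Since $A_{\mathrm{sa}}/A_0$ is a Banach space, its isometric image is then closed, so $\Phi$ is a surjective isometry.

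\emph{Isometry.} Clearly $\|\hat a\|_\infty\le\|a+A_0\|$. For the reverse inequality I would use Hahn--Banach. The norm $\|a+A_0\|$ equals $\sup\phi(a)$, where $\phi$ ranges over real-linear functionals on $A_{\mathrm{sa}}$ of norm at most one vanishing on $A_0$. Such a $\phi$ extends to a hermitian functional on $A$ of the same norm.

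The key step is to show $\phi$ is tracial. If $u$ is unitary and $a\in A_{\mathrm{sa}}$, then $uau^*-a$ lies in $A_0$, so $\phi(uau^*)=\phi(a)$. Take the Jordan decomposition $\phi=\phi_+-\phi_-$, which is unique with $\|\phi\|=\|\phi_+\|+\|\phi_-\|$. Applying $\mathrm{Ad}(u)$ gives another such decomposition, so by uniqueness $\phi_\pm$ are unitarily invariant. Since unitaries span $A$, they are positive traces. Hence $\phi=s\tau_1-t\tau_2$ with $\tau_i\in T(A)$ and $s+t\le1$, giving $|\phi(a)|\le\sup_\tau|\tau(a)|=\|\hat a\|_\infty$. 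This Jordan/uniqueness step is the main technical point, essentially the Cuntz--Pedersen theorem; everything else is soft.

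\emph{Density.} Let $\psi$ be a bounded functional on $\Aff(T(A))$ annihilating all $\hat a$. Extend it by Hahn--Banach to $C(T(A))$, obtaining a signed measure. Replacing measures by their barycentres, $\psi=s\,\mathrm{ev}_{\tau_1}-t\,\mathrm{ev}_{\tau_2}$ on $\Aff(T(A))$. Testing against $\hat 1$ gives $s=t$. If $s\neq0$, then $\tau_1(a)=\tau_2(a)$ for all $a$, so $\tau_1=\tau_2$ and $\psi=0$. Hence the $\hat a$ are dense.

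\emph{General case.} Given $f$ and $\epsilon$, surjectivity gives a class $a+A_0$ with $\Phi(a+A_0)=f$. The quotient norm is $\|f\|_\infty$, so some representative $a\in A_{\mathrm{sa}}$ satisfies $\|a\|\le\|f\|_\infty+\epsilon$, and $\tau(a)=f(\tau)$ for all $\tau$.

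\emph{Positive case.} Suppose $f$ is strictly positive, and let $m\coloneqq\min f>0$, which exists by compactness. Put $c\coloneqq(\|f\|_\infty+m)/2$ and $h\coloneqq f-c$, so that $\|h\|_\infty=(\|f\|_\infty-m)/2$. Lift $h$ to $b\in A_{\mathrm{sa}}$ with $\|b\|\le\|h\|_\infty+\eta$, where $\eta\coloneqq\min(m,\epsilon)$. Then $\|b\|\le c$, so $a\coloneqq c1_A+b\ge0$. Moreover $\tau(a)=f(\tau)$ for all $\tau$, and
\begin{equation}
    \|a\|\le c+\|b\|\le \|f\|_\infty+\eta\le\|f\|_\infty+\epsilon .
\end{equation}
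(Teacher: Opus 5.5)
Your proof is correct and follows the standard route behind the cited result: identify $A_{\mathrm{sa}}/A_0$ isometrically with $\Aff(T(A))$ (the Cuntz--Pedersen theorem, which the paper imports via \cite{carrion2023tracially} and \cite{CP79} rather than proving), lift $f$ to a representative of almost minimal norm, and shift by a scalar multiple of the unit to get positivity. The only difference is that you prove the Cuntz--Pedersen identification yourself, using uniqueness of the Jordan decomposition for the isometry and barycentres for density; this makes the argument self-contained, and both steps are sound.
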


Kirchberg's epsilon test gives ultrapower version of this result, where the $\e$ in the norm bound of \eqref{eqn:CP} can be dropped and strict positivity can be weakened to positivity. 

\begin{proposition}\label{prop:CP-ultrapowers}
    Let $(\M,X)$ be a factorial tracially complete $C^*$-algebra and let $f\in \Aff(T(\M^\omega))$. Then there exists self-adjoint element $a \in \M^\omega$ such that 
    \begin{equation} \label{eqn:CP-ultrapowers}
       \|a\| \leq \|f\|_\infty \quad \text{and} \quad \tau(a) = f(\tau), \quad \tau \in X^\omega. 
    \end{equation}
    If $f$ is positive, then we may assume $a \geq 0$.
\end{proposition}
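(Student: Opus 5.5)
The plan is to apply the Cuntz--Pedersen realisation result, Proposition~\ref{prop:CP}, inside the unital $C^*$-algebra $\M^\omega$ itself, and then remove the $\epsilon$ in the norm bound with Kirchberg's epsilon test (Lemma~\ref{lem:EpsTest}). The key point is that exact equality of traces on $X^\omega$ can be encoded by comparing with one fixed element that already has the right traces.

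\emph{Step 1: a reference element.} Since $\M^\omega$ is unital and $f\in\Aff(T(\M^\omega))$, Proposition~\ref{prop:CP} gives a self-adjoint $b\in\M^\omega$ with $\tau(b)=f(\tau)$ for all $\tau\in T(\M^\omega)$, and in particular for all $\tau\in X^\omega$. It satisfies only the weaker bound $\|b\|\le\|f\|_\infty+1$. Fix a representative $(b_n)_{n=1}^\infty\in\ell^\infty(\M)$ of $b$.

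\emph{Step 2: approximate solutions with the sharp norm bound.} Let $\epsilon>0$. Proposition~\ref{prop:CP} gives a self-adjoint $a_\epsilon\in\M^\omega$ with $\tau(a_\epsilon)=f(\tau)$ on $T(\M^\omega)$ and $\|a_\epsilon\|\le\|f\|_\infty+\epsilon$. Set $a'_\epsilon\coloneqq\frac{\|f\|_\infty}{\|f\|_\infty+\epsilon}a_\epsilon$ (if $f=0$, take $a=0$). Then $\|a'_\epsilon\|\le\|f\|_\infty$ and $\sup_{\tau\in X^\omega}|\tau(a'_\epsilon)-f(\tau)|\le\epsilon$.

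Next lift $a'_\epsilon$ to a sequence $(a'_{\epsilon,n})_n$ of self-adjoint elements of $\M$ with $\|a'_{\epsilon,n}\|\le\|f\|_\infty$. This is possible because $\M^\omega$ is a quotient of $\ell^\infty(\M)$. Take any self-adjoint lift, then apply the continuous function $t\mapsto\max(-\|f\|_\infty,\min(t,\|f\|_\infty))$ coordinatewise. This does not change the image, since that function fixes the spectrum of $a'_\epsilon$. By Lemma~\ref{lem:trace-trick},
\begin{equation}
\lim_{n\to\omega}\sup_{\tau\in X}|\tau(a'_{\epsilon,n}-b_n)|=\sup_{\tau\in X^\omega}|\tau(a'_\epsilon-b)|\le\epsilon.
\end{equation}

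\emph{Step 3: epsilon test.} Take $\Omega_n$ to be the set of self-adjoint elements of $\M$ of norm at most $\|f\|_\infty$. Take the single function $h^{(1)}_n(x)\coloneqq\sup_{\tau\in X}|\tau(x-b_n)|$. Step 2 shows the hypothesis of Lemma~\ref{lem:EpsTest} holds. Hence there is $(y_n)\in\prod_n\Omega_n$ with $\lim_{n\to\omega}h^{(1)}_n(y_n)=0$. The element $a\coloneqq(y_n)_n\in\M^\omega$ is self-adjoint with $\|a\|\le\|f\|_\infty$. By Lemma~\ref{lem:trace-trick} again, $\tau(a)=\tau(b)=f(\tau)$ for all $\tau\in X^\omega$.

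\emph{Positive case.} If $f\ge0$, run the same argument with $\Omega_n$ the positive elements of norm at most $\|f\|_\infty$. In Step 2, apply the positive part of Proposition~\ref{prop:CP} to the strictly positive function $f+\epsilon$. This gives $a_\epsilon\ge0$ with $\|a_\epsilon\|\le\|f\|_\infty+2\epsilon$ and traces $f+\epsilon$. Rescale as before; the traces are then within $O(\epsilon)$ of $f$ uniformly. Positive lifts of positive elements with the same norm bound exist, by lifting and applying $t\mapsto\min(t_+,\|f\|_\infty)$. The reference element $b$ need not be positive, since it only enters through $h^{(1)}_n$.

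The only step needing care is the lifting in Step 2, namely ensuring lifts lie in $\Omega_n$ with the exact norm bound. This is routine functional calculus on a quotient of $\ell^\infty(\M)$. Everything else is a direct application of Proposition~\ref{prop:CP} and Lemma~\ref{lem:trace-trick}.
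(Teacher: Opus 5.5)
Your proof is correct, and for the self-adjoint statement it is the paper's argument. The paper only says this is an immediate application of Kirchberg's epsilon test (via Lemma~\ref{lem:trace-trick}) to the $\epsilon$-approximate realisations from Proposition~\ref{prop:CP}. Your fixed reference element $b$, which encodes exact agreement on $X^\omega$ through the single function $\sup_{\tau\in X}|\tau(x-b_n)|$, together with the norm-preserving functional-calculus lifts, is a clean way of making that precise.

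For the positive case the paper takes a shorter route than rerunning the epsilon test. It applies the self-adjoint case to $g\coloneqq f-\|f\|_\infty/2$, which has $\|g\|_\infty\le\|f\|_\infty/2$, and sets $a\coloneqq b+\tfrac12\|f\|_\infty 1$. Positivity and the bound $\|a\|\le\|f\|_\infty$ then follow directly, without invoking the positivity clause of Proposition~\ref{prop:CP}. Your version, which applies that clause to $f+\epsilon$ and rescales, is also valid.
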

\begin{proof}
 The first statement is an immediate application of Kirchberg's epsilon test (using Lemma \ref{lem:trace-trick}). When $f\geq 0$, we translate and consider $g\coloneqq f-\|f\|_\infty/2$ so $\|g\|_\infty=\|f\|_\infty/2$. For a self-adjoint $b\in\M$ with $\|b\|\leq\|g\|_\infty$ which realises $g$, we have $a\coloneqq b+\|f\|_\infty/2\geq 0$ (recalling that $\M$ is unital) has $\|a\|\leq\|f\|_\infty$ and $a$ realises $f$.
\end{proof}

\subsection{Weak CPoU} The following theorem is our tool for performing partition-of-unity arguments using approximately central positive elements in a fashion which can be thought of as approximately subordinate to the affine open cover in a suitable sense. It is a special case of \cite[Theorem 6.16]{carrion2023tracially}, obtained by taking $q=1$ in that result.

\begin{theorem}[{Factorial tracially complete $C^*$-algebras have weak CPoU; see \cite[Theorem 6.16]{carrion2023tracially}}]\label{Thm:WCPoU} 
Let $(\mathcal M,X)$ be a
factorial tracially complete $C^*$-algebra. Let $\delta>0$ and $S\subset\mathcal M^\omega$ be $\|\cdot\|_{2,X^\omega}$-separable.  Suppose $a_1,\dots,a_N\in \mathcal M_+$ satisfy
\begin{equation}
\sup_{\tau\in X}\min_{1\leq n\leq N}\tau(a_n)<\delta.
\end{equation}
Then there exists positive contractions $e_1,\dots,e_N\in \mathcal M^\omega_+\cap \mathcal S'$ with $\sum_{n=1}^Ne_n=1$ and 
\begin{equation}\label{WCPoU:MainEquation}
\tau(a_ne_n)\leq \delta \tau(e_n),\quad n=1,\dots,N.
\end{equation}
\end{theorem}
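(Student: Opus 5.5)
This theorem is the case $q=1$ of \cite[Theorem 6.16]{carrion2023tracially}, so the intended proof is to quote that result. If I were to reprove it, I would proceed as follows.

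\textbf{Reduction to finite approximate statements.} By Kirchberg's epsilon test (Lemma~\ref{lem:EpsTest}), used together with Lemma~\ref{lem:trace-trick}, it suffices to prove an approximate version in $\M$ itself. Fix a finite set $\mathcal F\subset\M$, which represents finitely many terms of sequences in a countable $\|\cdot\|_{2,X^\omega}$-dense subset of $S$, and fix $\eta>0$. I would then find positive contractions $e_1,\dots,e_N\in\M$ such that:
\begin{itemize}
\item $\sum_n e_n=1$;
\item $\|[e_n,x]\|_{2,X}<\eta$ for all $x\in\mathcal F$;
\item $\tau(a_ne_n)\le\delta\tau(e_n)+\eta$ for all $\tau\in X$.
\end{itemize}
Since the hypothesis is a strict inequality, compactness of $X$ lets me replace $\delta$ by some $\delta'<\delta$. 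The slack $\delta-\delta'$ then absorbs the $\eta$-errors, after renormalising if necessary, when I pass to the ultrapower.

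\textbf{Local step at a single trace.} Fix $\sigma\in X$ and set $\mathcal N=\pi_\sigma(\M)''$. Because $X$ is a face of $T(\M)$, the normal traces on $\mathcal N$ dominated by a multiple of $\sigma$ correspond to traces in $X$. Consequently the centre $Z(\mathcal N)$ is described by the measure decomposing $\sigma$ over the factorial (extreme) points of $X$. For each extreme trace $\tau$ there is some $n$ with $\tau(a_n)<\delta'$. I would use this to choose central projections $z_1,\dots,z_N\in Z(\mathcal N)$ with $\sum_n z_n=1$ and $\sigma(a_nz_n)\le\delta'\sigma(z_n)$. This is done by partitioning the decomposing measure according to which $a_n$ has small trace, using a measurable selection.

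Next I would approximate the $z_n$ by elements of $\M$. A $\|\cdot\|_{2,X}$-Kaplansky density argument gives positive contractions in $\M$ that are $\|\cdot\|_{2,\sigma}$-close to the $z_n$. Since the $z_n$ are central, these contractions are approximately central with respect to $\mathcal F$ in $\|\cdot\|_{2,\sigma}$. Finally I would renormalise so that the contractions sum to $1$. This produces an approximate partition of unity that works at the single trace $\sigma$, and also at traces near $\sigma$ by weak$^*$ continuity.

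\textbf{Globalising (main obstacle).} The estimates must hold uniformly over all of $X$ at once, and there is no central partition of unity available in $\M$ to patch local solutions together. Here I would use a convexity/minimax argument. Consider the convex set of tuples $(e_n)$ of positive contractions with $\sum_n e_n=1$ and small commutators with $\mathcal F$. For each such tuple, look at the continuous function on $X$ given by
\[
\tau\mapsto\max_n\bigl(\tau(a_ne_n)-\delta'\tau(e_n)\bigr)
\]
together with the commutator defects. By a Hahn--Banach/minimax theorem, uniform smallness of this function over $X$ follows from smallness in integral against each probability measure on $X$. Integrating against a probability measure is the same as evaluating at its barycentre, which lies in $X$ because $X$ is convex. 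So the single-trace step above suffices.

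The delicate point is the approximate-centrality constraint. It is only convex up to controlled errors, because $\|[e,x]\|_{2,X}^2$ is convex in $e$ but the constraint $\sum_n e_n=1$ interacts with the renormalisation. Following \cite{carrion2023tracially}, I expect to handle this by carrying the commutator terms as extra coordinates in the minimax and accepting an $\eta$-loss, which the epsilon test then removes in $\M^\omega$.
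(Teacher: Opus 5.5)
Your primary argument, quoting \cite[Theorem 6.16]{carrion2023tracially} with $q=1$, is exactly what the paper does; the paper gives no further proof. The reproof you sketch, however, has a genuine gap in the globalisation step, so it should not be relied on as written.

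\textbf{Where the sketch fails.} You claim that integrating against a probability measure $\mu$ on $X$ is the same as evaluating at its barycentre. This holds only for continuous \emph{affine} functions. The function $\tau\mapsto\max_n\bigl(\tau(a_ne_n)-\delta'\tau(e_n)\bigr)$ is a maximum of affine functions, hence convex, so $\int\max_n g_n\,d\mu\geq\max_n g_n(\mathrm{bar}\,\mu)$. That inequality goes the wrong way, and a maximum over commutator defects has the same problem. A tuple that is good at the single trace $\sigma=\mathrm{bar}\,\mu$ therefore does not control the integral. A second problem: if approximate centrality in $\|\cdot\|_{2,X}$ is built into the constraint set, your local step does not produce members of that set, because it only gives $\|\cdot\|_{2,\sigma}$-control.

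\textbf{How to repair it.} Minimise over the convex set of all tuples of positive contractions in $\M$ summing to $1$. This set is genuinely convex, so no renormalisation is needed. Maximise over probability measures on $X\times I$, where $I$ indexes the quantities $\tau((a_n-\delta')e_n)$ and $\tau(|[e_n,x]|^2)$ for $x\in\mathcal F$. These are affine in $\tau$ and convex in $e$, so a Kneser--Fan minimax theorem applies.

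Taking barycentres slice by slice reduces a measure to weights $w_i$ and traces $\tau_i\in X$ that \emph{differ from index to index}. The local step must then handle all of them at once, which it can:
\begin{enumerate}
\item Put $\sigma=\sum_i w_i\tau_i$. Each $w_i\tau_i\leq\sigma$ extends to a normal functional $\sigma(h_i\,\cdot\,)$ on $\pi_\sigma(\M)''$, with $h_i$ central and $0\leq h_i\leq 1$.
\item Let $\Phi$ be the centre-valued trace on $\pi_\sigma(\M)''$. Take $z_n$ to be the disjointified spectral projections $\chi_{[0,\delta')}(\Phi(a_n))$. Their join is $1$: otherwise the normalised compression of $\sigma$ to the complement would be a trace in $X$ (as $X$ is a face) with all $\tau(a_n)\geq\delta'$. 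No measurable selection is needed.
\item These $z_n$ satisfy $\rho(a_nz_n)\leq\delta'\rho(z_n)$ for \emph{every} normal trace $\rho$.
\item Finally, take $\|\cdot\|_{2,\sigma}$-approximants of the $z_n$ lying in your convex set. For example, apply a continuous partition of unity to a Kaplansky approximant of $\sum_n nz_n$. This makes every weighted term small.
\end{enumerate}
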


In \cite{CETWW}, together with Castillejos and Winter, SE, AT and SW introduced the condition of having \emph{complemented partitions of unity (CPoU)} which requires the stronger conclusion that the $e_n$ in \eqref{WCPoU:MainEquation} are pairwise orthogonal (so projections).  This is a strong condition on a factorial tracially complete $C^*$-algebra (typically obtained from property $\Gamma$ via the arguments of \cite{CETWW,carrion2023tracially}). When it applies, CPoU gives a powerful tool for local-to-global arguments in tracially complete $C^*$-algebras, underpinning the existing implications of the structure theorem as well as being instrumental in the modern abstract approach to $C^*$-algebraic classification (see \cite{CETWW,CETW:IMRN,carrion2023tracially,CGSTW}).

The intuition behind \eqref{WCPoU:MainEquation}, articulated in the introduction to \cite{CETWW}, is that the affine function $\widehat{e}_n$ should be small on those traces where $\tau(a_n)\approx 1$, and in this way we can think of $\widehat{e}_n$ as loosely subordinate to the complement $1-\widehat{a}_n$. 

\subsection{Conditional expectations} 
In this section, we collect several standard facts about relative commutants of finite-dimensional subalgebras and conditional expectations onto these relative commutants. 

\begin{definition}\label{def:CondExp} 
Let $\M$ be a $C^*$-algebra. 
For a finite-dimensional $C^*$-algebra $F$ and a $^*$-homomorphism $\phi\colon F \to \M$, we use $E_{\phi(F)'}$ to denote the canonical conditional expectation $\M \to \M \cap \phi(F)'$; this is given by
\begin{equation} \begin{split} 
\label{eq:CondExp}
E_{\phi(F)'}(a) &\coloneqq \int_{u \in \mathcal U(F)} \phi(u)a\phi(u)^*\,du + (1-\phi(1_{F})) a (1-\phi(1_{F})).
\end{split} 
\end{equation} 
The integral part in the above is equal to an average over any finite subgroup of the unitary group of $F$ that generates $F$. If $F=M_k$ then the integral part is equal to $\frac1k \sum_{i,j} \phi(e_{ij})a\phi(e_{ji})$.
\end{definition} 

\begin{remark} 
\label{rem:cond_exp} \hfill
\begin{enumerate} 
\item 
\label{rem:con_exp_1} 
The conditional expectation $E_{\phi(F)'}$ defined in Definition~\ref{def:CondExp} is trace-preserving. Indeed, for any $\tau \in X$ and $a\in \mathcal{M}$, we have 
\begin{equation} 
\begin{split} 
\tau (E_{\phi(F)'} (a)) &= \int_{u\in \mathcal{U}(M_{d})} \tau (\phi(u) a \phi(u)^{*}) du + \tau ((1-\phi(1_{F}))a)\\ 
&= \tau (\phi(1_{k})a) + \tau ((1-\phi(1_{F}))a) = \tau (a). 
\end{split} 
\end{equation} 
\item 
\label{rem:con_exp_2} 
Let $\M$ be a $C^*$-algebra and let $\phi: F\rightarrow \M$ be a unital $^*$-homomorphism from the finite-dimensional algebra $F$, then for any positive element $a\in \M$, 
\begin{equation} 
\label{cond_exp_bound} 
a \leq C_F E_{\phi(F)'} (a), 
\end{equation} 
for a constant $C_F$ depending only on $F$.
Namely, one can take $C_F$ to be the size of a finite subgroup $G_F$ of unitaries which generate $F$ as a $C^*$-algebra, and then we have $E_{\phi(F)'} = \frac1{C_F} \sum_{u \in G_F} \phi(u)a\phi(u)^*$.
\end{enumerate} 
\end{remark} 

\begin{lemma}\label{Lem:RelCom} 
Let $B$ be a unital $C^*$-algebra and $F$ a finite-dimensional $C^*$-subalgebra containing the unit of $B$.  Let $r_1,\dots,r_S$ be the minimal central projections of $F$ (i.e.\ $F\cong \oplus_{s=1}^{S} Fr_{s}$) and let $e_1,\dots,e_S$ be minimal projections in $F$ with $e_s\leq r_s$. Then
\begin{equation}
B\cap F'=\oplus_{s=1}^S (r_sBr_s \cap (Fr_s)')\cong \oplus_{s=1}^S e_sBe_s.
\end{equation}
In particular if $B$ has real rank zero so too does $B\cap F'$.
\end{lemma}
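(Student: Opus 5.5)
The plan is to compute the relative commutant block by block, then identify each block with a corner. First, the minimal central projections $r_s$ of $F$ lie in $F$, so every $b\in B\cap F'$ commutes with each $r_s$. Since $\sum_s r_s=1_B$ (as $1_B\in F$), we get $b=\sum_s r_sbr_s$. Each $r_sbr_s$ commutes with $F r_t$: it commutes with $Fr_s$ because $b$ does, and for $t\neq s$ both products with $Fr_t$ vanish. Conversely, any $\sum_s c_s$ with $c_s\in r_sBr_s\cap(Fr_s)'$ commutes with $F=\bigoplus_s Fr_s$. The summands are mutually orthogonal, which gives the internal direct sum decomposition $B\cap F'=\bigoplus_{s=1}^S (r_sBr_s\cap (Fr_s)')$.

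It then remains to treat a single block. Fix $s$ and write $Fr_s\cong M_{k}$ with matrix units $f_{ij}$, chosen so that $f_{11}=e_s$ (possible since $e_s$ is a minimal projection in $Fr_s$). In the unital algebra $C=r_sBr_s$ (with unit $r_s=\sum_i f_{ii}$), the standard fact is that $C\cong M_k(f_{11}Cf_{11})$ via $c\mapsto (f_{1i}cf_{j1})_{i,j}$. Under this isomorphism $Fr_s$ corresponds to $M_k\otimes 1$, so its commutant is $1\otimes f_{11}Cf_{11}$.

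Concretely, I will write down the map $\theta\colon e_sBe_s\to C\cap (Fr_s)'$ given by $\theta(x)=\sum_{i} f_{i1}xf_{1i}$. To verify it is the required isomorphism:
\begin{itemize}
\item $\theta$ is a $^*$-homomorphism; this uses $f_{1i}f_{j1}=\delta_{ij}f_{11}$.
\item $\theta$ commutes with each $f_{kl}$, since $f_{kl}\theta(x)=f_{k1}xf_{1l}=\theta(x)f_{kl}$.
\item $\theta$ is injective, since $f_{11}\theta(x)f_{11}=x$.
\item $\theta$ is surjective: if $c$ commutes with all $f_{kl}$, then
\begin{equation}
c=\sum_i f_{ii}c f_{ii}=\sum_i f_{i1}(f_{11}cf_{11})f_{1i}=\theta(f_{11}cf_{11}).
\end{equation}
\end{itemize}
Finally $e_sCe_s=e_sBe_s$ because $e_s\le r_s$. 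This gives $B\cap F'\cong\bigoplus_s e_sBe_s$.

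For the real rank zero statement, each $e_sBe_s$ is a hereditary subalgebra (a corner) of $B$, so it has real rank zero by \cite[Theorem 2.10]{BrownPedersen:RR0} as recalled in Section~\ref{SS:RR0}. Finite direct sums preserve real rank zero, so $B\cap F'$ has real rank zero. There is no genuine obstacle here; the only care needed is choosing the matrix units so that $e_s$ is $f_{11}$, and checking the surjectivity identity displayed above.
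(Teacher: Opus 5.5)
Your proof is correct, and it is the standard argument. The paper states this lemma without proof, relying only on permanence of real rank zero under hereditary subalgebras and direct sums \cite{BrownPedersen:RR0}; your block decomposition along the $r_s$, the identification of each block with $e_sBe_s$ via $x\mapsto\sum_i f_{i1}xf_{1i}$ (with matrix units chosen so that $f_{11}=e_s$), and the checks of multiplicativity, commutation, injectivity and surjectivity supply exactly the details it leaves implicit.
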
 

\section{Projections in pure tracial ultrapowers} \label{sec:tracial-ultrapowers}

This section represents steps \ref{Step.1}-\ref{Step.4} from the outline in Section~\ref{subsec:organisation}. Our main goal is to show that strict comparison of $(\M^\omega,X^\omega)$ and divisibility of the unit $1_{\M^\omega}$ imply further regularity properties for $(\M^\omega,X^\omega)$: classification of projections, real rank zero, stable rank one, and pureness; see Proposition~\ref{S4.2:Prop}.

However, we begin by establishing some sufficient conditions on a $C^*$-algebra $A$ for its tracial ultrapower to have a divisible unit and strict comparison with respect to (the weak$^*$ closure of) limit traces. This proposition will be used to access the results of this section when it comes to proving the main theorems from the introduction.

\begin{proposition}
\label{prop:sc_divisible_tracial_ultra} 
    Let $A$ be a $C^*$-algebra such that $T(A)$ is compact and non-empty. Suppose that $A$ has strict comparison with respect to $T(A)$, and the class of a strictly positive element of $A$ is almost divisible in $\Cu(A)$. Then $A^\omega$ has strict comparison with respect to $\overline{T_\omega(A)}$ and the unit of $A^\omega$ is divisible.
\end{proposition}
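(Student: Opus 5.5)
Strict comparison of $A^\omega$ with respect to $\overline{T_\omega(A)}$ is exactly the first conclusion of Proposition~\ref{strict_compa_A_ultrap}, so nothing new is needed there. The substance is divisibility of the unit. Concretely, for each $k\in\mathbb N$ we want projections (or at least a unital embedding of $M_k\oplus M_{k+1}$) in $A^\omega$. Equivalently, we want $y\in M_n(A^\omega)_+$ with $k[y]\le[1]\le(k+1)[y]$ in the Cuntz semigroup of $A^\omega$. I will read ``divisible'' as this almost-divisibility of $[1_{A^\omega}]$ with no cut-down, in the sense of Definition~\ref{def:cuntz-omnibus}.

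Fix a strictly positive contraction $h\in A$. First observe that the image of $h^{1/m}$ in $A^\omega$ converges to $1$ in $\|\cdot\|_{2,T_\omega(A)}$. This uses $T(A)$ compact: by Dini's theorem, $\tau(h^{1/m})\to 1$ uniformly on $T(A)$, since $d_\tau(h)=1$ for all $\tau$ and the functions $\tau\mapsto\tau(h^{1/m})$ are continuous and increasing. Consequently the sequence $(h^{1/m_n})_n$, with $m_n\to\infty$, represents $1_{A^\omega}$.

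Now fix $k$ and a sequence $\epsilon_n\to0$. Apply almost divisibility of $[h]$ with the cut-down $x'=[(h-\epsilon_n)_+]\ll[h]$. This produces $y_n\in (A\otimes\K)_+$, which we may take in some $M_{N_n}(A)_+$ after a small cut-down, with
\[
k[y_n]\le[h]\quad\text{and}\quad [(h-\epsilon_n)_+]\le (k+1)[y_n].
\]
Rescale each $y_n$ to a contraction. The difficulty is that the matrix sizes $N_n$ may be unbounded, so the sequence $(y_n)$ does not obviously define an element of $M_N(A^\omega)$.

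To fix this, I will exploit strict comparison and the trace gap. Since $k\,d_\tau(y_n)\le 1$, we have $d_\tau(y_n)\le 1/k<1$. Hence by strict comparison in $M_{N_n}(A)$, $y_n\precsim 1$ whenever $A$ is unital. In the non-unital case we use instead $y_n\precsim h\otimes e_{11}$, with a small cut-down to keep the trace inequality strict. We may therefore replace $y_n$ by an element $z_n\in \overline{hAh}=A$ that is Cuntz equivalent to a slight cut-down of $y_n$. The same strict comparison step converts the relations $k[y_n]\le[h]$ and $[(h-\epsilon_n)_+]\le(k+1)[y_n]$ into witnesses in $M_{k+1}(A)$, at the cost of another small tracial error.

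Pass these witnesses, together with uniformly bounded implementing elements $r_n$, to $A^\omega$ via approximation in norm. Then use $(h-\epsilon_n)_+\to 1$ in $\|\cdot\|_{2,T_\omega(A)}$; this holds because $d_\tau((h-\epsilon_n)_+)\to1$ uniformly, again by Dini's theorem on the compact set $T(A)$. The result is elements $z\in A^\omega_+$ with $z^{\oplus k}\precsim 1$ in $M_k(A^\omega)$, and $1\le(k+1)[z]$ up to an arbitrarily small tracial defect.

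To remove that defect, use Kirchberg's epsilon test (Lemma~\ref{lem:EpsTest}). Encode the approximate Cuntz relations by functions such as $\|r^*(z^{\oplus(k+1)})r-1\|_{2,X}$, the analogous expression for $z^{\oplus k}\precsim 1$, and the norm bounds on the witnesses. The epsilon test then yields exact relations in $A^\omega$. Here the $2$-norm relations upgrade to genuine Cuntz subequivalence because $A^\omega$ is a quotient by the trace-kernel, and $\|\cdot\|_{2}$-null perturbations vanish there.

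The step I expect to be the main obstacle is controlling the implementing elements uniformly in norm. This is needed so that the sequences are bounded and actually define elements of $A^\omega$. I would handle it with R{\o}rdam's lemma, working with cut-downs $(\cdot-\delta)_+$, which gives witnesses of norm at most $\delta^{-1/2}$ for each fixed $\delta$. I would then let $\delta\to0$ only inside the epsilon test.
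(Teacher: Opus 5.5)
The strict-comparison half is fine: citing Proposition~\ref{strict_compa_A_ultrap} is exactly what the paper does. The divisibility half has a genuine gap, because you set out to prove a weaker statement than the one claimed. In this paper, ``$A^\omega$ has divisible unit'' means there is a unital embedding $M_n\hookrightarrow A^\omega$ for every $n$. This is spelled out in Proposition~\ref{S4.2:Prop}, and it is what is used there: projections of constant trace $1/n$, and the expectation $E_{\iota(M_n)'}$ in the projectionisation argument. The Cuntz-semigroup relation $k[z]\le[1]\le(k+1)[z]$ that you aim for is strictly weaker, and it is not ``equivalent'', since it produces no matrix units. Indeed, in the proof of Theorem~\ref{PurenessThm-body} the paper starts from exactly this kind of divisibility of $[1_{\M^\omega}]$ in the Cuntz semigroup and invokes the present proposition to upgrade it.

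The missing idea is to turn division in $\Cu(A)$ into a c.p.c.\ order zero map.
\begin{enumerate}
\item Almost divisibility, followed by a cut-down, gives $b$ and $\eta>0$ with $nk[b]\le[h]$ and $[(h-\delta)_+]\le(nk+1)[(b-\eta)_+]$.
\item \cite[Lemma 2.4]{RR_divisible} then gives $\psi\colon M_n\to\overline{hAh}\subseteq A$ with $[\psi(e_{11})]=k[(b-\eta)_+]$. Hence $d_\tau(\psi(1_n))\ge\frac{nk}{nk+1}d_\tau((h-\delta)_+)$, which is uniformly close to $1$.
\item Dini's theorem and order zero functional calculus give $\phi=\psi^{1/m}$ with $\tau(\phi(1_n))\ge 1-\epsilon$ on $T(A)$.
\item Kirchberg's epsilon test produces an order zero $\phi\colon M_n\to A^\omega$ with $\tau(\phi(1_n))=1$ on limit traces. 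Such a $\phi$ is unital, hence a $^*$-homomorphism.
\end{enumerate}
This also removes both obstacles you identify. Since $\psi$ lands in $A$ itself, no unbounded matrix sizes arise. Since c.p.c.\ maps are automatically bounded, no R{\o}rdam-lemma norm control is needed. Strict comparison plays no role in this half at all.

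There is also a false step inside your argument. The elements $(h-\epsilon_n)_+$ do \emph{not} converge to $1$ in $\|\cdot\|_{2,T_\omega(A)}$. They converge in norm to $h$, which need not be $1$ in $A^\omega$: take $A$ unital and $h=\tfrac12 1_A$. Uniform convergence of $d_\tau((h-\epsilon_n)_+)$ to $1$ is a statement about ranks, not about the elements. To get elements tending to $1$ in the uniform $2$-norm, you must first pass to something like $(h-\epsilon_n)_+^{1/m}$, which has the same Cuntz class, and then apply Dini. That is exactly the role of $\psi(1_n)^{1/m}$ in the paper. Similarly, letting the cut-down $\delta\to0$ ``inside'' the epsilon test does not work. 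The test needs witnesses in a fixed ball, so a bound of order $\delta^{-1/2}$ that deteriorates as $\delta\to 0$ does not yield a bounded sequence defining an element of $A^\omega$.
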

\begin{proof}
By Proposition~\ref{strict_compa_A_ultrap}, $A^\omega$ has strict comparison with respect to $\overline{T_\omega(A)}$.
We show divisibility of the unit of $A^\omega$, only using almost divisibility of the Cuntz class of a strictly positive element of $A$.
Fix $n \in \N$ and $\e>0$.
We will find a c.p.c.\ order zero map $\phi\colon M_n \to A$ such that $\tau(\phi(1_n))>1-\e$ for all $\tau$.
By Kirchberg's epsilon test, it then follows that there exists a c.p.c.\ order zero map $\phi\colon M_n \to A^\omega$ such that $\tau(\phi(1_n))=1$ for all $\tau \in T_\omega(A)$. This $\phi$ must be unital and so is a $^*$-homomorphism by \cite[Proposition 1.4]{BBSTWW}. Therefore, $1_{A^\omega}$ is divisible.

Let $h\in A_+$ be a strictly positive element. Then $d_\tau(h)=1$ for all $\tau \in T(A)$.
By a standard application of Dini's theorem,\footnote{Note that $(\tau \mapsto d_\tau((b-1/n)_+))_{n=1}^\infty$ is an increasing sequences of lower-semicontinuous functions converging pointwise to the continuous function 1 on the compact set $T(A)$.} there exists $\delta>0$ such that 
\begin{equation} \label{eq:deltaChoice}
d_\tau((h-\delta)_+)>1-\frac{1}{3}\e, \quad \tau\in T(A). 
\end{equation}
Let $k\in\N$ be such that 
\begin{equation}\label{eq:kChoice1} 
    \frac1{nk+1}<\frac{1}{3}\e.
\end{equation}
By almost divisibility of $[h]$, there exists $[b]\in \text{Cu}(A)$ such that
\begin{equation}\begin{split}
(nk)[b] &\leq [h] \label{eq:divide_nk-1},
\shortintertext{and}
[(h-\tfrac{1}{2}\delta)_+] &\leq (nk+1)[b].  
\end{split}\end{equation} 
Since $[(h-\delta)_+] \ll [(h-\tfrac{1}{2}\delta)_+]$, there exists $\eta > 0$ such that 
\begin{equation} 
\label{upper_bound_decrease} 
[(h-\delta)_+] \leq (nk+1) [(b-\eta)_{+}]. 
\end{equation}
By \cite[Lemma 2.4]{RR_divisible},\footnote{Note that there is a 1-1 correspondence between c.p.c.\ order zero maps $M_n \rightarrow B$ and $^*$-homomorphisms $C_0(0,1] \otimes M_n \rightarrow B$ for any $C^*$-algebra $B$ by \cite[Corollary 4.1]{WZ:Munster}.} it follows from \eqref{eq:divide_nk-1} that there exists a c.p.c.\ order zero map $\psi\colon M_{n}\rightarrow \overline{h(A\otimes\mathcal{K})h}\subseteq A$ (as $h\in A$) such that 
\begin{equation}\label{eq:kChoice}[\psi(e_{11})] = k[(b-\eta)_{+}].\end{equation}
Let $\tau \in T(A)$. Then 
\begin{equation} \label{eq:d-tau-estimate-1}\begin{array}{rcl}
d_{\tau} (\psi(1_{n})) &=& nd_\tau(\psi(e_{11})) \\
&\stackrel{\eqref{eq:kChoice}}{=}& nk d_{\tau}((b-\eta)_{+}) \\
&\stackrel{\eqref{upper_bound_decrease}}\geq& \frac{nk}{nk+ 1} d_{\tau}((h-\delta)_+) \\
&\stackrel{\eqref{eq:deltaChoice},\eqref{eq:kChoice1}}\geq& 1-\frac{2}{3}\e. 
\end{array}\end{equation} 

Applying Dini's theorem to the increasing sequence of continuous functions $f_m\colon T(A) \to \R$ given by $f_m(\tau) = \tau(\psi(1_n)^{1/m})$, there exists $m \in \N$ such that $\tau(\psi(1_n)^{1/m})\geq 1-\e$
for all $\tau\in T(A)$.
Then the c.p.c.\ order zero map $\phi\coloneqq \psi^{1/m}$ (defined via the order zero functional calculus of \cite{WZ:Munster}) has the required properties.
\end{proof}

\subsection{Classification of projections}\label{subsec:existence-and-uniqueness}

We begin with the notion of classification of projections for tracially complete $C^*$-algebras from \cite[Section 7.3]{carrion2023tracially}. Although this notion is not formally defined there, the results in that section naturally lead to the following definition.

\begin{definition}\label{def:classification-of-projections}
Let $(\mathcal{M}, X)$ be a II$_1$ tracially complete $C^*$-algebra.\footnote{We restrict to the II$_1$ setting because existence of projections should be defined differently outside the II$_1$ setting.} Then 
\begin{enumerate} 
\item $(\mathcal{M}, X)$ has \emph{comparison of projections} if, for any two projections $p, q \in \M$ such that $\tau(p) \leq \tau (q)$ for all $\tau \in X$, we have $p \precsim q$; 
\item $(\mathcal{M}, X)$ has \emph{existence of projections} if for any $f\in \textrm{Aff}(X)$ with $0\leq f \leq 1$, there exists a projection $p\in \mathcal{M}$ such that $\tau (p) = f(\tau)$ for all $\tau \in X$; 
\item $(\mathcal{M}, X)$ has \emph{classification of projections} if both (i) and (ii) hold. 
\end{enumerate} 
Occasionally, we will use a much weaker version of (ii), and say that \emph{all constant trace projections exist} in $(\M,X)$ when, for all $0\leq\gamma\leq1$, there exists a projection $p\in \mathcal M$ with $\tau(p)=\gamma$ for all $\tau\in X$.
\end{definition}

Classification of projections was established for II$_1$ factorial tracially complete $C^*$-algebras with CPoU in \cite[Theorems 7.17 and 7.18]{carrion2023tracially}.  Forthcoming work (\cite{JRW:inPrep}) of Jekel, Ryder, and SW will give new methods for obtaining classification of projections in a variety of $W^*$-bundles which do not have CPoU, including all trivial bundles.

Note that comparison of projections immediately implies \emph{uniqueness of projections}, which means that projections with the same trace are unitarily (and hence Murray--von Neumann) equivalent, as follows.
\begin{proposition}\label{Prop:MvN}
    Let $(\mathcal M,X)$ be a tracially complete $C^*$-algebra with comparison of projections.
    Suppose $p,q \in\M$ are projections with $\tau(p) = \tau(q)$ for all $\tau \in X$, then there exists a unitary $u\in\M$ with $upu^*=q$.
\end{proposition}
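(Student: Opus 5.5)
By comparison of projections applied in both directions, $p\precsim q$ and $q\precsim p$. So there are partial isometries $v,w\in\M$ with $v^*v=p$, $vv^*\leq q$, $w^*w=q$ and $ww^*\leq p$. The plan is first to upgrade this to an honest Murray--von Neumann equivalence $p\sim q$, and then to do the same for the complements $1-p$ and $1-q$. Once we have partial isometries $s$ and $t$ with
\begin{equation*}
s^*s=p,\quad ss^*=q,\quad t^*t=1-p,\quad tt^*=1-q,
\end{equation*}
the element $u\coloneqq s+t$ is a unitary with $upu^*=q$. This last step is a routine check.

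To show $p\sim q$, note that $vv^*\leq q$ and $vv^*\sim p$. Hence for every $\tau\in X$ we get $\tau(q-vv^*)=\tau(q)-\tau(p)=0$. Since $\|\cdot\|_{2,X}$ is a norm on $\M$ (Definition~\ref{def:TC}), this gives $\|q-vv^*\|_{2,X}^2=\sup_{\tau\in X}\tau(q-vv^*)=0$. So $q=vv^*$, and $v$ already implements $p\sim q$. We do not need the second partial isometry $w$ at all. The key point is that faithfulness of the designated traces, encoded in $\|\cdot\|_{2,X}$ being a norm, rules out any gap between $vv^*$ and $q$.

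For the complements, $\tau(1-p)=\tau(1-q)$ for all $\tau\in X$, and $1-p$ and $1-q$ are projections. So the same argument, comparison of projections followed by the faithfulness step, yields $t$ with $t^*t=1-p$ and $tt^*=1-q$. Then we set $u\coloneqq v+t$. We have $u^*u=p+(1-p)=1$, because the cross terms vanish: $v^*t=v^*q(1-q)t=0$. Similarly $uu^*=1$. Finally $upu^*=vpv^*=q$.

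I expect no real obstacle here. The only subtle point is making sure the traces in $X$ are faithful on $\M$, and that is built into the definition of a tracially complete $C^*$-algebra. Comparison is assumed with non-strict inequality, so the relation $\tau(p)\leq\tau(q)$ can be applied directly.
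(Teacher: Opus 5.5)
Your proof is correct. Its overall shape matches the paper's: establish $p\sim q$ and $1-p\sim 1-q$, then add the two partial isometries. The difference lies in how you get $p\sim q$.

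The paper applies comparison in both directions to obtain $p\precsim q\precsim p$. It then notes that a faithful set of traces makes $\M$ stably finite, and upgrades mutual subequivalence to Murray--von Neumann equivalence; this is implicitly a Schr\"oder--Bernstein-type finiteness argument.

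You use only $p\precsim q$. The defect $q-vv^*$ is a projection with $\tau(q-vv^*)=0$ for all $\tau\in X$, so it has zero uniform $2$-norm and must vanish, because $\|\cdot\|_{2,X}$ is a norm.

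Your version is slightly more economical. It needs only one direction of comparison and skips the finiteness step, in effect inlining the reason faithful traces force finiteness. The paper's version instead packages that reasoning as the standard fact that mutually subequivalent projections in a stably finite algebra are equivalent. Both rely on the same underlying ingredient: the designated traces are faithful.
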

\begin{proof}
    Since $\tau(p) \leq \tau(q)$ and $\tau(q) \leq \tau(p)$ for all $\tau \in X$, it follows that $p \precsim q$ and $q \precsim p$ by comparison of projections. Since $\M$ has a faithful set of traces, it is stably finite. It therefore follows that $p \sim_{\mathrm{MvN}} q$.  Since the same argument gives $1-p \sim_{\mathrm{MvN}} 1-q$, it follows that $p$ and $q$ are unitarily equivalent.
\end{proof}
Using Proposition \ref{Prop:MvN}, classification of projections immediately gives a classification of finite-dimensional $C^*$-subalgebras (consider first the minimal projections in the centre, and then the matrix units for each simple summand).\footnote{This is recorded for the factorial CPoU case in \cite[Lemma 9.5]{carrion2023tracially}, but that proof only uses CPoU through the existence and comparison of projections.}

\begin{proposition}\label{Class:FD}
    Let $(\mathcal M,X)$ be a II$_1$ tracially complete $C^*$-algebra with classification of projections.  Then $(\mathcal M,X)$ has classification of  finite-dimensional subalgebras, in the sense that for a finite-dimensional $C^*$-algebra $F$:
    \begin{enumerate}
        \item \label{it:ClassFDExistence}
        given a projection $p\in \M$ and continuous affine map $\alpha\colon X\to T_{\leq 1}(F)$ such that $\alpha(\tau)(1_F) \leq \tau(p)$ for all $\tau \in X$, there exists a $^*$-homomorphism $\theta\colon F\to p\mathcal Mp$ with $\tau \circ \theta=\alpha (\tau)$ for all $\tau\in X$; if $\alpha(\tau)(1_F)=\tau(p)$ for all $\tau \in X$ then $\theta(1_F)=p$; 
        \item \label{it:ClassFDUniqueness}
        two $^*$-homomorphisms $\phi,\psi\colon F\to\mathcal M$ are unitarily equivalent if and only if $\tau\circ\phi=\tau\circ\psi$ for all $\tau\in X$.
    \end{enumerate}
\end{proposition}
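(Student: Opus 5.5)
For existence (i), write $F=\oplus_{k=1}^m M_{n_k}$ and decompose $\alpha(\tau)=\sum_{k=1}^m \alpha_k(\tau)\,\mathrm{tr}_{n_k}$, where $\alpha_k\colon X\to[0,1]$ is continuous affine and $\sum_k\alpha_k(\tau)=\alpha(\tau)(1_F)\le\tau(p)$. The plan is to build, inside $p\M p$, mutually orthogonal projections $e^{(k)}_{11},\dots,e^{(k)}_{n_kn_k}$, each with trace $\alpha_k/n_k$ (normalising so that $\tau(\theta(e^{(k)}_{ii}))=\alpha(\tau)(e^{(k)}_{ii})$), and then to link them by partial isometries.

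First we produce a projection $q_{1}\le p$ with $\tau(q_1)=f_1(\tau)$ for a prescribed $f_1\in\Aff(X)$, $0\le f_1\le\hat p$. Existence of projections gives some projection $q'$ with $\tau(q')=f_1(\tau)$. Comparison gives $q'\precsim p$, so $q'=v^*v$ with $vv^*\le p$, and we take $q_1\coloneqq vv^*$, which has the same trace. Next we iterate with $p-q_1$ in place of $p$: its trace $\hat p-f_1$ is continuous affine. Proceeding through all $\sum_k n_k$ pieces in turn yields orthogonal projections $q^{(k)}_i\le p$ with $\tau(q^{(k)}_i)=\alpha_k(\tau)/n_k$. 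Within each summand, Proposition~\ref{Prop:MvN} (or directly, comparison together with stable finiteness) gives partial isometries $v^{(k)}_i$ with $v^{(k)*}_iv^{(k)}_i=q^{(k)}_1$ and $v^{(k)}_iv^{(k)*}_i=q^{(k)}_i$. Setting $e^{(k)}_{ij}=v^{(k)}_iv^{(k)*}_j$ defines a system of matrix units, hence a $^*$-homomorphism $\theta\colon F\to p\M p$ with the prescribed traces.

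If $\alpha(\tau)(1_F)=\tau(p)$, then $p-\theta(1_F)$ is a projection of trace zero on $X$. Since $\|\cdot\|_{2,X}$ is a norm, this projection vanishes, so $\theta(1_F)=p$. The only point of care is that the comparison step should be applied in $\M$, with the tracial data coming from $X$; this is exactly the hypothesis as stated in Definition~\ref{def:classification-of-projections}.

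For uniqueness (ii), the forward direction is clear, since traces are unitarily invariant. Conversely, the minimal projections $\phi(e^{(k)}_{11})$ and $\psi(e^{(k)}_{11})$ have the same traces on $X$, so by comparison and stable finiteness there exist partial isometries $w_k$ with $w_k^*w_k=\phi(e^{(k)}_{11})$ and $w_kw_k^*=\psi(e^{(k)}_{11})$. Set
\[
w\coloneqq\sum_{k}\sum_i\psi(e^{(k)}_{i1})\,w_k\,\phi(e^{(k)}_{1i}),
\]
a partial isometry from $\phi(1_F)$ to $\psi(1_F)$ satisfying $w\phi(x)w^*=\psi(x)$ for $x\in F$. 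The projections $1-\phi(1_F)$ and $1-\psi(1_F)$ have equal traces, so they are Murray--von Neumann equivalent via some partial isometry $w'$. Then $u\coloneqq w+w'$ is a unitary implementing the equivalence of $\phi$ and $\psi$. No step is a genuine obstacle; the main bookkeeping is keeping the projections orthogonal and under $p$ in the inductive construction.
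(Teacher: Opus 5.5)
Your proof is correct and follows essentially the same route the paper indicates: existence and comparison of projections, together with Proposition~\ref{Prop:MvN}, are used to build and then conjugate systems of matrix units. The paper phrases this as handling the minimal central projections first and then the matrix units in each summand, whereas you build the diagonal matrix units directly; this is only a cosmetic reordering, and your inductive construction under $p$, the vanishing of the trace-zero projection $p-\theta(1_F)$ via faithfulness of $\|\cdot\|_{2,X}$, and the unitary $u=w+w'$ in the uniqueness part all check out.
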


\subsection{Classification of projections, real rank zero, and stable rank one for tracial ultrapowers}

In this subsection, we establish that $(\M^\omega,X^\omega)$ has classification of projections and real rank zero when it has strict comparison with to $X^\omega$ and divisible unit.  

\begin{proposition}\label{S4.2:Prop}
    Let $(\M,X)$ be a II$_1$ factorial tracially complete $C^*$-algebra such that $\mathcal M^\omega$ has strict comparison with respect to $X^\omega$ and $\M^\omega$ has a divisible unit, i.e.\ $M_n\hookrightarrow \M^\omega$ unitally for all $n\in\mathbb N$.
    Then:
    \begin{enumerate}
        \item\label{it:classif-proj-omega} $\M^\omega$ has classification of projections, and hence also of finite-dimensional subalgebras;
        \item\label{it:rr0-omega} $\M^\omega$ has real rank zero;
        \item\label{it:sr1-omega} $\M^\omega$ has stable rank one;
        \item\label{it:pure-omega} $\M^\omega$ is pure.
    \end{enumerate}
\end{proposition}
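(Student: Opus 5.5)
We follow Steps (1)--(4) of Section~\ref{subsec:organisation}, working throughout in the ultrapower, where by Proposition~\ref{prop:computing-traces} we have $T(\M^\omega)=X^\omega$. All steps are run at every matrix level $M_n(\M^\omega)$. This is legitimate because $(M_n(\M^\omega),(X^\omega)_n)$ is again an ultrapower of the amplification from Remark~\ref{2909_rem_amplification}, so the hypotheses persist.

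\emph{Comparison of projections.} Let $p,q\in\M^\omega$ be projections with $\tau(p)\le\tau(q)$ for all $\tau\in X^\omega$. The difficulty is that strict comparison needs a strict inequality. We plan to get it by a reindexing argument. First fix $k$ and use a unital copy of $M_{2k}$ in $\M^\omega\cap\{p,q\}'$. Such a copy is obtained from divisibility of the unit together with Kirchberg's epsilon test and a reindexing that moves it into the relative commutant. Cut $q$ by a minimal projection $e$ of this copy, and compare $p(1-e)$ with $q$. Then $\tau(p(1-e))=(1-\tfrac1{2k})\tau(p)$. When $\tau(p)>0$ this is strictly less than $\tau(q)$, and the degenerate traces are handled by adding a tiny constant-trace projection. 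We thus obtain partial isometries $v_k$ with $v_k^*v_k=p(1-e_k)$ and $v_kv_k^*\le q$, with defect $\|p-v_k^*v_k\|_{2,X^\omega}\to0$. Kirchberg's epsilon test, together with Lemma~\ref{lem:trace-trick}, then produces an exact partial isometry from $p$ into $q$.

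\emph{Existence of projections.} Given $f\in\Aff(X^\omega)$ with $0\le f\le1$, use Proposition~\ref{prop:CP-ultrapowers} to realise $f$ by a positive contraction $a$. Then projectionise in the style of \cite{CETWW}. Using a unital $M_n\hookrightarrow\M^\omega\cap\{a\}'$, replace $a\otimes 1$ by a projection dominated by the spectral data of $a$, of the form $\sum_j\chi_{(j/n,1]}(a)$-like sums spread over matrix units. Concretely, set $p=\sum_{j=1}^n (a-\tfrac{j-1}{n})_+\wedge\tfrac1n$ scaled onto the diagonal, and then apply order-zero functional calculus. The traces of the resulting projection are within $1/n$ of $f$. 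Kirchberg's epsilon test converts this to exact equality.

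\emph{Real rank zero, stable rank one and pureness.} This is the main obstacle. Given a positive contraction $a$ and $\epsilon>0$, we want a projection $p\in\overline{a\M^\omega a}$ with $\|a-ap\|_{2,X^\omega}<\epsilon$ that approximately commutes with $a$. The plan is to first use existence of projections to find $p'$ with $\tau(p')$ slightly below $d_\tau((a-\epsilon)_+)$; Dini's theorem provides a uniform gap. Comparison, via strict comparison applied to $p'$ and $(a-\epsilon)_+$, then moves $p'$ under the hereditary subalgebra of $(a-\epsilon/2)_+$. Iterating over finitely many spectral levels gives a finite-spectrum $\|\cdot\|_{2,X^\omega}$-approximant, and Kirchberg's epsilon test upgrades this to norm approximation of self-adjoints by finite-spectrum elements, i.e.\ property (FS). The upgrade works because a $2$-norm approximation that holds exactly in the ultrapower can be made exact.

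Once real rank zero holds, classification of projections at all matrix levels gives cancellation, since the traces determine classes. Proposition~\ref{prop:RR0CancellationSR1} then yields stable rank one. Strict comparison, Proposition~\ref{prop:rr0-comparison}, and existence of projections of every continuous rank give almost unperforation and divisibility of $\Cu(\M^\omega)$, using \cite[Theorem 5]{CEI} to write Cuntz classes as suprema of projection classes. Hence $\M^\omega$ is pure. Part \ref{it:classif-proj-omega} for finite-dimensional subalgebras is Proposition~\ref{Class:FD}.
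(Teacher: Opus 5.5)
\textbf{There are genuine gaps in the existence-of-projections and real rank zero steps.}

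\emph{Matrix copies in relative commutants.} Your comparison and existence steps both use a unital copy of $M_k$ inside $\M^\omega\cap\{p,q\}'$ or $\M^\omega\cap\{a\}'$, which you say comes from divisibility of the unit plus reindexing. That does not follow. Divisibility gives matrix algebras with no commutation properties, and reindexing can only turn approximate commutation into exact commutation; it cannot create it. For example, a II$_1$ factor $\M$ without property $\Gamma$ satisfies the hypotheses, and $\M^\omega$ contains unital copies of every $M_n$, yet $\M^\omega\cap\M'=\mathbb C$. In the paper, matrix copies commuting with a given self-adjoint $a\in\M^\omega$ (Lemma~\ref{lem:central-divide-interval}) are obtained only after real rank zero of $\M^\omega$ is known. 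One norm-approximates $a$ by a finite-spectrum element, whose commutant is a sum of corners. Your real rank zero argument itself uses existence of projections, so the proposal is circular here.

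Neither step actually needs such a copy:
\begin{itemize}
\item \emph{Comparison.} Take a projection $r$ of constant trace $1/n$ from divisibility. Apply strict comparison to $p\oplus 0$ versus $q\oplus r$ in $M_2(\M^\omega)$. Compress the resulting partial isometry by $q\oplus 0$ to get $w$ with $qw=w$ and $\|w^*w-p\|_{2,X^\omega}\to 0$ as $n\to\infty$. Then apply the epsilon test.
\item \emph{Existence.} The missing idea is to replace $a$ by $b\coloneqq E_{\iota(M_n)'}(a)$ for an \emph{arbitrary} unital $\iota\colon M_n\to\M^\omega$. Then $b$ commutes with $\iota(M_n)$, and $\tau(b)=\tau(a)$ because the expectation is trace-preserving, which is all that matters. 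Setting $p=\sum_i\iota(e_{ii})g_i(b)$ gives $\tau(p)=\tau(a)$ and $\tau(p-p^2)\le 1/n$. This $p$ is an approximate projection with exact traces, not a projection with traces within $1/n$; the epsilon test then finishes.
\end{itemize}

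\emph{Real rank zero.} This step does not work as sketched, for three reasons.
\begin{itemize}
\item Choosing $p'$ with $\tau(p')$ strictly below $d_\tau((a-\epsilon)_+)$ requires this rank to be positive at every $\tau\in X^\omega$. Where $a$ vanishes on a trace there is no strict gap, and strict comparison cannot be invoked. The paper avoids this by working with $b_+\oplus q\in M_2(\M^\omega)$ for a small constant-trace projection $q$, and then discarding the off-diagonal and $q$-corners.
\item Projections at different spectral levels must be nested to assemble a finite-spectrum element, and you do not say how to arrange this. Lemma~\ref{Lemma:RealRank0IncreasingProject} builds an increasing sequence inside $\overline{a\M^\omega a}$ by a two-step comparison argument, using cut-offs with $f_{n+1}(a)f_n(a)=f_n(a)$.
\item Most seriously, Kirchberg's epsilon test cannot convert a $\|\cdot\|_{2,X^\omega}$-approximation into a norm approximation. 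The norm of $\M^\omega$ is not the $\omega$-limit of the norms of representatives, so the test only produces \emph{exact algebraic relations}. The relations that give real rank zero are $rb_+=b_+$ and $rb_-=0$ for a projection $r$. These force $r$ to commute with $b$, and then $r(b+\delta)+(1-r)(b-\delta)$ is an invertible self-adjoint within $\delta$ of $b$. Your sentence about making a $2$-norm approximation exact does not identify any such relations.
\end{itemize}

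Your stable rank one step agrees with the paper's. Your route to pureness, via \cite[Theorem 5]{CEI} and comparison of projections at all matrix levels, is a reasonable substitute for Proposition~\ref{prop:ETpure}, once classification of projections and real rank zero are genuinely in place.
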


\begin{remark}
    We note that the hypotheses that $(\M,X)$ is type II$_1$ and factorial are 
    is redundant under the assumptions that $\M^\omega$ has strict comparison with respect to $X^\omega$ and $\M^\omega$ has a divisible unit. 
    \begin{enumerate}
        \item Divisibility of the unit and order zero lifting give order zero maps $M_n\to\M$ which are not contained in the kernel of any $\pi_\tau$ for all $\tau \in X$. Accordingly, each $\pi_\tau(\M)''$ is type II$_1$.
        \item Strict comparison of $\M^\omega$ with with respect to $X^\omega$ implies $T(\M^\omega)=X^\omega$ by Proposition~\ref{prop:computing-traces}.
        Suppose $\sigma\in T(\M)$ is in the face generated by $X$.  Then there is a constant $C > 0$ and a trace $\tau \in X$ such that $\tau \leq C\sigma$, and in particular, $\sigma$ is $\|\cdot\|_{2, X}$-continuous.
        Then the limit trace $\sigma_\omega$ given by just using $\sigma$ is in $T(\M^\omega)=X^\omega$.
        The continuous affine map $T(\M^\omega) \rightarrow T(\M)$ given by $\tau \mapsto \tau|_\M$ maps $X^\omega$ into $X$ since $X$ is closed.
        Therefore, $\sigma = \sigma_\omega|_\M \in X$.
    \end{enumerate}
    Nevertheless, we will include the type II$_1$ and factorial hypotheses in the statements of results in this section in accordance with Definition~\ref{def:classification-of-projections}.
\end{remark}

Comparison of projections is essentially immediate from strict comparison and divisibility of the unit; the only detail is to add an extra gap by adding a small trace projection in the $2\times 2$ matrix amplification $(M_2(\M^\omega),X^\omega_2)$, which, by definition, also has strict comparison. 

\begin{proof}[Proof of Proposition \ref{S4.2:Prop}\ref{it:classif-proj-omega}: comparison of projections]
Let $p, q \in \M^\omega$ be projections such that $\tau(p) \leq \tau(q)$ for all $\tau\in X^\omega$.
Let $n\in\mathbb N$. As $\M^\omega$ has divisible unit, there exists a projection $r\in \M^\omega$ of constant trace $\tfrac{1}{n}$.  
Then, in $M_2(\M^\omega)$, we have
\begin{equation}
\tau(p\oplus 0)<\tau(q\oplus r)
\end{equation}
for every $\tau\in X_2^\omega$. Since $M_2(\M^\omega)$ has strict comparison with respect to $X^\omega_2$, there exists $v\in M_2(\M^\omega)$ with $v^*v=p \oplus 0$ and $vv^*\leq q\oplus r$.  Let $w \in \mathcal M_2^\omega$ be such that  $w \oplus 0 =(q \oplus 0) v(p \oplus 0) \in\mathcal M^\omega$. Then $qw=w$ and
\begin{equation}
    \|w^*w-p\|_{2,X^\omega}^2
    =2\|v^*(q\oplus 0)v-v^*(q\oplus r)v\|_{2,X^\omega_2}^2=\frac{2}{n}.
\end{equation}
Kirchberg's epsilon test provides $w\in \M^\omega$ with $w^*w=p$ and $qw=w$. Therefore, $p\precsim q$.
\end{proof}

For the existence of projections, we use the tracial projectionisation trick from \cite[Lemma 2.4]{CETWW}; this is powered by the divisibility of the unit which allows us to work in the commutant of large matrix subalgebras. This is the point where we go from being able to divide the unit of $\M^\omega$ to realising all continuous rank functions.

\begin{proof}[Proof of Proposition \ref{S4.2:Prop}\ref{it:classif-proj-omega}: existence of projections]
Let $f \in \mathrm{Aff}(X)$ be such that $0 \leq f \leq 1$.  Since $\M^\omega$ has strict comparison with respect to $X^\omega$, Proposition~\ref{prop:computing-traces} gives $T(\M^\omega)=X^\omega$. By Proposition \ref{prop:CP-ultrapowers}, there exists a positive contraction $a\in \M^\omega$  with $\tau(a)=f(\tau)$ for all $\tau\in T(\M^\omega)$.

Let $n\in\mathbb N$. We'll now produce $p\in \M^{\omega}$ such that $\|p-p^2\|_{2,X^\omega}^2 \leq \frac{1}{2^n}$ and $\tau (p) = \tau (a)$ for any $\tau\in X^\omega$.
Once this is done, Kirchberg's epsilon test produces a projection $p\in\M^\omega$ with $\tau(p)=f(\tau)$ for all $\tau\in X^\omega$, as required.

Using divisibility of the unit, fix a unital embedding $\iota\colon M_{n} \to \M^\omega$ and set
\begin{equation} b \coloneqq E_{\iota(M_{2^n})'}(a)\in \M^\omega\cap\iota(M_{n})' \end{equation}
where $E_{\iota(M_{n})'}$ is the canonical conditional expectation from
Definition~\ref{def:CondExp}.
Then $b$ is a positive contraction such that $\tau(b)=\tau(a)$ for all $\tau\in X^\omega$ by Remark~\ref{rem:cond_exp}\ref{rem:con_exp_1}.

The rest of this proof closely follows the ``tracial projectionisation'' argument of \cite[Lemma~2.4]{CETWW}.
Next, define functions $g_1,\dots,g_n \in C([0,1])$ by requiring that
$g_i|_{[0,\frac{i-1}{n}]} = 0$, $g_i|_{[\frac{i}{n},1]} = 1$, and $g_i$ is affine on $[\frac{i-1}{n},\frac{i}{n}]$. Note that 
\begin{equation} 
\label{eq:Projectionization2}
\mathrm{id}_{[0,1]} = \frac{1}{n} \sum_{i=1}^n g_i, \quad \text{and} 
\end{equation} 
\begin{equation}  
\label{eq:Projectionization3} 
0\leq \sum_{i=1}^n (g_i-g_i^2) \leq 1_{C([0,1])}.
\end{equation}
Then define
\begin{equation}\label{eq:def-p}
    p\coloneqq \sum_{i=1}^n \iota(e_{ii})g_i(b),
\end{equation}
which is a positive contraction by orthogonality of the $e_{ii}$.

For $\tau \in X^\omega$, since $M_n$ has unique trace, $\tau(g_i(b)\iota(\cdot))$ is a scalar multiple of this trace, and so
\begin{equation}\label{eq:Projectionization1}
    \tau(g_i(b)\iota(e_{ii}))=\frac{\tau(g_i(b))}{n}.
\end{equation}
Thus we have
\begin{equation}
    \tau(p) \stackrel{\eqref{eq:def-p}}{=} \tau \left(\sum_{i=1}^{n} \iota(e_{ii})g_{i}(b)\right) \stackrel{\eqref{eq:Projectionization1}}= \sum_{i=1}^{n} \frac{1}{n}\tau (g_{i}(b))
\stackrel{\eqref{eq:Projectionization2}}= \tau(b),\quad \tau\in X^\omega.
\end{equation}
Moreover, for $\tau \in X^\omega$, using that the $\iota(e_{ii})$ are orthogonal and commute with $b$,
\begin{equation} \begin{array}{rcl}
\tau(p-p^2)
&=& \tau\left(\sum_{i=1}^n \iota(e_{ii})\left(g_i(b)-g_i(b)^2\right)\right) \\
&\stackrel{\eqref{eq:Projectionization1}}=& \sum_{i=1}^n \frac1N\tau\left(g_i(b)-g_i(b)^2\right) \stackrel{\eqref{eq:Projectionization3}}\leq \tfrac1n \tau(1_{\M^\omega}) = \tfrac1n.
\end{array}
\end{equation}
Since $p-p^2$ is a positive contraction, we get
\begin{equation} \|p-p^2\|_{2,X^\omega}^2=\sup_{\tau\in X^\omega}\tau((p-p^2)^2) \leq \sup_{\tau \in X^\omega} \tau(p-p^2) \leq \tfrac1n, 
\end{equation} as required.
\end{proof} 

 Now we start on Proposition~\ref{S4.2:Prop}\ref{it:rr0-omega}.  It is a standard fact that in a separable $C^*$-algebra $A$ of real rank zero, for every $a \in A_+$, the hereditary subalgebra $\overline{aAa}$ has an increasing sequential approximate unit of projections; see \cite[Proposition 2.9]{BrownPedersen:RR0}.
 As a step towards showing that $\M^\omega$ has real rank zero, we prove a uniform 2-norm version of this for $\M^\omega$ using classification of projections.
 To do so, we will need to move larger trace projections over smaller ones \emph{while staying in this hereditary subalgebra}.
 The point of the functions $f_n$ in the following lemma is to give hereditary subalgebras where we can use comparison to move a larger projection over a smaller projection, while retaining overall control of its location.

 \begin{lemma}\label{Lemma:RealRank0IncreasingProject}
 Let $(\M,X)$ be a II$_1$ factorial tracially complete $C^*$-algebra such that $\mathcal M^\omega$ has strict comparison with respect to $X^\omega$ and $\M^\omega$ has a divisible unit. Suppose $a\in \mathcal M_+$ such that $d_\tau(a)>0$ for all $\tau\in X^\omega$. Then, for all $\epsilon>0$, there exists a projection $p\in \overline{a\mathcal M^\omega a}$ with $\|ap-a\|_{2,X^\omega}<\epsilon$.
 \end{lemma}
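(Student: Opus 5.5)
Throughout, we may assume $a$ is a positive contraction, and we may regard $a$ as an element of $\M^\omega$. The plan is to build the projection in $\overline{a\M^\omega a}$ by cutting $a$ into spectral slabs. We fix a large $n$ and use the functions $f_k\in C_0((0,1])$, where $f_k$ is $0$ on $[0,\tfrac{1}{k+1}]$, equal to $1$ on $[\tfrac1k,1]$, and linear in between. Each hereditary subalgebra $B_k\coloneqq\overline{f_k(a)\M^\omega f_k(a)}$ lies in $\overline{a\M^\omega a}$.

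The main step is to produce, for each threshold, a projection $p\in\overline{a\M^\omega a}$ whose trace approximately dominates $d_\tau(f_k(a))$ uniformly in $\tau$. The construction proceeds in three moves.
\begin{enumerate}
\item By existence of projections (already proved in Proposition~\ref{S4.2:Prop}\ref{it:classif-proj-omega}, together with $T(\M^\omega)=X^\omega$) we obtain, in $\M^\omega$, a projection $q$ with $\tau(q)=\tau(g(a))$. Here $g$ is chosen between $f_{k}$ and $f_{k+1}$, so that $d_\tau(f_k(a))\le\tau(g(a))\le d_\tau(f_{k+1}(a))$ up to small error; $\tau(g(a))$ is a continuous affine function of $\tau$.
\item Shrinking by a small constant-trace projection (from the divisible unit) gives strict inequality $\tau(q')<d_\tau(f_{k+1}(a))$ on the compact set $X^\omega$, with margin. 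Strict comparison then gives $q'\precsim f_{k+1}(a)$ in $\M^\omega$.
\item A standard argument (using $(f_{k+1}(a)-\eta)_+$ and R{\o}rdam's lemma, plus Kirchberg's epsilon test to make the partial isometry exact) moves $q'$ to a projection $p\le$ support inside $B_{k+1}$, hence $p\in\overline{a\M^\omega a}$.
\end{enumerate}

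Having $p\in\overline{a\M^\omega a}$ with $\tau(p)\ge d_\tau(f_k(a))-\delta$ is not enough: we also need $p$ to essentially cover the spectral part of $a$ away from zero, i.e.\ $\|(1-p)f_k(a)\|_{2}$ small. For this I would not take an arbitrary $p$. Instead I would iterate, using the slabs as the paper's hint suggests. Inside $B_{k+1}$, which also has strict comparison with respect to the restricted traces, compress by $1-p$ and repeat the construction on the positive element $(1-p)f_k(a)(1-p)$ (a hereditary element of $\overline{a\M^\omega a}$), adding an orthogonal projection. Since each step captures a uniform fraction of the remaining trace of $f_k(a)$, after finitely many steps the residual $\sup_\tau\tau((1-p)f_k(a)^2(1-p))$ is below $\epsilon^2/4$.

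An alternative I would try first is to work in $\M^\omega\cap\iota(M_N)'$ via the projectionisation argument. There one compares $f_k(a)$ with $(1-p)f_k(a)(1-p)$ in the corner $(1-p)B_{k+1}(1-p)$, using comparison to place a projection of trace nearly $d_\tau$ of that element under it; the hypothesis $d_\tau(a)>0$ ensures the required strict inequalities are nondegenerate.

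Finally, the estimate is
\[\|ap-a\|_{2,X^\omega}\le\|a-af_k(a)\|+\|f_k(a)(1-p)\|_{2,X^\omega}.\]
The first term is at most $\tfrac1k$ in operator norm, and the second is small by the previous paragraph. Choosing $k$ and then the iteration tolerance gives the bound $\epsilon$. Kirchberg's epsilon test is used throughout to turn the approximate projections and partial isometries into exact ones.

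The main obstacle is the middle step: guaranteeing that the projection sitting under $a$ actually absorbs $f_k(a)$ in $2$-norm, rather than merely having the right trace. This is exactly where moving a larger projection over a smaller one inside $B_{k+1}$ via strict comparison must be done carefully, with uniform control over all $\tau\in X^\omega$.
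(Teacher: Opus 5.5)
Your diagnosis is right. A single $p\in\overline{a\mathcal M^\omega a}$ with $\tau(p)\gtrsim d_\tau(f_k(a))$ does not control $\|f_k(a)(1-p)\|_{2,X^\omega}$. Nor can you pick $k$ so that $d_\tau(a)-d_\tau(f_k(a))$ is uniformly small, because $\tau\mapsto d_\tau(a)$ is only lower semicontinuous. (If you could, $\tau(p)\ge d_\tau(a)-\delta$ would already give $\|a-pa\|_{2,\tau}^2\le\delta$, since $\pi_\tau(p)$ lies under the support projection of $\pi_\tau(a)$.) However, the remedy you propose has a genuine gap.

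First, the claim that each step ``captures a uniform fraction of the remaining trace of $f_k(a)$'' is unsupported, and false in general. Your construction controls only $\tau(p)$. In a fibre, a projection in $B_{k+1}$ of trace about $d_\tau(f_k(a))$ may sit entirely under $\chi_{(1/(k+2),1/(k+1)]}(\pi_\tau(a))$, where $f_k(a)$ vanishes, since R{\o}rdam's lemma gives no control over where in $B_{k+1}$ it lands. The residual then does not decrease at all, and nothing prevents the same thing at later steps (in the low slab of the residual). So the per-step decay you assert does not follow.

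Second, the residuals $(1-p)f_k(a)(1-p)$ can have zero rank at some $\tau\in X^\omega$ and positive rank at others. At such $\tau$ the strict inequality needed in your second move cannot hold, so strict comparison is unavailable. The hypothesis $d_\tau(a)>0$ says nothing about residuals.

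Third, your ``alternative'' via $\mathcal M^\omega\cap\iota(M_N)'$ does not work. The element $a$ need not commute with $\iota(M_N)$, and applying the conditional expectation takes you out of $\overline{a\mathcal M^\omega a}$.

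The missing idea is to make the projections increasing and then use monotone convergence plus Dini. The paper proceeds as follows.
\begin{enumerate}
\item Fix $\epsilon_0>0$ with $d_\tau((a-\epsilon_0)_+)>0$ on $X^\omega$, and use thresholds $\epsilon_0/(n+1)<\epsilon_0/n$ for $f_n$. Every strict inequality needed is then nondegenerate.
\item Obtain $p_n\in\overline{f_n(a)\mathcal M^\omega f_n(a)}$ with $\tau(p_n)=(1-\tfrac1n)\tau(f_n(a))$.
\item Build $q_1\le q_2\le\cdots$ with $q_n\in\overline{f_n(a)\mathcal M^\omega f_n(a)}$ and $q_n\sim p_n$. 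Given $q_n$, find $v$ with $v^*v=q_n$ and $vv^*\le p_{n+1}$. Since $f_{n+1}(a)q_n=q_n$, we have $d_\tau(f_{n+1}(a)-q_n)=d_\tau(f_{n+1}(a))-\tau(q_n)>\tau(p_{n+1}-vv^*)$. So there is $w$ with $ww^*=p_{n+1}-vv^*$ and $w^*w$ under $f_{n+1}(a)-q_n$, hence orthogonal to $q_n$. Set $q_{n+1}=(v+w)^*(v+w)\ge q_n$.
\item Now $\tau(q_n)\to d_\tau(a)$ pointwise. In each fibre, $\pi_\tau(q_n)$ therefore increases strongly to the support projection of $\pi_\tau(a)$, so $\tau(q_na^2)\nearrow\tau(a^2)$ for every $\tau$.
\item These are continuous functions on the compact set $X^\omega$, so Dini's theorem makes the convergence uniform, giving $\|q_na-a\|_{2,X^\omega}\to0$.
\end{enumerate}
It is exactly this monotone structure that turns merely pointwise (lower semicontinuous) convergence of rank functions into uniform $2$-norm control. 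Your scheme has no substitute for it.
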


\begin{proof} 
 Without loss of generality, we can assume that $a$ is contractive. Since $\tau \mapsto d_{\tau} (a)$ is a lower-semicontinuous function on the compact space $X^\omega$, and the function is strictly positive, there exists some $\delta > 0$ such that $d_\tau(a) > \delta$ for every $\tau\in X^\omega$. Again by compactness of $X^\omega$ and lower semicontinuity of $d_{\tau}$, there exists $\epsilon > 0$ such that $d_\tau((a-\epsilon)_{+}) > 0$ for every $\tau\in X^\omega$.

 Our objective is to obtain an increasing sequence of projections $q_1\leq q_2\leq\dots$ in $\overline{a\M^\omega a}$ with $\tau(q_n)\to d_\tau(a)$ for all $\tau\in X^\omega$.  Suppose this has been achieved. Then for $\tau\in X^\omega$, we have that $\pi_\tau(q_n)$ is an increasing sequence of projections in $\pi_\tau(\M^\omega)''$ which converge strongly to $\chi_{(0,\infty)}(\pi_\tau(a))$ and so, for all $n \in \N$,
 \begin{equation}  
 \tau(q_na^2)=\tau(aq_na)\leq\tau(aq_{n+1}a)=\tau(q_{n+1}a^2)
 \end{equation}
 and $\lim_{n\to\infty}\tau(q_na^2)=\tau(a^2)$. By Dini's theorem this convergence is uniform, so
 \begin{equation}
     \|q_na-a\|_{2,X^\omega}^2=\sup_{\tau\in X^\omega}\tau(a^2-q_na^2)\to 0.
 \end{equation}
 Taking $n$ large enough proves the result.

 We now construct the desired sequence.  For each $n\in \mathbb{N}$, define $f_{n}\in C([0,1])$ to be the continuous piecewise affine function, 
 \begin{equation} 
 f_{n}(t) \coloneqq 
 \begin{cases} 
 0, & t\in \left [0, \frac{\eps}{n+1}\right]; \\ 
 \text{affine}, & t\in \left [\frac{\eps}{n+1}, \frac{\eps}{n}\right]; \\ 
 1, & t\in \left[\frac{\eps}{n}, 1\right]. 
 \end{cases} 
 \end{equation} 
 Note that $f_{n}$ is an increasing sequence of positive functions converging on $[0,1]$ to $\chi_{(0,1]}$ pointwise, which implies that 
 \begin{equation} 
 \tau (f_{n}(a)) \rightarrow d_{\tau} (a), \quad \tau \in X^\omega. 
 \end{equation} 
 Moreover, for all $n \in \N$, we have $f_n(a)(a-\eps)_+ = (a-\eps)_+$ and
 \begin{equation} 
 \label{act_as_identity} 
 f_{n+1}(a)f_{n}(a) = f_{n}(a).
 \end{equation} 
 For each $n\in \mathbb{N}$, by the existence part of Proposition \ref{S4.2:Prop}(i), we can find a projection $r_{n}\in \M^{\omega}$ such that 
 \begin{equation} 
 \label{exist_proj_converg} 
 \tau(r_{n}) = \left(1-\frac{1}{n}\right) \tau (f_{n} (a))< d_{\tau}(f_{n}(a)), \quad \tau \in  X^\omega.  
 \end{equation} 
 The strict inequality above holds because $d_{\tau}(f_{n}(a)) \geq d_{\tau}( (a-\epsilon)_{+} )>0$ for any $\tau \in X^\omega$ and $n\in \mathbb{N}$ by the choice of $\epsilon$. Since $\M^{\omega}$ has strict comparison with respect to $X^\omega$, we have $r_{n}\precsim f_{n}(a)$ in $\mathcal M^{\omega}$. By R{\o}rdam's lemma (\cite[Theorem 2.7 (iv)]{GP_Cuntz_semi}), there exists a projection $p_{n} \in \overline{f_n(a) \M^\omega f_n(a)}$ such that $p_{n}\sim r_{n}$ in $\M^{\omega}$. In particular, we have 
 \begin{equation}\label{eq:def-pn}
 \tau (p_{n}) = \tau (r_{n}) = \left(1-\frac{1}{n}\right) \tau (f_{n} (a)), \quad \tau \in  X^\omega. 
 \end{equation} 
 Thus we obtain a sequence of projections $(p_{n})_{n}$, where $p_{n}\in \overline{f_n(a) \M^\omega f_n(a)}$, and $\tau (p_{n})\rightarrow d_{\tau}(a)$ for every $\tau \in X^\omega$. We next modify these to produce the required increasing sequence.

 Take $q_1 \coloneqq p_1$. Suppose $q_1 \leq q_2 \leq \cdots \leq q_n$ have been constructed with $p_{i} \sim q_{i}$ and $q_{i} \in \overline{f_i(a) \M^\omega f_i(a)}$ for $i\in \{1, \cdots, n\}$. Since $f_{n}(a) \leq f_{n+1}(a)$, \eqref{eq:def-pn} implies
 \begin{equation} 
 \tau (q_{n}) < \tau (p_{n+1}), \quad \tau \in  X^\omega. 
 \end{equation} 
 By strict comparison of $\M^{\omega}$ with respect to $X^\omega$, there exists $v\in \M^{\omega}$ such that $v^*v = q_n$ and $vv^* \leq p_{n+1}$. Note that $v\in \overline{f_{n+1}(a) \M^\omega f_{n+1}(a)}$ as $q_{n}$ and $p_{n+1}$ are both in $\overline{f_{n+1}(a) \M^\omega f_{n+1}(a)}$. It remains to move $p_{n+1} - vv^{*}$ beneath $f_{n+1}(a) - q_{n}$. Since $q_n \in \overline{f_{n}(a) \M^\omega f_{n}(a)}$ and $f_{n+1}(a)f_n(a) = f_n(a)$ (equation \eqref{act_as_identity}), we have $f_{n+1}(a) q_{n} = q_{n}$. Thus $f_{n+1}(a) - q_{n}$ is a positive element and for all $k\in \mathbb{N}$, 
 \begin{equation} 
 \label{act_identity_power} 
 (f_{n+1}(a) - q_n)^k = f_{n+1}(a)^k - q_n.
 \end{equation} 
 Hence, we have the following for any $\tau \in X^\omega$: 
 \begin{equation} 
 \begin{array}{rcl} 
 d_\tau(f_{n+1}(a) - q_n) &\stackrel{\eqref{act_identity_power}}{=}& d_\tau(f_{n+1}(a)) - \tau(q_n)\\
 &\stackrel{\eqref{eq:def-pn}}{>}& \tau(p_{n+1}) - \tau(q_n) \\
 &=&\tau(p_{n+1} - vv^*).
 \end{array} 
 \end{equation}  
 Using strict comparison of $\M^{\omega}$ with respect to $X^\omega$, combined with R{\o}rdam's lemma (\cite[Theorem 2.7 (iv)]{GP_Cuntz_semi}), there exists some $w\in \M^{\omega}$ such that 
 \begin{equation} 
 w^*w \in \overline{(f_{n+1}(a) - q_n)  \mathcal M^\omega (f_{n+1}(a) - q_n)} \subseteq \overline{f_{n+1}(a) \mathcal M^\omega f_{n+1}(a)}, 
 \end{equation} 
 and $ww^* = p_{n+1} - vv^*$. Just as above, we have $w \in \overline{f_{n+1}(a) \M^\omega f_{n+1}(a)}$. Since $ww^* \perp vv^*$ and $w^*w \perp q_n = v^*v$, it follows that $u = v+w \in \overline{f_{n+1}(a) \M^\omega f_{n+1}(a)}$ is a partial isometry with $q_n = v^*v \leq u^*u$ and $uu^* = p_{n+1}$. Then we may set $q_{n+1} = u^*u$.  By induction we get the required increasing sequence $(q_n)_{n=1}^\infty$, which proves the lemma. 
 \end{proof} 

To get real rank zero, we would like to apply the previous lemma to the positive part of a self-adjoint $b\in\mathcal M^\omega$ and then reindex to get a projection $p\in\mathcal M_\omega$ with $pb=b_+$. 
Since the previous lemma requires the element to be positive on all traces, we add a small trace projection as in the proof of non-strict comparison of projections.  Recall  from Remark~\ref{2909_rem_amplification} that $X^\omega_2$ consists of the normalised extensions of traces in $X^\omega$ to $M_2(\M^\omega)$, so that $\|x\|_{2,X^\omega}=\sqrt{2}\|(x\oplus 0)\|_{2,X_2^\omega}$ for $x\in\M^\omega$.
 
\begin{proof}[Proof of Proposition \ref{S4.2:Prop}\ref{it:rr0-omega}: real rank zero]
     Let $b \in \M^\omega$ be self-adjoint. Now write $b=b_+-b_-$ as the difference of its positive and negative parts  $b_{+},b_{-}\in\mathcal M^\omega_+$.  Fix $0<\epsilon<1$. We will show there is an positive contraction $p_{11}\in \mathcal M^\omega$ with
     \begin{equation}
         p_{11}b_-=0,\ \|p_{11}b_+-b_+\|_{2,X^\omega}<\epsilon\text{ and }\|p_{11}^2-p_{11}\|_{2,X^\omega}<\epsilon.
     \end{equation}
Once this is achieved, Kirchberg's epsilon test provides a projection $r\in \M^\omega$ with $rb_+=b_+$ and $rb_-=0$. This gives real rank zero as for any $\delta>0$, the self-adjoint element 
\begin{equation}
   h\coloneqq r(b+\delta 1_{\mathcal M^\omega})+(1-r)(b-\delta 1_{\mathcal M^\omega})
\end{equation}
is invertible with $\|h-b\|=\delta$.

To prove the claim, use divisibility of the unit to find a projection $q\in \mathcal M^\omega$ with constant trace $\tau(q)<\frac{\epsilon^2}{27}$ for all $\tau\in X^\omega$.  Applying Lemma \ref{Lemma:RealRank0IncreasingProject} to $a\coloneqq b_+\oplus q\in M_2(\mathcal M^\omega)_+$ (and noting that $M_2(\M^\omega)$ inherits strict comparison with respect to $X^\omega_2$ from $\M^\omega$), we obtain a projection 
\begin{equation}\label{e:sw:423}
p=\begin{pmatrix}p_{11}&p_{12}\\p_{21}&p_{22}\end{pmatrix}\in \overline{(b_+\oplus q)M_2(\mathcal M^\omega)(b_+\oplus q)}
\end{equation}
such that 
\begin{equation}
\|(b_+\oplus q)p- (b_+\oplus q) \|_{2,X_2^\omega}<\frac{\epsilon^2}2.
\end{equation}
Therefore, we have
\begin{equation}\begin{split}
    \|b_+p_{11}-b_+\|_{2,X^\omega}^2\leq 2\|(b_+\oplus q)p - (b_+\oplus q)\|_{2,X^\omega_2}^2< \epsilon^2,
\end{split}\end{equation}
where the factor of $2$ arises due to the rescaling of traces from $X^\omega$ to $X_2^\omega$.
It follows from \eqref{e:sw:423} that $p_{11}\in\overline{b_+\mathcal M^\omega b_+}$, $p_{12}q=p_{12}$, $qp_{21}=p_{21}$ and $p_{22}\in q\mathcal M^\omega q$. The first of these gives $p_{11}b_-=0$. The latter three give
\begin{equation}
    \|p_{12}\|_{2,X^\omega}^2,\|p_{21}\|^2_{2,X^\omega},\|p_{22}\|_{2,X^\omega}^2\leq \|q\|_{2,X^\omega}^2<\frac{\e^2}{27}.
\end{equation}
Hence,
\begin{equation}
 \|p-(p_{11}\oplus 0)\|_{2,X^\omega_2}^2<\frac{3}{2}\cdot\frac{\epsilon^2}{27}=\frac{\epsilon^2}{18}. 
\end{equation}
Therefore,
\begin{equation}
    \|(p_{11}\oplus 0)-(p_{11}\oplus 0)^2\|_{2,X^\omega_2}\leq 3\|p-(p_{11}\oplus 0)\|_{2,X^\omega_2}<\frac\epsilon{\sqrt2},
\end{equation}
i.e.\ $\|p_{11}-p_{11}^2\|_{2,X^\omega}<\epsilon$, as required.
\end{proof}

Next, we use Blackadar and Handelman's result that stable rank one follows from real rank zero whenever the Murray--von Neumann semigroup has cancellation.
\begin{proof}[Proof of Proposition \ref{S4.2:Prop}\ref{it:sr1-omega}: stable rank one]
    Note that the hypotheses of Proposition~\ref{S4.2:Prop} are invariant under matrix amplification, so Proposition \ref{S4.2:Prop}\ref{it:classif-proj-omega} and Proposition~\ref{Prop:MvN} immediately gives cancellation in the Murray--von Neumann semigroup $V(\M^\omega)$.\footnote{Indeed, suppose $n \in \N$ and $p, q, r \in M_n(\M)$ are projections such that $p\oplus r\sim q\oplus r$. Let $\tau \in X^\omega_n$, and write $\tau_2\in X^\omega_{2n}$ for the corresponding normalised trace on $M_{2n}(\M)$. Then
    \begin{equation}
        \frac{1}{2}(\tau(p)+\tau(r))=\tau_2(p\oplus r)=\tau_2(q\oplus r)=\frac{1}{2}(\tau(q)+\tau(r)),
    \end{equation}
and so $\tau(p)=\tau(q)$. By Proposition \ref{Prop:MvN}, we have $p\sim q$.} Since $\M^\omega$ has real rank zero by Proposition \ref{S4.2:Prop}\ref{it:rr0-omega}, it has stable rank one by Proposition~\ref{prop:RR0CancellationSR1}.
\end{proof}

We access pureness via the following proposition. As proof of this proposition runs exactly as the proof of \cite[Theorem~D(iv)]{ET:RR0}, we omit it.

\begin{proposition}\label{prop:ETpure}
    Let $(\mathcal{N},Y)$ be a factorial tracially complete $C^*$-algebra with real rank zero such that $V(\mathcal{N})\cong \Aff(Y)_{+}$ as ordered semigroups.
    Then $\mathcal{N}$ is pure.
\end{proposition}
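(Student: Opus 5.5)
We need to show $\mathcal N$ is pure: $\Cu(\mathcal N)$ is almost unperforated and almost divisible. Since $\mathcal N$ has real rank zero, the approach is to reduce everything to projections, following Perera's description of Cuntz semigroups of real rank zero algebras. By \cite[Theorem 5]{CEI} (as in the proof of Proposition~\ref{prop:rr0-comparison}), every class in $\Cu(\mathcal N)$ is the supremum of an increasing sequence of classes of projections in $M_n(\mathcal N)$. So the plan is to transfer the order structure of $V(\mathcal N)\cong\Aff(Y)_+$ to $\Cu(\mathcal N)$ through these suprema.

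\emph{Almost unperforation.} First record that under the isomorphism $V(\mathcal N)\cong\Aff(Y)_+$, the class of a projection $p\in M_n(\mathcal N)$ corresponds to $\hat p\colon\tau\mapsto\tau(p)$. Next, $p\precsim q$ if and only if $\hat p\le\hat q$ on $Y$. The isomorphism is of ordered semigroups, so this reduces to checking that the map $[p]\mapsto\hat p$ is the given isomorphism. I expect this identification to hold by how the isomorphism arises, namely classification of projections, and I would take it as part of the hypothesis. Proposition~\ref{prop:rr0-comparison} then gives strict comparison of $\mathcal N$ with respect to $Y$ at all matrix levels. To get strict comparison on $\mathcal N\otimes\K$, use that each $(a-\epsilon)_+$ lives, up to Cuntz equivalence, in some $M_n(\mathcal N)$. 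Almost unperforation then follows by a standard argument. Suppose $(k+1)x\le ky$ with $x=\sup[p_i]$ and $y=\sup[q_j]$. For each $i$, compactness and Dini's theorem give $j$ with $(k+1)\hat p_i\le k\hat q_j+\eta$ for some small $\eta$. Dividing, and using that $\hat q_j$ can be made strictly larger by passing to larger $j$ when $\hat p_i\neq0$, we get $\hat p_i<\hat q_{j'}$ pointwise, so $[p_i]\le[q_{j'}]\le y$. Taking the supremum over $i$ gives $x\le y$. This works because every state comes from $Y$ on projections, and, by factoriality and the $\Cu$-structure, on all of $\Cu(\mathcal N)$.

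\emph{Almost divisibility.} Let $x=\sup[p_i]$, let $x'\ll x$, and let $k\ge1$. Then $x'\le[p_i]$ for some $i$. Set $f\coloneqq\hat p_i/k\in\Aff(Y)_+$. Surjectivity of $V(\mathcal N)\to\Aff(Y)_+$ gives a projection $r$ with $\hat r=f$, so $k[r]=[p_i]$ exactly. Hence $k[r]\le x$ and $x'\le[p_i]\le(k+1)[r]$. This gives divisibility, even exact divisibility at the projection level.

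The main obstacle is the almost unperforation step. There we need to pass from the tracial description of projections to comparison of arbitrary positive elements in $\mathcal N\otimes\K$, and to handle the fact that the rank functions of general elements are only lower semicontinuous. This is where the real rank zero approximation by projections and Dini-type arguments must be carried out carefully, mirroring \cite[Theorem~D(iv)]{ET:RR0}.
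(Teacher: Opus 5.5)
Your plan of using real rank zero and \cite[Theorem~5]{CEI} to reduce to projections is the right mechanism. The paper omits the proof, deferring to \cite[Theorem~D(iv)]{ET:RR0} and to the Perera-type fact recalled in \S\ref{SS:RR0} that for real rank zero $V$ determines $\Cu$. Your almost divisibility argument is correct.

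\textbf{The gap is in almost unperforation.} You replace the Cuntz inequality $(k+1)x\le ky$ by its tracial consequence $(k+1)\hat p_i\le k\sup_j\hat q_j$ on $Y$, and then claim a single $j'$ with $\hat p_i<\hat q_{j'}$ (or even $\le$). This does not follow from the trace data. The function $\sup_j\hat q_j$ is only lower semicontinuous, and Dini gives no control where $\hat p_i$ is small.

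For example, take the trivial $W^*$-bundle with fibre $\mathcal R$ over $K=\{0\}\cup\{1/n:n\ge1\}$. It has real rank zero and $V\cong\Aff(Y)_+=C(K)_+$ canonically. Take $\hat p(1/n)=\frac1{2n}$ and $\hat p(0)=0$. Take increasing $q_j$ with $\hat q_j(1/n)=\min\{4/n,1\}$ for $n\le j$ and $\hat q_j=0$ elsewhere. Then $2\hat p\le\sup_j\hat q_j$ on $Y$, but $\hat p\not\le\hat q_j$ for every $j$ (evaluate at $\frac1{j+1}$). Your strict pointwise inequality fails even earlier, in $\mathcal R\oplus\mathcal R$, at a trace where both functions vanish.

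Neither of your other two routes rescues this.
\begin{itemize}
\item Strict comparison with respect to $Y$: traces in $Y$ need not be faithful, and ranks in $\mathcal N\otimes\K$ can be infinite. So the hypothesis $d_\tau(x)<d_\tau(y)$ on all of $Y$ is typically unavailable, and the standard simple-case argument does not apply.
\item Appeal to states: almost unperforation is also governed by functionals on $\Cu(\mathcal N)$ that are infinite on $[1_{\mathcal N}]$ and do not come from $Y$. In the example, the functional induced by $h\mapsto\lim_{n\to\omega}n\,h(1/n)$ is $\frac12$ on $[p]$ and $0$ on $\sup_j[q_j]$. So there $2[p]\not\le\sup_j[q_j]$, even though the traces cannot detect it.
\end{itemize}

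\textbf{The missing idea is compactness of projection classes in $\Cu(\mathcal N)$, used in place of traces.}
\begin{enumerate}
\item For a projection $p$ we have $[p]=[(p-\frac12)_+]\ll[p]$, so $(k+1)[p_i]$ is compact.
\item Since $(k+1)[p_i]\le(k+1)x\le ky=\sup_jk[q_j]$, there is $j$ with $(k+1)[p_i]\le k[q_j]$.
\item This is an inequality in $V(\mathcal N)$. Applying the order isomorphism $\Phi\colon V(\mathcal N)\to\Aff(Y)_+$ gives $\Phi[p_i]\le\frac{k}{k+1}\Phi[q_j]\le\Phi[q_j]$.
\item Hence $[p_i]\le[q_j]\le y$, and taking suprema over $i$ gives $x\le y$.
\end{enumerate}
This uses only that $V(\mathcal N)$ is unperforated. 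So you do not need to assume that $\Phi$ is the canonical map $[p]\mapsto\hat p$, and your divisibility argument works verbatim with $\Phi$ in place of $p\mapsto\hat p$.
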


We can now prove the final part of Proposition~\ref{S4.2:Prop}.

\begin{proof}[Proof of Proposition \ref{S4.2:Prop}\ref{it:pure-omega}: pureness]
    The hypotheses of Proposition \ref{S4.2:Prop} are invariant under matrix amplification. Hence, $M_n(\M^\omega)$ also has classification of projections for all $n\in\mathbb N$. Therefore, the Murray-von Neumann semigroup $V(\M^\omega)$ is isomorphic $\Aff(X^\omega)_+$ as ordered semigroups. Pureness of $\M^\omega$ now follows from Proposition~\ref{prop:ETpure}.
\end{proof}

We end this subsection with the following technical corollary of Proposition~\ref{S4.2:Prop}\ref{it:rr0-omega}.

\begin{corollary}\label{Cor:FakeSpectralProjection} 
Let $(\M,X)$ be a II$_1$ factorial tracially complete $C^*$-algebra such that $\mathcal M^\omega$ has strict comparison with respect to $X^\omega$ and $\M^\omega$ has a divisible unit. 
Let $F\subseteq \M^{\omega}$ be a finite-dimensional subalgebra. 
For $b\in\M^\omega_+ \cap F'$ with and $\delta>0$, there exists a projection $q\in\M^\omega \cap F' \cap \{b\}'$ such that $qb\leq \delta q$ and $(1-q)b\geq \delta (1-q)$. 
\end{corollary}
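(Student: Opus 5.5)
We reduce to a single corner and then use real rank zero there. By Lemma~\ref{Lem:RelCom}, $\M^\omega\cap F'\cong\bigoplus_{s=1}^S e_s\M^\omega e_s$, where the $e_s$ are minimal projections of $F$ (if $1_F\neq 1$, first adjoin $1-1_F$ to $F$ as an extra summand). Each corner $e_s\M^\omega e_s$ is a hereditary subalgebra of $\M^\omega$. By Proposition~\ref{S4.2:Prop}\ref{it:rr0-omega}, $\M^\omega$ has real rank zero, so $B\coloneqq\M^\omega\cap F'$ has real rank zero, and it is unital with unit $1_{\M^\omega}$.

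The task then becomes: in a real rank zero algebra $B$, given $b\in B_+$ and $\delta>0$, find a projection $q\in B$ commuting with $b$ with $qb\le\delta q$ and $(1-q)b\ge\delta(1-q)$. A naive approach would try to produce a genuine spectral projection at $\delta$, but this is impossible in general, since $\delta$ may lie in the spectrum with no gap. The statement only requires the inequalities, though, so I exploit the ultrapower.

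Apply the argument from the proof of Proposition~\ref{S4.2:Prop}\ref{it:rr0-omega} at the level of $B$. Real rank zero lets us find, for each $k$, a self-adjoint element with finite spectrum approximating $b-\delta$ in norm. This yields a projection $q_k\in B$ that commutes approximately with $b$ and satisfies
\begin{equation}
q_k(b-\delta)q_k\le \tfrac1k\, q_k,\qquad (1-q_k)(b-\delta)(1-q_k)\ge -\tfrac1k\,(1-q_k),\qquad \|[q_k,b]\|\le \tfrac1k.
\end{equation}
One way to see this is to perturb $b-\delta$ to $h$ with finite spectrum and $\|h-(b-\delta)\|<1/k$, then take $q_k=\chi_{(-\infty,0]}(h)$. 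Note that $q_k$ commutes with $h$, hence commutes with $b$ up to $2/k$.

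To obtain exact conditions, I pass to a diagonal sequence. Lift $b$, $F$ and the $q_k$ to representatives in $\ell^\infty(\M)$ and apply Kirchberg's epsilon test (Lemma~\ref{lem:EpsTest}). The conditions to satisfy are: $q$ a projection, $[q,b]=0$, $[q,F]=0$, and the quantities $\|(q(b-\delta)q)_+\|$ and $\|((1-q)(b-\delta)(1-q))_-\|$ both equal to zero. Each of these is an approximate condition measurable in the $\|\cdot\|_{2,X^\omega}$-seminorm, or expressible via positivity, which can be encoded as $\|\cdot\|_2$-distance to the positive cone after functional calculus.

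The main obstacle is that the epsilon test in $\M^\omega$ only produces exact equalities in $\|\cdot\|_{2,X^\omega}$, whereas order inequalities are operator-norm statements. I would handle this as follows. Once $q$ is an exact projection commuting with $b$, the conditions $qb\le\delta q$ and $(1-q)b\ge\delta(1-q)$ are equivalent to $(qb-\delta q)_+=0$ and $((1-q)b-\delta(1-q))_-=0$. These are exact equalities, which are detectable by vanishing of $\|\cdot\|_{2,X^\omega}$ because $X^\omega$ is faithful on $\M^\omega$ (here I use $T(\M^\omega)=X^\omega$ via Proposition~\ref{prop:computing-traces}).

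So I impose the following approximate conditions at stage $k$, all with tolerance $1/k$ in $\|\cdot\|_{2,X^\omega}$:
\begin{enumerate}
\item $q_k-q_k^2\approx 0$;
\item $[q_k,b]\approx 0$ and $[q_k,F]\approx 0$;
\item $(q_kbq_k-\delta q_k)_+\approx 0$ and $((1-q_k)b(1-q_k)-\delta(1-q_k))_-\approx 0$.
\end{enumerate}
The last two hold with tolerance of order $1/k$ by the norm estimates above and continuity of functional calculus. The epsilon test then yields exact solutions, and by faithfulness these give the required inequalities.
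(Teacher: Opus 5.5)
Your proposal is correct and follows the paper's proof: real rank zero of $\M^\omega\cap F'$ via Lemma~\ref{Lem:RelCom}, then a finite-spectrum approximant and its spectral projection giving the conditions up to a small error, then Kirchberg's epsilon test. Two details go slightly beyond the paper. First, you encode the order inequalities as $\|(qbq-\delta q)_+\|_{2,X^\omega}=0$ and $\|((1-q)b(1-q)-\delta(1-q))_-\|_{2,X^\omega}=0$, which the paper leaves implicit; note that $\|\cdot\|_{2,X^\omega}$ is a norm on $\M^\omega$ by construction, so $T(\M^\omega)=X^\omega$ is not needed here. Second, taking $\chi_{(-\infty,0]}(h)$ with $h\approx b-\delta$ correctly covers an approximant with small negative spectrum.
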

\begin{proof}
    Fix $\epsilon>0$. Since $\M^\omega$ has real rank zero by Proposition \ref{S4.2:Prop}(ii), $\M^{\omega}\cap F'$ also has real rank zero by Lemma~\ref{Lem:RelCom} (we may assume that $F$ contains the unit to apply this, since unitising $F$ does not change the commutant). 
    Then there exists a finite spectrum element $c\in \M^\omega \cap F'$ with $\|c-b\|< \epsilon$. Let $q \in \M^\omega \cap F'$ be the spectral projection of $c$ for $[0,\delta]$ so that $qc = cq$,  $qc\leq \delta q$ and $(1-q)c\geq \delta(1-q)$. Then $\|[q, b]\| < 2 \epsilon$, $qbq\leq(\delta+\epsilon)q$, and $(1-q)b(1-q)\geq (\delta+\epsilon)(1-q)$. The result follows from Kirchberg's epsilon test. 
\end{proof} 

\section{\safeK-stability} \label{sec:KStable} 

In this section, we prove our $K$-stability results: Theorem~\ref{Intro:KStable1} and Theorem~\ref{Intro:KStable2}. As explained in the introduction, using the trace--kernel extension, these will be consequences of the properties we established for tracial ultrapowers in Section~\ref{sec:tracial-ultrapowers}.
We first obtain $K$-stability for $A^{\omega}$.

\begin{proposition} 
\label{2609_Aloweromega_kstable} 
    Let $A$ be a unital $C^*$-algebra with $T(A)\neq \emptyset$. 
    Suppose $A$ has strict comparison with respect to $T(A)$ and $[1_A]\in \mathrm{Cu}(A)$ is almost divisible. 
    Then $A^\omega$ is $K$-stable. 
\end{proposition}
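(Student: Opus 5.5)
The plan is to reduce to Rieffel's criterion: a $C^*$-algebra with stable rank one that is divisible in Rieffel's sense is $K$-stable (\cite[Theorem 4.13]{rieffel87}). Both properties should come from Proposition~\ref{S4.2:Prop} applied to the ultrapower of the tracial completion.

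\textbf{Step 1: verify the hypotheses of Proposition~\ref{S4.2:Prop}.} Since $A$ is unital, $T(A)$ is compact and non-empty, and $1_A$ is a strictly positive element. Proposition~\ref{prop:sc_divisible_tracial_ultra} then shows that $A^\omega$ has strict comparison with respect to $\overline{T_\omega(A)}$ and has a divisible unit. Let $(\M,X)$ be the tracial completion of $A$. By Proposition~\ref{prop:tracial-ultrapower-is-TC}, $(A^\omega,\overline{T_\omega(A)})\cong(\M^\omega,X^\omega)$, and $(\M,X)$ is factorial. It is type II$_1$ by the remark after Proposition~\ref{S4.2:Prop}: divisibility of the unit in the ultrapower gives this, and the factorial and II$_1$ hypotheses are redundant there anyway. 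So Proposition~\ref{S4.2:Prop}\ref{it:sr1-omega} gives that $A^\omega$ has stable rank one.

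\textbf{Step 2: Rieffel divisibility.} We need arbitrarily large $n$ with $A^\omega\cong M_n(C_n)$. A divisible unit gives a unital embedding $\iota\colon M_n\to A^\omega$. The standard argument then identifies $A^\omega\cong M_n(e A^\omega e)$, where $e=\iota(e_{11})$: the matrix units $\iota(e_{ij})$ implement the isomorphism $x\mapsto (\iota(e_{1i})x\iota(e_{j1}))_{i,j}$. Since this works for every $n$, $A^\omega$ is divisible in Rieffel's sense. If Rieffel's theorem needs stable rank one for the corners $eA^\omega e$ or for matrix algebras over $A^\omega$, these follow from the permanence of stable rank one under corners and matrix amplifications noted in Section~\ref{SS:SR1}.

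\textbf{Step 3: apply Rieffel's theorem.} Steps 1 and 2 together give $K$-stability of $A^\omega$. The main obstacle is matching the exact hypotheses of \cite[Theorem 4.13]{rieffel87}, which may be phrased as $K_1$-bijectivity of $C(S^k,M_m(B))$ or as divisibility in a particular form. If a direct match fails, the fallback is to argue directly. Stable rank one passes to $C(S^k)\otimes M_m(A^\omega)$ only under dimension control, so the argument should go through Rieffel's estimate $\mathrm{sr}(C(S^k)\otimes B)\le$ something decreasing under division. That is, $\mathrm{sr}(M_n(C(S^k)\otimes eA^\omega e))$ is bounded by roughly $\lceil (\mathrm{sr}(C(S^k)\otimes eA^\omega e)-1)/n\rceil+1$, which equals $1$ for large $n$. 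This gives stable rank one for $C(S^k,M_m(A^\omega))$, and then $K_1$-bijectivity follows from Theorem~\ref{thm:sr1facts}\ref{it:sr1Rieffel}, which is exactly $K$-stability.
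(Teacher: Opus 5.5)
Your argument is essentially the paper's: pass to $(\M^\omega,X^\omega)\cong(A^\omega,\overline{T_\omega(A)})$, get strict comparison and a divisible unit from Proposition~\ref{prop:sc_divisible_tracial_ultra}, get stable rank one from Proposition~\ref{S4.2:Prop}, note that a unital copy of $M_n$ gives Rieffel divisibility $A^\omega\cong M_n(eA^\omega e)$, and conclude with \cite[Theorem 4.13]{rieffel87}. The fallback at the end of Step~3 is not needed, and as written it does not work: $\lceil (s-1)/n\rceil+1\geq 2$ whenever $s\geq 2$, so matrix division by itself can never bring a stable rank $s\geq 2$ down to one.
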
 

\begin{proof} 
Let $(\M, X)$ be the tracial completion of $A$ with respect to $T(A)$. 
By Proposition \ref{prop:tracial-ultrapower-is-TC}, we have $(A^{\omega}, \overline{T_{\omega}(A)}) \cong (\M^{\omega}, X^{\omega})$.
Hence, by Proposition \ref{prop:sc_divisible_tracial_ultra}, our strict comparison and almost divisibility hypotheses on $A$ imply that $\M^{\omega}$ has strict comparison with respect to $X^{\omega}$ and $\M^{\omega}$ has a divisible unit. 
Therefore, $\M^{\omega}$ has stable rank one by Proposition~\ref{S4.2:Prop} and has divisibility in the sense of Rieffel (see \cite[Definition~4.1]{rieffel87}).  
Combining these results with \cite[Theorem 4.13]{rieffel87} gives $K$-stability of $\M^{\omega}$. Hence, $A^{\omega}$ is $K$-stable.  
\end{proof} 

We now turn to the trace-kernel ideal $J_A$. This is non-unital, and thus we need the following notion. For a (not necessarily unital) $C^*$-algebra $B$, we denote the minimal proper unitisation by $\tilde{B}$ and the \emph{generalised unitary group} is 
\begin{equation} 
\mathcal{V}(B) \coloneqq \{u\in U(\tilde{B}): u - 1_{\tilde{B}} \in B\}.\footnote{$\mathcal{V}(B)$ is homeomorphic to the topological group of quasi-unitaries in $B$ defined in \cite{thomsen}, and thus using it to define $K$-stability coincides with Thomsen's original \cite[Definition 3.1]{thomsen}. } 
\end{equation} 
If $B$ is unital, $\mathcal{V}(B)\cong U(B)$ as topological groups. 
For $n\geq 2$, we denote $\mathcal{V}(M_{n}\otimes B)$ by $\mathcal{V}_{n}(B)$. 
There are canonical inclusions $i_{n, n+1}: \mathcal{V}_{n}(B)\rightarrow \mathcal{V}_{n+1}(B)$. Then $B$ is \emph{$K$-stable} is for every $k\geq 0$ and $n\geq 1$, the induced map $\pi_{k}(i_{n, n+1})$ are isomorphisms.\footnote{In this section, $\pi_k$ is the functor taking pointed topological spaces to their $k$-th homotopy groups. Fortunately, we will not need to explicitly talk about the fibres of tracially complete $C^*$-algebras in this section!} Almost by definition, stable $C^*$-algebras are $K$-stable. Unfortunately $J_A$ is not stable, but it is separably-stable: every separable subalgebra $B_0$ is contained in a stable separable subalgebra $B_1$. This is enough for $K$-stability.

\begin{proposition}
\label{prop:J_A_K_stable}
Let $A$ be a unital $C^*$-algebra with $T(A) \neq \emptyset$. Suppose that $A$ has strict comparison with respect to $T(A)$. Then $J_{A}$ is $K$-stable. 
\end{proposition}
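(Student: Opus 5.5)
The plan has two parts: first show that $J_A$ is separably stable, then show that separable stability implies $K$-stability.

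For the first part, we use a Hjelmborg--R{\o}rdam type criterion (\cite{HR:JFA}), following the argument of \cite{Schafhauser:Crelle} (see also \cite{CGSTW}). A $\sigma$-unital $C^*$-algebra $B$ is stable if for every $b\in B_+$ and $\epsilon>0$ there is $c\in B_+$ with $\|bc\|<\epsilon$ and $b\precsim c$ (in an approximate, norm-small sense).

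So fix a separable $B_0\subseteq J_A$ and a positive contraction $b\in J_A$, lifted to $(b_n)\in\ell^\infty(A)_+$ with $\lim_{n\to\omega}\sup_{\tau\in T(A)}\tau(b_n)=0$. We aim to produce $c\in J_A$ orthogonal to $b$, also in the trace-kernel, such that $(b-\epsilon)_+\precsim c$ in $A_\omega$.

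The construction runs level by level. For each $n$, cut $b_n$ down to $(b_n-\epsilon)_+$. Since $\tau(b_n)\to0$ uniformly, the rank $d_\tau((b_n-\epsilon)_+)\le \tau(b_n)/\epsilon$ is uniformly small. Take $c_n$ in the hereditary subalgebra of $1-f(b_n)$, where $f$ is a continuous function equal to $1$ on $[\epsilon/2,1]$ and vanishing near $0$. This choice makes $c_n$ orthogonal to $(b_n-\epsilon)_+$. Make the rank of $c_n$ about $2\tau(b_n)/\epsilon+1/n$, which is still tracially small but strictly larger than $d_\tau((b_n-\epsilon)_+)$. The existence of such $c_n$ uses the rank-realisation available from strict comparison, via Proposition~\ref{prop:CP} and a cutdown. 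Strict comparison with respect to $T(A)$ then gives $(b_n-2\epsilon)_+\precsim c_n$, with an explicit $r_n$ such that $\|r_n^*c_nr_n-(b_n-2\epsilon)_+\|<\epsilon$ and $\|r_n\|$ bounded. The bound on $\|r_n\|$ comes from R{\o}rdam's lemma.

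Kirchberg's epsilon test (Lemma~\ref{lem:EpsTest}) then assembles these levels into $c\in J_A$ and $r\in A_\omega$ with $cb=0$ and $r^*cr=b$ up to $\epsilon$. Here we must also check that $c$ and $r^*cr$ lie in $J_A$, using Lemma~\ref{lem:trace-trick}. A standard separability bootstrap, alternately enlarging $B_0$ by such witnesses, then yields a separable $B_1\supseteq B_0$ satisfying the Hjelmborg--R{\o}rdam criterion, so $B_1$ is stable.

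The main obstacle is the orthogonality $cb\approx0$ in norm, not just in trace. Tracial smallness of $b_n$ does not make $b_n$ small in norm, so $c_n$ must genuinely live in the corner of $1-f(b_n)$. That corner is hereditary with unit complement of large trace, so strict comparison still applies inside it. If direct construction is awkward, I would instead cite that strict comparison with respect to $T(A)$ gives separable stability of $J_A$ (essentially \cite{CGSTW}).

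For the second part, suppose $x\colon S^k\to\mathcal V_{n+1}(J_A)$ represents an element of $\pi_k$, or a homotopy between two maps into $\mathcal V_n(J_A)$ is given. By compactness, a finite set of such data takes values, up to small perturbation, in $\mathcal V_m(B_0)$ for some separable $B_0$. Enlarge $B_0$ to a stable separable $B_1$. Since $B_1$ is stable, $\pi_k(i_{n,n+1})$ is an isomorphism for $B_1$ (Thomsen). Therefore the class lifts, or the homotopy exists, inside $B_1\subseteq J_A$, which gives both surjectivity and injectivity for $J_A$.
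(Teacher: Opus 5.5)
Your overall route is the paper's. The paper obtains separable stability of $J_A$ by citing the proof of \cite[Lemma 6.11]{CGSTW}, noting that it only uses strict comparison. It then runs exactly your second paragraph: the entries of the sphere map and of the homotopy generate a separable subalgebra, which sits inside a stable, hence $K$-stable, separable $B\subseteq J_A$. So with your fallback citation the proof is complete and matches the paper.

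The self-contained sketch of separable stability, however, has a step that fails as written. Strict comparison alone does not let you produce $c_n$ with a prescribed small rank. Proposition~\ref{prop:CP} prescribes traces $\tau(c)$, not ranks $d_\tau(c)$. Realising ranks is essentially a divisibility statement, and no divisibility is assumed here. Likewise, R{\o}rdam's lemma gives no uniform bound on $r_n$ in the form $r_n^*c_nr_n\approx(b_n-2\epsilon)_+$.

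Neither is needed, because the element that must lie in $J_A$ is the Cuntz witness, not $c$. Take $c_n=1-f(b_n)$ itself, with $f<1$ exactly on $[0,\epsilon/2)$. Then $d_\tau(c_n)\geq 1-2\tau(b_n)/\epsilon$, while $d_\tau((b_n-2\epsilon)_+)\leq \tau(b_n)/(2\epsilon)$. So for $\omega$-almost all $n$, strict comparison and R{\o}rdam's lemma give $x_n$ with $x_n^*x_n=(b_n-3\epsilon)_+$ and $x_nx_n^*\in\overline{c_nAc_n}$. This yields $\|x_n\|\leq 1$ and $\|b_nx_nx_n^*\|\leq\epsilon/2$.

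Since $\tau(x_nx_n^*)=\tau(x_n^*x_n)\leq\tau(b_n)$, the element $x=(x_n)_{n}$ lies in $J_A$. It satisfies $\|x^*x-b\|\leq 3\epsilon$ and $\|xx^*b\|\leq\epsilon/2$, which is the approximate Hjelmborg--R{\o}rdam condition. Your separable bootstrap then goes through.
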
 

\begin{proof} 
By the proof of \cite[Lemma 6.11]{CGSTW},\footnote{The argument in \cite{CGSTW} has additional assumptions: simplicity and $\Z$-stability, but in fact only relies on $A_{\omega}$ having strict comparison with respect to $\overline{T_\omega(A)}$ which follows from strict comparison of $A$ with respect to $T(A)$; see \cite[Lemma 1.23]{BBSTWW}. See the discussion of hypotheses in \cite[Section 1.3.6]{CGSTW}.} since $A$ has strict comparison with respect to $T(A)$, the trace-kernel ideal $J_A$ is separably stable. Fix $k \geq 0$ and $n\geq 1$. 
We show injectivity of $\pi_{k}(i_{n, n+1})$ and surjectivity follows from a similar strategy. 

To prove injectivity of $\pi_{k}(i_{n, n+1})$, take a continuous function $f: S^{k}\rightarrow \mathcal{V}_{n}(J_{A})$ such that $i_{n, n+1} \circ f$ is homotopic to the constant function $1_{n+1}$. 
Let $F: [0,1]\times S^{k}\rightarrow \mathcal{V}_{n+1}(J_{A})$ be the homotopy. 
Then matrix entries of $f(x) - 1_{n}$ and $F(y) - 1_{n+1}$ for $x\in S^{k}$ and $y\in [0,1]\times S^{k}$ generate a separable $C^*$-subalgebra as $S^{k}$ is compact and metrisable. This is contained in a separable stable subalgebra $B\subseteq J_{B}$ by separable stability. As $B$ is stable, it is $K$-stable. By the definition of $B$, we can view $f$ as a map into $\mathcal{V}_{n}(B)$ such that $i_{n, n+1}\circ f$ is homotopic to $1_{n+1}$ in $\mathcal{V}_{n+1}(B)$. The $K$-stability of $B$ implies that $f$ is homotopic to $1_{n}$ in $\mathcal{V}_{n}(B)$. Hence, $f$ is homotopic to $1_{n}$ in $\mathcal{V}_{n}(J_A)$). 
\end{proof} 

The final ingredient needed is the following technical result which allows us to pass from $K$-stability of $A_\omega$ to $K$-stability of $A$. 

\begin{lemma}\label{2609_AloweromegatoA} 
    Let $A$ be a unital $C^*$-algebra and $X$ be a compact Hausdorff space. 
    If $C(X,A_\omega)$ is $K_1$-injective (respectively, $K_1$-surjective), then  $C(X,A)$ is $K_1$-injective (respectively,  $K_1$-surjective). In particular, if $A_\omega$ is $K$-stable, then $A$ is $K$-stable.
\end{lemma}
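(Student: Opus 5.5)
The plan is to identify $C(X,A_\omega)$ with a subalgebra of an ultrapower of $C(X,A)$ and to pull unitaries and homotopies back through a reindexing argument. There is a canonical unital $^*$-homomorphism $\Phi\colon C(X,A)_\omega\to C(X,A_\omega)$, given by $(f_n)_n\mapsto(x\mapsto(f_n(x))_n)$. In general it is not surjective, since a continuous map into $A_\omega$ need not lift to an equicontinuous family. However, the diagonal inclusions $\iota\colon C(X,A)\to C(X,A)_\omega$ and $j\colon C(X,A)\to C(X,A_\omega)$ satisfy $j=\Phi\circ\iota$. So it is natural to work with $\Phi$ and the constant-sequence maps throughout.

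For $K_1$-injectivity, take a unitary $u\in C(X,A)$ with $[u]=0$ in $K_1(C(X,A))$. Then $[j(u)]=0$ in $K_1(C(X,A_\omega))$, so by hypothesis $j(u)$ lies in $U_0(C(X,A_\omega))$. Write $j(u)$ as a finite product $\prod_{l=1}^L e^{ih_l}$ up to small norm error, with $h_l$ self-adjoint in $C(X,A_\omega)$. The key step is to approximate each $h_l$ by an element lifting to $C(X,A)_\omega$. Here I would use compactness of $X$: choose a finite open cover and a partition of unity $(g_r)$ in $C(X)$ with points $x_r$ so that $\|h_l-\sum_r g_r h_l(x_r)\|$ is small. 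Each $h_l(x_r)\in A_\omega$ lifts to a bounded self-adjoint sequence $(a^{(l,r)}_n)_n$ in $A$, and then $k_{l,n}:=\sum_r g_r a^{(l,r)}_n\in C(X,A)_{sa}$. Evaluated along $\omega$, the continuous function $\prod_l e^{ik_{l,n}}$ is uniformly close to $u$ (uniformity comes from the finitely many $g_r$ and $\|\cdot\|$ controlling everything). Choosing one index $n$ with $\|u-\prod_l e^{ik_{l,n}}\|<1$ shows $u\in U_0(C(X,A))$.

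For $K_1$-surjectivity the plan is symmetric. Take a class in $K_1(C(X,A))$ represented by $v\in U(M_m(C(X,A)))$. Its image under $j$ is, by hypothesis, $[w]$ for a unitary $w\in C(X,A_\omega)$, with $\mathrm{diag}(w,1)$ and $j(v)$ connected in $U(M_m(C(X,A_\omega)))$. The same partition-of-unity discretisation first approximates $w$ by $\sum_r g_r w(x_r)$, and then lifts $w(x_r)$ to unitaries $w_n^{(r)}\in A$ (unitaries lift to $A_\omega$ representatives). This produces, for $\omega$-many $n$, an element of $C(X,A)$ close to a unitary, and polar decomposition gives a unitary $w_n$. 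The connecting path, being a finite product of exponentials, lifts exactly as in the injective case, which shows $[\mathrm{diag}(w_n,1)]=[v]$.

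For the final assertion, recall the characterisation from the introduction: $A$ is $K$-stable if and only if $C(S^k,M_n(A))$ is $K_1$-bijective for all $k,n$. Since $M_n(A)_\omega=M_n(A_\omega)$ and $K$-stability of $A_\omega$ gives $K_1$-bijectivity of $C(S^k,M_n(A_\omega))$, the first part applies with $M_n(A)$ in place of $A$. The main obstacle is the uniform discretisation: ensuring that the errors from the partition of unity survive exponentiation and products uniformly in $x\in X$. Controlling this uses $\|e^{ih}-e^{ih'}\|\le\|h-h'\|$, with all norm bounds fixed before choosing the cover.
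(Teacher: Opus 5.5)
Your argument is correct in substance and has the same skeleton as the paper's proof. You push $u$ (resp.\ $v$) into $C(X,A_\omega)$ by constant sequences, apply the hypothesis there, write the resulting homotopy as a finite product of exponentials, lift the exponents to sequences, and choose one good index. The difference is in how you return from $C(X,A_\omega)$ to sequences in $C(X,A)$.

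The paper uses the $^*$-homomorphism $\psi\colon C(X,A_\omega)=C(X)\otimes A_\omega\to C(X,A)_\omega$, coming from the commuting copies of $C(X)$ and $A_\omega$ inside $C(X,A)_\omega$, which satisfies $\psi\circ\phi=\iota_{C(X,A)}$. Because $\psi$ is a $^*$-homomorphism, homotopies are transported to the norm ultrapower $C(X,A)_\omega$ for free. Only the coordinatewise observation is then needed: homotopic unitaries in an ultrapower are homotopic on a set in $\omega$.

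You instead build $\psi$ by hand on the dense subalgebra $C(X)\odot A_\omega$ via a partition of unity. The cost is the uniform-in-$x$ estimate you flag, which is precisely the content of $\psi$ being well defined and contractive. That estimate holds, but your sketch does not really prove it. The bound $\|e^{ih}-e^{ih'}\|\le\|h-h'\|$ only controls the discretisation error inside $C(X,A_\omega)$. What is missing is the passage from ``for each $x$, $\lim_{n\to\omega}$'' to ``$\lim_{n\to\omega}\sup_x$''. This needs an explicit equicontinuity step:
\begin{itemize}
\item the maps $x\mapsto\prod_l e^{ik_{l,n}(x)}$ are equicontinuous in $n$, since they depend on $x$ only through the finitely many $g_r(x)$ and the lifts have norm at most $\max_l\|h_l\|$;
\item a finite net in $X$, together with a finite intersection of sets in $\omega$, then gives the uniform bound.
\end{itemize}
The same step is needed for the polar parts $w_n$ in the surjectivity argument. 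There, what you need is $\|y_n^*y_n-1\|$ and $\|y_ny_n^*-1\|$ small, rather than $y_n$ being ``close to a unitary''. You also need to allow enlarging $m$ when connecting $\mathrm{diag}(w,1)$ to $j(v)$.

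One statement in your set-up is false, although you never actually use it. The map $\Phi\colon C(X,A)_\omega\to C(X,A_\omega)$ is not well defined, because pointwise limits along $\omega$ need not be continuous. For example, take $X=[0,1]$, $A=\mathbb C$, and tent functions of width $1/n$ centred at $\tfrac12$; the image is the indicator function of $\{\tfrac12\}$. So $\Phi$ only lands in bounded functions $X\to A_\omega$. Your stated reason for non-surjectivity is also backwards. The composite $\Phi\circ\psi$ is the inclusion of $C(X,A_\omega)$ into bounded functions, so every continuous map $X\to A_\omega$ does lift to a bounded sequence in $C(X,A)$. The map that behaves well is the one going the other way, which is exactly the paper's $\psi$.
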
 

\begin{proof} 
    For any $C^*$-algebra $B$, we write $\iota_B:B \rightarrow B_\omega$ for the canonical inclusion of $B$ coming from constant sequences.
    Consider the canonical inclusions 
    \begin{equation}
       C(X,A) \xrightarrow{\phi} C(X,A_\omega) \xrightarrow{\psi} C(X,A)_\omega,
    \end{equation}
    where $\phi = \mathrm{id}_{C(X)}\otimes \iota_{A}$, 
    and $\psi$ determined on elementary tensors by 
    $\psi(f \otimes (a_n)_{n=1}^{\infty}) = (f \otimes a_n)_{n=1}^{\infty}$ for every $f \in C(X)$ and $(a_n)_{n=1}^{\infty} \in A_\omega$.\footnote{We are identifying $C(X,A_\omega) = C(X) \otimes A_\omega = C(X) \otimes_{\mathrm{max}} A_\omega$. The map $\psi$ is well-defined on the maximal tensor product as $C(X)$ and $A_\omega$ are commuting subalgebras of $C(X,A)_\omega$.}
    Then $\psi \circ \phi = \iota_{C(X,A)}$. 
    
    Hereinafter, set $D \coloneqq C(X,A)$ for brevity.
     We first observe that if $u_{n}$ and $v_{n}$ are unitaries in $U_{d}(D)$ for some $d$, such that $u = (u_{n})_{n=1}^{\infty}$ and $v = (v_{n})_{n=1}^{\infty}$ are homotopic in $U_{d}(D_{\omega})$, then 
    \begin{equation} 
    \label{homotopy_coordinatewise} 
    \{n\in \mathbb{N}: u_{n}\sim_{h} v_{n}\text{ in }U_{d}(D) \}\in \omega. 
    \end{equation}
    Indeed, $uv^{*}$ is homotopic to $1$ and thus there are self-adjoints $h_1,\dots, h_k \in M_{d}(D_{\omega})$ such that $uv^{*} = e^{ih_1} \cdots e^{ih_k}$. Take representative sequences of self-adjoints $(h_{i}(n))_{n=1}^{\infty}$ of $h_{i}$ for each $i\in \{1, \dots, k\}$. Then 
    \begin{equation} 
    (u_{n}v_{n}^{*})_{n=1}^{\infty} = (e^{ih_{1}(n)}\cdots e^{ih_{k}(n)})_{n=1}^{\infty} 
    \end{equation} 
    in $U_d (D_{\omega})$, which implies \eqref{homotopy_coordinatewise} as unitaries that are close in norm are homotopic.  

    Suppose $C(X, A_{\omega})$ is $K_{1}$-injective. Take a unitary $u\in D$ with $[u]_{1} = 0$. Then $[\phi(u)]_1 = 0 \in K_1(C(X,A_\omega))$, which implies that $\phi(u) \sim_{h} 1$ in $U(C(X,A_\omega))$ by its $K_{1}$-injectivity. Applying $\psi$ gives $\iota_{D}(u) \sim_{h} 1$ in $U(D_{\omega})$ and by the observation above, we conclude that $u \sim_{h} 1$ in the unitary group of $D$. 

    Suppose that $C(X, A_{\omega})$ is $K_{1}$-surjective. Take some $x \in K_1(D)$ and $u\in U_{d}(D)$ for some $d$ such that $[u]_{1} = x \in K_{1}(D)$. By $K_{1}$-surjectivity of $C(X, A_{\omega})$, there exists some unitary $v\in C(X, A_{\omega})$ such that $K_{1}(\phi)(x) = [v]_{1}\in K_{1}(C(X, A_{\omega}))$. Then $[\iota_{D}(u)]_{1} = K_{1}(\psi\circ \phi)(x)  = [\psi(v)]_{1} \in K_{1}(D_{\omega})$. There exists $m\geq d$ such that 
    \begin{equation} 
    \iota_{D}(u) \oplus 1^{\oplus (m-d)} \sim_{h} \psi (v)\oplus 1^{\oplus (m-1)}\quad \text{in } U_{m}(D_{\omega}). 
    \end{equation} 
    Choosing a unitary lift $(v_{n})_{n=1}^{\infty} \in \ell^{\infty}(D)$ of $\psi(v)$. 
    By the observation above, for some $n_{0}\in \mathbb{N}$, $u \oplus 1^{\oplus (m-d)} \sim_{h} v_{n_{0}}\oplus 1^{\oplus (m-1)}$ in $U_{m}(D)$, and thus $[v_{n_{0}}]_{1} = x \in K_{1}(D)$. 

    For the second part, if $A_{\omega}$ is $K$-stable, then $C(S^{n}, M_{m}(A_{\omega})) \cong C(S^{n}, M_{m}(A)_{\omega})$ is $K_{1}$-bijective for all $n \geq 0$ and $m\geq 1$. Hence by the first part of the proof,  $C(S^{n}, M_{m}(A))$ is $K_{1}$-bijective for all $n \geq 0$ and $m\geq 1$. Therefore, $A$ is $K$-stable. 
\end{proof} 

We conclude this section by putting together the previous results to complete the proof of Theorem~\ref{Intro:KStable2} from the introduction, which implies Theorem~\ref{Intro:KStable1} as a special case. 

\begin{theorem}[Theorem~\ref{Intro:KStable2}]
Let $A$ be a unital $C^*$-algebra which has strict comparison with respect to $T(A)$ and an almost divisible unit. Then $A$ is $K$-stable.
\end{theorem}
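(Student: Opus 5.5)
The plan is to assemble the three ingredients of this section through the trace-kernel extension
\begin{equation*}
0 \rightarrow J_A \rightarrow A_\omega \rightarrow A^\omega \rightarrow 0.
\end{equation*}
Proposition~\ref{2609_Aloweromega_kstable} shows that $A^\omega$ is $K$-stable; this uses stable rank one and Rieffel divisibility of $\M^\omega$, which come from Proposition~\ref{S4.2:Prop} together with Proposition~\ref{prop:sc_divisible_tracial_ultra}. Proposition~\ref{prop:J_A_K_stable} shows that $J_A$ is $K$-stable; here we use separable stability, which follows from strict comparison. Note that $T(A)$ is compact because $A$ is unital, and is non-empty since strict comparison with respect to $T(A)$ for a unital algebra is only meaningful in that case (otherwise we should make it an implicit assumption).

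The first step is to deduce $K$-stability of $A_\omega$. I would invoke Thomsen's result that $K$-stability passes to extensions: if $I$ and $B/I$ are $K$-stable then so is $B$ (\cite{thomsen}). The underlying mechanism is as follows. For each $n$, the quotient map $\mathcal V_n(A_\omega)\to \mathcal V_n(A^\omega)$ restricted to the identity components is a Serre fibration with fibre $\mathcal V_n(J_A)$; the path-lifting is available thanks to Proposition~\ref{Prop:LiftingPathComponent} and the exponential description of the identity component. This gives long exact sequences in homotopy, natural in $n$. The five lemma applied to the maps $i_{n,n+1}$ then yields isomorphisms $\pi_k(\mathcal V_n(A_\omega))\to\pi_k(\mathcal V_{n+1}(A_\omega))$. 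The only delicate spots are the ends of the sequence at $\pi_0$ and the fact that $\mathcal V_n$ of the quotient need not be path-connected. These are handled exactly as in Thomsen, so I would cite rather than reprove this.

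The second step is to apply Lemma~\ref{2609_AloweromegatoA}. Since $A_\omega$ is $K$-stable, $A$ is $K$-stable.

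The main obstacle is ensuring that the hypotheses match those used in the propositions. Proposition~\ref{2609_Aloweromega_kstable} requires almost divisibility of $[1_A]$, which is given. Proposition~\ref{prop:J_A_K_stable} requires strict comparison with respect to $T(A)$, which is also given. The remaining task is to verify that Thomsen's extension theorem is stated for non-unital ideals in our formulation via generalised unitary groups $\mathcal V(\cdot)$, which agrees with his quasi-unitary definition by the footnote in this section.
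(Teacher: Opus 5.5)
Your proposal is correct and follows the paper's proof: $K$-stability of $A^\omega$ (Proposition~\ref{2609_Aloweromega_kstable}) and of $J_A$ (Proposition~\ref{prop:J_A_K_stable}), then Thomsen's extension theorem \cite[Theorem~3.11]{thomsen} to get $A_\omega$, then Lemma~\ref{2609_AloweromegatoA} to pass to $A$. Your heuristic sketch of Thomsen's mechanism is loose: lifting individual unitaries is not the same as homotopy lifting, and the fibre over $1$ in the identity components need not be all of $\mathcal V_n(J_A)$. Since you cite Thomsen rather than rely on that sketch, nothing is missing.
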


\begin{proof}
By Proposition~\ref{2609_Aloweromega_kstable}, $A^{\omega}$ is $K$-stable. By Proposition \ref{prop:J_A_K_stable}, $J_{A}$ is $K$-stable. 
Applying permanence of $K$-stability for extensions (\cite[Theorem~3.11]{thomsen}) to the trace-kernel extension \eqref{2609_deftrkerEx}, it follows that $A_\omega$ is $K$-stable. 
Lemma~\ref{2609_AloweromegatoA} now implies that $A$ is $K$-stable. 
\end{proof}

\begin{corollary}[Theorem~\ref{Intro:KStable1}]
Let $A$ be a unital simple pure $C^*$-algebra whose quasitraces are traces. Then $A$ is $K$-stable.
\end{corollary}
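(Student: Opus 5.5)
The plan is to reduce the corollary to the preceding theorem (Theorem~\ref{Intro:KStable2}). We need to check that a unital simple pure $C^*$-algebra $A$ whose quasitraces are traces has strict comparison with respect to $T(A)$ and an almost divisible unit. The theorem then gives $K$-stability directly.

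First, dispose of the degenerate case $T(A)=\emptyset$. Then $A$ has no bounded traces, and since all quasitraces are traces, $A$ has no non-zero bounded quasitraces at all. As $A$ is unital, every densely-defined lower-semicontinuous quasitrace on $A$ is bounded, so $A$ has none. Pureness (almost unperforation) therefore makes $A$ purely infinite, as discussed in the introduction. Simple purely infinite $C^*$-algebras are $K$-stable by Zhang (\cite{zhang2001}), which settles this case. Alternatively, one can observe that strict comparison with respect to $X=\emptyset$ is then the statement that all non-zero positive elements are Cuntz equivalent, and run the theorem's argument. To stay safe, I will cite Zhang here.

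Now assume $T(A)\neq\emptyset$. Since $A$ is unital, $T(A)$ is compact. Densely-defined lower-semicontinuous quasitraces on $A\otimes\K$ restrict to $A$, and since $A$ is unital these restrictions are bounded. By hypothesis they are traces. This is exactly the quasitrace hypothesis of Theorem~\ref{thm:comparison}, and because $\Cu(A)$ is almost unperforated, that theorem yields strict comparison with respect to $T(A)$. Almost divisibility of $[1_A]$ is immediate from almost divisibility of $\Cu(A)$, which is part of pureness. Theorem~\ref{Intro:KStable2} then gives $K$-stability of $A$.

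The only step needing care is the quasitrace bookkeeping. One must check that the hypothesis ``quasitraces are traces'' is meant to cover the lower-semicontinuous quasitraces on $A\otimes\K$, and that unitality gives the boundedness on $A$ required by Theorem~\ref{thm:comparison}. This identification is the main obstacle; everything else is direct citation.
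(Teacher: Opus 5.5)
Your proof is correct and follows the paper's route: Theorem~\ref{thm:comparison} (with unitality giving compactness of $T(A)$ and boundedness of the relevant quasitraces) yields strict comparison with respect to $T(A)$, pureness yields an almost divisible unit, and Theorem~\ref{Intro:KStable2} finishes. Your separate treatment of the traceless case via Zhang's theorem on simple purely infinite algebras fills in a point the paper leaves implicit, and citing Zhang is the right choice there. Your suggested ``alternative'' of running the theorem's argument with $T(A)=\emptyset$ would not work: the trace-kernel argument needs $T(A)\neq\emptyset$, since otherwise $J_A=A_\omega$ is unital and hence not separably stable.
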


\section{Correcting \safeK-theory in (small) corners and classifying projections}\label{Sec:CentralDivisible}

This section is devoted to showing $(\mathcal M, X)$ has classification of projections for the tracially complete $C^*$-algebras of interest in this paper, fulfilling Steps \ref{Step.5} and \ref{Step.6} of the outline in Section~\ref{subsec:organisation}.
In particular, we prove the following result, generalising \cite[Theorems~7.17 and~7.18]{carrion2023tracially}, where the result was obtained for II$_1$ factorial tracially complete $C^*$-algebras with CPoU.
As a consequence, for the tracially complete $C^*$-algebras $(\M,X)$ of interest, we also show a lifting result for certain $^*$-homomorphisms from finite-dimensional $C^*$-algebras to $\M^\omega$, which is Step~\ref{Step.7}.

\begin{theorem}\label{thm:classif-proj}
   Let $(\M,X)$ be a II$_1$ factorial tracially complete $C^*$-algebra such that $\M^\omega$ has strict comparison with respect to $X^\omega$ and $\M^\omega$ has divisible unit. Then $(\mathcal M, X)$ has classification of projections.
\end{theorem}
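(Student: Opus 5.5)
We transfer classification of projections from $(\M^\omega,X^\omega)$, where it holds by Proposition~\ref{S4.2:Prop}, down to $(\M,X)$ by an intertwining argument. The aim is to build $\|\cdot\|_{2,X}$-Cauchy sequences in $\M$ whose limits are the required projections and partial isometries. We use two facts repeatedly. First, every trace in $X^\omega$ restricts to a trace in $X$, and a constant sequence defines a limit trace. Second, $\M^\omega$ has stable rank one, so by Theorem~\ref{thm:sr1facts} Murray--von Neumann equivalent projections in $\M^\omega$ are unitarily equivalent via a unitary in $U_0(\M^\omega)$. By Proposition~\ref{Prop:LiftingPathComponent}, such a unitary lifts to a sequence of unitaries in $\ell^\infty(\M)$, since $\M^\omega$ is a quotient of $\ell^\infty(\M)$.

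\emph{Existence.} Fix $f\in\Aff(X)$ with $0\le f\le1$ and view $f$ on $X^\omega$ through the restriction map $X^\omega\to X$. The first step is to produce projections in $\M$ of approximately correct trace. Proposition~\ref{S4.2:Prop} gives a projection $P\in\M^\omega$ with trace $f$. Lifting $P$ to a sequence of positive contractions and applying Lemma~\ref{lem:trace-trick}, we find, for every $\epsilon$, a positive contraction $a\in\M$ with $\|a-a^2\|_{2,X}<\epsilon$ and $\sup_X|\tau(a)-f(\tau)|<\epsilon$. The second step is to make these successive approximations converge. Choose $a_k\in\M$ with errors $\epsilon_k\to0$ summable. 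In $\M^\omega$, Kirchberg's epsilon test applied to the constant sequence $a_k$ and to $a_{k+1}$ yields nearby projections $P_k,P_{k+1}$ with traces $\approx f$. Because the trace discrepancy is small, comparison of projections in $\M^\omega$ plus a small constant-trace correction lets us find a partial isometry moving $P_{k+1}$ to within $\|\cdot\|_{2}$-distance $O(\sqrt{\epsilon_k})$ of $P_k$. By Brown's theorem this partial isometry is implemented by a unitary in $U_0(\M^\omega)$, which lifts to unitaries in $\M$. Conjugating $a_{k+1}$ by a suitable coordinate of the lift and reindexing, we obtain $a'_{k+1}$ with $\|a'_{k+1}-a_k\|_{2,X}\lesssim\sqrt{\epsilon_k}$. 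Summability makes the sequence Cauchy, and its limit is a projection with trace $f$, since the unit ball of $\M$ is $\|\cdot\|_{2,X}$-complete.

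\emph{Comparison.} By the usual small-gap trick and the fact that existence is already available in $\M$, it suffices to show that projections $p,q\in\M$ with $\tau(p)=\tau(q)$ on $X$ are Murray--von Neumann equivalent. In $\M^\omega$ (constant sequences), Proposition~\ref{Prop:MvN} gives a unitary $u\in\M^\omega$ with $upu^*=q$, but $u$ need not lift. We correct this in small corners as follows. Choose a projection $e\le 1-q$ in $\M^\omega$ of small constant trace $\eta$; this uses existence and comparison in $\M^\omega$, and $1-q$ has to be handled where its trace is small, for instance by instead taking $e$ under $q$ or under $1-q$ as traces permit via a Riesz decomposition. Since $K_1$-injectivity holds by Theorem~\ref{thm:sr1facts}(i), and $(1-q)\M^\omega(1-q)$ has stable rank one and is full, we can modify $u$ by a unitary supported on a corner of small trace so that the new unitary $u'$ lies in $U_0(\M^\omega)$. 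The modified unitary still satisfies $\|u'pu'^*-q\|_{2,X^\omega}\lesssim\sqrt{\eta}$. Lifting $u'$ and taking coordinates yields unitaries $u_k\in\M$ with $\|u_kpu_k^*-q\|_{2,X}\to0$. Finally we run the Cauchy intertwining again to upgrade approximate conjugacy to exact equivalence: compare $u_kpu_k^*$ and $u_{k+1}pu_{k+1}^*$, correct the intertwining unitary in $\M^\omega$ by small corners, lift, and compose, so that the resulting partial isometries $v_k=w_kp$ form a Cauchy sequence. The limit $v$ satisfies $v^*v=p$ and $vv^*=q$.

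The main obstacle is the $K$-theory correction, namely making a unitary that implements the equivalence in $\M^\omega$ lie in $U_0$ while disturbing it only on a tracially small corner. The first approach to try is to identify the class $[u]_1\in K_1(\M^\omega)$ and show it is realised by a unitary $z$ in a corner $e\M^\omega e$ of trace $\eta$. For this we would use that $K_1$ of a full corner surjects onto $K_1(\M^\omega)$ via stable rank one and Rieffel's results. We would then replace $u$ by $(z^*\oplus(1-e))u$, arranging $e$ to commute with the relevant projections so that the conjugation error is controlled by $\eta$.
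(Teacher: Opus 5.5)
Your existence argument is sound and is essentially the paper's intertwining. Your direct construction of a unitary in $U_0(\mathcal M^\omega)$ moving $P_{k+1}$ close to $P_k$ plays the role of Proposition~\ref{prop:proj-approx-classif}; you only need to take $\sum_k\sqrt{\epsilon_k}<\infty$ and conjugate $a_{k+1}$ towards the already-modified $a_k'$.

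The comparison part, however, has a genuine gap at the decisive step. For the partial isometries $w_kp$ to be $\|\cdot\|_{2,X}$-Cauchy you need, at each stage, a \emph{liftable} unitary $s\in\mathcal M^\omega$ satisfying two conditions:
\begin{itemize}
\item $sp_ks^*=q$, where $p_k=w_kpw_k^*$;
\item $\|s-1\|_{2,X^\omega}\le C\|p_k-q\|_{2,X}+(\text{summable error})$.
\end{itemize}
Nothing in your proposal controls $\|s-1\|_{2,X^\omega}$. The unitary from Proposition~\ref{Prop:MvN} or from Brown's theorem can differ from a good one by an arbitrary unitary in $\{p_k\}'$. Multiplying by $z^*+(1-e)$ with $\tau(e)=\eta$ moves a unitary by at most $2\sqrt\eta$ in $\|\cdot\|_2$, so it can repair the $K_1$-class but cannot make an arbitrary implementing unitary close to $1$.

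You have also misplaced the obstacle. Liftability alone costs nothing: Theorem~\ref{thm:sr1facts}\ref{it:BrownTrick} already gives an implementing unitary in $U_0(\mathcal M^\omega)$, which is how the paper proves approximate uniqueness. So your first $K_1$-correction is unnecessary. The real problem, as the paper stresses, is to get liftability and closeness to $1$ at the same time.

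The missing ingredient is the polar-decomposition trick of Lemma~\ref{lem:proj-unique-induction}. Given $v\in\mathcal M^\omega$ with $vp_kv^*=q$, put $a=p_kvp_k+(1-p_k)v(1-p_k)\in\mathcal M^\omega\cap\{p_k\}'$. A block computation gives $\|a-v\|_{2,X^\omega}=\|p_k-q\|_{2,X}$ and $\|1-|a|\|_{2,X^\omega}\le\|p_k-q\|_{2,X}$. The algebra $\mathcal M^\omega\cap\{p_k\}'=p_k\mathcal M^\omega p_k\oplus(1-p_k)\mathcal M^\omega(1-p_k)$ has stable rank one. So reindexing approximate unitary polar decompositions yields a unitary $w$ commuting with $p_k$ with $a=w|a|$. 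Then $\tilde u=vw^*$ satisfies $\tilde up_k\tilde u^*=q$ and $\|\tilde u-1\|_{2,X^\omega}\le2\|p_k-q\|_{2,X}$.

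Only at this point should your $K_1$-correction enter. Your version of it works provided $e\le 1-q$ is full. Then $z^*+(1-e)$ commutes with $q$, so $s=(z^*+1-e)\tilde u$ has the following properties:
\begin{itemize}
\item it still conjugates $p_k$ exactly onto $q$;
\item it lies in $U_0(\mathcal M^\omega)$;
\item it satisfies $\|s-1\|_{2,X^\omega}\le2\|p_k-q\|_{2,X}+2\sqrt{\eta_k}$.
\end{itemize}
To guarantee such an $e$, pass to $M_2(\mathcal M)$, so that $\tau(1-q)\ge\frac12$. Do not split $e$ between $q$ and $1-q$, since fullness of the pieces is not automatic and $K_1$-surjectivity can then fail. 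The paper gets the same room from the increasing constant-trace projections $r_n$.

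With these two additions, lift $s$ and choose $\sum_k\sqrt{\eta_k}<\infty$. Your sequence is then Cauchy, and its limit is a unitary conjugating $p$ onto $q$, which closes the argument.
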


Similar results were obtained for certain classes of $W^*$-bundles (including all trivial $W^*$-bundles) in by Jekel, Ryder, and SW in the forthcoming work \cite{JRW:inPrep} (which actually predates the results here).  We note in particular that the results in \cite{JRW:inPrep} work from selflessness  and go much more directly, without the need for an intertwining argument. 

Note that we've already shown that under the hypotheses of Theorem~\ref{thm:classif-proj}, $(\mathcal M^\omega, X^\omega)$ has classification of projections in Proposition~\ref{S4.2:Prop}\ref{it:classif-proj-omega}.  It is a standard theme in classification that existence and uniqueness of maps into ultrapowers, or sequence algebras gives rise to existence results into the underlying algebra by means of intertwining arguments.  The extra difficulty here is that we do not know unitaries in $\mathcal M^\omega$ lift to sequences of unitaries in $\mathcal M$.

This is overcome using that $\mathcal M^\omega$ has stable rank one (Proposition~\ref{S4.2:Prop}\ref{it:sr1-omega}) together with various non-stable $K$-theory results.  For the existence of projections, we use Brown's Theorem~\ref{thm:sr1facts}\ref{it:BrownTrick}, which implies that unitarily equivalent projections in $\mathcal M^\omega$ are equivalent via a liftable unitary.

The existence result given here is essentially the same as that of \cite[Theorem~7.18]{carrion2023tracially}, which used the intertwining by reparameterisation argument in \cite[Theorem~5.11]{carrion2023tracially} (using sequence algebras in place of ultrapowers).  While we do not know if the hypothesis that all unitaries in $\mathcal M^\omega$ lift in that theorem is necessarily satisfied, the unitaries appearing in the proof can be chosen to be liftable.

Since the intertwining by reparameterisation argument is somewhat awkward in the case of ultrapowers, we give a bare-hands intertwining argument, which essentially amounts to specialising \cite[Theorem~5.11]{carrion2023tracially} to the case of a one-point space.  We start with the following classification of approximate projections in $\mathcal M$.

\begin{proposition}\label{prop:proj-approx-classif}
    Let $(\mathcal M, X)$ be a II$_1$ factorial tracially complete $C^*$-algebra such that $\mathcal M^\omega$ has strict comparison with respect to $X^\omega$ and $\mathcal M^\omega$ has divisible unit.
    \begin{enumerate}
        \item\label{it:proj-approx-unique} For all $\epsilon > 0$, there is $\delta > 0$ such that if $p, q \in \mathcal M$ are positive contractions with 
        \begin{equation}\label{eq:proj-approx-unique-input}
            \|p^2 - p \|_{2, X} < \delta, \quad \|q^2 - q \|_{2, X} < \delta, \quad \text{and} \quad \sup_{\tau \in X} |\tau(p) - \tau(q)| < \delta,
        \end{equation}
        then there is a unitary $u \in \mathcal M$ such that 
        \begin{equation}\label{eq:proj-approx-unique-output}
            \|u pu^* - q \|_{2, X} < \epsilon.
        \end{equation}
         \item\label{it:proj-approx-exist} For all $f \in \mathrm{Aff}(X)$ with $0 \leq f \leq 1$ and $\delta > 0$, there is a positive contraction $p \in \mathcal M$ such that 
        \begin{equation}\label{eq:proj-approx-exist}
            \|p^2 - p \|_{2, X} < \delta \quad \text{and} \quad \sup_{\tau \in X} |\tau(p) - f(\tau)| < \delta.
        \end{equation}
    \end{enumerate}
\end{proposition}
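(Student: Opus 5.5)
Both parts will be proved by contradiction and reindexing. Each time, an approximate statement in $\mathcal M$ is reduced to an exact statement in $(\mathcal M^\omega,X^\omega)$, where Proposition~\ref{S4.2:Prop} supplies classification of projections and stable rank one. Throughout, Lemma~\ref{lem:trace-trick} converts limits along $\omega$ of uniform tracial quantities into uniform quantities over $X^\omega$.

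\emph{Part \ref{it:proj-approx-exist}.} We regard $f$ as an element of $\Aff(X^\omega)$ via $\tau\mapsto f(\tau|_{\M})$; this is legitimate since restriction maps $X^\omega$ into $X$. By Proposition~\ref{S4.2:Prop}\ref{it:classif-proj-omega} there is a projection $P\in\M^\omega$ with $\tau(P)=f(\tau|_\M)$ for all $\tau\in X^\omega$. We lift $P$ to a sequence $(p_n)$ of positive contractions in $\M$, which is possible since positive contractions lift along the quotient $\ell^\infty(\M)\to\M^\omega$. Then $\lim_{n\to\omega}\|p_n^2-p_n\|_{2,X}=\|P^2-P\|_{2,X^\omega}=0$. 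Similarly, $\lim_{n\to\omega}\sup_{\tau\in X}|\tau(p_n)-f(\tau)|=0$ by Lemma~\ref{lem:trace-trick}, applied to $(p_n-a)$ where $a\in\M$ realises $f$ up to a small error by Proposition~\ref{prop:CP}. Taking $p=p_n$ for a suitable $n$ gives \eqref{eq:proj-approx-exist}.

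\emph{Part \ref{it:proj-approx-unique}.} Suppose the statement fails for some $\epsilon>0$. Then, with $\delta=1/n$, we obtain positive contractions $p_n,q_n\in\M$ satisfying \eqref{eq:proj-approx-unique-input} but such that $\|up_nu^*-q_n\|_{2,X}\ge\epsilon$ for every unitary $u\in\M$. The images $P=(p_n)$ and $Q=(q_n)$ in $\M^\omega$ are projections with $\tau(P)=\tau(Q)$ for all limit traces, hence for all $\tau\in X^\omega$ by continuity. Since $T(\M^\omega)=X^\omega$ (Proposition~\ref{strict_compa_A_ultrap}), comparison of projections in $\M^\omega$ (Proposition~\ref{S4.2:Prop}\ref{it:classif-proj-omega}) and stable finiteness give $P\sim_{\mathrm{MvN}}Q$.

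The main obstacle is that a unitary implementing this equivalence must lift to a sequence of unitaries in $\M$, which is not known in general. Stable rank one of $\M^\omega$ (Proposition~\ref{S4.2:Prop}\ref{it:sr1-omega}) resolves this via Brown's Theorem~\ref{thm:sr1facts}\ref{it:BrownTrick}: there is a unitary $U\in U_0(\M^\omega)$ with $UPU^*=Q$. Because $U$ lies in the path component of the identity, it is a finite product $e^{ih_1}\cdots e^{ih_k}$ with $h_j$ self-adjoint. Lifting each $h_j$ to a bounded sequence of self-adjoints $(h_j(n))$ in $\M$, as in Proposition~\ref{Prop:LiftingPathComponent}, gives unitaries $u_n=e^{ih_1(n)}\cdots e^{ih_k(n)}\in\M$ representing $U$. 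Then $\lim_{n\to\omega}\|u_np_nu_n^*-q_n\|_{2,X}=\|UPU^*-Q\|_{2,X^\omega}=0$, contradicting the choice of $(p_n,q_n)$ for $\omega$-many $n$.
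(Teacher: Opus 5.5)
Your proposal is correct and follows the paper's proof essentially step for step. For (i) you argue by contradiction, use Brown's theorem to obtain a unitary in $U_0(\mathcal M^\omega)$ and lift it, and for (ii) you lift a projection in $\mathcal M^\omega$ with trace $f(\tau|_{\mathcal M})$ to positive contractions. The one small wrinkle is that Proposition~\ref{prop:CP} is stated for functions on $T(\mathcal M)$, which may be strictly larger than $X$ at this stage, so you would first extend $f$ from the closed face $X$ via Corollary~\ref{cor:extending-affine-2}, or more simply argue directly with limit traces, as the paper implicitly does.
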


\begin{proof}
    Suppose \ref{it:proj-approx-unique} fails for some $\epsilon > 0$.  For each $n \in \mathbb N$, let $\delta_n = 1/n$ and choose positive contractions $p_n, q_n \in \mathcal M$ as in \eqref{eq:proj-approx-unique-input} such that there is no unitary $u_n \in \mathcal M$ as in \eqref{eq:proj-approx-unique-output}.     
    The sequences $(p_n)_{n=1}^\infty$ and $(q_n)_{n=1}^\infty$ in $\mathcal M$ 
    define projections $p, q \in \mathcal M^{\omega}$ with $\tau(p) = \tau(q)$ for all $\tau \in X^\omega$.  
    Hence by Proposition~\ref{S4.2:Prop}\ref{it:classif-proj-omega}, $p$ and $q$ are unitarily equivalent.  Since $\M^\omega$ has stable rank one by Proposition \ref{S4.2:Prop}\ref{it:rr0-omega}, by Theorem~\ref{thm:sr1facts}\ref{it:BrownTrick} (due to L.\ Brown), there is a unitary $u \in U(\mathcal M^\omega)$ in the path component of $1_{\mathcal M^\omega}$ such that $upu^* = q$.  Since $u$ is in the path component of $1_\mathcal{M^\omega}$, there is a sequence $(u_n)_{n=1}^\infty$ of unitaries in $\M$ lifting $u$ (see Proposition~\ref{Prop:LiftingPathComponent}).  Then
    \begin{equation}
        \lim_{n \rightarrow\omega} \|u_n p_n u_n^* - q_n \|_{2, X} = 0.
    \end{equation}
    This is a contradiction, proving \ref{it:proj-approx-unique}.

    For \ref{it:proj-approx-exist}, Proposition~\ref{S4.2:Prop}\ref{it:classif-proj-omega} implies there is a projection $p \in \mathcal M^\omega$ such that $\tau(p) = f(\tau|_\mathcal M)$ for all $\tau \in X^\omega$.  Let $(p_n)_{n=1}^\infty \subseteq \mathcal M$ be a sequence of positive contractions lifting $p$.  Then
    \begin{equation}
        \lim_{n \rightarrow \omega} \|p_n^2 - p_n \|_{2, X} = 0 \quad \text{and} \quad \lim_{n \rightarrow \omega} \sup_{\tau \in X} | \tau(p_n) - f(\tau) | = 0.
    \end{equation}
    For large enough $n \in \mathbb N$, $p_n \in \mathcal M$ satisfies \eqref{eq:proj-approx-exist}.
\end{proof}

The previous result and a standard intertwining argument gives the existence of projections.  The idea is simply to use Proposition~\ref{prop:proj-approx-classif}\ref{it:proj-approx-exist} to construct a sequence of approximate projections $(p_n)_{n=1}^\infty \subseteq \mathcal M$ approximately realising a given trace map and then to use Proposition~\ref{prop:proj-approx-classif}\ref{it:proj-approx-unique} to conjugate the $p_n$ into a $\|\,\cdot\,\|_{2, X}$-Cauchy sequence.

\begin{proof}[Proof of Theorem \ref{thm:classif-proj}: existence of projections]
    Fix $f \in \mathrm{Aff}(X)$ with $0 \leq f \leq 1$,  let $(\epsilon_n)_{n=1}^\infty \subseteq (0, \infty)$ be such that $\sum_{n=1}^\infty \epsilon_n < \infty$, and for each $n \in \mathbb N$, let $\delta_n \in (0,\delta_{n-1})$ be given by applying Proposition~\ref{prop:proj-approx-classif}\ref{it:proj-approx-unique} to $\epsilon_n$ and assume $\delta_n \rightarrow 0$.  Apply Proposition~\ref{prop:proj-approx-classif}\ref{it:proj-approx-exist} to find a positive contraction $p_n \in M$ such that 
    \begin{equation}
        \|p_n^2 - p_n\|_{2, X} < \delta_n/2 \quad \text{and} \quad \sup_{\tau \in X} | \tau(p_n) - f(\tau)| < \delta_n / 2.
    \end{equation}
    Then we have
    \begin{equation}
        \|p_n^2 - p_n\|_{2,X} < \delta_n,\ \|p_{n+1}^2 - p_{n+1}\|_{2,X} < \delta_n,\ \text{and}\ \sup_{\tau \in X} |\tau(p_n) - \tau(p_{n+1})| < \delta_n.
    \end{equation}
    By Proposition~\ref{prop:proj-approx-classif}\ref{it:proj-approx-unique} there are unitaries $v_n \in U(\mathcal M)$ such that $\|v_n p_n v_n^* - p_{n+1}\|_{2, X} < \epsilon_n$.  Define a sequence of projections by
    \begin{equation}
        q_n \coloneqq v_1^* \cdots v_{n-1}^* p_n v_{n-1} \cdots v_1 \in \mathcal M,
    \end{equation}
    so that all $n \in \mathbb N$,
    \begin{equation}
        \|q_n - q_{n+1} \|_{2, X} < \epsilon_n,\ \|q_n^2 - q_n\|_{2, X} < \delta_n/2,\ \text{and}\ \sup_{\tau \in X} |\tau(q_n) - \tau(f)| < \delta_n/2.
    \end{equation}
    Then the sequence $(q_n)_{n=1}^\infty$ is $\|\,\cdot\,\|_{2, X}$-Cauchy, and the limit $p \in \mathcal M$ is a projection with $\tau(p) = f(\tau)$ for all $\tau \in X$.
\end{proof}

Comparison of projections is more subtle.  
As in the proof of \cite[Theorem~7.17(i)]{carrion2023tracially} (where the result was proven assuming CPoU), it is enough to prove a uniqueness theorem for projections: two projections in $\mathcal M^\omega$ with the same trace are unitarily equivalent.  Proposition~\ref{prop:proj-approx-classif}\ref{it:proj-approx-unique} immediately gives uniqueness up to $\|\cdot\|_{2, X}$-approximate unitary equivalence.  

The uniqueness theorem in \cite[Theorem~7.17(ii)]{carrion2023tracially} (again depending on CPoU) used the classification of projections in $\mathcal M^\omega$ and a polar decomposition argument to show that projections $p, q \in \mathcal M$ which agree on trace and are $\|\cdot\|_{2, X}$-close are $\|\cdot\|_{2, X}$-approximately conjugate via a unitary $\|\cdot\|_{2, X}$-close to $1_\mathcal M^\omega$, and hence with a unitary in $\mathcal M$ using that unitaries lift (\cite[Lemma 7.16]{carrion2023tracially}).  This was then used to construct a $\|\cdot\|_{2, X}$-Cauchy sequence of unitaries realising the $\|\cdot\|_{2, X}$-approximate unitary equivalence between $p$ and $q$.

The overall strategy here is similar, but there is an extra difficulty.  Given projections $p, q \in \mathcal M$ which agree on traces, they are unitarily equivalent in $\mathcal M^\omega$ by Proposition~\ref{S4.2:Prop}\ref{it:classif-proj-omega}.  
Using $\mathcal M^\omega$ has stable rank one (\ref{S4.2:Prop}\ref{it:classif-proj-omega}), on the one hand, the unitary in $\mathcal M^\omega$ realising this equivalence can be chosen to lift by Brown's Theorem~\ref{thm:sr1facts}\ref{it:BrownTrick}, and on the other, using the existence of (approximate) unitary polar decompositions in $\mathcal M^\omega$ (Lemma~\ref{Polar}), 
when $\|p-q\|_{2,X}$ is small a unitary in $\M^\omega$ conjugating $p$ to $q$ can be taken $\|\cdot\|_{2, X}$-close to the identify using the proof of \cite[Lemma~7.16]{carrion2023tracially}. However, it is not clear if one can do both simultaneously.

To work around this, if $p$ and $q$ are projections in $\mathcal M$ which agree on traces and are $\|\cdot\|_{2, X}$-close, we will construct a unitary in $\mathcal M^\omega$ with $upu^* = q$ which is $\|\cdot\|_{2, X}$-close to the identity.  Adding a small projection $r$ to $p$ and $q$ in $\mathcal M_2(\mathcal M^\omega)$ will give us the room needed to conjugate to find a $\|\cdot\|_{2, X}$-perturbation of $u \oplus r$ conjugating $p \oplus 0$ to $q \oplus 0$ and which lifts to a sequence in $M_2(\mathcal M^\omega)$.  This is done using another application of stable rank one and Rieffel's $K_1$-bijectivity theorem (Theorem~\ref{thm:sr1facts}\ref{it:sr1Rieffel}) to find a unitary $w \in r \mathcal M^\omega$ with the same $K_1$-class as $u$ and then using that $u \oplus w^*$ vanishes in $K_1$.

Iterating this construction will lead to a unitary equivalence between $p \oplus 0$ and $q \oplus 0$ in $M_2(\mathcal M)$ and hence a Murray--von Neumann equivalence between $p$ and $q$.  Applying the same argument to $1_{\mathcal M^\omega}  - p$ and $1_{\mathcal M^\omega} - q$ and combining the partial isometries gives the desired unitary equivalence.

The following is the inductive step which, loosely speaking, obtains a version of \cite[Lemma~7.16]{carrion2023tracially} at the expense of a small amplification.  

\begin{lemma}\label{lem:proj-unique-induction}
    Let $(\M,X)$ be a II$_1$ factorial tracially complete $C^*$-algebra such that $\M^\omega$ has strict comparison with respect to $X^\omega$ and $\M^\omega$ has divisible unit.  Suppose $p, q, r_1, r_2 \in \mathcal M$ are projections such that $\tau(p) = \tau(q)$ for all $\tau \in X$, $p, q \leq r_1 \leq r_2$, and $r_1$ and $r_2$ have constant trace $t_{1}$ and $t_{2}$ with $t_{2} > t_{1} > 0$. 
    Then for every $\epsilon > 0$, there is a unitary $u \in r_2 \mathcal M r_2$ such that 
    \begin{equation}
        \|u p u^* - q\|_{2, X} < \epsilon \quad \text{and} \quad \| u - r_2 \|_{2, X} < 2 \|p - q\|_{2, X} + 2 \sqrt{ (t_2 - t_1)}  
        + \epsilon.
    \end{equation}
\end{lemma}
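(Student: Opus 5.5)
The plan is to do all the work in the ultrapower, where Proposition~\ref{S4.2:Prop} gives stable rank one, real rank zero and classification of projections. We regard $p,q,r_1,r_2$ as constant sequences in $\M^\omega$ and build there a unitary $u'\in r_2\M^\omega r_2$ with
\begin{equation}
u'pu'^*=q \quad\text{and}\quad \|u'-r_2\|_{2,X^\omega}\leq 2\|p-q\|_{2,X}+2\sqrt{t_2-t_1}.
\end{equation}
We arrange that $u'$ lies in the path component of $r_2$ in $U(r_2\M^\omega r_2)$. Proposition~\ref{Prop:LiftingPathComponent}, applied to the quotient $\ell^\infty(r_2\M r_2)\to r_2\M^\omega r_2$, then lifts $u'$ to a sequence of unitaries $u_n\in r_2\M r_2$. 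For $\omega$-many $n$, $u_n$ satisfies both estimates of the lemma with the extra $\epsilon$, by Lemma~\ref{lem:trace-trick}.

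\emph{Step 1: a unitary in the $r_1$-corner close to $r_1$.} Following the proof of \cite[Lemma~7.16]{carrion2023tracially}, set $x\coloneqq qp+(r_1-q)(r_1-p)\in r_1\M^\omega r_1$. Then $xp=qx$, so $x^*x$ commutes with $p$. A direct computation gives $x-r_1=(q-p)p+(p-q)(r_1-p)$, so $\|x-r_1\|_{2,X^\omega}\leq \|p-q\|_{2,X}$ up to a harmless constant.

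The corner $r_1\M^\omega r_1$ has stable rank one, since stable rank one passes to corners. Proposition~\ref{it:sr1PolarDecomp} then gives unitaries $w$ with $x\approx w|x|$. The first thing to try is to combine this, via Kirchberg's epsilon test, with the exact intertwining $xp=qx$ and with $|x|$ commuting with $p$, to obtain a unitary $w\in r_1\M^\omega r_1$ with $wpw^*=q$ exactly and $\|w-x\|_{2}\leq\|r_1-|x|\|_2\leq\|r_1-x\|_2$. This yields $\|w-r_1\|_{2,X^\omega}\leq 2\|p-q\|_{2,X}$.

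This exactness is the step I expect to be the main obstacle: one needs conjugation exactly onto $q$ and closeness to $r_1$ simultaneously. As a fallback, I would first get approximate versions, correct them exactly using uniqueness of projections in $\M^\omega$ (Proposition~\ref{Prop:MvN}) on small defect corners, and then pass to the limit with the epsilon test.

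\emph{Step 2: correcting $K_1$ in the small corner $r_2-r_1$.} The projection $r_2-r_1$ has constant trace $t_2-t_1>0$. By comparison of projections in $\M^\omega$, $r_1$ and $r_2$ are subequivalent to finitely many copies of $r_2-r_1$. Hence all three corners are full in $r_2\M^\omega r_2$, and the corner inclusions induce isomorphisms on $K_1$.

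Each corner has stable rank one, so by Rieffel's Theorem~\ref{thm:sr1facts}\ref{it:sr1Rieffel} there is a unitary $z\in(r_2-r_1)\M^\omega(r_2-r_1)$ with $[z]_1=-[w]_1$ in $K_1(r_2\M^\omega r_2)$. Put $u'\coloneqq w+z$. Then $[u']_1=0$, and $K_1$-injectivity of $r_2\M^\omega r_2$ (again Theorem~\ref{thm:sr1facts}\ref{it:sr1Rieffel}) places $u'$ in the path component of $r_2$.

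Since $p,q\leq r_1$, we have $u'pu'^*=wpw^*=q$. For the norm estimate,
\begin{equation}
\|u'-r_2\|_{2,X^\omega}\leq\|w-r_1\|_{2,X^\omega}+\|z-(r_2-r_1)\|_{2,X^\omega}.
\end{equation}
The second term is at most $2\|r_2-r_1\|_{2,X^\omega}=2\sqrt{t_2-t_1}$, using $\|z\|\leq 1$. This completes the construction of $u'$, and the lifting described in the first paragraph finishes the proof.
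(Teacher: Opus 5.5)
Your Step~2 (the $K_1$-correction in $(r_2-r_1)\mathcal M^\omega(r_2-r_1)$ via Rieffel's $K_1$-bijectivity, $K_1$-injectivity of $r_2\mathcal M^\omega r_2$, lifting a unitary in the path component of $r_2$, and the $2\sqrt{t_2-t_1}$ bound) is what the paper does. The gap is in Step~1, at exactly the point you flag.

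Your primary route never uses $\tau(p)=\tau(q)$. Moreover, nothing in ``approximate unitary polar decomposition of $x$ in $r_1\mathcal M^\omega r_1$ plus $xp=qx$'' forces $wpw^*\approx q$, because the polar unitary is unconstrained on the approximate kernel of $|x|$. Concretely, suppose $p\perp q$. Then $x=r_1-p-q$ is a projection, and $x=r_1|x|$ is an exact unitary polar decomposition, yet $r_1pr_1=p$ is far from $q$. So the approximate polar unitaries supplied by stable rank one need not satisfy $wpw^*\approx q$. That means you have no approximate solutions of the combined system to feed into Kirchberg's epsilon test. Your fallback is too unspecified to count as an argument.

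The missing idea is to use uniqueness of projections first, and only then polar-decompose inside the commutant of $p$.
\begin{enumerate}
\item By Proposition~\ref{S4.2:Prop}\ref{it:classif-proj-omega}, applied to $p,q$ and to $r_1-p,r_1-q$, there is a unitary $v\in r_1\mathcal M^\omega r_1$ with $vpv^*=q$.
\item Then $v^*x=pv^*p+(r_1-p)v^*(r_1-p)$ commutes with $p$. So it lies in $r_1\mathcal M^\omega r_1\cap\{p\}'=p\mathcal M^\omega p\oplus(r_1-p)\mathcal M^\omega(r_1-p)$, which has stable rank one.
\item Reindexing approximate polar decompositions there gives a unitary $w'$ commuting with $p$ with $v^*x=w'|x|$ exactly.
\item Set $w\coloneqq vw'$. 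Then $wpw^*=q$ and $x=w|x|$ hold exactly.
\item Hence $\|w-x\|_{2,X^\omega}=\|r_1-|x|\|_{2,X^\omega}\leq\|r_1-x\|_{2,X^\omega}=\|p-q\|_{2,X}$, and so $\|w-r_1\|_{2,X^\omega}\leq 2\|p-q\|_{2,X}$.
\end{enumerate}

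This is the paper's argument in different notation. Its $a=pvp+(r_1-p)v(r_1-p)$ equals $x^*v$, and its $\tilde u=vw^*$ is precisely a polar unitary of your $x$ that conjugates $p$ onto $q$. With this step in place, the rest of your proposal goes through as written.
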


\begin{proof}
    By Proposition~\ref{S4.2:Prop}\ref{it:classif-proj-omega}, $(\mathcal M^\omega, X^\omega)$ has classification of projections, so $p \sim q$ and $r_1 - p \sim r_1 - q$ in $\mathcal M^\omega$.  It follows that $p$ and $q$ are unitarily equivalent in the corner $r_1 \mathcal M^\omega r_1$.  Let $v \in r_1 \mathcal M^\omega r_1$ be a unitary with $v p v^* = q$.  Define a contraction by
    \begin{equation}
        a \coloneqq pvp + (r_1 - p)v (r_1 - p) \in r_1 \mathcal M^\omega r_1 \cap \{p\}',
    \end{equation}
so that
\begin{equation}\label{E5.13}
    \begin{split}
        \|a - v\|_{2, X^\omega}^2 
        &= \| pvp - pv + (r_1 - p) v (r_1 - p) - (r_1 - p)v \|_{2, X^{\omega}}^2 \\
        &= \|pvp - pv\|_{2, X^\omega}^2 + \|(r_1 - p)v (r_1 - p) - (r_1-p)v\|_{2, X^\omega}^2 \\
        &= \| p(q - p)\|_{2, X}^2 + \|(r_1-p)(p -q)\|_{2, X}^2 \\
        &=\|r_1 (p - q) \|_{2, X}^2 \\
        &= \|p - q\|_{2, X}^2
    \end{split}
    \end{equation}

    Note that $r_1\mathcal M^\omega r_1 \cap \{p\}'$ is a direct sum of two corners of $\mathcal M^\omega$ (given by the projections $p$ and $r_1 - p$), so as noted in Section~\ref{SS:SR1}, it inherits stable rank one from $\mathcal M^\omega$ (Proposition~\ref{S4.2:Prop}\ref{it:sr1-omega}).
    Reindexing the approximate polar decompositions given by 
    Proposition \ref{it:sr1PolarDecomp} provides a unitary $w \in r_1 \mathcal M^\omega r_1 \cap \{p\}'$ such that $a=w|a|$.
    Let $\tilde{u}\coloneqq vw^*$ and note that $\tilde{u}$ is a unitary in $r_1 \mathcal M^\omega r_1$ with $\tilde{u}p\tilde{u}^* = q$.

    Calculating similarly to \eqref{E5.13}, we have
    \begin{equation}
    \begin{split}
        \|w - a \|_{2, X^\omega}^2 &= \|r_1 - |a|\|_{2, X^\omega}^2 \\
        &\leq \|r_1 - |a|^2\|_{2, X^\omega}^2 \\
        &\leq \|p - pv^*pvp \|_{2, X^\omega}^2 \\
        &\qquad + \|(r_1 - p) - (r_1 - p)v^*(r_1 - p) v (r_1 - p)\|_{2, X^\omega}^2
        \\
        &= \|pv^*(q - p)v p \|_{2, X^\omega}^2 \\
        &\qquad + \|(r_1 - p)v^*(p - q) v (r_1 - p)\|_{2, X^\omega}^2
        \\
        &\leq \|pv^*(p -q) \|_{2, X^\omega}^2 + \|(r_1 - p) v^*(p -q)\|_{2, X^\omega}^2 \\
        &= \|r_1 v^*(p - q)\|_{2, X^\omega}^2 \\
        &\leq \|p-q\|_{2,X}^2.
    \end{split}
    \end{equation}
    Combining this with \eqref{E5.13} gives
    \begin{equation}
        \|\tilde{u} - r_1 \|_{2, X^\omega} = \| v - w\|_{2, X^\omega} \leq 2 \|p - q\|_{2, X}^2.
    \end{equation}
    
    Note that $(r_2 - r_1) \mathcal M^\omega (r_2 - r_1)$ is a full corner of $\mathcal M^\omega$ (since $t_{2} - t_{1} > 0$ and by strict comparison), and so its inclusion into $\M^\omega$ induces an isomorphism on $K_1$.  
    Using $K_1$-surjectivity of $(r_2-r_1)\M^\omega(r_2-r_1)$ from Theorem~\ref{thm:sr1facts}\ref{it:sr1Rieffel} (again using that stable rank one passes to corners; see Section \ref{SS:SR1}), there is a unitary $w_2 \in (r_2 -r_1)\mathcal M^\omega (r_2 - r_1)$ such that
    \begin{equation} 
        [r_1 + w_2 + (1 - r_2)] = [\tilde{u} + (1 - r_1)] \text{ in }K_1(\mathcal M^\omega).
    \end{equation}
    Using that $r_2 \mathcal M r_2$ is a full corner of $\mathcal M^\omega$, it follows that $u\coloneqq \tilde{u} + w_2^* \in r_2\M^\omega r_2$ has trivial $K_1$-class.
    Hence by $K_1$-injectivity of $r_2 \mathcal M^\omega r_2$ (again from Theorem~\ref{thm:sr1facts}\ref{it:sr1Rieffel} using stable rank one on a corner), $u$ is in the path component of the unit in $U(r_2\mathcal M^\omega r_2)$, and so it lifts to a sequence $(u_n)_{n=1}^\infty$ of unitaries in $r_2\M r_2$. Note that $upu^* = q$ and
    \begin{equation}
    \begin{split}
        \|u - r_2 \|_{2, X^\omega} &\leq \|\tilde{u} - r_1\|_{2, X^\omega} +   \| w_2^* - (r_2-r_1)\|_{2, X^\omega} \\
        &\leq 2\|p - q\|_{2, X} + 2 \sqrt{(t_2 - t_1)}.  
    \end{split}
    \end{equation}
    Hence the required properties are satisfied by some $u_n$.
\end{proof}

Iterating Lemma~\ref{lem:proj-unique-induction}, similarly to the construction in \cite[Theorem~7.17(ii)]{carrion2023tracially}, will lead to uniqueness (and hence comparison) of projections in $(\mathcal M, X)$ as stated in Theorem~\ref{thm:classif-proj}.  In the proof that follows, note that the hypotheses on $(\M,X)$ -- that $\M^\omega$ has strict comparison with respect to $X^\omega$ and $
\M^\omega$ has divisible unit -- pass to constant trace corners $(p\M p,X|_{p\M p})$.   Indeed, divisibility in $(p \mathcal M p)^\omega = p \mathcal M^\omega p$ follows from classification of projections in $\M^\omega$ (Proposition~\ref{S4.2:Prop}\ref{it:classif-proj-omega}). 

\begin{proof}[Proof of Theorem \ref{thm:classif-proj}: comparison of projections]
     It suffices to show uniqueness of projections in the sense that if $p, q \in \mathcal M$ and $\tau(p) = \tau(q)$, then $p$ and $q$ are Murray and von Neumann equivalent.  Indeed, if this holds and $p, q \in \mathcal M$ are such that $\tau(p) \leq \tau(q)$ for all $\tau \in X$, the existence of projections implies there is a projection $r \in \mathcal M$ such that $\tau(p) + \tau(r) = \tau(q)$, and then uniqueness of projections applied to $M_2(\mathcal M)$ implies $p \oplus r \precsim q \oplus 0$.  Hence $p \precsim q$, proving comparison of projections.  Finally, when verifying uniqueness of projections, after replacing $\mathcal M$ with $M_2(\mathcal M)$ (which shares the same hypotheses as $\M$) and $p$ and $q$ with $p \oplus 0$ and $q \oplus 0$ in $M_2(\mathcal M)$, we may assume there is a projection $r$ with constant trace $t< 1$ such that $p, q \leq r$. 

     Now we prove uniqueness under these additional hypotheses.  So assume $p, q, r_0 \in \mathcal M$ are projections such that $\tau(p) = \tau(q)$ for all $\tau \in X$, $p, q \leq r_0$, and $r_0$ has constant trace $t_0 < 1$.  Let $(\epsilon_n)_{n=1}^\infty \subseteq (0, \infty)$ be decreasing such that $\sum_{n=1}^\infty \epsilon_n < \infty$.  We will inductively construct an increasing sequence of projections $(r_n)_{n=1}^\infty$ in $\M$, where each $r_n$ has constant trace $t_n < 1$ and a sequence of normal partial isometries $(u_n)_{n=1}^\infty$ in $\mathcal M$ such that each $u_n$ is a unitary in $r_{n}\M r_{n}$,  
    \begin{equation}
        \|u_n p u_n^* - q \|_{2, X} < \epsilon_n, \qquad  \text{and} \qquad  \|u_{n+1} - u_n\|_{2, X} < 3 \epsilon_n.
    \end{equation}

    Let $u_0 = r_{0}$ and suppose $r_n$ and $u_n$ have been constructed.  Choose $\delta_n > 0$ such that 
    \begin{equation} 
    \label{choose_delta_n}
        t_n + \delta_n < 1 \qquad  \text{and} \qquad 2 \|u_n p u_n^* - q\|_{2, X} +  3 \sqrt{\delta_n} < 2\epsilon_n.
    \end{equation}
    Since $r_n$ has constant trace $t_n < 1$, applying the existence of projections part of Theorem \ref{thm:classif-proj} in the corner $(1-r_n) \mathcal M (1 - r_n)$, we can find a projection $r_n'$ orthogonal to $r_n$ with constant trace $\delta_n$.  Set $r_{n+1} = r_n + r_n'$ and note that this is a projection in $\mathcal M$ with constant trace $t_{n+1} = t_n + \delta_n \in (0, 1)$ and $r_n \leq r_{n+1}$.

    Note that $u_n p u_n^*$ and $q$ are projections in $\mathcal M$ which agree on trace and satisfy $u_n p u_n^*, q \leq r_n$.  By Lemma~\ref{lem:proj-unique-induction}, there is a unitary $v_{n+1}$ in the corner $r_{n+1} \mathcal M r_{n+1}$ such that
    \begin{equation} 
        \|v_{n+1} u_n p u_n^* v_{n+1}^* - q \|_{2, X} < \epsilon_{n+1}
    \end{equation} 
    and 
    \begin{equation} 
    \label{close_identity_unitary} 
        \|v_{n+1} - r_{n+1} \|_{2, X}  < 2 \|u_n p u_n^* - q \|_{2, X} + 2 \sqrt{(t_{n+1} - t_n)} + \epsilon_{n+1}  
    \end{equation} 
    Set $u_{n+1} = v_{n+1} (u_n+ r_{n+1}-r_{n})$, which is a unitary in $r_{n+1} \M r_{n+1}$, and note that 
    \begin{equation}
    \begin{array}{rcl}
        \|u_{n+1} - u_n \|_{2, X} &=& \| (v_{n+1}-r_{n+1}) (u_{n} + r_{n+1} -r_{n}) + r_{n+1}-r_{n}\|_{2, X} \\
        & \leq & \|v_{n+1} - r_{n+1}\|_{2, X} +  \|r_{n+1} - r_{n}\|_{2, X}\\ 
        & \stackrel{\eqref{close_identity_unitary}}< & 2 \|u_n p u_n^* - q \|_{2, X} + 2 \sqrt{\delta_{n}} + \epsilon_{n+1} + \sqrt{\delta_{n}}\\
        & \stackrel{\eqref{choose_delta_n}}< & 3\epsilon_{n}. 
    \end{array} 
    \end{equation}
    completing the induction. 

    Since $\sum_{n=1}^\infty \epsilon_n < \infty$, the sequence $(u_n)_{n=1}^\infty$ is $\|\cdot\|_{2, X}$-Cauchy.  Let $u \in \mathcal M$ denote the limit and note that $u$.  By construction $upu^* = q$.  Since each $u_n$ is a normal partial isometry whose support contains $p$,  this also holds for $u$.  Define $w = up \in \mathcal M$ and note that $w^*w = p$ and $ww^* = upu^* = q$.  
\end{proof} 

As recorded in Proposition~\ref{Class:FD}, classification of projections in $(\mathcal M, X)$ implies classification of finite-dimensional subalgebras, giving part \ref{it:ExistenceToM} of the next result. Part \ref{it:LiftingToG} is a lifting result which will be crucial in Section~\ref{S:LocalHyperfinite}.  It combines the classification subalgebras with the same non-stable $K$-theory argument in Lemma~\ref{lem:proj-unique-induction} to perturbing a unitary to one in the path-component of the identity up to a small amplification.

\begin{proposition}\label{Prop:ClassifyToM}
Let $(\M,X)$ be a II$_1$ factorial tracially complete $C^*$-algebra such that $\M^\omega$ has strict comparison with respect to $X^\omega$ and $\M^\omega$ has divisible unit.
    \begin{enumerate}
        \item \label{it:ExistenceToM}
        $(\M,X)$ has classification of unital finite-dimensional subalgebras as in Proposition~\ref{Class:FD}.
        \item \label{it:LiftingToG}
        If $G$ is a finite-dimensional $C^*$-algebra, $\gamma \in [0, 1)$, and $\phi \colon G \rightarrow \mathcal M^\omega$ is a $^*$-homomorphism such that $\phi(1_G)$ has constant trace $\gamma$, then there is a sequence of $^*$-homomorphisms $(\phi_n \colon G \rightarrow \mathcal M)_{n=1}^\infty$ lifting $\phi$ such that $\phi_n(1_G)$ has constant trace $\gamma$ for all $n \in \mathbb N$. 
    \end{enumerate}
\end{proposition}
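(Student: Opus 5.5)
Part \ref{it:ExistenceToM} requires no new work. By Theorem~\ref{thm:classif-proj}, $(\M,X)$ has classification of projections. Proposition~\ref{Class:FD} then gives existence and uniqueness (up to unitary equivalence) of $^*$-homomorphisms from finite-dimensional $C^*$-algebras into corners $p\M p$ with a prescribed continuous affine trace map. The unital case is obtained by taking $p=1_\M$.

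For part \ref{it:LiftingToG}, the plan has three steps: approximately lift $\phi$, compare the approximate lift with $\phi$ inside $\M^\omega$, and correct the comparing unitary so that it lifts.

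\textbf{Step 1: an exact homomorphism into $\M^\omega$ coming from $\M$.} The trace map $\tau\mapsto\tau\circ\phi$ is continuous and affine on $X^\omega$. Since $T(\M^\omega)=X^\omega$ by Proposition~\ref{prop:computing-traces}, I would expect the trace map to factor through restriction to $\M$, i.e.\ to equal $\alpha(\tau|_\M)$ for some continuous affine $\alpha\colon X\to T_{\le 1}(G)$. If this factorisation is not automatic, I would instead work with a sequence $(\alpha_n)$ of trace maps on $X$ obtained from the minimal projections of $\phi$ via liftings and Kirchberg's epsilon test, and use part \ref{it:ExistenceToM} for each $n$ separately; the argument below is unchanged. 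Applying part \ref{it:ExistenceToM} in $\M$ gives $^*$-homomorphisms $\psi_n\colon G\to\M$ with $\tau\circ\psi_n=\alpha_n(\tau)$ and $\psi_n(1_G)$ of constant trace $\gamma$. The sequence $(\psi_n)_n$ then defines a $^*$-homomorphism $\psi\colon G\to\M^\omega$ with $\tau\circ\psi=\tau\circ\phi$ for all $\tau\in X^\omega$.

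\textbf{Step 2: unitary equivalence in $\M^\omega$.} By Proposition~\ref{S4.2:Prop}\ref{it:classif-proj-omega} and Proposition~\ref{Class:FD}\ref{it:ClassFDUniqueness}, there is a unitary $u\in\M^\omega$ with $u\psi u^*=\phi$. If $u$ lifts to a sequence of unitaries $u_n\in\M$, then $\phi_n\coloneqq u_n\psi_nu_n^*$ is the required lift, since conjugation preserves $\tau(\phi_n(1_G))=\gamma$. The whole difficulty is therefore to modify $u$ so that it lifts. By Proposition~\ref{Prop:LiftingPathComponent}, it suffices to arrange that $u$ lies in $U_0(\M^\omega)$.

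\textbf{Step 3 (the main obstacle): correcting $[u]_1$.} Here I would reuse the trick from Lemma~\ref{lem:proj-unique-induction}, spending the room provided by $\gamma<1$. Set $e\coloneqq 1-\phi(1_G)$ and $f\coloneqq 1-\psi(1_G)$. Both have constant trace $1-\gamma>0$, so the corners $e\M^\omega e$ and $f\M^\omega f$ are full by strict comparison. Since $\M^\omega$ has stable rank one, so do these corners, and Theorem~\ref{thm:sr1facts}\ref{it:sr1Rieffel} makes them $K_1$-bijective with $K_1$ isomorphic to $K_1(\M^\omega)$ via inclusion. Because $u$ maps $f$ onto $e$, the freedom is in the complementary corner. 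Choose a unitary $w\in e\M^\omega e$ whose class $[w+(1-e)]_1$ equals $-[u]_1$, and replace $u$ by $u'\coloneqq (w+(1-e))u$. This still satisfies $u'\psi(\cdot)u'^*=\phi$, because $w+(1-e)$ commutes with $\phi(G)$, acting as the identity under $\phi(1_G)$. Moreover $[u']_1=0$, so $K_1$-injectivity of $\M^\omega$ (Theorem~\ref{thm:sr1facts}\ref{it:sr1Rieffel}) places $u'$ in $U_0(\M^\omega)$.

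The point to check is the $K_1$ bookkeeping: that the unitary $w+(1-e)$ can realise an arbitrary class. This holds because the inclusion $e\M^\omega e\to\M^\omega$ is an isomorphism on $K_1$ by fullness, and the class is attained by a unitary in the corner by $K_1$-surjectivity of that corner, which has stable rank one. With this in place, $u'$ lifts to unitaries $u_n\in\M$, and $\phi_n\coloneqq u_n\psi_nu_n^*$ completes the proof.
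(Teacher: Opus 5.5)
Your proposal is correct and follows the paper's proof essentially step for step. You build $\psi_n\colon G\to\M$ from classification of projections in $\M$ with $\psi_n(1_G)$ of constant trace $\gamma$, conjugate $\psi$ onto $\phi$ by a unitary in $\M^\omega$, and cancel that unitary's $K_1$-class with a unitary from the full corner $(1-\phi(1_G))\M^\omega(1-\phi(1_G))$, so that it lies in $U_0(\M^\omega)$ and lifts.

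One caveat concerns Step 1. The trace map $\tau\mapsto\tau\circ\phi$ on $X^\omega$ need not factor through $\tau\mapsto\tau|_\M$, so your fallback is the route actually needed. The paper makes it precise as follows: lift $\phi(p_i)$, for $p_i$ the minimal central projections of $G$, to positive contractions $a_{i,n}$; rescale so that $\sum_i\tau(a_{i,n})\le\gamma$; and assign the defect $\gamma-\sum_{i<k}\tau(a_{i,n})$ to the last summand. This normalisation is what makes $\psi_n(1_G)$ have trace exactly $\gamma$, and no epsilon test is needed.
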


\begin{proof}
    \ref{it:ExistenceToM} follows immediately from Propositions \ref{Class:FD} and Theorem \ref{thm:classif-proj}.
    
    \ref{it:LiftingToG}   
    Let $p_1, \ldots, p_k \in Z(G)$ denote the minimal central projections in $G$.  For each $i = 1, \ldots, k$, lift $\phi(p_i)$ to a sequence of positive contractions $(a_{i, n})_{n=1}^\infty \subseteq \mathcal M$. Then since
    \begin{equation}\label{eq:sumain}
        \lim_{n \rightarrow \omega} \sup_{\tau \in X} \Big| \sum_{i=1}^k \tau(a_{i, n}) - \gamma \Big| = 0,
    \end{equation}
    by rescaling, we may assume that $\sum_{i=1}^k \tau(a_{i,n}) \leq \gamma$ for all $\tau\in X$ and $n\in\mathbb N$.
    Define $f_{i,n} \in \Aff X$ for $i=1,\dots,k$ and $n\in \mathbb{N}$ by 
    \begin{equation} \begin{split}
        f_{i,n}(\tau)\coloneqq \begin{cases} \tau(a_{i,n}),\quad &i<k; \\ \gamma-\sum_{j=1}^{k-1} \tau(a_{j,n}),\quad &i=k. \end{cases}
    \end{split}\end{equation} 
    Note that for any sequence $(\tau_n)_{n=1}^\infty$ in $X$, if $\tau\in X^\omega$ is the associated limit trace, then
    \begin{equation}\label{eq:LimitTracefin} \lim_{n\to\omega} f_{i,n}(\tau_n) = \tau(p_i), \quad i=1,\dots,k. 
    \end{equation}
    (For the case $i=k$, this uses \eqref{eq:sumain}). 

    By classification of projections in $\mathcal M$ (Theorem~\ref{thm:classif-proj}), Proposition~\ref{Class:FD} provides, for each each $n \in \mathbb N$, a $^*$-homomorphism $\psi_n \colon G \rightarrow \mathcal M$ such that\footnote{An affine function $\alpha\colon T(\M)\to T_{\leq 1}(G)$ is fully determined by the values $\alpha(\tau)(p_i)$ on the minimal central projections.}
    \begin{equation}
        \tau(\psi_n(p_i)) = f_{i,n}(\tau), \quad i = 1, \ldots, k,\ \tau \in X.
    \end{equation}
    In particular, $\tau(\psi_n(1_G)) = \gamma$ for all $\tau \in X$. 
    Define $\psi\colon G \to \M^\omega$ using the sequence $(\psi_n)$.
    Then using \eqref{eq:LimitTracefin} (and since limit traces are dense in $X^\omega$), it follows that $\tau(\psi(p_i))=\tau(\phi(p_i))$ for all $\tau\in X^\omega$ and $i=1,\dots,k$.
    Therefore, $\tau \circ \psi=\tau \circ \phi$ for all $\tau \in X^\omega$. 
    By Proposition~\ref{Class:FD}\ref{it:ClassFDUniqueness} (again using classification of projections, this time in $\M^\omega$ provided by Proposition~\ref{S4.2:Prop}\ref{it:classif-proj-omega}), $\psi$ and $\phi$ are unitarily equivalent, so there exists a unitary $v \in \M^\omega$ such that $\mathrm{Ad}(v)\circ\psi=\phi$.

    Note that the projection $q \coloneqq 1_{\mathcal M^\omega}  - \phi(1_k)$ satisfies $\tau(q)  = 1 - \gamma > 0$ for all $\tau \in X^\omega$, and since $\mathcal M^\omega$ has strict comparison with respect to $X^\omega$, $q$ is a full projection.  It follows that the inclusion $q \mathcal M^\omega q \rightarrow \mathcal M^\omega$ induces an isomorphism on $K_1$.  Since $\mathcal M^\omega$ has stable rank one by Proposition~\ref{S4.2:Prop}\ref{it:sr1-omega}, so does the corner $q \mathcal M^\omega q$ (as noted in Section~\ref{SS:SR1}), and so both $\M^{\omega}$ and the corner are $K_1$-bijective (Theorem~\ref{thm:sr1facts}\ref{it:sr1Rieffel}).
    By $K_1$-surjectivity, there is a unitary $w \in q\mathcal M^\omega q$ such that $[w + \phi(1_G)] = [v^*] \in K_1(\mathcal M^\omega)$.  In particular, $u = (w + \phi(1_G))v \in \mathcal M^\omega$ satisfies $\mathrm{Ad}(u) \circ \psi = \phi$ and $[u] = 0 \in K_1(\mathcal M^\omega)$.  By $K_1$-injectivity of $\M^{\omega}$, $u$ is in the path component of the identity in $\mathcal M^\omega$ and hence lifts to a sequence of unitaries $(u_n)_{n=1}^\infty \subseteq \mathcal M$ (by the standard lifting result recorded in Proposition \ref{Prop:LiftingPathComponent}).
    Setting $\phi_n\coloneqq \mathrm{Ad}(u_n)\circ\psi_n$ completes the proof. 
\end{proof} 

The next lemma will be the starting point to pass from local hyperfiniteness to global hyperfiniteness in Section~\ref{S:LocalHyperfinite}. In slightly more detail, fix an amenable II$_1$ factorial tracially complete $C^*$-algebra $(\mathcal M, X)$ such that $\mathcal M^\omega$ has strict comparison with respect to $X^\omega$ and $\mathcal M^\omega$ has divisible unit.  For a finite set $\mathcal F \subseteq \mathcal M$, amenability of $\mathcal M$ implies hyperfiniteness of $\pi_\tau(\mathcal M)''$ via Connes' theorem, and hence there is a finite-dimensional subalgebra $F \subseteq \pi_\tau(\mathcal M)''$ which $\|\cdot\|_{2, \tau}$-approximately contains $\mathcal F$.  The following will allow us to take $F$ to be a finite-dimensional subalgebra of $\mathcal M$, while still $\|\cdot\|_{2, \tau}$-approximately containing $\mathcal F$.

\begin{lemma}\label{Lem:LiftMatrix} 
Let $(\mathcal M,X)$ be a factorial tracially complete $C^*$-algebra such that $\M^\omega$ has comparison with respect to $X^\omega$ and $\mathcal M^\omega$ has divisible unit. 
 Let $F$ be a finite-dimensional $C^*$-algebra and $\tau \in X$. Given $\epsilon>0$ and unital $^*$-homomorphism $\tilde{\theta}\colon F\to \pi_\tau(\mathcal M)''$, there exists a unital $^*$-homomorphism $\theta\colon F\to \mathcal M$, such that for all $x\in F$ with $\|x\|\leq 1$ 
 \begin{equation} 
 \|\pi_\tau( \theta (x))- \tilde{\theta}(x)\|_{2, \tau} <\epsilon. 
 \end{equation}
 \end{lemma}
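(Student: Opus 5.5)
The plan is to first find some unital $^*$-homomorphism $\theta_0\colon F\to\M$ whose traces match those of $\tilde\theta$ closely enough, and then conjugate it into position by a unitary of $\M$. Write $p_1,\dots,p_S$ for the minimal central projections of $F$. Let $N\coloneqq\pi_\tau(\M)''$. This is a finite von Neumann algebra with faithful normal trace $\tau$, but it is not necessarily a factor, since $\tau$ need not be extremal. So matching $\tilde\theta$ by traces alone is not enough: we must control the centre-valued traces $\mathrm{ctr}_N(\tilde\theta(p_s))$.

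\emph{Step 1 (approximate lift).} The cone $C_0(0,1]\otimes F$ is projective. The map $\pi_\tau\colon\M\to\pi_\tau(\M)$ is surjective, and $\pi_\tau(\M)$ is $\|\cdot\|_{2,\tau}$-dense in $N$ on bounded sets by Kaplansky. Combining these, I expect to find a c.p.c.\ order zero map $\psi\colon F\to\M$ with
\[
\|\pi_\tau(\psi(x))-\tilde\theta(x)\|_{2,\tau}<\eta\quad\text{for contractions }x\in F,
\]
with $\eta$ as small as we like. Concretely, view $\tilde\theta$ as a $^*$-homomorphism into the $\|\cdot\|_{2,\tau}$-ultrapower of $\pi_\tau(\M)$, lift through projectivity, and reindex. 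Then for every normal trace $\sigma=\tau(z\,\cdot\,)$ on $N$, with $z\in Z(N)_+$ and $\tau(z)=1$, we get $\sigma(\psi(p_s))\approx\sigma(\tilde\theta(p_s))$. Thus the centre-valued traces $\mathrm{ctr}_N(\pi_\tau\psi(p_s))$ and $\mathrm{ctr}_N(\tilde\theta(p_s))$ are $\|\cdot\|_{2,\tau}$-close.

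\emph{Step 2 (exact homomorphism in $\M$).} Define a continuous affine map $\alpha\colon X\to T(F)$ by
\[
\alpha(\sigma)(p_s)\coloneqq\frac{\sigma(\psi(p_s))}{\sigma(\psi(1_F))}.
\]
This is legitimate once $\sigma(\psi(1_F))$ is bounded away from $0$ on $X$. Arrange that by first replacing $\psi$ with $\psi+$ (a small correction using a divisible unit / order zero map with full trace); if necessary, simply use the affine numerator and compensate the last summand as in the proof of Proposition~\ref{Prop:ClassifyToM}\ref{it:LiftingToG}. By Proposition~\ref{Prop:ClassifyToM}\ref{it:ExistenceToM}, that is, classification of unital finite-dimensional subalgebras of $(\M,X)$ via Theorem~\ref{thm:classif-proj} and Proposition~\ref{Class:FD}, there is a unital $^*$-homomorphism $\theta_0\colon F\to\M$ with $\sigma\circ\theta_0=\alpha(\sigma)$ for all $\sigma\in X$. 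The normal traces on $N$ restrict to elements of $X$, because $X$ is a face containing $\tau$. Hence the centre-valued traces of $\pi_\tau\theta_0(p_s)$ and $\tilde\theta(p_s)$ in $N$ are within $O(\eta)$ in $\|\cdot\|_{2,\tau}$.

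\emph{Step 3 (conjugation).} In the finite von Neumann algebra $N$, two unital $^*$-homomorphisms from $F$ whose centre-valued traces on the matrix units agree up to $O(\eta)$ are approximately unitarily equivalent in $\|\cdot\|_{2,\tau}$. To see this, compare the minimal projections $e_s$ via the comparison theorem with centre-valued trace, and fix the small discrepancy by a partial isometry of small $2$-norm. This gives a unitary $U\in N$ with $\|U\pi_\tau(\theta_0(x))U^*-\tilde\theta(x)\|_{2,\tau}<\epsilon/2$. Write $U=e^{iH}$ with $H=H^*\in N$ and $\|H\|\le\pi$. By Kaplansky, choose a self-adjoint $h\in\M$ with $\|h\|\le\pi$ and $\pi_\tau(h)$ $\|\cdot\|_{2,\tau}$-close to $H$. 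By strong continuity of $\exp$ on bounded sets, $u\coloneqq e^{ih}\in\M$ is a unitary with $\pi_\tau(u)\approx U$ in $\|\cdot\|_{2,\tau}$. Then $\theta\coloneqq\mathrm{Ad}(u)\circ\theta_0$ works, with estimates uniform over the unit ball of the finite-dimensional $F$.

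The main obstacle is Step 3 together with the central-trace bookkeeping in Step 2. Specifically, we must show that approximate agreement of all normal traces on $N$ gives $2$-norm approximate unitary equivalence in the non-factorial fibre. I would handle this by working with $\mathrm{ctr}_N$ directly, using the dimension theory of finite von Neumann algebras on each minimal central summand of $F$.
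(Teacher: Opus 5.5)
Your argument is correct, after one small repair described below. It runs differently from the paper's.

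\textbf{The two routes.} The paper works with exact trace data:
\begin{enumerate}
\item It identifies $T(\mathcal N)$, via $\gamma(\rho)=\rho\circ\pi_\tau$, with the closed face $Y\subseteq X$ generated by $\tau$.
\item It extends $\rho\mapsto\rho\circ\tilde\theta$ from $Y$ to a continuous affine $\alpha\colon X\to T(F)$ using Lemma~\ref{lem:aff-lift-valuesp}, and realises $\alpha$ by a unital $\phi\colon F\to\M$.
\item It deduces that $\pi_\tau\circ\phi$ and $\tilde\theta$ are \emph{exactly} unitarily equivalent in $\mathcal N$, and approximates only the implementing unitary.
\end{enumerate}
You approximate first. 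Your order zero lift $\psi$ gives trace data that is automatically continuous and affine on all of $X$. So you need neither an extension theorem nor any description of $T(\mathcal N)$. You pay for this with an approximate unitary equivalence in $\mathcal N$, controlled by the centre-valued trace.

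\textbf{Why the difference matters.} The paper only checks that $\gamma$ maps onto a face; injectivity can fail when $\tau$ is not extreme, and then exact matching is impossible. Take the trivial $W^*$-bundle $\M$ of bounded $\|\cdot\|_2$-continuous maps $[0,1]\to\mathcal R$. It is McDuff, so it satisfies the hypotheses by Corollary~\ref{TC:Structure:MainBody}. Let $\tau$ be given by Lebesgue measure, so $\mathcal N\cong L^\infty[0,1]\,\overline{\otimes}\,\mathcal R$. Let $F=\mathbb C^2$ and $\tilde\theta(1,0)=\chi_{[0,1/2]}\otimes 1$. Every projection $P\in\M$ has continuous centre-valued trace $t\mapsto\tau_{\mathcal R}(P(t))$. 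Hence no $\phi$ makes $\pi_\tau\circ\phi$ unitarily equivalent to $\tilde\theta$. The paper's route is clean when $\tau\in\partial_e X$, since then $\mathcal N$ is a factor. But Theorem~\ref{SmallLocalApproximations} applies the lemma at every $\tau\in X$, and there your approximate matching is what actually works.

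\textbf{The repair.} The map $\sigma\mapsto\sigma(\psi(p_s))/\sigma(\psi(1_F))$ is not affine. Use your fallback instead:
\begin{itemize}
\item take weights $\sigma(\psi(p_s))$ for $s<S$ and $1-\sum_{s<S}\sigma(\psi(p_s))\ge 0$ for $s=S$;
\item no divisibility correction is needed.
\end{itemize}
Testing against the traces $\tau(z\,\cdot\,)$ with $z\in Z(\mathcal N)_+$ (which restrict into $X$) gives $\mathrm{ctr}(\pi_\tau\theta_0(p_s))=\mathrm{ctr}(\pi_\tau\psi(p_s))$ exactly for $s<S$. For $s=S$ the error is at most $(S-1)\eta$.

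\textbf{Step 3 in a non-factor fibre.} This step holds in any finite von Neumann algebra.
\begin{itemize}
\item Write $F=\bigoplus_s M_{n_s}$. Let $a_s,b_s$ be the centre-valued traces of the two images of a minimal projection $e_s$ of the $s$-th summand.
\item The comparison theorem gives equivalent subprojections of these two images, each with centre-valued trace $\min(a_s,b_s)$.
\item These assemble into a partial isometry $W$ intertwining the two maps.
\item The defect projections $1-W^*W$ and $1-WW^*$ have equal centre-valued trace, and trace at most $\sum_s n_s\tau(|a_s-b_s|)$.
\item So $W$ extends to a unitary with the required $2$-norm estimate.
\end{itemize}
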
 

 \begin{proof} 
 Write $\mathcal N \coloneqq \pi_\tau(\mathcal M)''$. Let $\gamma\colon T(\mathcal N) \rightarrow T(\mathcal{M})$ be the continuous affine function induced by $\pi_\tau$ (i.e.\ $\gamma(\rho)\coloneqq\rho\circ\pi_\tau$ for $\rho\in T(\mathcal N)$).  By \cite[Lemma 2.10]{carrion2023tracially},  any $\rho\in T(\mathcal N)$ has that $\rho\circ\pi_\tau$ lies in the closed face $Y$ of $X$ generated by $\tau$, i.e.\ 
 $\gamma(T(\mathcal N))$ is contained in the closed face $Y$ of $X$ generated by $\tau$. In fact it equals this face as we now argue. Note first that $\gamma(T(\mathcal N))$ contains $\tau$ and is convex and compact. 
 To see that it is a face, it suffices to check that if $\rho \in T(\mathcal N)$ and $\sigma \in X$ satisfies $\lambda \sigma \leq \gamma(\rho)$ for some $\lambda>0$ then $\sigma\in \gamma(T(\mathcal N))$.
 For this, note that $\sigma$ extends by $\|\cdot\|_{2,\rho}$-continuity to a trace $\hat\sigma$ on $\pi_{\gamma(\rho)}(\M)''$, and so $\hat\sigma \circ \pi_\rho \in T(\mathcal N)$ and $\sigma=\gamma(\hat\sigma\circ\pi_\rho)$. 
 This concludes the proof that $\gamma\colon T(\mathcal N) \to Y$ is an isomorphism.

 The unital $^*$-homomorphism $\tilde{\theta}: F\rightarrow \mathcal N$ induces a continuous affine function $\beta\colon T(\mathcal N)\to T(F)$.  By Lemma \ref{lem:aff-lift-valuesp} the composition 
 \begin{equation}
     Y\stackrel{\gamma^{-1}}{\longrightarrow}T(\mathcal N)\stackrel{\beta}{\longrightarrow}T(F)
 \end{equation}
 extends to a continuous affine map $\alpha\colon X\rightarrow T(F)$, and, as $(\mathcal M,X)$ has classification of unital finite-dimensional $C^*$-algebras by Proposition \ref{Prop:ClassifyToM}\ref{it:ExistenceToM}, $\alpha$ is realised by a unital $^*$-homomorphism $\phi: F\rightarrow \mathcal{M}$ such that 
 \begin{equation}\label{2509_EqEx}
     \rho \circ \phi = \alpha (\rho),\quad \rho \in T(\mathcal{M}).
 \end{equation}
 In particular, 
 \begin{equation} 
 \rho \circ \tilde{\theta} = \beta (\rho) = \alpha(\gamma(\rho)) = \alpha(\rho\circ\pi_\tau)\stackrel{\eqref{2509_EqEx}}=\rho\circ\pi_\tau\circ\phi,\quad \rho\in T(\mathcal N).
 \end{equation} 
 By the classification of maps from finite-dimensional $C^*$-algebras into finite von Neumann algebras by traces,\footnote{This is ancient history and due to Murray and von Neumann.} there is a unitary $u\in \mathcal N$ with 
 \begin{equation}
     \tilde{\theta}(x)=u\pi_\tau(\phi(x))u^*,\quad x\in F. \end{equation}
 By Kaplansky's density theorem, there is an unitary $v\in \mathcal M$ such that 
 \begin{equation}\|\pi_\tau(v)-u\|_{2,\tau}<\frac{1}{2}\epsilon.
 \end{equation} 
 Define $\theta\coloneqq \mathrm{Ad}(v)\circ\phi\colon F\to \mathcal M^\omega$. Then, for $x\in F$ with $\|x\| \leq 1$, we have
\begin{equation}
 \begin{split}
     \|\pi_\tau(\theta(x))-\tilde\theta(x)\|_{2,\tau} &=\|\mathrm{Ad}(\pi_\tau(v))(\phi(x))-\mathrm{Ad}(u)(\phi(x))\|_{2, \tau} \\      &\leq 2\|\pi_\tau(v)-u\|_{2,\tau}\|x\| \\
     &<\eps,
 \end{split}
 \end{equation}
 as required. 
 \end{proof} 

\section{Local hyperfiniteness} 
\label{S:LocalHyperfinite} 
In this section we use the structural results we have obtained in Sections~\ref{sec:tracial-ultrapowers} and~\ref{Sec:CentralDivisible} to establish local hyperfiniteness for amenable tracially complete $C^*$-algebras in the spirit of the  arguments of Connes (\cite{Connes:Annals}) and Popa (\cite{Popa:InjectiveHyperfinite}).  We isolate four small calculations in Section \ref{SS6.1}, before giving the main ``small local approximation'' argument. 

\subsection{Preliminary estimates}\label{SS6.1} We start with the following standard polar decomposition estimate. Many results of this type can be found in the literature (for example, \cite[Lemma 2.7]{Christensen75}). 

\begin{lemma}\label{Polar}
Let $B$ be a unital $C^*$-algebra and $p,q$ be projections in $B$ such that $qpq\leq \delta q$ for some $0<\delta<1$.  Then the partial isometry $v$ in the polar decomposition of $(1-p)q$ lies in $B$ and satisfies 
\begin{equation}  
\begin{split} 
v^*v&=q,\\ vv^*&\leq (1-p),\\
\|v-q\|&\leq\delta^{1/2}.
\end{split} 
\end{equation} 
\end{lemma}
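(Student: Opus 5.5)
The plan is a direct functional calculus computation with $x\coloneqq(1-p)q\in B$ and its polar decomposition $x=v|x|$. In outline: first show $|x|$ is invertible in $qBq$, so that $v$ lies in $B$ and its initial and final projections are as claimed. Then compute $(v-q)^*(v-q)$ exactly and read off $\|v-q\|$ from the spectrum of $|x|$. I expect the argument to produce the bound $\bigl(2(1-\sqrt{1-\delta})\bigr)^{1/2}$, which is slightly larger than the stated $\delta^{1/2}$; this is flagged in the last paragraph.

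\emph{Step 1 (invertibility and membership in $B$).} We have
\begin{equation}
x^*x=q(1-p)q=q-qpq\geq(1-\delta)q,
\end{equation}
so $|x|=(q-qpq)^{1/2}$ is invertible in the corner $qBq$, with spectrum in $[\sqrt{1-\delta},1]$. Write $|x|^{-1}$ for its inverse in $qBq$. Then $v\coloneqq x|x|^{-1}\in B$, with
\begin{equation}
v^*v=|x|^{-1}x^*x|x|^{-1}=q.
\end{equation}
Since $(1-p)v=v$, the projection $vv^*$ lies under $1-p$. So no von Neumann algebra is needed: $v$ is produced by continuous functional calculus inside $qBq$.

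\emph{Step 2 (the key identity).} Using $qv=qx|x|^{-1}$ and $qx=q(1-p)q=|x|^2$, we get
\begin{equation}
v^*q=|x|^{-1}x^*q=|x|^{-1}|x|^2=|x|=qv.
\end{equation}
Since $v^*v=q$, $q^2=q$ and $v^*q=qv=|x|$, this gives
\begin{equation}
(v-q)^*(v-q)=2q-2|x|.
\end{equation}
Hence
\begin{equation}
\|v-q\|^2=2\bigl(1-\min\operatorname{sp}_{qBq}(|x|)\bigr)\leq 2\bigl(1-\sqrt{1-\delta}\bigr).
\end{equation}
Equivalently, $\|v-q\|\leq\sqrt2\,\|q-|x|\|^{1/2}$ with $\|q-|x|\|\leq 1-\sqrt{1-\delta}$.

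\emph{Step 3 (the numerical constant, which is the main obstacle).} It remains to compare $2(1-\sqrt{1-\delta})$ with $\delta$. The identity in Step 2 is exact, so any improvement would have to come from this comparison. However, $1-\sqrt{1-\delta}\geq\delta/2$, so $2(1-\sqrt{1-\delta})\geq\delta$, and the inequality goes the wrong way.

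If one tests the case $qpq=\delta q$, realised in $M_2$ with $p,q$ rank-one projections at angle $\theta$ where $\cos^2\theta=\delta$, the formula in Step 2 is attained with equality. This suggests that the displayed $\delta^{1/2}$ cannot be reached with this $v$. What the argument honestly gives is
\begin{equation}
\|v-q\|\leq\bigl(2(1-\sqrt{1-\delta})\bigr)^{1/2}\leq(2\delta)^{1/2},
\end{equation}
using $1-\sqrt{1-\delta}\leq\delta$. I would therefore carry out Steps 1--2 as above and then check whether later uses of this lemma only need a bound of order $\delta^{1/2}$, in which case the constant $\sqrt2$ is harmless.
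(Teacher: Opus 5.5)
Your argument is correct, and your suspicion is right: the displayed bound $\|v-q\|\le\delta^{1/2}$ is false as stated. In your $M_2$ example ($p,q$ rank-one at angle $\theta$, $\delta=\cos^2\theta$) one gets $\|v-q\|^2=2(1-\sin\theta)$. This exceeds $\cos^2\theta=(1-\sin\theta)(1+\sin\theta)$ for every $\theta\in(0,\pi/2)$; for instance $\delta=\tfrac12$ gives $\|v-q\|^2=2-\sqrt2>\tfrac12$. Your Steps 1--2 are sound. With $|x|^{-1}$ the inverse in $qBq$, $v=x|x|^{-1}$ is the polar part, and $v^*q=qv=|x|$ gives the exact identity $(v-q)^*(v-q)=2(q-|x|)$. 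Hence you get the sharp bound $\|v-q\|\le\bigl(2(1-\sqrt{1-\delta})\bigr)^{1/2}\le(2\delta)^{1/2}$.

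The paper builds the same $v=(1-p)qa^{-1/2}$ (with $a=q(1-p)q$), but estimates $\|v-q\|\le\|qa^{-1/2}-q\|+\|pqa^{-1/2}\|$. It then bounds the first term by $1-(1-\delta)^{-1/2}$, which is negative; the correct bound is $(1-\delta)^{-1/2}-1$. Its footnote verifies an inequality for the sign-flipped quantity, so the stated constant is never actually established. Repaired, the triangle-inequality route gives only $\bigl(1-\sqrt{1-\delta}+\sqrt\delta\bigr)/\sqrt{1-\delta}$, which is worse than your bound and degenerates as $\delta\to1$. Your exact computation is both shorter and optimal.

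As you anticipated, the slip is harmless downstream. The only place the constant matters is Theorem~\ref{SmallLocalApproximations}, through condition~\ref{it:new-pi-5} and \eqref{eq:new-99}. With $\sqrt{2\delta}$ in place of $\sqrt\delta$ there, \eqref{eq:new-40} becomes $5\sqrt{2\delta}\,\|v_n^*v_n\|_{2,\tau}$ and \eqref{eq:new-45} becomes $(1+2\sqrt2)\sqrt\delta\,\|v_n^*v_n\|_{2,\tau}$. Replacing $28+8\sqrt2$ by $60+4\sqrt2$ in the definition \eqref{eq:new-4} of $\delta$ then restores the final inequality.
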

\begin{proof}
By definition, $(1-p)q=v|(1-p)q|$ in $B^{**}$ and $v^*v \leq q$. By hypothesis 
\begin{equation}
q\geq a\coloneqq |(1-p)q|^2=q(1-p)q\geq (1-\delta)q,
\end{equation}
and therefore $a$ is invertible in $qBq$ and using $a^{-1}$ to denote this inverse, we have $q\leq a^{-1}\leq (1-\delta)^{-1}q$. Therefore $a^{-1/2}\in B$ and so $v=(1-p)qa^{-1/2}\in B$. 
Next,
\begin{equation}
v^*v =a^{-1/2}q(1-p)qa^{-1/2}=q,
\end{equation} 
while $vv^*=(1-p)qa^{-1}q(1-p)$, and so $(1-p)vv^*=vv^*$.

For the last inequality, we see that\footnote{The very last inequality is justified by
\begin{equation}
    0 \leq (1-(1-\delta)^{1/2})(1-\delta^{1/2}) = 1-(1-\delta)^{1/2}- \delta^{1/2}+ (\delta(1-\delta))^{1/2}, 
\end{equation} 
and hence
$(1-\delta)^{1/2}-1+\delta^{1/2} \leq (\delta(1-\delta))^{1/2}$.
The desired inequality follows by dividing by $(1-\delta)^{1/2}$. Though we found it easier to draw a graph. }  
\begin{equation}\begin{split} 
\|v-q\|&=\|(1-p)qa^{-1/2}-q\|\\
&\leq \|qa^{-1/2}-q\|+\|pqa^{-1/2}\|\\
&\leq 1-\frac{1}{(1-\delta)^{1/2}}+\frac{\delta^{1/2}}{(1-\delta)^{1/2}} \\
&\leq\delta^{1/2}.
\end{split}\end{equation} 
\vspace*{-2.5em}

\end{proof} 

The following routine estimate is isolated to slightly shorten the proof of Proposition \ref{SmallLocalApproximations}.
\begin{lemma} 
\label{2409_LemPosiSelf} 
    Let $B$ be a $C^*$-algebra, $b\in B_+$ a positive contraction and $h\in B$  self-adjoint. Then, for every $\tau\in T(B)$, 
    \begin{equation}\|[b,h]\|_{2,\tau}^2\leq 2\tau(b^{2} h^2).\end{equation}  
\end{lemma}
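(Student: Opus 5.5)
We simply expand the commutator and estimate each term using the trace property and operator inequalities. Since $b$ and $h$ are self-adjoint, $[b,h]^*=-[b,h]$, so
\begin{equation}
\|[b,h]\|_{2,\tau}^2=\tau([b,h]^*[b,h])=-\tau((bh-hb)^2)=2\tau(b^2h^2)-2\tau(bhbh),
\end{equation}
where the terms $\tau(bhhb)=\tau(b^2h^2)$ and $\tau(hbbh)=\tau(b^2h^2)$ have been combined using the trace property, and $\tau(bhbh)=\tau(hbhb)$.

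It therefore suffices to show that $\tau(bhbh)\geq 0$. This follows from the trace property together with positivity. Write
\begin{equation}
\tau(bhbh)=\tau(b^{1/2}hbhb^{1/2}),
\end{equation}
and observe that $b^{1/2}hbhb^{1/2}=(b^{1/2}hb^{1/2})^2=(b^{1/2}hb^{1/2})^*(b^{1/2}hb^{1/2})\geq 0$, since $b^{1/2}hb^{1/2}$ is self-adjoint. Hence $\tau(bhbh)\geq0$. Substituting this into the identity above gives $\|[b,h]\|_{2,\tau}^2\leq 2\tau(b^2h^2)$.

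Note that $\tau(b^2h^2)=\tau(hb^2h)\geq 0$, so the right-hand side is a genuine nonnegative quantity. The contractivity of $b$ is not needed for the inequality itself; it is presumably included for later use, for example to bound $\tau(b^2h^2)\leq\tau(bh^2)$ via $b^2\leq b$. The only point requiring care is the sign bookkeeping in the expansion, which I expect to be the sole (minor) obstacle. If $B$ is non-unital, nothing changes, since all the expressions involved lie in $B$.
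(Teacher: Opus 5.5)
Your proposal is correct and follows essentially the same route as the paper: expand $\|[b,h]\|_{2,\tau}^2$ to $2\tau(b^2h^2)-2\tau(hbhb)$ and observe that $\tau(hbhb)=\tau\big((b^{1/2}hb^{1/2})^2\big)=\|b^{1/2}hb^{1/2}\|_{2,\tau}^2\geq 0$. You are also right that contractivity of $b$ plays no role in this inequality.
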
 

\begin{proof} 
We have the following computation 
    \begin{equation} 
        \begin{split} 
            \|[b, h]\|_{2, \tau}^2 &= 
            \tau ((bh-hb)^{*} (bh-hb)) \\
            &= \tau (hb^{2}h - hbhb - bhbh + bh^{2}b) \\
            &= 2\tau (b^{2}h^{2}) - 2\tau(hbhb)\\ 
        \end{split}
    \end{equation} 
Since $\tau(hbhb) = \|b^{1/2}h b^{1/2}\|_{2, \tau}^{2}\geq 0$, we have $\|[b, h]\|_{2, \tau}^{2}\leq 2\tau (b^{2}h^{2})$. 
\end{proof} 

We also need the following Pythagorean estimate (similar to that at the end of \cite[Page 271]{Popa:InjectiveHyperfinite}, and which is best seen by drawing a picture of matrix blocks). 

\begin{lemma}\label{Pythagoras} 
    Let $r_1, \ldots, r_n$ be pairwise orthogonal projections in a unital $C^*$-algebra $A$, and suppose $x \in A$ and $y_i \in r_i Ar_i$ for $i = 1, \ldots, n$.  Write $r = \sum_{i=1}^n r_i$ and $y = \sum_{i=1}^n y_i$.  Then for all $\tau \in T(A)$,
    \begin{equation}
        \|x - (1 - r) x (1 - r) - y\|_{2, \tau}^2 \leq \sum_{i=1}^n \| x - (1 - r_i) x (1 - r_i) - y_i\|_{2, \tau}^2
    \end{equation}
\end{lemma}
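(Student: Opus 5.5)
The plan is to decompose $x$ into matrix blocks relative to the orthogonal family $r_1,\dots,r_n$ together with $r_0\coloneqq 1-r$, and compare both sides block by block. Write $x_{ij}\coloneqq r_ixr_j$ for $i,j\in\{0,1,\dots,n\}$, so that $x=\sum_{i,j}x_{ij}$. Since the $r_i$ are pairwise orthogonal projections summing to $1$, the elements $x_{ij}$ have pairwise orthogonal ranges and supports. Hence for any trace $\tau$ and any $z=\sum_{i,j} z_{ij}$ with $z_{ij}\in r_iAr_j$ we have the Pythagorean identity
\begin{equation}
\|z\|_{2,\tau}^2=\sum_{i,j=0}^n\|z_{ij}\|_{2,\tau}^2 .
\end{equation}
This follows because $\tau(z_{ij}^*z_{kl})=\tau(z_{kl}z_{ij}^*)=0$ unless $(i,j)=(k,l)$, using $z_{kl}z_{ij}^*=z_{kl}r_lr_jz_{ij}^*$ and the trace property.

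Next we compute the left-hand side. The element $x-(1-r)x(1-r)-y$ has blocks $x_{ij}$ for $(i,j)\ne(0,0)$ with $i\ne j$; the block $0$ at $(0,0)$; and the blocks $x_{ii}-y_i$ at $(i,i)$ for $i\geq1$. So its squared $2$-norm is
\begin{equation}
\sum_{i=1}^n\|x_{ii}-y_i\|_{2,\tau}^2+\sum_{\substack{i\neq j}}\|x_{ij}\|_{2,\tau}^2 .
\end{equation}
For the $i$-th term on the right-hand side, decompose relative to $r_i$ and $1-r_i$. The element $x-(1-r_i)x(1-r_i)-y_i$ equals $(x_{ii}-y_i)+r_ix(1-r_i)+(1-r_i)xr_i$. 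Its squared norm is therefore
\begin{equation}
\|x_{ii}-y_i\|_{2,\tau}^2+\sum_{j\neq i}\big(\|x_{ij}\|_{2,\tau}^2+\|x_{ji}\|_{2,\tau}^2\big),
\end{equation}
where $j$ ranges over $\{0,\dots,n\}$. This uses the block identity again, since $r_ix(1-r_i)=\sum_{j\ne i}x_{ij}$.

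Summing over $i=1,\dots,n$, the diagonal terms match exactly. Each off-diagonal block $x_{jk}$ with $j\neq k$ and at least one of $j,k$ nonzero appears at least once on the right: it appears twice if both indices are nonzero and once if one index is $0$. These are precisely the off-diagonal blocks occurring on the left, each with coefficient one. Since all terms are nonnegative, the inequality follows.

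There is no real obstacle here. The only point needing care is the orthogonality identity for traces. In a general $C^*$-algebra we must justify $\tau(z_{ij}^*z_{kl})=0$ via the trace property rather than Hilbert-space orthogonality, and we verify this by the cyclicity computation above.
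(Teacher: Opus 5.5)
Your proof is correct and is the same block-decomposition Pythagoras argument the paper uses, with respect to $r_1,\dots,r_n,1-r$. The cross terms with $i\neq k$ (which your cyclicity remark doesn't literally cover) vanish outright, since $z_{ij}^*z_{kl}=z_{ij}^*r_ir_kz_{kl}=0$. Your count also shows that the exact excess of the right-hand side over the left-hand side is $\sum_{i\neq j,\ i,j\geq 1}\|r_ixr_j\|_{2,\tau}^2$, with each ordered pair counted once, so the factor $2$ in the paper's one-line proof is an overcount, though this is harmless for the inequality.
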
 
\begin{proof}
    On expanding and using Pythagoras with respect to the matrix block decomposition $r_1,\dots,r_n,1-r$ the right-hand side exceeds the left-hand side by
    \begin{equation}
        2\sum_{i\neq j}\|r_ixr_j\|_{2,\tau}^2\geq 0.\qedhere
    \end{equation}
\end{proof}
 
Finally, the following lemma allows us to realise prescribed trace values by subprojections in the relative commutant of a unital finite-dimensional subalgebra. 

\begin{lemma} 
\label{lem:TraceProjInCommutant} 
    Let $(\M,X)$ be a II$_{1}$ factorial tracially complete $C^*$-algebra with classification of projections and $X$ is a Choquet simplex,  and $F\subseteq \M$ be a finite-dimensional unital subalgebra.
    If $q \in \M \cap F'$ is a projection and $h \in \Aff(X)_+$ satisfies $h(\tau) \leq \tau(q)$ for all $\tau \in X$, then there exists a projection $z\in \M \cap F'$ such that
    \begin{equation} z \leq q\quad\text{and}\quad \tau(z)=h(\tau),\quad \tau \in X. \end{equation} 
\end{lemma}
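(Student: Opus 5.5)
We reduce to the corner structure of $\M\cap F'$ given by Lemma~\ref{Lem:RelCom}. Write $r_1,\dots,r_S$ for the minimal central projections of $F$, with $Fr_s\cong M_{k_s}$, and fix minimal projections $e_s\le r_s$ in $F$. The isomorphism $\Phi\colon \M\cap F'\to\bigoplus_s e_s\M e_s$, $x\mapsto (e_sx)_s$, has inverse $(y_s)_s\mapsto\sum_s\sum_{i}v^{(s)}_{i1}y_sv^{(s)}_{1i}$, where $v^{(s)}_{ij}$ are matrix units of $Fr_s$. On traces this gives
\begin{equation}
\tau(x)=\sum_{s=1}^S k_s\,\tau(e_sx),\quad x\in \M\cap F',\ \tau\in X.
\end{equation}
So a projection $z\le q$ in $\M\cap F'$ is the same thing as a choice of projections $z_s\le q_s\coloneqq e_sq$ in $e_s\M e_s$, and then $\tau(z)=\sum_s k_s\tau(z_s)$.

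First we split $h$. We have $h\le \sum_s k_s\hat q_s$ in $\Aff(X)_+$, where $\hat q_s(\tau)\coloneqq\tau(q_s)$, and $X$ is a Choquet simplex. Riesz decomposition (Corollary~\ref{cor:riesz-properties}, iterated as in Remark~\ref{Rem:MoreRieszDecomp}) then gives $h_1,\dots,h_S\in\Aff(X)_+$ with $h=\sum_s h_s$ and $h_s\le k_s\hat q_s$. Put $g_s\coloneqq h_s/k_s$, so that $0\le g_s\le \hat q_s\le 1$.

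Next, for each $s$ we realise $g_s$ below $q_s$ inside $\M$. By existence of projections in $(\M,X)$ there is a projection $z_s'\in\M$ with $\tau(z_s')=g_s(\tau)\le\tau(q_s)$ for all $\tau\in X$. Comparison of projections then gives $z_s'\precsim q_s$, so there is a partial isometry $w_s$ with $w_s^*w_s=z_s'$ and $w_sw_s^*\le q_s$. Set $z_s\coloneqq w_sw_s^*\in q_s\M q_s\subseteq e_s\M e_s$; then $\tau(z_s)=g_s(\tau)$. Finally let $z\coloneqq\Phi^{-1}((z_s)_s)$. This is a projection in $\M\cap F'$ with $z\le q$, since $\Phi$ is an order isomorphism and $z_s\le e_sq$. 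Its trace is
\begin{equation}
\tau(z)=\sum_s k_s g_s(\tau)=\sum_s h_s(\tau)=h(\tau).
\end{equation}

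The main point needing care is that the corner $e_s\M e_s$ need not have constant-trace unit, so we should not apply classification inside the corner. The argument above avoids this: both existence and comparison are used in $(\M,X)$ itself, and the corner is reached only through the subprojection $w_sw_s^*\le q_s$. The other step to check is the trace formula for $\Phi^{-1}$. It follows from $\tau(v_{i1}yv_{1i})=\tau(e_sy)$ for $y\in e_s\M e_s$, because $v_{1i}v_{i1}=e_s$.
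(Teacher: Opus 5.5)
Your proof is correct and follows the paper's argument: you Riesz-decompose $h$ along the simple summands of $F$ and realise each piece as a subprojection of $q$ in the corresponding corner of $\M\cap F'\cong\bigoplus_s e_s\M e_s$. Your explicit use of existence and comparison of projections in $(\M,X)$ itself, with the multiplicity factor $k_s$ tracked, is simply a careful unpacking of the paper's one-line remark that classification of projections passes to these corners.
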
 

\begin{proof} 
Let $p_{1}, \dots, p_{k}$ be the minimal central projections in $F$ and note that 
\begin{equation}\label{eq:new-24}
	q = \sum_{i=1}^k q p_{i} \in \mathcal M \cap F', 
\end{equation} 
as $F$ is a unital subalgebra of $\mathcal M$. 
Since $X$ is a Choquet simplex, Corollary~\ref{cor:riesz-properties} gives that $\Aff(X)$ has Riesz decomposition, so there are continuous affine functions $h_1, \ldots, h_k \in \Aff(X)_+$ such that $h = \sum_{i=1}^k h_i$ and 
\begin{equation}\label{eq:new-25}
	0 \leq h_i(\tau) \leq \tau(q p_{i}), \quad i = 1, \ldots, k,\ \tau \in X. 
\end{equation}
	Identifying $p_{i} \mathcal M p_i \cap (F p_i)'$ with a corner of $\mathcal M$, then it also has classification of projections as this property passes to corners. 
    In particular, for each $i = 1, \dots, k$, 
    there is a projection $z_i \in p_{i} \mathcal M p_i \cap (F p_i)'$ such that $z_{i} \leq q p_i$ and
	\begin{equation}\label{eq:new-26}
		\tau(z_i) = h_i(\tau), \quad \tau \in X. 
	\end{equation}
	Set $z \coloneqq \sum_{i = 1}^k z_i \in \mathcal M \cap F'$. Note that $z$ is a projection with $z \leq q$ and $\tau(z) = h(\tau)$ for all $\tau \in X$.  
\end{proof}

\subsection{Small local approximations} 
We now come to the crucial technical result, which provides small finite-dimensional approximations allowing us to run the maximality arguments discussed in Section \ref{S:SmallMatricesOutline}. 

We continue to work with the standing hypotheses that $\M^\omega$ has strict comparison with respect to $X^\omega$ and divisible unit. For ease of comparison with the discussion in Section \ref{S:SmallMatricesOutline}, the consequences of this which will be used in the proof are exactly the following by the order of appearance.
\begin{itemize}
    \item Every constant trace $^*$-homomorphism from a finite-dimensional $C^*$-algebra $G$ to $\M^{\omega}$ lifts to a sequence of $^*$-homomorphisms from $G$ to $\M$ with the same constant trace (provided by Proposition~\ref{Prop:ClassifyToM}\ref{it:LiftingToG}). This reduces the proof to finding finite-dimensional approximations in $\M^{\omega}$.
    \item $(\mathcal{M},X)$ has classification of unital finite-dimensional $C^*$-algebras (provided by Proposition \ref{Prop:ClassifyToM}\ref{it:ExistenceToM}), which leads to global finite-dimensional subalgebras in $\M$ for fibrewise or local approximations (as shown in Lemma~\ref{Lem:LiftMatrix}).
    \item Through real rank zero of $\M^{\omega}$ (Proposition~\ref{S4.2:Prop}\ref{it:rr0-omega}), we have the spectral projection lemma (Corollary \ref{Cor:FakeSpectralProjection}), which will be applied to positive elements $a_{n}\in \M$ controlling the commutation error and overlaps with projections produced in previous stages.
    \item $T(\M^\omega)=X^\omega$ as a consequence of strict comparison (by Proposition~\ref{prop:computing-traces}), and so $X^\omega$ is a Choquet simplex, thus satisfying the Riesz interpolation property and Riesz decomposition property (Corollary \ref{cor:riesz-properties}). 
    \item $\M^{\omega}$ has classification of projections (provided by Proposition~\ref{S4.2:Prop}\ref{it:classif-proj-omega}), which is used to carve out projections beneath $q$ with prescribed tracial data, and obtain a projection in $\M^{\omega}$ that fills in tracial defects. 
\end{itemize}

\begin{theorem}\label{SmallLocalApproximations} 
	Let $(\mathcal M, X)$ be a II$_1$ factorial amenable tracially complete $C^*$-algebra such that $\mathcal M^\omega$ has strict comparison with respect to $X^\omega$ and $\mathcal M^\omega$ has divisible unit.  For all $L \in \mathbb N$, $\epsilon \in (0, 1)$, and positive contractions $x_1, \ldots, x_L \in \mathcal M$, there exist $\gamma \in (0, 1)$, a finite-dimensional $C^*$-algebra $G \subseteq \mathcal M$, and positive contractions $y_1, \ldots, y_L \in G$ such that $1_G$ has constant trace $\gamma$ and 
	\begin{equation}\label{eq:new-1}
		\|x_\ell - (1_\mathcal M - 1_G) x_\ell (1_\mathcal M - 1_G)  - y_\ell \|_{2, X} \leq \epsilon\|1_G\|_{2, X}, \quad \ell = 1, \ldots, L.
	\end{equation}
\end{theorem}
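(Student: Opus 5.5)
We follow the sketch of Lemma~\ref{RobotLemma}, with the affine partition of unity coming from weak CPoU (Theorem~\ref{Thm:WCPoU}). First we reduce to building the approximation in $\M^\omega$. By Proposition~\ref{Prop:ClassifyToM}\ref{it:LiftingToG}, a constant trace $^*$-homomorphism $\phi\colon G\to\M^\omega$ lifts to constant trace $^*$-homomorphisms $\phi_n\colon G\to\M$. By Lemma~\ref{lem:trace-trick}, if \eqref{eq:new-1} holds in $\M^\omega$ with constant $\epsilon/2$, for $\phi(G)$ and elements $y_\ell=\phi(\tilde y_\ell)$ with $\tilde y_\ell \in G$, then it holds for $\phi_n(G)$ and $\phi_n(\tilde y_\ell)$ for $\omega$-many $n$. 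So it suffices to find $\gamma$, a finite-dimensional $G\subseteq\M^\omega$ with $1_G$ of constant trace $\gamma$ on $X^\omega=T(\M^\omega)$, and positive contractions $y_\ell\in G$ satisfying the estimate in $\|\cdot\|_{2,X^\omega}$.

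\textbf{Local data.} Fix $\delta>0$, small in terms of $\epsilon$ and $L$. For each $\tau\in X$, amenability makes $\pi_\tau(\M)''$ injective, hence hyperfinite by Connes. So there is a unital finite-dimensional $\tilde F\subseteq\pi_\tau(\M)''$ approximately containing the $\pi_\tau(x_\ell)$. Lemma~\ref{Lem:LiftMatrix} turns $\tilde F$ into a unital $F_\tau\subseteq\M$ and gives positive contractions $y_{\ell,\tau}\in F_\tau$ such that $a_\tau\coloneqq\sum_\ell(x_\ell-y_{\ell,\tau})^2$ satisfies $\tau(a_\tau)<\delta$. The sets $\{\sigma:\sigma(a_\tau)<\delta\}$ are open, so compactness gives $F_1,\dots,F_N$ and $a_1,\dots,a_N$ with $\sup_{\tau\in X}\min_n\tau(a_n)<\delta$. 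Weak CPoU then supplies positive contractions $e_n\in\M^\omega\cap\{x_\ell,F_m,a_m\}'$ with $\sum_n e_n=1$ and $\tau(a_ne_n)\le\delta\tau(e_n)$. The functions $h_n\coloneqq\gamma\widehat{e_n}\in\Aff(X^\omega)_+$ play the role of $\gamma f_n$ in Lemma~\ref{RobotLemma}.

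\textbf{Inductive construction.} We work in $\M^\omega$, writing $E_n\coloneqq E_{F_n'}$. Suppose orthogonal finite-dimensional algebras $G_1,\dots,G_{n-1}$ have been built, with $\tau(1_{G_m})=h_m(\tau)$. Apply Corollary~\ref{Cor:FakeSpectralProjection} in $\M^\omega\cap F_n'$ to
\[
b_n\coloneqq E_n(a_n)+C\,E_n(1_{G_1}+\dots+1_{G_{n-1}}),
\]
where $C$ is the constant $C_{F_n}$ of Remark~\ref{rem:cond_exp}\ref{rem:con_exp_2}. This gives a projection $q_n\in\M^\omega\cap F_n'$ with $q_nb_n\le\sqrt\delta\,q_n$ and $(1-q_n)b_n\ge\sqrt\delta(1-q_n)$. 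A Markov-type estimate using $\tau(a_ne_n)\le\delta\tau(e_n)$, with $e_n$ commuting with everything, gives
\[
\tau(q_n)\ge\tau(e_n)\bigl(1-O(\sqrt\delta)\bigr)-O(\gamma C/\sqrt\delta).
\]
For $\gamma$ small (depending on $N$, $\delta$ and the $F_m$), this should dominate $h_n$. If it only holds after a small defect, we absorb the defect using Riesz decomposition in $\Aff(X^\omega)$ and put the leftover into a final "filler'' corner, where no approximation is needed but whose trace is $O(\sqrt\delta\gamma)$. Lemma~\ref{lem:TraceProjInCommutant} (in $\M^\omega$, with $X^\omega$ a Choquet simplex) then gives $z_n\le q_n$ in $\M^\omega\cap F_n'$ with $\tau(z_n)=h_n(\tau)$. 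Now $z_n\,1_{G_m}\,z_n\le C\sqrt\delta\,z_n$ for $m<n$, so Lemma~\ref{Polar} produces $v_n$ with $v_n^*v_n=z_n$, range orthogonal to the earlier $G_m$, and $\|v_n-z_n\|\le(C\sqrt\delta)^{1/2}$. Set $G_n\coloneqq v_nF_nz_nv_n^*$ and $y_{\ell,n}\coloneqq v_n z_ny_{\ell,n}z_n v_n^*$; the latter is a positive contraction, since $z_n$ commutes with $F_n$. Finally, classification of projections in $\M^\omega$ adjusts the total trace exactly to $\gamma$, giving $G=\bigoplus_nG_n$ and $y_\ell=\sum_ny_{\ell,n}$.

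\textbf{Estimates and main obstacle.} By Lemma~\ref{Pythagoras}, it is enough to bound each $\|x_\ell-(1-r_n)x_\ell(1-r_n)-y_{\ell,n}\|_{2,\tau}^2$ by a small multiple of $\tau(r_n)=h_n(\tau)$. This term splits into $\|(x_\ell-y_{\ell,n})z_n\|$-type pieces plus perturbation terms of size $\|v_n-z_n\|$. The pieces are controlled by $z_na_nz_n\le C\sqrt\delta\,z_n$, which comes from $z_nE_n(a_n)z_n\le\sqrt\delta z_n$ together with $a_n\le CE_n(a_n)$; this is exactly the analogue of \eqref{E1.5}. Summing over $n$ should give a bound of order $\delta^{1/4}C\,\gamma$, and hence \eqref{eq:new-1} once $\delta$ is chosen small. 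I expect the hardest step to be the lower bound on $\tau(q_n)$ relative to $\gamma\tau(e_n)$ uniformly over $X^\omega$. Weak CPoU only controls $a_n$ on average against $e_n$, not pointwise, so the positive element must be compressed by $e_n$ carefully, and the order of choosing $\delta$, $N$, the $F_n$ and $\gamma$ must be fixed consistently.
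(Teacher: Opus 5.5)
Your architecture is the paper's: lifting via Proposition~\ref{Prop:ClassifyToM}\ref{it:LiftingToG}, Connes plus Lemma~\ref{Lem:LiftMatrix}, weak CPoU, a spectral projection of $E_n(a_n)+C_{F_n}E_n(p)$, Lemma~\ref{lem:TraceProjInCommutant}, Lemma~\ref{Polar}, a filler, and Lemma~\ref{Pythagoras}. However, there is a genuine gap in the quantitative bookkeeping: the constant $C=C_{F_n}$ of Remark~\ref{rem:cond_exp}\ref{rem:con_exp_2} survives into your error terms. The $F_n$ are chosen \emph{after} $\delta$, and $C_{F_n}$ grows with the size of $F_n$. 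So your final step (a bound of order $\delta^{1/4}C\gamma$, ``hence \eqref{eq:new-1} once $\delta$ is chosen small'') is circular, since shrinking $\delta$ changes the $F_n$ and hence $C$. The same problem already breaks the overlap step. With $z_n1_{G_m}z_n\le C\sqrt\delta\,z_n$ you cannot ensure $C\sqrt\delta<1$, which Lemma~\ref{Polar} needs. Nor can you ensure that $\|v_n-z_n\|\le (C\sqrt\delta)^{1/2}$ is small.

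Both occurrences of $C$ are avoidable, and this is exactly how the paper proceeds.

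\emph{Overlap.} Since $C_{F_n}E_n(p)$ is already a summand of $b_n$ and $p\le C_{F_n}E_n(p)$, you get $q_npq_n\le C_{F_n}q_nE_n(p)q_n\le q_nb_nq_n\le\sqrt\delta\,q_n$. Hence $z_npz_n\le\sqrt\delta\,z_n$ and $\|v_n-z_n\|\le\delta^{1/4}$, with no $C$.

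\emph{Approximation.} Never use the operator inequality $a_n\le C E_n(a_n)$. Every $2$-norm estimate you need involves only $\tau(a_nz_n)$: this covers the commutator via Lemma~\ref{2409_LemPosiSelf} and the term $\|(x_\ell-y_{\ell,n})z_n\|_{2,\tau}^2$. Since $z_n\in\M^\omega\cap F_n'$ and $E_n$ is trace-preserving and $F_n'$-bimodular, $\tau(a_nz_n)=\tau(E_n(a_n)z_n)\le\tau(b_nz_n)\le\sqrt\delta\,\tau(z_n)$. The squared error is then at most an absolute constant times $\sqrt\delta\,\gamma$. The constant $C$ enters only through $\gamma$, which is chosen last.

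\emph{Domination.} Your lower bound $(1-\sqrt\delta)\tau(e_n)-C\gamma/\sqrt\delta$ never dominates $\gamma\tau(e_n)$ where $\tau(e_n)$ is small but positive, so the defect step is not optional. Fix $\beta$ of order $\sqrt\delta/N$ and $\gamma\lesssim\beta\sqrt\delta/C$. Split according to whether $\tau(e_n)<\beta$ to obtain $\max\{0,\gamma\widehat{e_n}-\gamma\beta\}\le\min\{\widehat{q_n},\gamma\widehat{e_n}\}$ on $X^\omega$. Then use Riesz \emph{interpolation} (not decomposition, Corollary~\ref{cor:riesz-properties}) to produce the trace of $z_n$. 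The filler then has trace at most $N\gamma\beta$. With these changes your argument becomes the paper's proof.
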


\begin{proof}
	It suffices to construct a finite-dimensional $C^*$-algebra $G \subseteq \mathcal M^\omega$ and positive contractions $y_1, \ldots, y_L \in G$ such that $1_G$ has constant trace $\gamma$ and 
	\begin{equation}\label{eq:new-2}
		\|x_\ell - (1_{\mathcal{M}^\omega} - 1_G) x_\ell (1_{\mathcal{M}^\omega} - 1_G)  - y_\ell \|_{2, X^\omega} \leq \epsilon\|1_G\|_{2, X^{\omega}}, \quad \ell = 1, \ldots, L.
	\end{equation}
	Indeed, from here, Proposition \ref{Prop:ClassifyToM}\ref{it:LiftingToG} provides a sequence of $^*$-homomorphisms $(\phi_n \colon G \rightarrow \mathcal M)_{n=1}^\infty$ lifting the inclusion $G \rightarrow \mathcal M^\omega$, such that $\phi_n(1_G)$ has constant trace $\gamma$ for all $n \in \mathbb N$.  Then for each $\ell = 1 , \ldots, L$, 
	\begin{equation}\label{eq:new-3}
		\lim_{n \rightarrow \omega} \|x_\ell - (1_{\mathcal M} - 1_{\phi_n(G)})x_\ell (1_{\mathcal M} - 1_{\phi_{n}(G)}) - \phi_{n}(y_\ell)\|_{2, X} \leq \epsilon\|\phi_{n}(G)\|_{2, X}. 
	\end{equation}
	The result will then follow (with $2 \epsilon$ in place of $\epsilon)$ by taking $G$ and $y_1, \ldots, y_L$ to be $\phi_n(G)$ and $\phi_n(y_1), \ldots, \phi_n(y_L)$ for some $n \in \mathbb N$.
	
	With this reduction in place, define
	\begin{equation}\label{eq:new-4}
		\delta \coloneqq \frac{\epsilon^2}{28 + 8 \sqrt{2}} \qquad \text{and} \qquad \eta \coloneqq \frac{3\delta}{4L}.
	\end{equation}
	For each $\tau \in X$, note that $\pi_\tau(\mathcal M)''$ is hyperfinite by Connes' theorem (\cite{Connes:Annals}).\footnote{The hypothesis of amenability for a tracially complete $C^*$-algebra is the completely positive approximation property in the $\|\cdot\|_{2,X}$ norm.  As set out in \cite[Theorem 1.2]{carrion2023tracially}, is equivalent to amenability of the fibres $\pi_\tau(\mathcal M)''$ (passing through \cite[Theorem 4.12(i)$\Rightarrow$(iii)]{carrion2023tracially}, which, in turn, uses Brown's work on uniformly amenable traces (\cite[Theorem 3.2.2]{Brown:MAMS}) and Connes' theorem).  In the situation of most interest to us, when $A$ is nuclear and $\mathcal M=\completion{A}{T(A)}$, then the fibres $\pi_\tau(\mathcal M)''$ are $\pi_\tau(A)''$, which are injective. Hence our view that this is really Connes' theorem.}  Therefore, there is a unital finite-dimensional $C^*$-algebra $\tilde F_\tau \subseteq \pi_\tau(\mathcal M)''$ and positive contractions $\tilde y_{1, \tau}, \ldots, \tilde y_{L, \tau} \in \tilde F_\tau$ such that
	\begin{equation}\label{eq:new-5}
		\|\pi_\tau(x_\ell) - \tilde y_{\ell,\tau} \|_{2, \tau} < \sqrt\eta, \quad \ell = 1, \ldots, L.
	\end{equation}
	By Lemma~\ref{Lem:LiftMatrix}, we may find a unital finite-dimensional subalgebra $F_\tau \subseteq \mathcal M$ (isomorphic to $\tilde F_\tau$) along with positive contractions $y_{1, \tau}, \ldots, y_{L, \tau} \in F_\tau$ such that 
	\begin{equation}\label{eq:new-6}
		\|x_\ell - y_{\ell, \tau} \|_{2, \tau} < \sqrt\eta, \quad \ell = 1, \ldots, L.
	\end{equation}
	By compactness of $X$, there are $N \in \mathbb N$ and $\tau_1, \ldots, \tau_N \in X$ together with open neighbourhoods $U_1, \ldots, U_N \subseteq X$ covering $X$ such that for each $n = 1, \dots, N$, 
	\begin{equation}\label{eq:new-7}
		\|x_\ell - y_{\ell, \tau_n} \|_{2, \tau} < \sqrt\eta, \quad \ell = 1, \ldots, L,\ \tau_n \in U_n.
	\end{equation}
	Define
	\begin{equation}\label{eq:new-8}
		\beta \coloneqq \frac{\delta}{2N} \qquad \text{and} \qquad \gamma \coloneqq \frac{\beta \delta}{4 \max_n C_{F_n}},
	\end{equation}
    where $C_{F_n}$ is the constant from \eqref{cond_exp_bound}.
	
	Further define positive elements $a_1, \ldots, a_N \in \mathcal M_+$ by
	\begin{equation}\label{eq:new-9}
		a_n \coloneqq \sum_{\ell = 1}^L (x_\ell - y_{\ell, \tau_n})^2, \quad n = 1, \ldots, N,
	\end{equation}
	and note that \eqref{eq:new-7} implies
	\begin{equation}\label{eq:new-10}
		\sup_{\tau \in X} \min_{1 \leq n \leq N} \tau(a_i) < L \eta.
	\end{equation}
	Let $T \subseteq \mathcal M$ denote the separable $C^*$-algebra generated by $F_{\tau_1} \cup \cdots \cup \mathcal F_{\tau_N}$ and apply the weak CPoU condition recalled in Theorem~\ref{Thm:WCPoU} to obtain positive contractions $e_1, \ldots, e_N \in \mathcal M^\omega \cap T'$ such that 
	\begin{equation}\label{eq:new-11}
		\tau(a_n e_n) \leq \eta L \tau(e_n), \quad n = 1, \ldots, N,\ \tau \in X^\omega,\ 
	\end{equation}
	
	We will inductively construct partial isometries $v_1, \ldots, v_N \in \mathcal M^{\omega}$ such that for all $n = 1, \ldots, N$, the following properties hold.
	\begin{enumerate}
		\item\label{it:new-pi-1} $v_n^* v_n$ commutes with $F_{\tau_n}$;
		\item\label{it:new-pi-2} $v_nv_n^*$ is orthogonal to the projections $v_1 v_1^*, \ldots, v_{n-1} v_{n-1}^*$;
		\item\label{it:new-pi-3} $\gamma \tau(e_n) - \beta \gamma \leq \tau(v_n^*v_n) \leq \gamma \tau(e_n)$ for all $\tau \in X^\omega$;
		\item\label{it:new-pi-4}$ \tau(a_n v_n^* v_n) \leq \delta \tau(v_n^* v_n)$ for all $\tau \in X^\omega$;
		\item\label{it:new-pi-5} $\|v_n - v_n^*v_n\| < \sqrt\delta$.
	\end{enumerate}
	To this end, fix some $n = 1, \ldots, N$ and suppose $v_1, \ldots, v_{n-1}$ have been constructed.  Define $p = \sum_{i=1}^{n-1} v_i v_i^*$ (when $n = 1$, we interpret this as $p = 0$).  For later use, we record the inequality
	\begin{equation}\label{eq:new-12}
		\tau(e_n p) \leq \tau(p) = \sum_{i=1}^{n-1} \tau(v_nv_n^*) \stackrel{\ref{it:new-pi-3}}{\leq} \sum_{i=1}^{n-1} \gamma \tau(e_i) \leq \gamma, \quad \tau \in X^\omega.
	\end{equation}
	
	Let $E \coloneqq E_{F_{\tau_n}'} \colon \mathcal M^\omega \rightarrow \mathcal M^\omega \cap F_{\tau_n}'$ denote the conditional expectation from Definition~\ref{def:CondExp} and define\footnote{The first term $E(a_n)$ is to control the distance from elements $x_\ell \in \mathcal M$ to the corresponding elements $y_{\ell, \tau_n} \in F_{\tau_n}$, and the second term $C_{F_n} E(p)$ controls the overlap with the projections $v_1v_1^*, \ldots, v_{n-1}v_{n-1}^*$ already constructed.}
	\begin{equation}\label{eq:new-13}
		b \coloneqq E(a_n) + C_{F_n} E(p) \in \mathcal M^\omega_{+} \cap F_{\tau_n}'.
	\end{equation}
	By Corollary~\ref{Cor:FakeSpectralProjection}, there is a projection $q \in \mathcal M^\omega \cap F_{\tau_n}' \cap \{b\}'$
	 such that
	\begin{equation}\label{eq:new-14}
	 	qb \leq \delta q \qquad \text{and} \qquad (1 -q)b \geq \delta (1 - q).  
	\end{equation}
	 
\begin{claim}
   The following estimates hold:
	\begin{equation}\label{eq:new-15}
		\max \{0, \gamma \tau(e_n) - \gamma \beta\} \leq \min \{\tau(q), \gamma \tau(e_n) \}, \quad \tau \in X^\omega.
	\end{equation}
\end{claim}
\begin{claimproof}[Proof of Claim]
	To see this, note that the only non-trivial claim in \eqref{eq:new-15} is
	\begin{equation}\label{eq:new-16}
		\gamma \tau(e_n) - \gamma \beta \leq \tau(q), \quad \tau \in X^\omega.
	\end{equation}
	For the rest of this paragraph, let $\tau \in X^\omega$ be fixed.
	Using the second part of \eqref{eq:new-14}, 
	\begin{equation}\label{eq:new-17}
	\begin{split}
		\delta \tau(e_n(1 -q)) 
		&= \delta \tau(e_n^{1/2}(1-q)e_n^{1/2}) \\
		&\leq \tau(e_n^{1/2} b^{1/2} (1 - q) b^{1/2}e_n^{1/2}) \\
		&\leq \tau(e_n^{1/2} b e_n^{1/2}) \\
		&= \tau(e_n b).
	\end{split}
	\end{equation}
	By the definition of $b$ in \eqref{eq:new-13}, and as $E$ preserves traces on $\mathcal M^\omega$ (Remark~\ref{rem:cond_exp}\ref{rem:con_exp_1}), we have
	\begin{equation}\label{eq:new-18}
	\begin{array}{rcl}
		\tau(e_n b) & \leq & \tau(e_n a_n) + C_{F_{\tau_n}} \tau(e_n p) \\
		& \stackrel{\eqref{eq:new-11},\eqref{eq:new-12}}{\leq} & L\eta \tau(e_n) + C_{F_{\tau_n}} \gamma \\
		& \stackrel{\eqref{eq:new-8}}{\leq} & L\eta \tau(e_n) + \frac{\beta\delta}4.
	\end{array}
	\end{equation}
	Combining \eqref{eq:new-17} and \eqref{eq:new-18}, we see that
	\begin{equation}\label{eq:new-19}
		\tau(e_n (1 - q)) \stackrel{\eqref{eq:new-17}}{\leq} \frac1\delta \tau(e_nb) \stackrel{\eqref{eq:new-18}}{\leq} \frac{L\eta}{\delta} \tau(e_n) + \frac{\beta}{4},
	\end{equation}
	and hence
	\begin{equation}\label{eq:new-20}
	\begin{array}{rcl}
		\tau(q) & \geq & \tau(e_n q) \\
		& = & \tau(e_n) - \tau(e_n(1-q)) \\
		& \stackrel{\eqref{eq:new-19}}{\geq} & \big(1 - \frac{L\eta}{\delta} \big) \tau(e_n) - \frac\beta4 \\
		& \stackrel{\eqref{eq:new-4}}{=} & \frac14 (\tau(e_n) - \beta).
	\end{array}
	\end{equation}
	There are now two cases to consider.  If $\tau(e_n) - \beta < 0$, then certainly
	\begin{equation}\label{eq:new-21}
		\tau(q) \geq 0 \geq \gamma (\tau(e_n) - \beta).
	\end{equation}
	Otherwise, $\tau(e_n) - \beta \geq 0$.  Since $\gamma \leq \frac14$ (by \eqref{eq:new-8} and \eqref{eq:new-4} since we assume $\epsilon < 1$), we then have
	\begin{equation}\label{eq:new-22}
		\tau(q) \geq \frac14 (\tau(e_n) - \beta) \geq \gamma(\tau(e_n) - \beta).
	\end{equation}
	This proves \eqref{eq:new-16} and hence also the claim \eqref{eq:new-15}.
    \end{claimproof}
	
	Since $\mathcal M^\omega$ has strict comparison with respect to $X^\omega$, Proposition~\ref{prop:computing-traces} implies $X^\omega = T(\mathcal M^\omega)$.  In particular, $X^\omega$ is a Choquet simplex.  By Corollary~\ref{cor:riesz-properties}, $X^\omega$ has Riesz interpolation, and applying this to \eqref{eq:new-15} yields a continuous affine function $h \colon X^\omega \rightarrow \mathbb R$ such that 
	\begin{equation}\label{eq:new-23}
		\max \{0, \gamma \tau(e_n) - \gamma \beta \} \leq h(\tau) \leq \min \{ \tau(q), \gamma \tau(e_n) \}, \quad \tau \in X^\omega.
	\end{equation} 
    As $\M^{\omega}$ has classification of projections and $X^{\omega}$ is a Choquet simplex, an application of Lemma~\ref{lem:TraceProjInCommutant} to $(\M^{\omega}, X^{\omega})$ and $F_{\tau_{n}}$ produces a projection $z\in \M^{\omega}\cap F_{\tau_{n}}'$ satisfying $z\leq q$ and $\tau (z) = h(\tau)$ for all $\tau \in X^{\omega}$. 
    In particular, \eqref{eq:new-23} implies
	\begin{equation}\label{eq:new-27}
		\gamma \tau(e_n) - \gamma \beta \leq \tau(z) \leq \gamma \tau(e_n), \quad \tau \in X^\omega.
	\end{equation}
	
	For $\ell = 1, \ldots, L$, since $z$ commutes with $y_{\ell, \tau_n} \in F_{n_\tau}$, we have for $\tau \in X^\omega$,
	\begin{equation}\label{eq:new-28}
	\begin{array}{rcl}
		\|[z, x_\ell]\|_{2, \tau}^2 & = & \|[z, x_\ell- y_{\ell, \tau_n}] \|_{2, \tau}^2 \\ 
		& \leq & 2 \tau(z (x_\ell - y_{\ell, \tau_n})^2) \\
		& \stackrel{\eqref{eq:new-9}}{\leq}  & 2\tau(a_n z),
	\end{array} 
	\end{equation} 
    using Lemma~\ref{2409_LemPosiSelf} and that $z$ is a projection.  Now as $z \leq q$, $z, q \in \mathcal M^\omega \cap F_{\tau_n}'$, and $E$ preserves traces on $\mathcal M^\omega$ (Remark~\ref{rem:cond_exp}\ref{rem:con_exp_1}), 
	\begin{equation}\label{eq:new-29}
		\tau(a_n z) = \tau(E(a_n) z) \leq \tau(b z) \stackrel{\eqref{eq:new-14}}{\leq} \delta \tau(z), \quad \tau \in X^\omega,
	\end{equation}
	by using \eqref{eq:new-14} at the stated point and compressing by $z$.
	Accordingly,
	\begin{equation}\label{eq:new-30}
		\|[z, x_\ell]\|_{2, \tau} \stackrel{\eqref{eq:new-28}}{\leq} \sqrt{2\tau(a_{n}z)} \stackrel{\eqref{eq:new-29}}{\leq} \sqrt{2\delta\tau(z)} = \sqrt{2\delta} \|z\|_{2, \tau}, \quad \tau \in X^\omega.
	\end{equation}
	
	We will now define $v_n$ by conjugating $z$ under $1 - p$.  As
	\begin{equation}\label{eq:new-31}
		qpq \stackrel{\eqref{cond_exp_bound}}{\leq} C_{F_{\tau_n}} q E(p) q \stackrel{\eqref{eq:new-13}}{\leq} qbq \stackrel{\eqref{eq:new-14}}{\leq} \delta q,
	\end{equation}
	we can compress by $z$ to get $zpz \leq \delta z$.  Then Lemma~\ref{Polar} produces a partial isometry $v_n \in \mathcal M^\omega$ such that
	\begin{equation}\label{eq:new-32}
		v_n^*v_n = z, \quad vv^* \leq 1 - p, \quad \text{and} \quad \|v_n - z\| < \sqrt\delta.
	\end{equation}
	Note that \ref{it:new-pi-1}, \ref{it:new-pi-2}, and \ref{it:new-pi-5} are immediate from \eqref{eq:new-32} by the definition of $z$ and $p$ (defined above \eqref{eq:new-27} and \eqref{eq:new-12}, respectively).  Also, \ref{it:new-pi-3} and \ref{it:new-pi-4} follow from \eqref{eq:new-27} and \eqref{eq:new-29}, respectively.  This completes the construction of $v_1, \ldots, v_N$.

    We note that \ref{it:new-pi-5} implies that for $\tau \in X^\omega$,
    \begin{equation}
    \label{eq:new-99} 
    \|v_n-v_n^*v_n\|_{2,\tau} = \|v_n-v_n^*v_n\|\,\|v_n^*v_n\|_{2,\tau} \leq \sqrt{\delta}\|v_n^*v_n\|_{2,\tau}
    \end{equation} 
	Using \ref{it:new-pi-3} and that $\sum_{n=1}^N e_n = 1_{\mathcal M^\omega}$, we have
	\begin{equation}\label{eq:new-33}
		\sum_{n=1}^N \tau(v_nv_n^*) \leq \gamma, \quad \tau \in X^\omega.	
	\end{equation}
	As $\mathcal M^\omega$ has classification of projections (Proposition~\ref{S4.2:Prop}\ref{it:classif-proj-omega}), there is a projection $d \in \mathcal M^\omega$ orthogonal to $v_1v_1^*$, \ldots, $v_N v_N^*$ such that
	\begin{equation}\label{eq:new-34}
		\tau(d) = \gamma - \sum_{n=1}^N \tau(v_n v_n^*), \quad \tau \in X^\omega.
	\end{equation} 
	For each $n \in \mathbb N$, $v_n^*v_n$ commutes with $F_{\tau_n}$ by \ref{it:new-pi-1}, so $v_n F_{\tau_n} v_n^* \subseteq \mathcal M^\omega$ is a finite-dimensional $C^*$-algebra.  Define a finite-dimensional $C^*$-algebra 
	\begin{equation}\label{eq:new-35}
		G \coloneqq v_1 F_{\tau_1} v_1^* \oplus \cdots \oplus v_N F_{\tau_N} v_N^* \oplus \mathbb C d \subseteq \mathcal M^\omega
	\end{equation}
	and note that $1_G$ has constant trace $\gamma$ by \eqref{eq:new-34}.  Further define
	\begin{equation}\label{eq:new36}
		y_\ell \coloneqq \sum_{n=1}^N v_n y_{\ell, \tau_n} v_n^* \in G.
	\end{equation}
	
    It remains to prove \eqref{eq:new-2}.  For the rest of the proof, fix some $\ell \in \{1, \ldots, L\}$ and $\tau \in X^\omega$.  Note that Lemma~\ref{Pythagoras} yields
    \begin{equation}\label{eq:new-44}
    \begin{split}
		 & \|x_\ell - (1_{\mathcal{M}^\omega} - 1_G) x_\ell(1_{\mathcal{M}^\omega} - 1_G) - y_\ell \|_{2, \tau}^2 \\
         \leq & \sum_{n =1}^N \|x_\ell - (1_{\mathcal M^\omega} - v_n v_n^*) x_\ell (1_{\mathcal M^\omega} - v_n v_n^*) - v_ny_{\ell, \tau_n}v_n^*\|_{2, \tau}^2 \\
         & + \|x_\ell - (1_{\mathcal M^\omega} - d)x_\ell (1_{\mathcal M^\omega} -d)|\|_{2, \tau}^2. 
	\end{split}
    \end{equation} 
    We will bound these terms separately. Starting with the last term, since $d$ is a projection, we have
    \begin{equation}
    \label{eq:new-43}
	\begin{array}{rcl}
		\|x_\ell - (1_{\mathcal M^\omega} - d) x_\ell (1_{\mathcal M^\omega} - d) \|_{2, \tau}^2 & =&  \|d x_{\ell} \|_{2, \tau}^2  + \| (1_{\mathcal M^\omega} - d)x_\ell d\|_{2, \tau}^2 \\
		&\leq& 2 \tau(d) \\ 
		&\stackrel{\eqref{eq:new-34}}{=} & 2\gamma - 2\sum_{n=1}^N \tau(v_n v_n^*) \\
		& \stackrel{\ref{it:new-pi-3}}{\leq} & 2N\beta \gamma \\
        &\stackrel{\eqref{eq:new-8}}{=}& \delta \gamma.
	\end{array}
	\end{equation}
    Next for each $n = 1, \ldots, N$, we have 
    \begin{equation}\label{eq:new-37}
		\|(x_\ell - y_{\ell, \tau_n})v_n^*v_n\|_{2, \tau}^2 = \tau((x_\ell - y_{\ell, \tau_n} )^2 v_n^*v_n) \stackrel{\eqref{eq:new-9}}{\leq} \tau(a_n v_n^* v_n) \stackrel{\ref{it:new-pi-4}}{\leq} \delta \tau(v_n^*v_n),
	\end{equation}
	and hence 
    \begin{equation}\label{eq:new-39}
	\begin{array}{rcl}
		\|[x_\ell, v_n^*v_n]\|_{2, \tau}^2 &\stackrel{\ref{it:new-pi-1}}{=} & \|[x_\ell - y_{\ell, \tau_n}, v_n^*v_n]\|_{2, \tau}^2 \\ 
		& \leq & 2 	\tau (v_{n}^{*}v_{n} (x_{\ell} - y_{\ell, \tau_{n}})^{2}) \\
        & = & 2 \|(x_\ell - y_{\ell, \tau_n})v_n^*v_n\|_{2, \tau}^2\\ 
		& \stackrel{\eqref{eq:new-37}}{\leq} & 2 \delta \tau(v_n^*v_n),
	\end{array}
	\end{equation}
    where the first inequality is true by Lemma~\ref{2409_LemPosiSelf} and that $v_{n}^{*}v_{n}$ is a projection. 
	This in turn implies that for all $n = 1, \ldots N$, and $\ell = 1, \ldots, L$,
	\begin{equation}\label{eq:new-40}
	\begin{array}{rcl}
		\|[x_\ell, v_n v_n^*]\|_{2, \tau} & \leq & 2 \|v_n^* v_n - v_n v_n^* \|_{2, \tau} + \|[x_\ell, v_n^* v_n]\|_{2, \tau} \\
		& \stackrel{\eqref{eq:new-39}}{\leq} &  2 (\|v_n^* v_n -v_n\|_{2,\tau}+\|v_n-v_nv_n^*\|_{2,\tau}) + \sqrt{2\delta \tau(v_n^*v_n)} \\
        & \leq &  2 (\|v_n^* v_n -v_n\|_{2,\tau}+\|v_n(v_n^*v_n - v_n^*) \|_{2, \tau}) + \sqrt{2\delta \tau(v_n^*v_n)} \\
        &\stackrel{\eqref{eq:new-99}}\leq & (4 + \sqrt{2}) \sqrt{\delta}\|v_n^*v_n\|_{2,\tau}.
	\end{array}
	\end{equation}
	Also, for $n = 1, \ldots, N$ and $\ell = 1, \ldots, L$, we have
    \begin{equation}\label{eq:new-45}
    \begin{array}{rcl}
        &&\hspace*{-6em} \|v_n v_n^* x_\ell v_n     v_n^* - v_n y_{\ell, \tau_n} v_n^* \|_{2, \tau} \\ 
        &=& \| v_{n}^{*} x_{\ell} v_{n} - v_{n}^{*} v_{n} y_{\ell, \tau_{n}} v_{n}^{*} v_{n} \|_{2, \tau} \\ 
        &=& \|v_{n}^{*} v_{n} (x_{\ell}-y_{\ell, \tau_{n}}) v_{n}^{*} v_{n} + v_{n}^{*}x_{\ell} (v_{n} - v_{n}^{*} v_{n}) + (v_{n}^{*} - v_{n}^{*} v_{n}) x_{\ell} v_{n}^{*} v_{n} \|_{2, \tau}\\ 
        & \leq & \|v_{n}^{*} v_{n} (x_{\ell}-y_{\ell, \tau_{n}}) v_{n}^{*} v_{n}\|_{2, \tau} + 2 \|v_{n} - v_{n}^{*} v_{n}\|_{2, \tau}\\
        &\stackrel{\eqref{eq:new-37},\eqref{eq:new-99}}{\leq}& 3\sqrt\delta \|v_n^*v_n\|_{2,\tau}, 
    \end{array} 
    \end{equation} 
    where the first equality is true as multiplying by $v_{n}^{*}$ and $v_{n}$ on each side does not change the $2$-norm. 
    
    Returning to the first term in \eqref{eq:new-44}, for each $n = 1, \ldots, N$, we compute\footnote{The first inequality is best seen by thinking of a $2 \times 2$ matrix decomposition with respect to $v_n v_n^*$ and its complement.} 
    \begin{equation}\label{eq:new-46}
    \begin{array}{rcl}
        &&\hspace*{-6em}\|x_\ell - (1_{\mathcal M^\omega} - v_n v_n^*) x_\ell (1_{\mathcal M^\omega} - v_n v_n^*) - v_ny_{\ell, \tau_n}v_n^*\|_{2, \tau}^2 \\
        & = & \|[x_\ell, v_nv_n^*]\|_{2, \tau}^2 + \|v_n v_n^* x_\ell v_nv_n^* - v_n y_{\ell, \tau_n} v_n^*\|_{2, \tau}^2  \\
        &\stackrel{\eqref{eq:new-40},\eqref{eq:new-45},\ref{it:new-pi-3}}\leq& (4 + \sqrt2)^2 \delta \gamma \tau(e_n) + 9\delta \gamma \tau(e_n)  \\
        &=& (27 + 8\sqrt2)\delta\gamma \tau(e_n),
    \end{array}
    \end{equation}  
    Combining \eqref{eq:new-46} and \eqref{eq:new-43} with \eqref{eq:new-44} yields
    \begin{equation}
        \|x_\ell - (1_{\mathcal{M}^\omega} - 1_G) x_\ell(1_{\mathcal{M}^\omega} - 1_G) - y_\ell \|_{2, \tau}^2 \leq (28 + 8 \sqrt{2})\delta\gamma = \epsilon^2 \tau(1_G),
    \end{equation}
    using the definition of $\delta$ in \eqref{eq:new-4} for the final equality.  Now \eqref{eq:new-2} follows by taking square roots. 
\end{proof}

\begin{remark} 
It is in establishing the claim in the proof of Theorem~\ref{SmallLocalApproximations} where we use that the elements given by weak CPoU form a partition of unity which is morally subordinate to the given convex open cover.  The main difficulty is bounding $\tau(q)$ from below, which splits into cases according to whether or not $\tau (e_{n}) < \beta$
for a small value $\beta$. This should be compared with the more classical partition of unity argument in the sketch proof of Lemma~\ref{RobotLemma} in Section~\ref{RobotSection}, where the corresponding lower bound comes from separately considering $\tau(f_n)$ for $\tau\in U_n$, and that $f_n$ is supported $U_n$.  In both cases, this gives enough space to obtain the required projection via the classification of projections.
\end{remark}

\subsection{The structure theorem for factorial amenable tracially complete \cstar-algebras} 
We are now in position to follow the last step of Popa's proof of injectivity implies hyperfiniteness from \cite{Popa:InjectiveHyperfinite}. 

Recall that, in exact analogy with tracial von Neumann algebras, a tracially complete $C^*$-algebra $(\mathcal M, X)$ is \emph{hyperfinite} (\cite[Definition~8.1]{carrion2023tracially}) if all $n \in \mathbb N$, $x_1, \ldots, x_n \in \mathcal M$, and $\epsilon > 0$, there are a finite-dimensional $C^*$-algebra $F \subseteq \mathcal M$ and $y_1, \ldots, y_n \in F$ such that $\|x_i - y_i\|_{2, X} < \epsilon$ for all $i = 1, \ldots, n$.  In the separable, factorial setting, this is equivalent to being an inductive limit of finite-dimensional tracially complete $C^*$-algebras (\cite[Theorem~8.4]{carrion2023tracially}). Just like in Popa's proof that injective implies hyperfinite (and in Connes' original proof from \cite{Connes:Annals}), it is the local characterisation of hyperfiniteness which will be verified.  

\begin{theorem}\label{PureAmenable=>Hyperfinite}	
Let $(\mathcal M, X)$ be a II$_1$ factorial amenable tracially complete $C^*$-algebra such that $\mathcal M^\omega$ has strict comparison with respect to $X^\omega$ and $\mathcal M^\omega$ has divisible unit.  Then $(\mathcal M,X)$ is hyperfinite. 
\end{theorem}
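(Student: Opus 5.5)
We follow Popa's maximality argument, with Theorem~\ref{SmallLocalApproximations} playing the role of his local quantisation lemma. Fix positive contractions $x_1,\dots,x_L\in\mathcal M$ and $\epsilon\in(0,1)$; it suffices to treat positive contractions, since every element is a linear combination of such. Consider families $(G_i,y_{1,i},\dots,y_{L,i})_{i\in I}$, where the $G_i\subseteq\mathcal M$ are finite-dimensional subalgebras with pairwise orthogonal units, each $1_{G_i}$ has constant trace $\gamma_i>0$, and the $y_{\ell,i}\in G_i$ are positive contractions. Set $r\coloneqq\sum_i 1_{G_i}$ (a $\|\cdot\|_{2,X}$-limit when $I$ is infinite) and $y_\ell\coloneqq\sum_i y_{\ell,i}$. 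We require
\begin{equation*}
\Big\|x_\ell-(1-r)x_\ell(1-r)-y_\ell\Big\|_{2,X}^2\leq\epsilon^2\sum_i\gamma_i,\qquad \ell=1,\dots,L.
\end{equation*}
Because every $1_{G_i}$ has constant trace, $\sum_i\gamma_i\leq 1$. Hence $I$ is countable and the partial sums of $r$ and $y_\ell$ are $\|\cdot\|_{2,X}$-Cauchy, with tails controlled uniformly over $X$ by $\sum_{i>k}\gamma_i$. The condition is therefore closed under increasing unions, and Zorn's lemma gives a maximal family. Alternatively, one can avoid Zorn by choosing greedily pieces of trace at least half the supremum available at each stage.

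We next show that a maximal family has $\tau(r)=1$. Suppose instead that $t\coloneqq\tau(r)<1$; this trace is constant, equal to $\sum_i\gamma_i$. Working with a finite stage of the family (with a small tail error that must be absorbed) lets us assume $r\in\mathcal M$ is a genuine projection. Its complement $1-r$ then has constant trace $1-t>0$. The corner $((1-r)\mathcal M(1-r),X|)$ inherits the standing hypotheses: it is factorial, II$_1$ and amenable, and its ultrapower $(1-r)\mathcal M^\omega(1-r)$ keeps strict comparison and a divisible unit. Divisibility follows from classification of projections in $\mathcal M^\omega$ (Proposition~\ref{S4.2:Prop}), exactly as remarked before the comparison part of Theorem~\ref{thm:classif-proj}. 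Apply Theorem~\ref{SmallLocalApproximations} in this corner to $(1-r)x_\ell(1-r)$, rescaling traces by $1-t$. This yields $G'\leq 1-r$ with $1_{G'}$ of constant trace $\gamma'>0$ and satisfying the local estimate. Lemma~\ref{Pythagoras} then shows that appending $G'$ preserves the defining inequality: the error terms are exactly those of $(1-r)x_\ell(1-r)$ relative to $G'$, plus the old ones. This contradicts maximality, so $r=1$.

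With $r=1$, the algebra $F\coloneqq\bigoplus_{i\leq k}G_i$ together with a truncation $\sum_{i\leq k}y_{\ell,i}$ approximates $x_\ell$ to within roughly $\epsilon+2\sqrt{1-\sum_{i\leq k}\gamma_i}$ in $\|\cdot\|_{2,X}$. This is the local characterisation of hyperfiniteness. The main obstacle is the infinite-family bookkeeping: the limit $r$ must be a projection in $\mathcal M$, and Pythagoras must pass to the limit. Both are handled by constant traces giving uniform Cauchy tails. The corner application needs care with the tail error $\sum_{i>k}\gamma_i$, which is absorbed by choosing $k$ large compared with $\gamma'$. I would first try the greedy countable construction, since it makes that bookkeeping explicit.
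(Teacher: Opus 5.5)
Your proposal is correct and is essentially the paper's own argument: Zorn's lemma over families of finite-dimensional subalgebras with pairwise orthogonal constant-trace units, Theorem~\ref{SmallLocalApproximations} applied in the corner $(1-r)\mathcal M(1-r)$ to contradict maximality, and truncation to a finite subfamily at the end.

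Two small corrections. First, drop the finite-stage detour: $r=\sum_i 1_{G_i}$ is already a projection of constant trace in $\mathcal M$, because its partial sums are norm-bounded and $\|\cdot\|_{2,X}$-Cauchy and the unit ball is $\|\cdot\|_{2,X}$-complete. By contrast, a piece placed under $1-r_k$ need not be orthogonal to the tail, so it cannot be appended to the maximal family, and choosing ``$k$ large compared with $\gamma'$'' is circular there since $\gamma'$ depends on $k$. Second, the extension step should be the exact identity $\|E_{\mathrm{new}}\|_{2,\tau}^2=\|E_{\mathrm{old}}\|_{2,\tau}^2+\|E_{\mathrm{corner}}\|_{2,\tau}^2$, which holds because the old error has zero $(1-r)$-corner and the corner error lives in $(1-r)\mathcal M(1-r)$. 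Lemma~\ref{Pythagoras} as stated is not the right tool here, since it brings in the cross terms $r x_\ell 1_{G'}$, which the corner estimate does not control.
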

\begin{proof}
Fix a finite set $\mathcal F=\{x_1,\dots,x_L\}$ of positive contractions in $\mathcal M$ and $\epsilon>0$. 
    We consider families 
\begin{equation}
    (I,G,y)=\Big\{\bigoplus_{i\in I}G_i,(y_{1,i},\dots,y_{L,i})_{i\in I}\Big\}
\end{equation}
where:
\begin{itemize}
    \item $G_i$ are finite-dimensional subalgebras of $\mathcal M$ with pairwise orthogonal units $1_{G_i}$ which have constant trace; 
    \item $y_{\ell,i}$ are contractions in $G_i$ for $i\in I$, and $\ell=1,\dots,L$; and
    \item For $\ell=1,\dots,L$, we have
\begin{equation}\label{ZL.1}
    \Big\|x_\ell-(1-\sum_{i\in I}1_{G_i})x_{\ell}(1-\sum_{i\in I}1_{G_i})-\sum_{i\in I}y_{\ell,i}\Big\|_2^2\leq\epsilon^2\sum_{i\in I}\tau(1_{G_i}).
\end{equation}
\end{itemize}
Note that the sums in \eqref{ZL.1} converge in $\|\cdot\|_{2,X}$-norm, from the assumption that each $1_{G_i}$ has constant trace.

We order such families by saying that $(I,G,y)\leq (J,H,z)$, when there is an injection $\iota:I\to J$ such that $H_{\iota(i)}=G_i$ and $z_{\ell,\iota(i)}=y_{\ell,i}$ for all $i\in I$ and $\ell=1,\dots,L$. This says that $\bigoplus_{i\in I}G_i\subset\bigoplus_{j\in J}H_j$, and the relevant terms of the approximating elements are equal.

Given a chain, the union gives an upper bound (the approximation condition in \eqref{ZL.1} follows by continuity).  So by Zorn's lemma we can take a maximal element $(I,G,y)$.  Suppose, by way of reaching a contradiction that $\sum_{i\in I}\tau(1_{G_i})<1$. Then $q=\sum_{i\in I}1_{G_i}$ is $\|\cdot\|_{2,X}$-convergent.  We now work in the corner $(1-q)\M(1-q)$, and note that this corner inherits the hypotheses on $(\M,X)$ when equipped with the traces 
\begin{equation}
    \Big\{ \frac{1}{\tau(1-q)} \tau|_{(1-q)\mathcal M(1-q)} : \tau \in X \Big\} \subseteq T((1-q)\mathcal M(1-q)).
\end{equation}  
Indeed, strict comparison of $((1-q) \mathcal M^\omega (1 -q))^\omega = (1 - q)\mathcal M^\omega (1 -q)$ follows immediately from strict comparison in $\mathcal M^\omega$, and divisibility of the unit comes from classification of finite-dimensional subalgebras of $\mathcal M^\omega$, recorded in Proposition~\ref{S4.2:Prop}\ref{it:classif-proj-omega}.

By Proposition \ref{SmallLocalApproximations}, there exists some non-zero finite-dimensional subalgebra $H$ of $(1-q)\mathcal M(1-q)$ whose unit has constant trace, and approximates $z_1,\dots,z_L\in H$ with\footnote{The trace is initially computed in the corner, and then rescaled back to $\M$}
\begin{equation}
    \|(1-q)x_\ell(1-q)-(1-q-1_H)x_\ell(1-q-1_H)-z_\ell\|_{2,X}\leq \epsilon\|1_H\|_{2,X}.
\end{equation}
For each $\ell=1,\dots,L$, and $\tau\in X$, we then compute (identically to Popa, and again best done by drawing some block matrix decompositions), 
\begin{equation}
    \begin{split}
        &\|x_\ell-(1-\sum_{i\in I}1_{G_i}-1_H)x_\ell(1-\sum_{i\in I}1_{G_i}-1_H)-(\sum_{i\in I}y_{\ell, i}+z_{\ell})\|_{2,\tau}^2\\
        =&\|x_\ell-(1-\sum_{i\in I}1_{G_i})x_{\ell}(1-\sum_{i\in I}1_{G_i})-\sum_{i\in I}y_{\ell, i}\|_{2, \tau}^2\\
        &+\|(1-\sum_{i\in I}1_{G_i})x_{\ell}(1-\sum_{i\in I}1_{G_i})-(1-\sum_{i\in I}1_{G_i}-1_H)x_{\ell}(1-\sum_{i\in I}1_{G_i}-1_H)-z_{\ell}\|_{2,\tau}^2\\ 
        &\leq \epsilon^2(\sum_{i\in I}\tau(1_{G_i})+\tau(1_H)).
    \end{split}
\end{equation} 
This contradicts maximality, and hence $\sum_{i\in I}\tau(G_i)=1$.  

For all $\ell=1,\dots,L$, we have
\begin{equation}
        \|x_\ell-\sum_{i\in I}y_i\|_{2,\tau}\leq\epsilon, 
\end{equation}
so we can find a finite $I_0$ such that
\begin{equation}
        \|x_\ell-\sum_{i\in I_0}y_i\|_{2,\tau}\leq2\epsilon.
\end{equation}
Taking $F=\bigoplus_{i\in I_0}G_i$, shows that $(\M,X)$ is hyperfinite.
\end{proof}

Combining Theorem \ref{PureAmenable=>Hyperfinite} with the structure theorem from \cite{carrion2023tracially} gives our structure theorem for II$_1$ factorial amenable tracially complete $C^*$-algebras.  Note that $\|\cdot\|_{2,X}$-separability is not required; all the conditions below are defined locally (but also recall that McDuffness of $(\M,X)$ is only equivalent to the tensorial absorption $(\M,X)\cong (\M,X)\,\overline{\otimes}\,(\mathcal R,\{\tau_{\mathcal R}\})$ when $(\M,X)$ is $\|\cdot\|_{2,X}$-separable).

\begin{corollary}[{Theorem \ref{TC:Structure}}]\label{TC:Structure:MainBody}
    Let $(\M,X)$ be a II$_1$ factorial amenable tracially complete $C^*$-algebra. The following are equivalent:
    \begin{enumerate}
        \item $(\M,X)$ is  hyperfinite;\label{TC:Structure:MainBody.1}
        \item $(\M,X)$ is McDuff;\label{TC:Structure:MainBody.2}
        \item[\namedlabel{TC:Structure:MainBody.2'}{(ii$'$)}] $(\M,X)$ has property $\Gamma$;
\item[\namedlabel{TC:Structure:MainBody.2''}{(ii$''$)}] $(\M,X)$ has CPoU;
        \item $(\M,X)$ has the property that $\M^\omega$ has strict comparison by $X^\omega$ and the unit of $\M^\omega$ is divisible.\label{TC:Structure:MainBody.3}
    \end{enumerate}
\end{corollary}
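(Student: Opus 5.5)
The plan is to arrange the implications as (i)$\Leftrightarrow$(ii)$\Leftrightarrow$(ii$'$)$\Leftrightarrow$(ii$''$), then (ii$''$)$\Rightarrow$(iii), and finally (iii)$\Rightarrow$(i). The last of these is exactly Theorem~\ref{PureAmenable=>Hyperfinite}, so nothing new is needed there.

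\emph{The equivalences among (i), (ii), (ii$'$), (ii$''$).} These are already known for factorial amenable type II$_1$ tracially complete $C^*$-algebras and I will import them rather than reprove them. Hyperfinite $\Rightarrow$ McDuff $\Rightarrow$ property $\Gamma$ is the standard chain (see \cite[Theorem B]{carrion2023tracially}). Property $\Gamma$ $\Rightarrow$ CPoU is \cite[Theorem 1.4]{carrion2023tracially}. CPoU $\Rightarrow$ McDuff/hyperfinite in the amenable factorial setting is again \cite[Theorem B]{carrion2023tracially}. None of these results needs $\|\cdot\|_{2,X}$-separability, since all the properties involved are local; I will record this explicitly, as the paper does just before the statement.

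\emph{(ii$''$)$\Rightarrow$(iii).} I will use the results from \cite{carrion2023tracially} and \cite{ET:RR0} quoted in Section~\ref{S:Outline}, and the key point is that CPoU passes to the ultrapower $(\M^\omega,X^\omega)$ (as in \cite{carrion2023tracially}).
\begin{enumerate}
\item Divisibility of the unit: property $\Gamma$/McDuff gives unital embeddings $M_n\to\M^\omega\cap\M'$, and in particular into $\M^\omega$. Alternatively, \cite[Theorem 7.18]{carrion2023tracially} gives existence of projections of constant trace $1/n$; combining this with \cite[Theorem 7.17]{carrion2023tracially} and Proposition~\ref{Class:FD} produces $M_n\hookrightarrow\M^\omega$ unitally.
\item Strict comparison of $\M^\omega$ with respect to $X^\omega$: by \cite[Theorem A]{ET:RR0}, $\M^\omega$ has real rank zero. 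Proposition~\ref{prop:rr0-comparison} then reduces strict comparison to comparison of projections in all matrix amplifications $M_n(\M^\omega)$, which holds by \cite[Theorem 7.17]{carrion2023tracially} since each $(M_n(\M^\omega),X_n^\omega)$ has CPoU.
\end{enumerate}

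\emph{Main obstacle.} The subtle step is checking that CPoU, real rank zero and comparison of projections really do transfer to $M_n(\M^\omega)$ and to $X^\omega$. In particular, the comparison statements must apply with respect to $X^\omega$ rather than just to limit traces. I expect this to follow from the closure of limit traces together with the stability of CPoU under ultrapowers and matrix amplification. If that transfer turns out to be awkward, the fallback is to use the reverse direction: (ii) together with Matui--Sato-type arguments, or \cite[Theorem D]{ET:RR0}, which gives that $\M$ is pure. Combined with Proposition~\ref{strict_compa_A_ultrap} and Proposition~\ref{prop:sc_divisible_tracial_ultra}, pureness would yield (iii) for $\M^\omega$ directly.
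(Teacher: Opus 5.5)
Your proposal is correct and is essentially the paper's proof. The paper imports (i)$\Leftrightarrow$(ii)$\Leftrightarrow$(ii$'$)$\Leftrightarrow$(ii$''$) from \cite[Theorem~9.15]{carrion2023tracially}. It gets (ii$''$)$\Rightarrow$(iii) by passing CPoU to $(\M^\omega,X^\omega)$ via \cite[Corollary~7.7(i)]{carrion2023tracially}, obtaining real rank zero and classification of projections there, then divisibility via Proposition~\ref{Class:FD} and strict comparison via Proposition~\ref{prop:rr0-comparison}. It uses Theorem~\ref{PureAmenable=>Hyperfinite} for (iii)$\Rightarrow$(i).

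The only differences are cosmetic: you cite \cite[Theorem~A]{ET:RR0} where the paper cites \cite[Proposition~7.2]{carrion2023tracially} for real rank zero of $\M^\omega$. Your worry about $X^\omega$ versus limit traces is moot, because CPoU is inherited by the tracially complete ultrapower $(\M^\omega,X^\omega)$ itself, so the fallback through pureness of $\M$ is not needed.
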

\begin{proof}
    The equivalence of \ref{TC:Structure:MainBody.1}, \ref{TC:Structure:MainBody.2}, \ref{TC:Structure:MainBody.2'},
    and \ref{TC:Structure:MainBody.2''} is \cite[Theorem~9.15]{carrion2023tracially}.
    The implication \ref{TC:Structure:MainBody.2''}$\Rightarrow$\ref{TC:Structure:MainBody.3} holds without the amenability assumption. This is the combination of \cite[Proposition 7.2]{carrion2023tracially} (which shows $\M^\omega$ has real rank zero), \cite[Proposition 7.4]{carrion2023tracially} (comparison of projections in $(\M^\omega,X^\omega)$), and \cite[Theorem 7.18]{carrion2023tracially} (existence of projections from CPoU, which applies to $(\M^\omega,X^\omega)$ as this inherits CPoU from $\M$ by \cite[Corollary 7.7(i)]{carrion2023tracially}).\footnote{The authors of \cite{carrion2023tracially} neglected to explicitly collect the existence of projections in $\M^\omega$ when $(\M,X)$ has CPoU, focusing instead on the existence of projections in $(\M,X)$.}   Then classification of projections in $\mathcal M^\omega$ implies the classification of finite-dimensional subalgebras by Proposition~\ref{Class:FD}, which in turn implies $\mathcal M^\omega$ has divisible unit.  Then Proposition~\ref{prop:rr0-comparison} implies $\mathcal M^\omega$ has strict comparison with respect to $X^\omega$. 
    The final piece is \ref{TC:Structure:MainBody.3}$\Rightarrow$\ref{TC:Structure:MainBody.1}, which we have just established in Theorem \ref{PureAmenable=>Hyperfinite}.
\end{proof}
 
\section{The structure theorem for simple separable nuclear \cstar-algebras}\label{S:StructureNuclear}

With the structure theorem for II$_1$ factorial amenable tracially complete $C^*$-algebras in place, we can apply Matui and Sato's theorem and obtain the structure theorems for simple separable nuclear $C^*$-algebras set out in the introduction.

To handle the general non-unital stably finite case, we will pass to a suitable choice of hereditary subalgebra of the stabilisation where all trace are bounded and the traces simplex is both non-empty and compact. By Brown's theorem (\cite{Brown:PJM}), the hereditary subalgebra will be stably isomorphic to the original $C^*$-algebra, so all the relevant regularity properties can be transferred between the original $C^*$-algebra and the hereditary subalgebra. This idea is well-known to experts and variants of it have previously been used to generalise progress on the unital Toms--Winter conjecture to the non-unital setting (see \cite{Tikuisis:MathAnn,CE:APDE,CE:PAMS}).

\begin{lemma}\label{lem:reduction-a-la-brown}
    Let $A$ be a stably finite simple separable nuclear pure $C^*$-algebra. 
    Then there is a stably finite simple separable nuclear pure $C^*$-algebra $B$ that is stably isomorphic to $A$ such that the trace space $T(B)$ is compact and $B$ has strict comparison with respect to $T(B)$. In particular, $A$ is $\Z$-stable if and only if $B$ is $\Z$-stable.
\end{lemma}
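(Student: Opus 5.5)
The plan is to take $B$ to be a hereditary subalgebra $\overline{e(A\otimes\K)e}$ for a well-chosen positive element $e\in (A\otimes\K)_+$. Simplicity and separability of $A$ make $e$ full. Brown's theorem (\cite{Brown:PJM}) then gives $B\otimes\K\cong A\otimes\K$. Simplicity, separability, nuclearity and stable finiteness all pass to hereditary subalgebras and are invariant under stable isomorphism. Pureness depends only on $\Cu(B)\cong\Cu(A\otimes\K)\cong\Cu(A)$, and $\Z$-stability is invariant under stable isomorphism for separable $C^*$-algebras (Toms--Winter). So the last assertion is automatic, and the real work is choosing $e$ so that $T(B)$ is non-empty and compact and $B$ has strict comparison with respect to $T(B)$.

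\textbf{Choosing $e$.} Since $A$ is nuclear, Haagerup's theorem makes every lower-semicontinuous quasitrace on $A\otimes\K$ a trace. Since $A$ is stably finite, simple and exact, the cone $T^+$ of densely defined lower-semicontinuous traces on $A\otimes\K$ is non-zero. The simplest choice is $e=(a-\epsilon)_+$ for a non-zero $a\in A_+$ and small $\epsilon>0$ (suitably non-zero), so that $e$ lies in the Pedersen ideal and $\tau\mapsto d_\tau(e)$ is finite on $T^+$. If this does not give compactness directly, I would instead use a compact element of $\Cu(A)$ or the standard compact-base choice of \cite{Tikuisis:MathAnn}: take $e\in\operatorname{Ped}(A\otimes\K)_+$ with $\overline{e(A\otimes\K)e}$ algebraically simple, so that $B=\operatorname{Ped}(B)$.

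\textbf{Compactness of $T(B)$.} Restriction identifies $T^+$ with the densely defined lower-semicontinuous traces on $B$, and all of these are bounded because $B$ equals its Pedersen ideal. The tracial states $T(B)$ then form a base of this cone. The map $\tau\mapsto\tau(e')$, for a strictly positive $e'\in B$ in the Pedersen ideal, is continuous and bounded away from $0$ on the normalised traces, and I would use this to show $T(B)$ is weak$^*$-closed in the unit ball of $B^*$. This is the step I expect to be the main obstacle: making the quantifier over densely defined traces genuinely reduce to bounded tracial states, with no loss of mass at infinity. I would follow \cite{Tikuisis:MathAnn} and \cite{CE:APDE} closely here.

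\textbf{Strict comparison.} Once $T(B)$ is compact and every lower-semicontinuous quasitrace on $B\otimes\K$ is a densely defined trace that is bounded on $B$, Theorem~\ref{thm:comparison} applies. Almost unperforation of $\Cu(B)\cong\Cu(A)$, which comes from pureness, then gives strict comparison of $B$ with respect to $T(B)$, completing the proof.
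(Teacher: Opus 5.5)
Your framework (a full hereditary subalgebra $B=\overline{e(A\otimes\K)e}$, Brown's theorem, invariance of the listed properties, and Theorem~\ref{thm:comparison} for strict comparison) matches the paper. However, the step you flag as the main obstacle is a genuine gap, and the mechanisms you propose do not close it.

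For a trace $\tau$ on $A\otimes\K$, the norm of $\tau|_B$ is $d_\tau(e)$, which is only lower semicontinuous in $\tau$. A weak$^*$-limit of tracial states of $B$ can therefore have norm strictly less than $1$, so compactness of $T(B)$ requires $\tau\mapsto d_\tau(e)$ to be continuous.
\begin{itemize}
\item The choice $e=(a-\epsilon)_+$ gives finiteness of $d_\tau(e)$, not continuity.
\item Algebraic simplicity $B=\operatorname{Ped}(B)$ only makes densely defined traces bounded.
\item A strictly positive $e'$ with $\tau(e')$ bounded below only keeps $0$ out of the closure of $T(B)$.
\end{itemize}
None of these prevents loss of mass in the limit.

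In fact algebraic simplicity does not imply compactness, even in the lemma's setting. Take a simple unital AF algebra $A$ with $\partial_eT(A)\cong[0,1]$. Let $e=\sum_j2^{-j}p_j$, where $p_1=1_A\otimes e_{11}$ and $p_2,p_3,\dots$ are orthogonal projections whose trace functions sum to a discontinuous lower-semicontinuous function bounded by $1$. Every $x\in B_+$ has $d_\tau(x)\le 2<d_\tau(p_1^{\oplus 3})$. Strict comparison and stable rank one then put $x$ in the algebraic ideal generated by $p_1$, so $B$ is algebraically simple. Yet $T(B)$ is not weak$^*$-closed. This is the difference between algebraic simplicity and Lin's continuous scale. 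So pointing to an algebraically simple corner as in \cite{Tikuisis:MathAnn} does not by itself give compactness.

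The missing idea is to choose $e$ whose rank function is continuous. The paper splits into two cases.
\begin{itemize}
\item If $A\otimes\K$ contains a non-zero projection $p$, take $B=p(A\otimes\K)p$. This $B$ is unital, so $T(B)$ is compact, and $T(B)$ is non-empty by Blackadar--Handelman and Haagerup. Your ``compact element of $\Cu(A)$'' suggestion amounts to this case.
\item If $A$ is stably projectionless, no such element exists. The paper imports \cite[Lemma~3.3]{CE:PAMS}, which supplies a stably isomorphic $B$ with all densely defined lower-semicontinuous traces bounded and $T(B)$ non-empty and compact. In effect this is a full positive element with finite, continuous rank function; for instance, an $e$ with $d_\tau(e)=\tau(a)$ for some non-zero $a\in\operatorname{Ped}(A)_+$ would do.
\end{itemize}
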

\begin{proof}
    We note that the following properties are invariant under stable isomorphism: stable finiteness, simplicity, separability, nuclearity and $\Z$-stability. (The latter is \cite[Section 3]{TomsWinter:TAMS} and the rest are standard; see \cite[Proposition 2.3]{CE:APDE} and the references therein for an overview). 
    Pureness is also invariant under stable isomorphism because it is a property of the Cuntz semigroup, which is by definition a stable isomorphism invariant.
    We will construct $B$ such that $T(B)$ is compact and all densely-defined lower-semicontinuous traces on $B$ are bounded.
    By pureness, it then follows that $B$ has strict comparison with respect to $T(B)$.
    
    There are two cases to consider: either $A \otimes \K$ contains a non-zero projection or it does not.
    
    First, suppose there is a non-zero projection $p \in A \otimes \K$. 
    Set $B = p(A \otimes \K)p$.
    Clearly both $B$ and $A\otimes\K$ are separable, so have strictly positive elements. 
    As $A$ is simple, so is $A \otimes \K$; hence, $B$ is a full hereditary subalgebra of $A \otimes \K$. 
    Therefore, by Brown's theorem \cite[Theorem 2.8]{Brown:PJM}, $B$ is stably isomorphic to $A$.
    Clearly, $B$ is unital (with unit $p$). 
    Since $B$ is simple and unital, all densely-defined lower-semicontinuous traces on $B$ are bounded, so up to scaling they are tracial states. 
    Since $B$ is unital stably finite and exact, $T(B)$ is non-empty by \cite{BH82} and \cite{Haagerup}.
    As $B$ is unital, $T(B)$ is compact.
    
    Second, suppose $A \otimes \K$ contains no non-zero projections (i.e.\ $A$ is stably projectionless). By \cite[Lemma 3.3]{CE:PAMS}, which is also a hereditary subalgebra argument using Brown's theorem, there exists a $C^*$-algebra $B$ such that $B$ is stably isomorphic to $A$ such that all densely-defined lower-semicontinuous traces on $B$ are bounded and the trace space $T(B)$ is both non-empty and compact. 
\end{proof}

Next, we isolate the application of the structure theorem for factorial amenable tracially complete $C^*$-algebras that we will need. This result was stated earlier as Theorem~\ref{Thm:McDuff1} in the discussion of methods section. We restate it here for the benefit of the reader.

\begin{theorem}[Theorem~\ref{Thm:McDuff1}]\label{thm:McDuff1again}
    Let $A$ be a separable and nuclear $C^*$-algebra such that $T(A)$ is compact and non-empty.  Suppose that $A$ has strict comparison with respect to $T(A)$ and an almost divisible strictly positive element.  Then its tracial completion $(\completion{A}{T(A)},T(A))$ is McDuff.
\end{theorem}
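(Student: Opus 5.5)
The plan is to verify the hypotheses of Theorem~\ref{PureAmenable=>Hyperfinite} for $(\M,X)\coloneqq(\completion{A}{T(A)},T(A))$, conclude hyperfiniteness, and then deduce McDuffness from Corollary~\ref{TC:Structure:MainBody}.

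\textbf{Step 1: basic structure of $(\M,X)$.} By \S\ref{subsubsec:completions-tracial}, $(\M,X)$ is a factorial tracially complete $C^*$-algebra. Since $A$ is nuclear, every fibre $\pi_\tau(\M)''=\pi_\tau(A)''$ is injective, and hence $(\M,X)$ is amenable (Proposition~\ref{Pure:TracialCompletion}, or \cite[Theorem 4.12]{carrion2023tracially}).

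\textbf{Step 2: the ultrapower hypotheses.} By Proposition~\ref{prop:tracial-ultrapower-is-TC}, $(\M^\omega,X^\omega)\cong(A^\omega,\overline{T_\omega(A)})$. Proposition~\ref{prop:sc_divisible_tracial_ultra} then applies directly to the assumptions on $A$: strict comparison with respect to $T(A)$, together with an almost divisible strictly positive element. It shows that $\M^\omega$ has strict comparison with respect to $X^\omega$ and has divisible unit.

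\textbf{Step 3: the type II$_1$ condition.} This is the one point needing care. The remark after Proposition~\ref{S4.2:Prop} shows that it is automatic from the ultrapower hypotheses. Unital embeddings $M_n\hookrightarrow\M^\omega$ lift to c.p.c.\ order zero maps $M_n\to\M$ that are non-degenerate at every trace, so no fibre has a type I part.

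\textbf{Step 4: conclusion.} With Steps 1--3 in place, Theorem~\ref{PureAmenable=>Hyperfinite} gives that $(\M,X)$ is hyperfinite. Corollary~\ref{TC:Structure:MainBody}, via (i)$\Rightarrow$(ii), then gives that $(\M,X)$ is McDuff.

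\textbf{Main obstacle.} The only non-formal point is Step 3, checking that $(\M,X)$ is type II$_1$. I would handle it by the order zero lifting remark, applied fibrewise: the lifted order zero maps $M_n\to\M$ have $\tau(\phi(1))$ close to $1$, so each $\pi_\tau(\M)''$ admits approximately unital copies of $M_n$ for every $n$. This excludes finite type I summands.
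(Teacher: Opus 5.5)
Your proposal is correct and follows essentially the same route as the paper. The paper checks factoriality and amenability, uses Propositions~\ref{prop:sc_divisible_tracial_ultra} and~\ref{prop:tracial-ultrapower-is-TC} to get strict comparison and a divisible unit for $\M^\omega$, and then applies Corollary~\ref{TC:Structure:MainBody} (iii)$\Rightarrow$(ii), which is exactly your composite of Theorem~\ref{PureAmenable=>Hyperfinite} with (i)$\Rightarrow$(ii); your explicit check of the type II$_1$ hypothesis (via the remark after Proposition~\ref{S4.2:Prop} and the fibrewise order zero argument) is sound and fills in a point the paper's proof leaves implicit.
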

\begin{proof}
Let $(\M,X) = (\completion{A}{T(A)},T(A))$ be the tracial completion of $A$. Then $\M$ is factorial by \cite[Proposition 3.23(iv)]{carrion2023tracially}. Moreover, $\M$ is amenable by \cite[Corollary 4.10]{carrion2023tracially}.\footnote{Note that this is really Connes' theorem, as the content is that each $\pi_\tau(A)''$ is an amenable von Neumann algebra.}  Applying Proposition~\ref{prop:sc_divisible_tracial_ultra} and Proposition \ref{prop:tracial-ultrapower-is-TC}, we see that $\M^\omega$ has strict comparison with respect to $X^\omega$ and has a divisible unit. By Corollary \ref{TC:Structure:MainBody} (iii)$\Rightarrow$(ii), $\M$ is McDuff.
\end{proof}

We now prove Theorem~\ref{Intro:pure-implies-Zstable} from the introduction. After handling the purely infinite case, we first use Lemma~\ref{lem:reduction-a-la-brown} to reduce to a setting where Theorem~\ref{Thm:McDuff1} is applicable to get that $A$ is uniformly McDuff. We then use a suitable non-unital version of Matui--Sato's $\Z$-stability theorem to finish the job. 

\begin{theorem}[Theorem~\ref{Intro:pure-implies-Zstable}]\label{Intro-body:ThmA}
    Let $A$ be a simple separable nuclear pure $C^*$-algebra.  Then $A$ is $\mathcal Z$-stable.
\end{theorem}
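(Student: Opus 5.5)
We split into the traceless (purely infinite) case and the stably finite case. If $A$ has no non-zero densely-defined lower-semicontinuous (quasi)traces, then almost unperforation of $\Cu(A)$ forces $A$ to be purely infinite. Nuclearity makes $A$ exact, so Haagerup's theorem ensures there are no quasitraces either. Concretely, every non-zero positive $a$ satisfies $2[a]\leq[a]$ after comparing via almost unperforation against the vanishing state space. This gives pure infiniteness in the sense of Kirchberg--Rørdam. Kirchberg's $\mathcal O_\infty$-absorption theorem then gives $A\cong A\otimes\mathcal O_\infty$, and since $\mathcal O_\infty$ is $\Z$-stable, $A$ is $\Z$-stable. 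Simple non-purely-infinite algebras with almost unperforated Cuntz semigroup are stably finite, so this covers the remaining case split.

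In the stably finite case, Lemma~\ref{lem:reduction-a-la-brown} replaces $A$ by a stably isomorphic simple separable nuclear pure $B$ with $T(B)$ compact and non-empty and with strict comparison with respect to $T(B)$. We must also check that a strictly positive element of $B$ has almost divisible Cuntz class. This is immediate from pureness, since every element of $\Cu(B)$ is almost divisible. Theorem~\ref{thm:McDuff1again} then shows that the tracial completion $(\completion{B}{T(B)},T(B))$ is McDuff, that is, $B$ is uniformly McDuff. Equivalently, $M_k$ embeds unitally into $B^\omega\cap B'$ for every $k$.

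The final step is a Matui--Sato type argument: strict comparison together with uniform McDuffness gives $\Z$-stability for simple separable nuclear $B$ with compact non-empty $T(B)$. In the unital case (when $B=p(A\otimes\K)p$) this is exactly the Matui--Sato theorem as framed in Section~\ref{S2.1}. Strict comparison yields property (SI), which lifts the tracial central matrix embeddings to order zero maps $M_k\to B_\omega\cap B'$ together with the complementary element needed to embed dimension-drop algebras. In the stably projectionless case we would invoke the non-unital version of Matui--Sato used in the Castillejos--Evington framework, where compactness of $T(B)$ and boundedness of all traces substitute for a unit, with central sequences handled in $F(B)=B_\omega\cap B'/\mathrm{Ann}(B)$. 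Finally, $\Z$-stability transfers back from $B$ to $A$ by the last sentence of Lemma~\ref{lem:reduction-a-la-brown}.

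The main obstacle is this non-unital Matui--Sato step. We need property (SI) in $F(B)$ from strict comparison with respect to $T(B)$ alone, and we must make the trace-kernel lifting work without a unit. First we would try to reduce to the literature, namely Castillejos--Evington's non-unital $\Z$-stability results from uniform McDuff and strict comparison. If that fails, we would verify (SI) directly via the comparison in $B_\omega$ coming from \cite[Lemma 1.23]{BBSTWW}.
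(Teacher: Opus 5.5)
Your proposal follows the paper's proof step for step. The traceless case goes through Kirchberg's $\mathcal O_\infty$-absorption; in the stably finite case you use Lemma~\ref{lem:reduction-a-la-brown}, get almost divisibility of a strictly positive element from pureness, obtain uniform McDuffness from Theorem~\ref{thm:McDuff1again}, and finish with a Matui--Sato theorem. The non-unital Matui--Sato step you flag as the main obstacle is handled in the paper purely by citation: \cite[Corollary 3.6]{Szabo:MJM} with the trivial group (building on \cite[Theorem 5.4]{SW:APDE}) gives $\Z$-stability from uniform McDuffness and strict comparison in exactly this setting, so you do not need to verify property (SI) in $F(B)$ by hand.
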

\begin{proof}
    Let $A$ be a simple separable nuclear pure $C^*$-algebra. Then $A$ is exact, and so all lower-semicontinuous densely-defined quasitraces on $A \otimes \K$ are additive by Haagerup's theorem; see \cite{Haagerup} for the unital case and \cite[Remark 2.29(i)]{BlanchardKirchberg:JFA} for the non-unital version. If $A$ is traceless, then $A$ is a Kirchberg algebra,\footnote{This is well known, as when $T(A)=\emptyset$ we have $a\precsim b$ for all non-zero $a,b\in A$, and hence $A$ is simple and purely infinite} so is $\mathcal O_\infty$-stable by Kirchberg's $\mathcal O_\infty$-absorption theorem (\cite{K:ICM,KP:Crelle}), and finally $\Z$-stable as $\mathcal O_\infty$ is $\Z$-stable (by the Kirchberg--Phillips theorem, for example).
    
    Hence, we may assume that $A$ is stably finite. 
    By Lemma~\ref{lem:reduction-a-la-brown}, we may further that $T(A)$ is compact and $A$ has strict comparison with respect to $T(A)$ (without affecting the stable isomorphism class of $A$). Since $A$ is separable, it has a strictly positive element $h \in A_+$. As $A$ is pure, $h$ is almost divisible.
    Then by Theorem~\ref{thm:McDuff1again},  $A$ has the uniform McDuff property. In the unital case, $\Z$-stability of $A$ now follows by Matui--Sato's $\Z$-stability theorem (see for example \cite[Theorem 2.6]{TWW:IMRN}). In the non-unital case, we need to use an appropriate non-unital version of Matui and Sato's $\Z$-stability theorem. A clean form of this is \cite[Corollary 3.6]{Szabo:MJM} (taking the group there to be trivial, so that equivariant $\Z$-stability is just $\Z$-stability), which builds on \cite[Theorem 5.4]{SW:APDE}. 
\end{proof}
    
Combining Theorem~\ref{Intro:pure-implies-Zstable} with the recent result of Thiel on realising ranks (\cite{Thiel:arXiv}), the proof of the Toms--Winter conjecture is now complete.

\begin{corollary}[Theorem~\ref{Intro:Structure}]\label{Intro-body:Toms-Winter-Theorem}
    Let $A$ be a simple separable nuclear non-elementary $C^*$-algebra. The following are equivalent:
    \begin{enumerate}[(i)]
        \item $A$ has finite nuclear dimension,
        \item $A$ is $\Z$-stable,
        \item $A$ has strict comparison (i.e.\ $\Cu(A)$ is almost unperforated).
    \end{enumerate}
\end{corollary}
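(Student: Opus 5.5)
The plan is to prove the cycle (i)$\Rightarrow$(iii)$\Rightarrow$(ii)$\Rightarrow$(i). Each arrow should reduce to results already recorded, either in the introduction or established above.

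For (ii)$\Rightarrow$(i), I will quote the known direction of the Toms--Winter conjecture recalled in the introduction. For simple separable nuclear $\mathcal Z$-stable $A$, nuclear dimension is at most one; this comes from the results cited before Theorem~\ref{Intro:Structure}, namely \cite{CETWW} in the unital case and \cite{CE:APDE} in general. Nothing new is needed for this arrow.

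For (i)$\Rightarrow$(iii), I will use Winter's first $\mathcal Z$-stability theorem. Its non-unital version is \cite{Tikuisis:MathAnn}. It shows that a simple separable nuclear non-elementary algebra of finite nuclear dimension is $\mathcal Z$-stable. Rørdam's theorem \cite{Rordam:IJM} then gives that $\mathcal Z$-stable algebras have almost unperforated Cuntz semigroup. Alternatively, one can combine \cite{Robert:MJM} and \cite[Section 4]{winter2012nuclear} with the dimension reduction of \cite[Theorem 10.5]{APRT:JFA}. This yields pureness directly and feeds into Theorem~\ref{Intro-body:ThmA}. I would present the first, shorter route.

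The substantive arrow is (iii)$\Rightarrow$(ii). Let $A$ be simple, separable, nuclear and non-elementary with $\Cu(A)$ almost unperforated. By Thiel's theorem \cite{Thiel:arXiv}, as recorded in the introduction, almost unperforation closes the gap between ranks almost occurring and all ranks occurring in simple non-elementary algebras. It therefore yields almost divisibility of $\Cu(A)$, which is condition \ref{tw3prime}$\Leftarrow$\ref{tw3}, so $A$ is pure. Theorem~\ref{Intro-body:ThmA} then gives $\mathcal Z$-stability.

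The only point requiring care is that Thiel's statement is phrased for unital algebras, while $A$ here may be non-unital. I would handle this as in Lemma~\ref{lem:reduction-a-la-brown}. In the traceless case, almost unperforation already makes $A$ purely infinite, hence pure. In the stably finite case, pass via Brown's theorem to a stably isomorphic $B$, either a unital corner $p(A\otimes\K)p$ or the hereditary subalgebra from \cite[Lemma 3.3]{CE:PAMS}. Almost unperforation, pureness and $\mathcal Z$-stability are all stable isomorphism invariants, so it suffices to apply Thiel there. I expect this to be the only potential obstacle: one must check that the version in \cite{Thiel:arXiv} covers the stably projectionless case, or else cite its non-unital form. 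If it does not, I would fall back on applying it in a unital corner whenever a projection exists and quoting \cite{Thiel:arXiv} directly for the general statement that \ref{tw3}$\Rightarrow$\ref{tw3prime}.
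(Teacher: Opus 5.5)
Your proof is correct and is essentially the paper's argument: (i)$\Leftrightarrow$(ii) is quoted from \cite{winter2012nuclear,Tikuisis:MathAnn,CETWW,CE:APDE}, (ii)$\Rightarrow$(iii) is R{\o}rdam's \cite{Rordam:IJM}, and (iii)$\Rightarrow$(ii) is pureness via \cite{Thiel:arXiv} followed by Theorem~\ref{Intro-body:ThmA}. Your non-unital worry is settled by your own fallback, which is exactly what the paper does: it cites \cite[Corollary B]{Thiel:arXiv} in the unital case and \cite[Theorem E]{Thiel:arXiv} in general (simple algebras have the global Glimm property), so no Brown-type reduction is needed at this step; one small correction is that the paper calls finite nuclear dimension $\Rightarrow$ $\mathcal Z$-stability Winter's \emph{second} $\mathcal Z$-stability theorem, not the first.
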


\begin{proof}
The equivalence between finite nuclear dimension and $\Z$-stability was established in \cite{winter2012nuclear,Tikuisis:MathAnn,CETWW,CE:APDE}. Moreover, $\Z$-stable $C^*$-algebra have strict comparison by \cite{Rordam:IJM}.
Let $A$ be a simple separable nuclear non-elementary $C^*$-algebra with strict comparison. Then, by \cite{Thiel:arXiv}, $A$ is pure.\footnote{This is \cite[Corollary B]{Thiel:arXiv} in the unital case, and \cite[Theorem E]{Thiel:arXiv} in the non-unital case (as simple $C^*$-algebras have the global Glimm property).} Hence, $A$ is $\Z$-stable by Theorem~\ref{Intro:pure-implies-Zstable}.  
\end{proof}

\section{Regular tracially complete \cstar-algebras}\label{S:RegTCAlg}

This section represents Steps~\ref{Step.8} and \ref{Step.9} of the outline in Section~\ref{subsec:organisation}; that is, we prove that for a factorial tracially complete $C^*$-algebra $(\mathcal M, X)$, if $\mathcal M^\omega$ is pure as a $C^*$-algebra, then firstly $\M$ has real rank zero and, using this, $\M$  is pure.

We have already obtained the classification of projections in $\mathcal M$ from Theorem~\ref{thm:classif-proj}, and applying this to matrices computes the Murray--von Neumann semigroup $V(\mathcal M) \cong \mathrm{Aff}(X)_+$.  Since this semigroup is divisible and unperforated, it will follow from Proposition~\ref{prop:ETpure} that $\mathcal M$ is pure as soon as $\mathcal M$ has real rank zero.

The bulk of this section is hence devoted to showing that $\mathcal M$ has real rank zero.  It was shown in \cite[Theorem~2.5]{ET:RR0} that factorial tracially complete $C^*$-algebras with CPoU have real rank zero.  Our proof uses a similar argument, via a weakened form of CPoU.  Specifically, we show that for a unital embedding $\mathcal R \subseteq \mathcal M^\omega$, the relative commutant $\mathcal M^\omega \cap \mathcal R'$ has CPoU with respect to the set of traces $X^\omega$ (Proposition~\ref{prop:commutant-cpou}), and we will apply this to embeddings constructed by the next lemma.  

\begin{lemma}\label{lem:central-divide-interval}
    Let $(\M,X)$ be a type II$_1$ factorial tracially complete $C^*$-algebra such that $\M^\omega$ has strict comparison with respect to $X^\omega$ and $\mathcal M^\omega$ has divisible unit.  If $a \in \mathcal M^\omega$ is self-adjoint, then there is a unital embedding $\iota \colon \mathcal R \rightarrow \mathcal M^\omega \cap \{a\}'$.
\end{lemma}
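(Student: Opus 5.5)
The plan is to build the embedding in two stages: first reduce $a$ to a finite-spectrum element, then use the existence and uniqueness results for projections and finite-dimensional subalgebras of $\M^\omega$ (Proposition~\ref{S4.2:Prop}\ref{it:classif-proj-omega}) to place matrix algebras in its commutant. The standard reduction is to produce, for each finite subset $\mathcal S$ of $\mathcal R$ generating a matrix subalgebra $M_{2^k}\subseteq\mathcal R$ and each $\epsilon>0$, a unital $^*$-homomorphism $M_{2^k}\to\M^\omega$ whose image almost commutes with $a$. Since $\mathcal R$ is the $\|\cdot\|_2$-closure of $\bigcup_k M_{2^k}$, which is unital and nested, it then suffices (by a Kirchberg epsilon test argument, Lemma~\ref{lem:EpsTest}) to find, for each $k$, a unital embedding $M_{2^k}\to\M^\omega\cap\{a\}'$ compatible with the previously chosen one. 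More precisely, I would construct inductively unital embeddings $\phi_k\colon M_{2^k}\to\M^\omega\cap\{a\}'$ with $\phi_{k+1}|_{M_{2^k}}=\phi_k$; their union extends to $\mathcal R$ because the trace on $\M^\omega$ restricted to the union is the unique trace for every $\tau\in X^\omega$, so $\|\cdot\|_{2,X^\omega}$ is the $2$-norm of $\mathcal R$ there, and $\M^\omega$ is $\|\cdot\|_{2,X^\omega}$-complete on bounded sets.

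For the inductive step it suffices, replacing $\M^\omega$ by the relative commutant $\M^\omega\cap\phi_k(M_{2^k})'\cong e\M^\omega e$ (with $e=\phi_k(e_{11})$, a corner of constant trace $2^{-k}$, Lemma~\ref{Lem:RelCom}) and $a$ by its component there, to produce a unital copy of $M_2$ commuting with a self-adjoint element. The hypotheses pass to such constant-trace corners, as noted before Theorem~\ref{thm:classif-proj}. So the core claim is: given self-adjoint $a\in\M^\omega$, there is a unital $M_2\to\M^\omega\cap\{a\}'$. By real rank zero (Proposition~\ref{S4.2:Prop}\ref{it:rr0-omega}) and Kirchberg's epsilon test, we may approximate $a$ in norm by $c=\sum_{j=1}^m\lambda_jp_j$ with orthogonal projections $p_j$ summing to $1$. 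It then suffices to find, for each $j$, a unital $M_2\to p_j\M^\omega p_j$: the direct sum is a unital $M_2$ commuting with $c$, and hence $\epsilon$-commuting in norm with $a$. The epsilon test then yields exact commutation.

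For each $j$, apply Proposition~\ref{S4.2:Prop}\ref{it:classif-proj-omega} in the corner $p_j\M^\omega p_j$. Existence of projections gives $r\le p_j$ with $\tau(r)=\tau(p_j)/2$; this is continuous affine on $X^\omega=T(\M^\omega)$, and it lives in the corner because the corner inherits classification (via the $M_2(\M^\omega)$ argument). Comparison then gives $r\sim p_j-r$, and the partial isometry yields matrix units for $M_2$ with unit $p_j$. The main obstacle is doing this cleanly in a corner of nonconstant trace: existence of projections under $p_j$ with prescribed trace $h\le\hat p_j$ needs the corner version. I would obtain it by realising $h$ by a projection $r'$ and using comparison $r'\precsim p_j$ to conjugate $r'$ under $p_j$. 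One must also check in the Kirchberg step that the approximate conditions (matrix-unit relations, unit, commutation with $a$, and restriction to $\phi_k$) are all expressible as $\|\cdot\|$ or $\|\cdot\|_{2,X^\omega}$ conditions on representing sequences. This is routine since norm approximation of $a$ by $c$ only needs operator-norm commutator estimates.
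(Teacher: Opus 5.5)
Your argument is correct and is essentially the paper's proof: approximate $a$ in norm by a finite-spectrum element using real rank zero of $\M^\omega$, build matrix units in each spectral corner from classification of projections in $\M^\omega$, and then use Kirchberg's epsilon test (reindexing) to obtain exact commutation and finally a copy of $\mathcal R$. The difference is only organisational. The paper gets a unital $M_n$ in each spectral corner for every $n$ at once via Proposition~\ref{Class:FD} and leaves the nesting to the reindexing step. Your inductive $M_2$-tower instead works in the relative commutants $\M^\omega\cap\phi_k(M_{2^k})'$, and there the needed properties should be justified directly: real rank zero via Lemma~\ref{Lem:RelCom}, and existence and comparison of projections inside corners of $\M^\omega$. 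The remark before Theorem~\ref{thm:classif-proj} does not do this, since it concerns corners cut down by projections of $\M$, not of $\M^\omega$.
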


\begin{proof}
    By a reindexing argument, it suffices to show that for all $\epsilon > 0$ and $n \in \mathbb N$, there is a unital embedding $\iota_n \colon M_n \rightarrow \mathcal M^\omega$ such that $\|a x - x a\|_{2, X} \leq 2\epsilon\|x\|$ for all $x \in M_n$.  Recall that $\mathcal M^\omega$ has real rank zero by \ref{S4.2:Prop}\ref{it:rr0-omega}, so there is a self-adjoint $b \in \mathcal M^\omega$ with finite spectrum such that $\|a - b\| < \epsilon$.  Then $\mathcal M^\omega \cap \{b\}'$ is a direct sum of corners of $\mathcal M$ (as in Lemma~\ref{Lem:RelCom}).  Using Proposition \ref{Class:FD} (on each corner), there is a unital embedding $\iota \colon M_n \rightarrow \mathcal M^\omega \cap \{b\}'$.  Then
    \begin{equation}
        \|[a, x]\|_{2, X} \leq \|[a, x]\| = \|[a-b,x]\| \leq 2 \|a - b\| \|x\| \leq 2 \epsilon \|x\|
    \end{equation}
    for all $x \in M_n$.
\end{proof}

Traces on relative commutants can be difficult to compute in general.  It is a standard fact, however, that for a II$_1$ factor $\mathcal M$ and an inclusion $\mathcal R \subseteq \mathcal M^\omega$, the relative commutant $\mathcal M^\omega \cap R'$ has unique trace.  The following gives a tracially complete analogue of this statement.

\begin{lemma}\label{lem:factorial-commutant}
    Let $(\M,X)$ be a  factorial tracially complete $C^*$-algebra such that $\M^\omega$ has strict comparison with respect to $X^\omega$. Given a unital embedding $\iota \colon \mathcal R \subseteq  \mathcal M$, we have that $T(\M^\omega \cap \iota(\mathcal R)') = T(\M^\omega)=X^\omega$ (identified via the restriction map), and so $(\M^\omega \cap \iota(\mathcal R)',X^\omega)$ is a factorial tracially complete $C^*$-algebra.
\end{lemma}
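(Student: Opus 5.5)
The plan is to show that every trace on $\mathcal N\coloneqq\M^\omega\cap\iota(\mathcal R)'$ is the restriction of a trace on $\M^\omega$. Since $T(\M^\omega)=X^\omega$ by Proposition~\ref{strict_compa_A_ultrap} (via Proposition~\ref{prop:computing-traces}), this will give the claimed identification. (Here $\iota$ is read as an embedding $\mathcal R\to\M^\omega$.) The restriction map $X^\omega\to T(\mathcal N)$ is clearly continuous and affine, so the work is in injectivity and surjectivity. Once these are in place, factoriality and tracial completeness follow formally. The unit ball of $\mathcal N$ is $\|\cdot\|_{2,X^\omega}$-closed in that of $\M^\omega$, since commutation with a fixed element is a $\|\cdot\|_{2}$-continuous condition on bounded sets. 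Moreover, $X^\omega=T(\mathcal N)$ is trivially a face of itself.

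\emph{Injectivity.} For injectivity I would use a conditional expectation $E\colon\M^\omega\to\mathcal N$ that preserves every $\tau\in X^\omega$. Write $\mathcal R=\overline{\bigcup_k M_{2^k}}$. By Remark~\ref{rem:cond_exp}\ref{rem:con_exp_1}, the expectations $E_k\coloneqq E_{\iota(M_{2^k})'}$ are trace-preserving, and they are decreasing ($E_{k+1}E_k=E_{k+1}$). For each fixed $a$, the sequence $E_k(a)$ is $\|\cdot\|_{2,X^\omega}$-Cauchy: uniformly in $\tau$, $\|E_k(a)-E_{k+1}(a)\|_{2,\tau}^2=\tau(E_k(a)^*E_k(a))-\tau(E_{k+1}(a)^*E_{k+1}(a))$, which is a telescoping sum bounded by $\|a\|^2$. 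The difficulty is that summability in $k$ is only pointwise in $\tau$, not uniform. So I would instead diagonalise with Kirchberg's epsilon test, obtaining $E(a)\in\M^\omega$ that commutes with all of $\iota(\mathcal R)$ and satisfies $\tau(E(a)b)=\tau(ab)$ for $b\in\mathcal N$. Given such an $E$, two traces in $X^\omega$ that agree on $\mathcal N$ also agree on $\M^\omega$, since $\tau=\tau\circ E$ for each.

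\emph{Surjectivity.} This is the main step, and I would aim to use strict comparison. Fix $\sigma\in T(\mathcal N)$, and define $\tilde\sigma\coloneqq\sigma\circ E$ on $\M^\omega$. The issue is whether $\tilde\sigma$ is tracial. The first attempt is to show that for a projection (or positive element) $a$, $E(a)$ depends only on the values $\tau(a)$, in the sense that Cuntz classes in $\mathcal N$ are controlled by $X^\omega$. Concretely, suppose $a,b\in\mathcal N_+$ satisfy $d_\tau(a)<d_\tau(b)$ for all $\tau\in X^\omega$. Then I want to show $a\precsim b$ inside $\mathcal N$. One way is to pass to $\mathcal N\cong(\M^\omega\cap\iota(M_{2^k})')\cap\iota(\ldots)'$ and use that $\M^\omega\cap\iota(M_{2^k})'\cong e\M^\omega e$ by Lemma~\ref{Lem:RelCom}, which inherits strict comparison. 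Kirchberg's epsilon test then yields approximate commutation with all of $\iota(\mathcal R)$. This gives strict comparison of $\mathcal N$ with respect to $X^\omega$, together with unitality. Proposition~\ref{prop:computing-traces}, applied to $\mathcal N$ and the compact convex set of restrictions of $X^\omega$, then gives $T(\mathcal N)=X^\omega|_{\mathcal N}$ directly, without needing traciality of $\tilde\sigma$ at all.

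I expect the main obstacle to be the reindexing that shows strict comparison of $\mathcal N$. The Cuntz subequivalence witness $v$ must be made to commute with $\iota(\mathcal R)$, and not merely with $\iota(M_{2^k})$ for large $k$. The point to check carefully is that the $2^k$-level witness $v_k\in\M^\omega\cap\iota(M_{2^k})'$, with $\|v_k^*bv_k-a\|$ small in $\|\cdot\|_{2,X^\omega}$, gives commutators with a fixed finite subset of $\iota(\mathcal R)$ that are small in $\|\cdot\|_{2,X^\omega}$, uniformly in the norm of $v_k$. Norm control can come from the usual $(a-\epsilon)_+$ cut-down. This yields $(2,X^\omega)$-subequivalence in $\mathcal N$, and the epsilon test upgrades it to an exact relation. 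It suffices for Proposition~\ref{prop:computing-traces} to work with projections: $\mathcal N$ has real rank zero via Lemma~\ref{Lem:RelCom} and reindexing, so Proposition~\ref{prop:rr0-comparison} reduces comparison to the case of projections.
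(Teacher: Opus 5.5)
Your proposal is correct, and its core is the paper's proof. You show that $\mathcal N=\M^\omega\cap\iota(\mathcal R)'$ has strict comparison with respect to the set $Y$ of restrictions of $X^\omega$: apply strict comparison of $\M^\omega$ inside the corners $\M^\omega\cap\iota(F)'\cong e\M^\omega e$ for matrix subalgebras $F\subseteq\mathcal R$, then reindex, then invoke Proposition~\ref{prop:computing-traces}.

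Two things differ.

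First, you also prove that the restriction map $X^\omega\to Y$ is injective, by reindexing the elements $E_{\iota(M_{2^k})'}(a)$. The paper only shows $Y=T(\mathcal N)$ and leaves injectivity implicit, so this is a worthwhile addition. Note that the reindexing produces one element of $\mathcal N$ for each $a$, not a conditional expectation. You also only need $\tau(E(a))=\tau(a)$, i.e.\ the case $b=1$.

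Second, you obtain uniformly bounded witnesses differently. The paper stays with positive elements. It first fixes, by compactness of $Y$, a single $\delta>0$ with $d_\tau((a-\epsilon)_+)<d_\tau((b-\delta)_+)$ for all $\tau$. Then every corner contains exact witnesses with $v^*v=(a-2\epsilon)_+$, $f_\delta(b)v=v$ and $\|v\|\le\|a\|^{1/2}$, and these reindex directly. Your suggestion that the $(a-\epsilon)_+$ cut-down alone gives norm control would not be enough on that route. Without a $\delta$ chosen before passing to corners, the relation tying $v$ to $b$ does not survive reindexing.

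Your fallback of reducing to projections via Proposition~\ref{prop:rr0-comparison} does work, since the witnesses are then partial isometries. It has two costs:
\begin{itemize}
\item It needs real rank zero of $\mathcal N$, hence of $\M^\omega$ via Proposition~\ref{S4.2:Prop}. That requires divisibility of the unit of $\M^\omega$, which is not among the lemma's hypotheses but is supplied by $\iota$.
\item Real rank zero must be reindexed in a $\|\cdot\|_{2,X^\omega}$-expressible form, not as operator-norm density of invertibles. A suitable form is a projection $r\in\mathcal N$ with $rh_+=h_+$ and $rh_-=0$, as in the proof of Proposition~\ref{S4.2:Prop}.
\end{itemize}
The paper's route is more economical, since it uses only strict comparison of $\M^\omega$.
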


\begin{proof}
    Set $Y \coloneqq \{\tau|_{\mathcal M^\omega \cap \iota(\mathcal R)'}: \tau \in X^\omega \}$ and note that $Y$ is a compact convex subset of $T(\mathcal M^\omega \cap \iota(\mathcal R)')$.  We will show $Y = T(\mathcal M^\omega \cap \iota(\mathcal R)'$ by showing that $\M\cap \iota(\mathcal R)'$ has strict comparison with respect to $Y$ and using Proposition~\ref{prop:computing-traces}.
    To this end, fix $n \in \mathbb N$ and $a, b \in M_n(\mathcal M^\omega \cap \iota(\mathcal R)')$ such that $d_\tau(a) < d_\tau(b)$ for all $\tau \in Y_n$. Without loss of generality we may assume $n=1$.

    Fix $\epsilon > 0$.  For each $\tau \in Y$,
    \begin{equation}
        d_\tau((a - \epsilon)_+) \leq d_\tau(a) < d_\tau(b) = \sup_{\delta > 0} d_\tau((b - \delta)_+).
    \end{equation}
    By Dini's theorem and the compactness of $Y$, there $\delta > 0$ such that 
    \begin{equation}
        d_\tau((a - \epsilon)_+) < d_\tau(b - \delta)_+), \quad \tau \in Y.
    \end{equation}
    Define a continuous function $f_\delta \colon \mathcal [0, \infty) \rightarrow [0, \infty)$ by
    \begin{equation}
        f_\delta(t) \coloneqq
        \begin{cases}
            t/ \delta, \quad &t \in [0, \delta); \\
            1, \quad &t \in [\delta, \infty).
        \end{cases}
    \end{equation}
    We'll show that for each matrix subalgebra $F \subseteq \mathcal R$, there is $v \in \mathcal M^\omega \cap \iota(F)'$ such that
    \begin{equation}\label{eq:central-traces}
        v^*v = (a - 2\epsilon)_+ \quad \text{and} \quad f_\delta(b) v = v
    \end{equation}
    Assuming this has been done, note that $\|v\|^2 \leq \|a\|$, so a reindexing argument produces $v \in \mathcal M^\omega \cap \iota(\mathcal R)'$ such that \eqref{eq:central-traces} holds.  This will imply $(a - 2 \epsilon)_+ \precsim b$ in $\mathcal M^\omega \cap \iota(\mathcal R)'$, and since $\epsilon > 0$ was arbitrary, this will imply $a \precsim b$ in $\mathcal M^\omega \cap \iota(\mathcal R)'$.
    
    Given $F \subseteq \mathcal R$, let $p \in F$ be a rank one projection.  Then there is an isomorphism
    \begin{equation}
        \phi \colon \mathcal M^\omega \cap \iota(F)' \rightarrow \iota(p) \mathcal M^\omega \iota(p) \colon a \mapsto \iota(p)a,
    \end{equation}
    which satisfies 
    \begin{equation}
        \tau(\phi(c))/\tau(\iota(p)) = \tau(c), \qquad \tau \in X^\omega,\ c \in \mathcal M^\omega \cap \iota(F)'
    \end{equation}
    Given $\tau \in X^\omega$, $\tau|_{\mathcal M^\omega \cap \iota(\mathcal R)'} \in Y$, so $d_\tau((a - \epsilon)_+) < d_\tau((b - \delta)_+)$, and therefore, $d_\tau(\phi((a - \epsilon)_+)) < d_\tau(\phi((b - \delta)_+))$.  As $\mathcal M^\omega$ has strict comparison with respect to tracial states, $\phi((a- \epsilon)_+) \precsim \phi((b- \delta)_+)$ in $\mathcal M^\omega$ and hence also in the corner $\iota(p)\mathcal M^\omega \iota(p)$.  So $(a - \epsilon)_+ \precsim (b -\delta)_+$ in $\mathcal M^\omega \cap \iota(F)'$.
    Hence there is $v \in \mathcal \mathcal M^\omega \cap \iota(F)'$
    such that
    \begin{equation}
        v^*v = (a - 2\epsilon)_+ \quad \text{and} \quad vv^* \in \overline{(b - \delta)_+(\mathcal M^\omega \cap \iota(F)') (b - \delta)_+}.        
    \end{equation}
    Then $f_\delta(b)vv^* = vv^*$ and the $C^*$-identity implies $f_\delta(b) v = v$, as required.
\end{proof}

The proposition below will give us access to a restricted form of CPoU.

\begin{proposition}\label{prop:commutant-cpou}
    Let $(\M,X)$ be a factorial tracially complete $C^*$-algebra such that $\M^\omega$ has strict comparison with respect to $X^\omega$. Given a unital embedding $\iota \colon \mathcal R \rightarrow \mathcal M$, 
    the tracially complete $C^*$-algebra $(\mathcal M^\omega \cap \iota(R)', X^\omega)$ is factorial and McDuff. In particular, it has CPoU (as defined in \cite[Definition~3.1]{carrion2023tracially}).
\end{proposition}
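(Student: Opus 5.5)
Factoriality is already supplied by Lemma~\ref{lem:factorial-commutant}. It gives $T(\M^\omega\cap\iota(\mathcal R)')=X^\omega$ via restriction. Since $X^\omega=T(\M^\omega)$ is a face of itself, the pair $(\M^\omega\cap\iota(\mathcal R)',X^\omega)$ is a factorial tracially complete $C^*$-algebra. Here I read the embedding as landing in $\M^\omega$, as in Lemma~\ref{lem:factorial-commutant}.

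Before that, I would check that this pair really is tracially complete. The relative commutant of a separable set is $\|\cdot\|_{2,X^\omega}$-closed on bounded sets in $\M^\omega$, because the commutator $x\mapsto [x,y]$ is $\|\cdot\|_{2}$-continuous on bounded sets for each fixed $y$. Hence its unit ball is complete.

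For McDuff, I would verify the local form directly. Fix a $\|\cdot\|_{2,X^\omega}$-separable subset $S$ of $\mathcal N\coloneqq\M^\omega\cap\iota(\mathcal R)'$ and $k\in\mathbb N$. I need a unital embedding $M_k\to \mathcal N^\omega\cap S'$, or equivalently, after reindexing (Kirchberg's $\epsilon$-test), a unital $^*$-homomorphism $M_k\to\mathcal N$ that approximately commutes with finitely many elements of $S$ in $\|\cdot\|_{2,X^\omega}$.

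The natural source is $\mathcal R$ itself, using its McDuffness. Write $\mathcal R\cong\mathcal R\bar\otimes\mathcal R$. The first factor is an embedding $\iota_1\colon\mathcal R\to\M^\omega$ with $\iota=\iota_1$ composed with a suitable isomorphism, and the second factor, $\iota_2(\mathcal R)$, commutes with $\iota_1(\mathcal R)$. The trouble is that $\iota_2(\mathcal R)\subseteq\iota(\mathcal R)$ does not lie in $\iota(\mathcal R)'$. I would fix this with a reindexing/diagonal argument. Choose a separable $\|\cdot\|_2$-dense sequence generating $\mathcal R$ and central sequences $(w_m)$ of matrix units in $\mathcal R^\omega\cap\mathcal R'$ of $M_k$. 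Then represent $\iota$ on each finite-dimensional subalgebra $F\subseteq\mathcal R$ and use the elements $\iota(w_m)$. These commute asymptotically with $\iota(F)$, but not with $S$.

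So instead I would produce the matrix units inside $\M^\omega\cap(\iota(F)\cup S_0)'$, for $S_0\subseteq S$ finite and $F\subseteq\mathcal R$ a matrix algebra, using the trick from the proof of Lemma~\ref{lem:factorial-commutant}. Take a rank-one projection $p\in F$. Then $\M^\omega\cap\iota(F)'\cong\iota(p)\M^\omega\iota(p)$, a constant-trace corner that is again an ultrapower with strict comparison and divisible unit. Elements of $S_0$ commute with $\iota(F)$, so they correspond to elements of this corner. It then suffices to find approximately central unital copies of $M_k$ in $\iota(p)\M^\omega\iota(p)$ relative to a finite set.

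This is the main obstacle: it is essentially McDuffness of $\M^\omega$ relative to finite sets. I do not have that in general, since I only have central divisibility against a single self-adjoint, as in Lemma~\ref{lem:central-divide-interval}.

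The way around it is to use the elements of $\iota(\mathcal R)$ outside $F$. Choose $F'\subseteq\mathcal R$ a larger matrix algebra with $F\subseteq F'$ and $\mathcal R\cap F'$-like structure: $F'=F\otimes M_k$ inside $\mathcal R$, with the elements of $S_0$ close to $\iota(F)$-commutant data. The copy $\iota(1\otimes M_k)$ commutes with $\iota(F)$ and exactly with $S_0$? It does not commute exactly, since elements of $S$ only commute with $\iota(\mathcal R)$ as a whole... In fact they \emph{do}: $S\subseteq\iota(\mathcal R)'$, so anything inside $\iota(\mathcal R)$ commutes with $S$.

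So the plan simplifies to this. Pick matrix units of $M_k$ in $\mathcal R^\omega\cap\mathcal R'$, with representing sequences $(w_m)$ in $\mathcal R$. The elements $\iota(w_m)$ commute exactly with $S$ and asymptotically with $\iota(F)$ for every finite-dimensional $F\subseteq\mathcal R$. The asymptotic commutation happens in $\|\cdot\|_2$ of $\mathcal R$, which transfers to $\|\cdot\|_{2,X^\omega}$ because every $\tau\in X^\omega$ restricts to $\tau_{\mathcal R}$ on $\iota(\mathcal R)$. Projecting with the conditional expectations $E_{\iota(F)'}$ and perturbing back to genuine matrix units gives unital copies of $M_k$ in $\M^\omega\cap\iota(F)'$ that are approximately in $\iota(\mathcal R)'$ and exactly commute with $S$. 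A diagonal reindexing, with $F$ increasing to exhaust $\mathcal R$, then yields $M_k\hookrightarrow\mathcal N^\omega\cap S'$ unitally, so $\mathcal N$ is McDuff.

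Finally, CPoU follows from McDuff via property $\Gamma$ for factorial tracially complete $C^*$-algebras, \cite[Theorem~1.4]{carrion2023tracially}.
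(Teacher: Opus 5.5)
Your final plan is correct and is essentially the paper's argument. McDuffness of $\mathcal R$ gives approximately central unital copies of matrix algebras inside $\iota(\mathcal R)$ (the paper uses copies of $\mathcal R$). These are controlled in $\|\cdot\|_{2,X^\omega}$ because every $\tau\in X^\omega$ restricts to $\tau_{\mathcal R}$, and they commute exactly with $S\subseteq\iota(\mathcal R)'$. Kirchberg's epsilon test in $\M^\omega$ then places them in $\M^\omega\cap\iota(\mathcal R)'\cap S'$, and CPoU follows from \cite[Proposition~5.22 and Theorem~1.4]{carrion2023tracially}.

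The detour through corners and the conditional-expectation/perturbation step are unnecessary: you can take genuine unital copies from the start, e.g.\ tail tensor factors of $\mathcal R\cong\overline{\bigotimes}_{n}M_2$. Note also that your copies lie in $\M^\omega\cap\iota(F)'$, not in $\mathcal N$. So the reindexing must be done in $\M^\omega$ itself rather than by forming a sequence in $\mathcal N^\omega$; this gives $M_k\hookrightarrow\mathcal N\cap S'\subseteq\mathcal N^\omega\cap S'$.
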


\begin{proof}
    The tracially complete $C^*$-algebra $(\M \cap \iota(\mathcal R)',X^\omega)$ is factorial by Lemma~\ref{lem:factorial-commutant}. Since $\mathcal R$ is McDuff, for any separable set $S \subseteq \mathcal M^\omega \cap \iota(\mathcal R)'$, we can find a sequence of unital embeddings $j_n\colon \mathcal R\to \iota(\mathcal R)\subseteq \mathcal M^\omega$ which are approximately central. Kirchberg's epsilon test then gives an embedding $j\colon\mathcal R\to \mathcal M^\omega\cap \iota(\mathcal R)'\cap S'$. Hence $(\mathcal M^\omega \cap \iota(\mathcal R)', X^\omega)$ is McDuff.
    The final statement follows from \cite[Proposition~5.22 and Theorem~1.4]{carrion2023tracially}.
\end{proof}

The following result is a generalisation (and a consequence) of \cite[Proposition~2.2]{ET:RR0}, where they obtain the same conclusion when $(\mathcal M, X)$ is a factorial tracially complete $C^*$-algebra with CPoU.

\begin{lemma}[{cf.\ \cite[Proposition~2.2]{ET:RR0}}]\label{lem:ETRR02.2}
    Let $(\M,X)$ be a II$_1$ factorial tracially complete $C^*$-algebra such that $\mathcal M^\omega$ has strict comparison with respect to $X^\omega$ and $\M^\omega$ has a divisible unit.  Let $\epsilon > 0$ and $a, p, c \in \mathcal M$ be positive contractions with $cp = p$ and $pa = a$.  Suppose $\|p^2 - p\|_{2, X} \leq \delta$ for some $\delta \in (0, 1/4]$.  Then there is a positive contraction $q \in \mathcal M$ such that 
    \begin{equation}\label{eq:rr0-lemma}
        \|cq - q\|_{2, X} < \epsilon,\ \|q a - a\|_{2, X} < \epsilon,\ \|q^2 - q\|_{2, X} < \epsilon,\ \text{and}\ \|p - q\|_{2, X} < \sqrt{5\delta}. 
    \end{equation}
\end{lemma}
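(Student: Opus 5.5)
The plan is to reduce to \cite[Proposition~2.2]{ET:RR0} applied inside the relative commutant $\mathcal N\coloneqq\mathcal M^\omega\cap\iota(\mathcal R)'$, with $\iota$ chosen to commute with the data. By a reindexing and Kirchberg epsilon test argument (Lemma~\ref{lem:EpsTest} with Lemma~\ref{lem:trace-trick}), it suffices to find a positive contraction $q\in\mathcal M^\omega$ satisfying \eqref{eq:rr0-lemma} with exact equalities $cq=q$, $qa=a$, $q^2=q$, and $\|p-q\|_{2,X^\omega}<\sqrt{5\delta}$. A representing sequence of positive contractions for $q$ then gives, at some index, an element of $\mathcal M$ with the stated strict inequalities. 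The last inequality is strict and $\|p-q\|_{2,X^\omega}$ is computed with $p\in\mathcal M$, so it passes to the sequence via Lemma~\ref{lem:trace-trick}.

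To build $q$, I first produce a unital embedding $\iota\colon\mathcal R\to\mathcal M^\omega\cap\{a,p,c\}'$. Lemma~\ref{lem:central-divide-interval} only handles a single self-adjoint element, so I would adapt its proof. Real rank zero of $\mathcal M^\omega$ (Proposition~\ref{S4.2:Prop}\ref{it:rr0-omega}) lets me approximate $c$ by a finite-spectrum self-adjoint $c_1$. In the corners cut by the spectral projections of $c_1$, which inherit real rank zero and the standing hypotheses by Lemma~\ref{Lem:RelCom}, I approximate the compression of $p$ by a finite-spectrum element, and then do the same for $a$. This gives a finite-dimensional abelian algebra $D\subseteq\mathcal M^\omega$ that is norm-close to containing $a,p,c$. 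Since $\mathcal M^\omega\cap D'$ is a direct sum of corners with classification of projections, Proposition~\ref{Class:FD} gives a unital $M_n\to\mathcal M^\omega\cap D'$. Reindexing then yields $\iota(\mathcal R)$ exactly commuting with $a,p,c$.

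By Proposition~\ref{prop:commutant-cpou}, $(\mathcal N,X^\omega)$ is a factorial tracially complete $C^*$-algebra with CPoU, and $a,p,c\in\mathcal N$ satisfy $cp=p$ and $pa=a$. Since $\|\cdot\|_{2,X^\omega}$ restricted to $\mathcal N$ is the same norm, we also have $\|p^2-p\|_{2,X^\omega}\le\delta$. Applying \cite[Proposition~2.2]{ET:RR0} in $(\mathcal N,X^\omega)$ gives, for each $\epsilon'$, a positive contraction $q\in\mathcal N$ with errors below $\epsilon'$ and $\|p-q\|_{2,X^\omega}<\sqrt{5\delta}$. A reindexing in $\mathcal M^\omega$ upgrades these errors to exact equalities, or I can simply keep $\epsilon'$ small and lift directly. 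Since $q$ lies in $\mathcal N\subseteq\mathcal M^\omega$, the reduction in the first paragraph finishes the proof.

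I expect the main obstacle to be the simultaneous approximate commutant step. The three elements need not commute, so the finite-spectrum trick cannot be applied to all of them at once. If the iterated-corner approach fails, my first fallback is to apply Lemma~\ref{lem:central-divide-interval} to $a+p+c$ together with a second self-adjoint combination such as $i(p c-cp)$. A second option is to check that the proof of \cite[Proposition~2.2]{ET:RR0} only needs CPoU relative to the single hereditary structure determined by $p$, and to use $\iota$ commuting with just $p$ while handling $a$ and $c$ through the relations $cp=p$ and $pa=a$. A lesser point to verify is that ET's constant $\sqrt{5\delta}$ matches the one stated here.
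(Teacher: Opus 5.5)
Your overall architecture (embed $\mathcal R$ unitally into $\mathcal M^\omega\cap\{a,p,c\}'$, use Proposition~\ref{prop:commutant-cpou} to get CPoU in $\mathcal N$, apply \cite[Proposition~2.2]{ET:RR0} there, lift a representing sequence) is the paper's. The gap is the step you flag as the main obstacle: you never actually construct $\iota$, and the iterated-corner construction you propose fails as written.

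\textbf{The iterated-corner step.} Its premise is wrong: the relations force $a,p,c$ to commute. Indeed $pc=(cp)^*=p=cp$ and $ap=(pa)^*=a=pa$, hence $ca=cpa=pa=a$ and $ac=apc=ap=a$. More seriously, cutting by the spectral projections $e_i$ (with eigenvalues $\lambda_i$) of a finite-spectrum approximant $c_1$ of $c$ only gives $\|e_ipe_j\|\le 2\|c-c_1\|/|\lambda_i-\lambda_j|$. This is useless when eigenvalues of $c_1$ are close, and real rank zero gives you no way to choose $c_1$ to avoid that. So $\sum_i e_ipe_i$ need not approximate $p$. For example, in $M_2$ take $c=1$, $p=\tfrac12\begin{pmatrix}1&1\\1&1\end{pmatrix}$ and $c_1=\mathrm{diag}(1,1-\eta)$: the compression is $\tfrac12\cdot 1_{M_2}$, at distance $\tfrac12$ from $p$ however small $\eta$ is. Hence you do not obtain a finite-dimensional abelian $D$ approximately containing $a,p,c$.

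\textbf{The fallbacks.} They do not repair this. Lemma~\ref{lem:central-divide-interval} handles a single self-adjoint element. You give no reason why commuting with $a+p+c$ should force commuting with $a$, $p$, $c$ individually, and $i(pc-cp)=0$ adds nothing. An embedding commuting only with $p$ does not put $a$ and $c$ into $\mathcal N$, so \cite[Proposition~2.2]{ET:RR0} cannot be applied there.

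\textbf{The missing idea.} The elements $a,p,c$ are continuous functions of the single self-adjoint $b\coloneqq a+p+c$. By commutativity, the joint spectrum of $(c,p,a)$ consists of triples $(\gamma,\pi,\alpha)\in[0,1]^3$ with $\gamma\pi=\pi$ and $\pi\alpha=\alpha$. That is, $\pi>0$ forces $\gamma=1$, and $\alpha>0$ forces $\pi=1$. On this set one has $c=\min\{b,1\}$, $p=\min\{(b-1)_+,1\}$ and $a=(b-2)_+$. Thus $a,p,c\in C^*(b)$, and Lemma~\ref{lem:central-divide-interval} applied to $b$ gives a unital $\iota\colon\mathcal R\to\mathcal M^\omega\cap\{b\}'=\mathcal M^\omega\cap\{a,p,c\}'$. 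This is exactly what the paper does.

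With this in place the rest of your plan goes through. You should lift the $q$ from \cite[Proposition~2.2]{ET:RR0} directly rather than first upgrading to exact equalities. That proposition gives $\|p-q\|_{2,X^\omega}<\sqrt{5\delta}$ without the uniform margin the epsilon test would need to preserve the strict inequality.
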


\begin{proof}
    Let $b = a + p + c \in \mathcal M^\omega$.  Define $f, g, h \in C_0((0, 3])$ by
    \begin{equation}
        f(t) \coloneqq \min \{t, 1\}, \quad g(t) \coloneqq \min\{(t-1)_+, 1\}, \quad \text{and} \quad h(t)\coloneqq (t - 2)_+
    \end{equation} 
    for $t \in (0, 3]$.  Then we have $f(b) = a$, $g(b) = p$, and $h(c) = p$ since the $C^*$-algebra generated by $\{a,p,c\}$ is commutative and this holds for complex numbers $a$, $p$, and $c$.  So $a, p, c \in C^*(b)$.

    By Lemma~\ref{lem:central-divide-interval}, there is a unital embedding
    \begin{equation}
        \iota \colon \mathcal R \rightarrow \mathcal M^\omega \cap \{b\}' =\mathcal M^\omega \cap \{a, p, c \}'.
    \end{equation}  Proposition~\ref{prop:commutant-cpou}  then
    implies that $(\mathcal M^\omega \cap \iota(\mathcal R)', X^\omega)$ is a factorial tracially complete $C^*$-algebra with CPoU.
    Applying \cite[Proposition~2.2]{ET:RR0} to this tracially complete $C^*$-algebra (which contains $a,p$ and $c$) gives a positive contraction $q \in \mathcal M^\omega \cap \iota(\mathcal R)'$ satisfying \eqref{eq:rr0-lemma} with $Y$ in place of $X$.  Let $(q_n)_{n=1}^\infty \subseteq \mathcal M$ be a sequence of positive contractions lifting $q$.  Then \eqref{eq:rr0-lemma} holds with $q_n$ in place of $q$ for some $n \in \mathbb N$.
\end{proof}

It is shown in \cite[Theorem~2.5]{ET:RR0} that if $(\mathcal M, X)$ is a factorial tracially complete $C^*$-algebra with CPoU, then $\mathcal M$ has real rank zero; however, the proof (going through the proofs of \cite[Proposition~2.3 and Theorems~2.4 and 2.5]{ET:RR0}) only uses CPoU in order to establish the conclusion of the previous lemma (as \cite[Proposition~2.2]{ET:RR0}).
Hence copying these proofs, with Lemma~\ref{lem:ETRR02.2} in place of \cite[Proposition~2.2]{ET:RR0} yields the following result.
Together with Theorem~\ref{thm:classif-proj}, this proves \ref{it:Pureness.3}$\Rightarrow$\ref{it:Pureness.4} of Theorem~\ref{PurenessThm}.

\begin{theorem}\label{thm:rr0}
    Let $(\M,X)$ be a factorial tracially complete $C^*$-algebra such that $\M^\omega$ has strict comparison with respect to $X^\omega$ and $\M^\omega$ has divisible unit.  Then $\mathcal M$ has real rank zero.
\end{theorem}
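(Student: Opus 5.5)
The plan is to follow the route of \cite[Theorem~2.5]{ET:RR0}, as the paragraph before the statement indicates, with Lemma~\ref{lem:ETRR02.2} substituted for \cite[Proposition~2.2]{ET:RR0}. The target is the criterion that real rank zero follows once, for every self-adjoint $h\in\M$ and $\eta>0$, there is a projection $r\in\M$ with $\|r h_- \|$ small and $\|r h_+ - h_+\|$ small, up to a norm perturbation. Equivalently, we want approximately spectral projections separating $h_+$ from $h_-$. Once such $r$ exist, $r(h+\eta)+(1-r)(h-\eta)$ is invertible and norm close to $h$, as in the proof of Proposition~\ref{S4.2:Prop}\ref{it:rr0-omega}. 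The tracially complete setting forces us to build $r$ as a $\|\cdot\|_{2,X}$-limit inside $\M$. Since only the unit ball is $\|\cdot\|_{2,X}$-complete, the limit must be obtained from a Cauchy sequence of contractions satisfying exact algebraic constraints.

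\textbf{Step 1} (analogue of \cite[Proposition~2.3]{ET:RR0}). Fix positive contractions $a,c\in\M$ with $ca=a$; these will be functional-calculus cut-downs of $h_+$, with $a$ supported where $h\geq 2\eta$ and $c$ supported where $h>\eta$. Start with $p_0=c'$, an intermediate function of $h$ chosen so that $c p_0=p_0$ and $p_0a=a$. Then iterate Lemma~\ref{lem:ETRR02.2}. At stage $k$, with $\|p_k^2-p_k\|_{2,X}\le\delta_k$, the lemma produces $q$ close to $p_k$ (within $\sqrt{5\delta_k}$), with $q^2\approx q$, $cq\approx q$ and $qa\approx a$, to an arbitrarily small tolerance $\epsilon_k$. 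The exact relations $cp_{k+1}=p_{k+1}$ and $p_{k+1}a=a$ must hold for the next application. To restore them, replace $q$ by $g(c)\,q\,g(c)$ (with $g$ equal to $1$ on the support of $a$'s relevant spectrum, chosen using a nested family $c=c_0\geq c_1\geq\dots$ of cut-downs), and add a correction so that the result dominates $a$. This costs only $O(\epsilon_k)$ in $\|\cdot\|_{2,X}$.

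\textbf{Step 2.} Choose $\epsilon_k\to0$ fast, so that $\delta_{k+1}\lesssim\epsilon_k$ and $\sum\sqrt{5\delta_k}<\infty$. Then $(p_k)$ is $\|\cdot\|_{2,X}$-Cauchy in the unit ball. Its limit $r\in\M$ satisfies $r^2=r$ exactly, and $cr=r$ and $ra=a$ pass to the limit. Thus $r$ is a projection with $a\le r\le$ (support of $c$) in the appropriate sense. This is the content of \cite[Theorem~2.4]{ET:RR0}: a projection squeezed between two nested positive elements.

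\textbf{Step 3} (as in \cite[Theorem~2.5]{ET:RR0}). Given self-adjoint $h$ and $\eta>0$, apply Step~2 with $a=f_{2\eta}(h)$ and $c=f_{\eta}(h)$ for suitable continuous functions. Then $r$ commutes approximately with $h$: in fact $r$ acts as the identity on $\{h\ge2\eta\}$ and vanishes on $\{h\le \eta\}$, because $cr=r$. Hence $h$ is within $O(\eta)$ in norm of $rhr+(1-r)h(1-r)$, with $rhr\geq \eta r$ and $(1-r)h(1-r)\leq 2\eta(1-r)$. Shifting by $\pm 3\eta$ on the two corners gives an invertible self-adjoint element norm-close to $h$.

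\textbf{Main obstacle.} The delicate point is Step~1: ensuring that the \emph{exact} relations $cp=p$ and $pa=a$, which are needed as hypotheses of Lemma~\ref{lem:ETRR02.2}, can be restored at each stage without destroying the $\|\cdot\|_{2,X}$-smallness of $p^2-p$. I would handle this exactly as in \cite{ET:RR0}, by interleaving a decreasing sequence of functional-calculus cut-downs between $a$ and $c$. Since the previous paragraph asserts those proofs use CPoU only through \cite[Proposition~2.2]{ET:RR0}, this should go through verbatim.
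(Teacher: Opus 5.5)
Your proposal is the paper's proof. The paper likewise obtains Theorem~\ref{thm:rr0} by rerunning the arguments of \cite[Proposition~2.3 and Theorems~2.4 and~2.5]{ET:RR0} with Lemma~\ref{lem:ETRR02.2} in place of \cite[Proposition~2.2]{ET:RR0}, which is the only place CPoU enters.

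One correction to your sketch of Step~1: the nesting runs the other way, and you should not start at $c_0=c$. The lemma only gives $\|c_kq-q\|_{2,X}<\epsilon_k$. So an exact relation can be restored only by compressing $q$ with $g(c_k)$, where $g=1$ on $[1-\theta,1]$; this costs about $2\epsilon_k/\theta$. You then use a \emph{larger} $c_{k+1}$ with $c_{k+1}g(c_k)=g(c_k)$. Hence the $c_k$ must increase towards $c$ starting strictly below it, and dually the $a_k$ must decrease towards $a$. This is why the room between your two cut-downs of $h$ in Step~3 is essential: for arbitrary $ca=a$, no projection $r$ with $cr=r$ and $ra=a$ need exist.
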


To go from real rank zero and classification of projections of $\M$ to pureness, we want to apply Proposition~\ref{prop:ETpure}.
There is a small subtle point: classification of projections tells us that projections in $\M$ modulo Murray--von Neumann equivalence looks like $\Aff(X,[0,1])$, whereas to apply Proposition~\ref{prop:ETpure} we need to extend this to projections in all matrix algebras over $\M$.
The following enables this.

\begin{lemma}
\label{lem:RR0StableClassificationOfProjections}
    Let $(\M,X)$ be a factorial tracially complete $C^*$-algebra with classification of projections.
    Suppose $\M$ has real rank zero.
    Then $(M_n(\M), X_n)$ has classification of projections for every $n \in N$.
\end{lemma}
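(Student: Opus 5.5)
The plan is to prove comparison and existence in $(M_n(\M),X_n)$ separately, reducing each to the corresponding statement in $\M$ by cutting projections into pieces that live in the diagonal corners $e_{ii}\otimes\M$. Real rank zero is what makes this cutting possible. It passes to $M_n(\M)$ by the Brown--Pedersen results recalled in Section~\ref{SS:RR0}, and Zhang's Riesz decomposition for real rank zero algebras then applies.

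\textbf{Existence.} Given $f\in\Aff(X)$ with $0\le f\le 1$, regard $f$ as a function on $X_n$. Write $nf=\sum_{i=1}^n f_i$ with $f_i\in\Aff(X)$ and $0\le f_i\le 1$; for instance take $f_i=\min\{(nf-(i-1))_+,1\}$. This choice is not affine, so instead I would use Riesz decomposition (Remark~\ref{Rem:MoreRieszDecomp}) applied to $nf\le \sum_{i=1}^n 1$; $X$ is a Choquet simplex since $X=T(\M)$ is a face. Existence of projections in $\M$ gives projections $p_i\in\M$ with $\tau(p_i)=f_i(\tau)$. Then $p=\bigoplus_i p_i\in M_n(\M)$ satisfies $(\tr_n\otimes\tau)(p)=f(\tau)$.

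\textbf{Comparison.} Let $p,q\in M_n(\M)$ be projections with $\tau(p)\le\tau(q)$ on $X_n$. The key step is to show that every projection in $M_n(\M)$ is Murray--von Neumann equivalent to a diagonal projection $\bigoplus_i p_i$ with $p_i\in\M$. Since $p\precsim 1_n=\bigoplus_i e_{ii}$ and $M_n(\M)$ has real rank zero, Zhang's Riesz decomposition of projections, which holds in real rank zero algebras (\cite{Zhang:Riesz-Docomposition}), writes $p\sim\bigoplus_i p_i$ with $p_i\le e_{ii}\otimes 1$. Do the same for $q\sim\bigoplus_i q_i$.

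It then remains to compare $\bigoplus_i p_i$ and $\bigoplus_i q_i$ given $\sum_i\tau(p_i)\le\sum_i\tau(q_i)$ for all $\tau\in X$. Apply Remark~\ref{Rem:MoreRieszDecomp} to the functions $\hat p_i,\hat q_j\in\Aff(X)_+$ to get $a_{ij}\in\Aff(X)_+$ with $\hat p_i=\sum_j a_{ij}$ and $\sum_i a_{ij}\le\hat q_j$. The next step is to realise these functions by projections, starting from $p_j$'s complementary structure inside $\M$. By existence of projections and Lemma~\ref{lem:TraceProjInCommutant} (with $F=\C$), split each $p_i$ as an orthogonal sum $\sum_j p_{ij}$ with $\tau(p_{ij})=a_{ij}$. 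Inside $q_j$, choose orthogonal subprojections $q'_{ij}$ with $\tau(q'_{ij})=a_{ij}$, carving them out successively from the remainder. Uniqueness of projections (Proposition~\ref{Prop:MvN}) gives $p_{ij}\sim q'_{ij}$ in $\M$. Summing these partial isometries, each placed in the appropriate matrix position, yields $\bigoplus_i p_i\precsim\bigoplus_j q_j$.

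The main obstacle is the diagonalisation step. I need the precise form of Zhang's result: in a real rank zero algebra, $p\precsim r_1\oplus\cdots\oplus r_n$ implies $p\sim p_1\oplus\dots\oplus p_n$ with $p_i\precsim r_i$. The real rank zero hypothesis enters only here. If the cited form is unavailable, I would prove the case $n=2$ by hand, using an increasing approximate unit of projections in the hereditary subalgebra generated by $p(e_{11}\otimes 1)p$, and then induct on $n$.
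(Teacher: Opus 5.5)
Your proposal is correct and follows the paper's proof: the comparison argument is the same (Zhang's Riesz decomposition in the real rank zero algebra $M_n(\M)$ to move $p$ and $q$ into the diagonal corners $e_{ii}\otimes\M$, then Remark~\ref{Rem:MoreRieszDecomp} in $\Aff(X)$, then splitting and matching the pieces using classification of projections in $\M$). Two small points: in the existence step, your Riesz splitting of $nf$ works but is unnecessary, since the paper simply takes $p\oplus\cdots\oplus p$ with $\tau(p)=f(\tau)$; and $X$ is a Choquet simplex because it is a closed face of the simplex $T(\M)$, not because $X=T(\M)$, which is not yet known at this stage.
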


\begin{proof}
    Fix $n \in \N$. 
    Note that $X_n \cong X$ as topologically affine spaces. We make therefore identify $\Aff(X)$ and $\Aff(X_n)$.
    Let $f \in \Aff(X_n)$ with $0 \leq f \leq 1$. 
    Let $p \in \M$ be a projection with $\tau(p) = f(\tau)$ for all $\tau \in X$.
    Let $p_n = p \oplus \cdots \oplus p \in M_n(\M)$ be the $n$-fold direct sum of $p$.
    Then $(\tr_n \otimes \tau)(p_n) = \tfrac{1}{n}\cdot n \cdot \tau(p) = f(\tau)$ for all $\tau \in X$.
    Hence, by the definition of $X_n$, we see that $(M_n(\M), X_n)$ has existence of projections.

    Since $\M$ has real rank zero, it follows from \cite[Theorem 2.10]{BrownPedersen:RR0} that $M_n(\M)$ also has real rank zero. By \cite[Theorem 1.1]{Zhang:Riesz-Docomposition}, the partially ordered local semigroup $D(M_n(\M))$ of Murray--von Neumann equivalence classes of projections in $M_n(\M)$ has the Riesz decomposition property.\footnote{Zhang's result is given for $C^*$-algebras with the property that the self-adjoint elements of finite spectrum are dense in the self-adjoint elements, which is an equivalent formulation of real rank zero (\cite[Theorem 2.6]{BrownPedersen:RR0}).}
     
    Let $p,q \in M_n(\M)$ be projections with $\tau(p)\leq\tau(q)$ for all $\tau\in X_n$. Then $[p] \leq n[e_{11} \otimes 1_M]$ in $D(M_n(\mathcal M))$. By the Riesz decomposition property, there exist projections $p_1,\ldots,p_n,\in M_n(\M)$ such that  $[p] = \sum_{i=1}^n [p_i]$ and $[p_i] \leq [e_{11} \otimes 1_M]$. Since each $p_i \precsim e_{11} \otimes 1_M$ in $M_n(\M)$, we can assume without loss of generality that that each $p_i \in e_{11} \otimes \M$, which we identify with $\M$. Likewise we can write $[q]=\sum_{j=1}^n[q_j]$ for projections $q_j\in\M$. 

    Set $f_i(\tau)=\tau(f_i)$ and $g_j=\tau(q_j)$ for $\tau \in X$. As $(\M,X)$ is factorial, $X$ is a Choquet simplex. Therefore, we can apply Riesz decomposition for $\Aff(X)$ in the form of Remark \ref{Rem:MoreRieszDecomp} to find $f_{i,j}\in \Aff(X)$ with $0\leq f_{i,j}$ and $f_i=\sum_{j=1}^nf_{i,j}$ and $\sum_{i=1}^nf_{i,j}\leq g_j$. Since $(\M,X)$ has classification of projections we can find projections $p_{i,j}\in \M$ with $\sum_{j=1}^np_{i,j}=p_i$ and $\tau(p_{i,j})=f_{i,j}(\tau)$ for all $\tau\in X$, as well as projections $q_{i,j}\in\M$ with $\sum_{i=1}^nq_{i,j}\leq q_j$ and $\tau(q_{i,j})=f_{i,j}(\tau)$ for all $\tau\in X$.  Then $p_{i,j}\sim q_{i,j}$ and hence, 
    \begin{equation}
[p]=\sum_{i=1}^n\sum_{j=1}^n[p_{i,j}]\leq \sum_{j=1}^n\sum_{i=1}^n[q_{i,j}]\leq [q],
    \end{equation}
    and $M_n(\M)$ has classification of projections.
\end{proof}

Note that the following will prove part of the final sentence of Theorem~\ref{PurenessThm} once its show \ref{it:Pureness.1}--\ref{it:Pureness.4} are equivalent.

\begin{proposition}\label{prop:MPure}
Let $(\M,X)$ be a II$_1$ factorial tracially complete $C^*$-algebra which has classification of projections and real rank zero.  Then $\M$ is pure and has stable rank one as a $C^*$-algebra.
\end{proposition}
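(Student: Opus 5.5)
The plan is to compute the Murray--von Neumann semigroup $V(\M)$ and then invoke Proposition~\ref{prop:ETpure} for pureness and Proposition~\ref{prop:RR0CancellationSR1} for stable rank one. First, Lemma~\ref{lem:RR0StableClassificationOfProjections} shows that $(M_n(\M),X_n)$ has classification of projections for every $n$, using real rank zero and classification of projections in $\M$.

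Next we identify $V(\M)$ with $\Aff(X)_+$. Define $\Phi\colon V(\M)\to\Aff(X)_+$ by $[p]\mapsto(\tau\mapsto(\mathrm{Tr}_n\otimes\tau)(p))$ for $p\in M_n(\M)$, where $\mathrm{Tr}_n\otimes\tau=n(\tr_n\otimes\tau)$. This map is well defined and additive, and it is continuous affine since $\tau\mapsto\tau(x)$ is continuous on $X$.

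For surjectivity, take $f\in\Aff(X)_+$ and choose $n$ with $f\leq n$. Existence of projections in $(M_n(\M),X_n)$ applied to $f/n$ gives $p\in M_n(\M)$ with $\Phi([p])=f$.

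For order embedding, let $p\in M_n(\M)$ and $q\in M_m(\M)$ with $\Phi([p])\leq\Phi([q])$. Pad both to $M_k(\M)$ with $k=\max(n,m)$. Then $(\tr_k\otimes\tau)(p)\leq(\tr_k\otimes\tau)(q)$ for all $\tau$, so comparison of projections in $(M_k(\M),X_k)$ gives $p\precsim q$. Hence $V(\M)\cong\Aff(X)_+$ as ordered semigroups. Since $(\M,X)$ is factorial and $\M$ has real rank zero, Proposition~\ref{prop:ETpure} then gives that $\M$ is pure.

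For stable rank one, cancellation in $V(\M)$ follows from the order embedding. If $p\oplus r\sim q\oplus r$, applying $\Phi$ and subtracting the finite quantity $\Phi([r])$ gives $\Phi([p])=\Phi([q])$, hence $p\sim q$. This is the same argument as in the footnote to Proposition~\ref{S4.2:Prop}\ref{it:sr1-omega}, or alternatively Proposition~\ref{Prop:MvN} at matrix level. Together with real rank zero, Proposition~\ref{prop:RR0CancellationSR1} yields stable rank one.

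The only delicate point is making sure the matrix-level classification supplied by Lemma~\ref{lem:RR0StableClassificationOfProjections} is exactly the hypothesis Proposition~\ref{prop:ETpure} requires, namely $V(\M)\cong\Aff(Y)_+$ with $Y=X$. Since $X_n\cong X$ affinely, with traces rescaled by $n$, this should be routine once the unnormalised trace is used to define $\Phi$.
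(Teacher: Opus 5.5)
Your proposal is correct and follows the paper's proof: the paper also uses Lemma~\ref{lem:RR0StableClassificationOfProjections} to get $V(\M)\cong\Aff(X)_+$, then applies Proposition~\ref{prop:ETpure} for pureness, and uses cancellation together with Proposition~\ref{prop:RR0CancellationSR1} for stable rank one. You give more detail than the paper does, namely the explicit map $\Phi$ and the surjectivity and order-embedding checks. The one step you leave implicit is injectivity of $\Phi$, which comes from comparison of projections in $M_k(\M)$ together with stable finiteness, exactly as in Proposition~\ref{Prop:MvN}.
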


\begin{proof}
    From Lemma~\ref{lem:RR0StableClassificationOfProjections}, we see that $V(\M)\cong \Aff T(X)_+$ as ordered semigroups, so pureness follows from Proposition~\ref{prop:ETpure}.  Also the computation of $V(\mathcal M)$ implies $\mathcal M$ has cancellation of projections, so it has stable rank one by Proposition~\ref{prop:RR0CancellationSR1}.
\end{proof}

\section{Uniform 2-norm comparison}\label{Sect8}

In this final section, we complete the proof of the regularity theorem for tracially complete $C^*$-algebras (Theorem~\ref{PurenessThm}) by proving that a II$_1$ factorial tracially complete $C^*$-algebra $(\mathcal M, X)$ has strict $(2, X)$-comparison and approximately constant rank functions if and only if $\mathcal M$ has strict comparison with respect to $X$ and has an almost divisible unit (\ref{it:Pureness.1}$\Leftrightarrow$\ref{it:Pureness.3} of Theorem~\ref{PurenessThm}). 
For the readers convenience, we shall reproduce Theorem~\ref{PurenessThm} at the end of the section, and provide a full proof, which mostly consists of references to the relevant results in the paper.  

We begin by recalling the uniform 2-norm variants of Cuntz subequivalence and strict comparison. 

\begin{definition}[cf.\ Definition~\ref{def:traciallyPure}]\label{def:tracial-subequiv}
    Let $A$ be a $C^*$-algebra and let $X \subseteq T(A)$ be a compact convex set.  For $n \in \mathbb N$ and $a, b \in M_n(A)_{+}$, write $a \precsim_X b$ if there is a sequence $(v_m)_{m=1}^\infty \subseteq M_n(A)$ such that 
    \begin{equation} 
        \lim_{m \rightarrow \infty} \|v_m^* b v_m - a \|_{2, X} = 0.
    \end{equation}
    We say $A$ has \emph{strict $(2, X)$-comparison} if for all $n \in \mathbb N$ and $a, b \in M_n(A)_+$ with $d_\tau(a) < d_\tau(b)$ for all $\tau \in X$, we have $a \precsim_X b$.
\end{definition} 

The following is an easy consequence of Kaplansky's density theorem for tracially complete $C^*$-algebras (\cite[Proposition~3.28]{carrion2023tracially}), which allows us to restrict our attention to tracially complete $C^*$-algebras by replacing a $C^*$-algebra with its tracial completion. 

\begin{proposition}\label{prop:dense-subalgebra}
	Let $A$ be a $C^*$-algebra, $X \subseteq T(A)$ be a compact convex subset, and $\iota \colon A \rightarrow \completion{A}{X}$ denote the canonical inclusion. Let $a, b \in A_+$. Then $a \precsim_X b$ in $A$ if and only if $\iota(a) \precsim_X \iota(b)$ in $\completion{A}{X}$.
\end{proposition}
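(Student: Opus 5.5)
The two directions need different tools. The forward direction is a direct computation with the canonical map. The reverse direction needs an approximation argument, and that is where the work lies.

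\emph{Forward direction.} Suppose $a \precsim_X b$ in $A$, witnessed by $(v_m)$ in $A$. Push the sequence forward through $\iota$. Each $\tau\in X$ extends to $\completion{A}{X}$ with $\tau(\iota(x))=\tau(x)$, so $\|\iota(x)\|_{2,X}=\|x\|_{2,X}$ for all $x\in A$. Hence
\begin{equation}
\|\iota(v_m)^*\iota(b)\iota(v_m)-\iota(a)\|_{2,X}=\|v_m^*bv_m-a\|_{2,X}\to 0,
\end{equation}
which gives $\iota(a)\precsim_X \iota(b)$ in $\completion{A}{X}$.

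\emph{Reverse direction.} Suppose $(w_m)$ in $\completion{A}{X}$ satisfies $\|w_m^*\iota(b)w_m-\iota(a)\|_{2,X}\to 0$. For each $m$ we want $v_m\in A$ with $\|\iota(v_m)^*\iota(b)\iota(v_m)-w_m^*\iota(b)w_m\|_{2,X}<1/m$.
\begin{itemize}
\item The obstacle is that the $2$-norm is not submultiplicative. It only satisfies $\|xy\|_{2,X}\le\|x\|\,\|y\|_{2,X}$ and $\|xy\|_{2,X}\le\|x\|_{2,X}\|y\|$, so approximating $w_m$ in $2$-norm controls products only if the operator norm stays bounded.
\item To get that control, use Kaplansky's density theorem for tracially complete $C^*$-algebras (\cite[Proposition~3.28]{carrion2023tracially}). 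Fix $m$ and put $R=\|w_m\|$. The theorem gives a sequence $(u_k)$ in $A$ with $\|u_k\|\le R$ and $\|\iota(u_k)-w_m\|_{2,X}\to 0$.
\end{itemize}
Then, by the triangle inequality,
\begin{equation}
\|\iota(u_k)^*\iota(b)\iota(u_k)-w_m^*\iota(b)w_m\|_{2,X}\le \|(\iota(u_k)-w_m)^*\iota(b)\iota(u_k)\|_{2,X}+\|w_m^*\iota(b)(\iota(u_k)-w_m)\|_{2,X}.
\end{equation}
The $2$-norm is invariant under adjoints, since each $\tau$ is a trace. Using this and the mixed estimates above, the right-hand side is at most
\begin{equation}
2R\|b\|\,\|\iota(u_k)-w_m\|_{2,X}\to 0 \quad (k\to\infty).
\end{equation}
Choose $k$ large enough that this quantity is below $1/m$, and set $v_m=u_k$.

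Combining the two estimates, $\|\iota(v_m^*bv_m-a)\|_{2,X}\to 0$. Since $\iota$ is isometric for $\|\cdot\|_{2,X}$, this gives $\|v_m^*bv_m-a\|_{2,X}\to0$, so $a\precsim_X b$ in $A$.

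The matrix case is identical after replacing $A$ by $M_n(A)$, since $M_n(\completion{A}{X})$ is the tracial completion of $M_n(A)$ with respect to $X_n$.
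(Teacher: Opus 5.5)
Your proof is correct and follows the paper's argument. The forward direction uses that $\iota$ preserves $\|\cdot\|_{2,X}$, and the reverse direction uses the tracially complete Kaplansky density theorem to get $2$-norm approximants with operator-norm control, combined with the mixed estimates $\|xy\|_{2,X}\le\|x\|\,\|y\|_{2,X}$ and $\|xy\|_{2,X}\le\|x\|_{2,X}\|y\|$. Your tolerance, expressed through $R=\|w_m\|$ and fixed before approximating, is slightly cleaner than the paper's $\epsilon/(\|b\|\|v\|+1)$, which refers to the approximant $v$ itself.
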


\begin{proof} 
    Fix $a, b \in A_+$. 
    Since $\iota$ is a $\|\cdot\|_{2,X}$-preserving $^*$-homomorphism, it is clear that $a \precsim_X b$ in $A$ implies $\iota(a) \precsim_X \iota(b)$.
    
    Suppose $\iota(a) \precsim_X \iota(b)$ and $\epsilon > 0$.  Let $\bar v \in \completion{A}{X}$ be such that $\| \bar v^* \iota(b) \bar v - a \|_{2, X} < \epsilon$.
	Using the tracially complete analogue of Kaplansky's density theorem (\cite[Proposition~3.28]{carrion2023tracially}),  there is $v \in A$ with $\|v\| \leq \|\bar v\|$ such that $\|\iota(v) - \bar v\|_{2, X} < \epsilon/ (\|b\| \|v\| + 1)$.  Then $\|v^* b v - a \|_{2, X} < 3 \epsilon$, so $a \precsim_X b$.
\end{proof}

As with the usual Cuntz subequivalence relation, the sequence $(v_m)_{m=1}^\infty$ in Definition~\ref{def:tracial-subequiv} need not be bounded. R{\o}rdam's lemma (\cite[Proposition 2.4]{Rordam92}) is typically used to work around this for the usual Cuntz subequivalence. Here, we develop an analogous tool for the uniform 2-norm setting. We begin with the following lemma.

\begin{lemma}\label{lem:controlled-subequiv1}
	Let $A$ be a $C^*$-algebra and let $T(A)$ be a compact convex subset.  For all $a, b \in A_+$ and $w \in A$, there is $v \in A$ such that $\|v\| \leq \|a\|^{1/2}$, $vv^* \in \overline{bAb}$, and 
	\begin{equation}
		\quad \|v^*v - a\|_{2, \tau} \leq  \|wbw^* - a \|_{2, \tau}^{1/2}, \quad \tau \in T(A). 
	\end{equation}
\end{lemma}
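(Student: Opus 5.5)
The plan is to build $v$ from $w$ and $b$ by functional calculus, so that $vv^*$ lies in $\overline{bAb}$ and $v^*v$ is a norm-truncation of $wbw^*$. The bound should then come from a trace-norm estimate converted into a $2$-norm estimate. Throughout, set $\lambda\coloneqq\|a\|$, $x\coloneqq b^{1/2}w^*$ and $c\coloneqq x^*x=wbw^*$.

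\textbf{Construction of $v$.} Let $h\in C([0,\infty))$ be given by $h(t)\coloneqq\min\{1,(\lambda/t)^{1/2}\}$ (with $h(0)=1$), so that $t\,h(t)^2=\min\{t,\lambda\}$. Put $v\coloneqq x\,h(c)$. Then:
\begin{itemize}
\item $v^*v=h(c)\,c\,h(c)=\min\{c,\lambda\}\eqqcolon c\wedge\lambda$, which gives $\|v\|\leq\lambda^{1/2}=\|a\|^{1/2}$.
\item $vv^*=b^{1/2}w^*h(c)^2wb^{1/2}\in\overline{bAb}$.
\end{itemize}
If $A$ is non-unital, $h(c)$ is interpreted in the unitisation; $v$ itself still lies in $A$ because $x\in A$. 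It remains to compare $\|c\wedge\lambda-a\|_{2,\tau}$ with $\|c-a\|_{2,\tau}$, for each fixed $\tau$.

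\textbf{Trace-norm estimate.} Fix $\tau$ and work in $\pi_\tau(A)''$. Let $e\coloneqq\chi_{(\lambda,\infty)}(c)$. Write
\[
c\wedge\lambda-a=(c-a)-(c-\lambda)_+ .
\]
Since $a\leq\lambda$, we have $e(c-a)e\geq e(c-\lambda)e=(c-\lambda)_+\geq0$. Hence
\[
\tau((c-\lambda)_+)\leq\tau(e(c-a)e)\leq\|c-a\|_{1,\tau}\leq\|c-a\|_{2,\tau}.
\]
It follows that $\|c\wedge\lambda-a\|_{1,\tau}\leq 2\|c-a\|_{2,\tau}$.

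\textbf{Passage to the $2$-norm.} Let $y\coloneqq c\wedge\lambda-a$. Both $c\wedge\lambda$ and $a$ lie between $0$ and $\lambda$, so $\|y\|\leq\lambda$. Then
\[
\|y\|_{2,\tau}^2=\tau(y^2)\leq\|y\|\,\|y\|_{1,\tau}\leq 2\lambda\|c-a\|_{2,\tau}.
\]
This produces the square-root dependence in the statement, but only up to the factor $(2\|a\|)^{1/2}$.

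\textbf{Main obstacle.} The hard step is removing this constant to get exactly $\|wbw^*-a\|_{2,\tau}^{1/2}$.
\begin{itemize}
\item First I would try to sharpen the trace estimate by cutting down with $e$ and $1-e$ separately. On $1-e$ the truncation does nothing. On $e$, I would try to use $e(c-a)e\geq(c-\lambda)_+$ to show that $\tau(y^2)$ is controlled by $\tau(|c-a|\,y)$ without the factor $2$.
\item Alternatively, the normalisation in which the lemma is applied may only need $\|a\|\leq1$ with a harmless constant. In that case the above argument suffices after rescaling, with the constant absorbed later into the uniform-$2$-norm version of R{\o}rdam's lemma.
\end{itemize}
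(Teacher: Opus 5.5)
Your $v=b^{1/2}w^*h(wbw^*)$ is exactly the element the paper uses. The paper obtains it by applying the odd truncation $g(t)=t\,h(t^2)$ to the self-adjoint dilation $\begin{pmatrix}0&wb^{1/2}\\ b^{1/2}w^*&0\end{pmatrix}$. Your checks of $\|v\|\le\|a\|^{1/2}$, $vv^*\in\overline{bAb}$ and $v^*v=\min\{wbw^*,\|a\|\}$ are fine.

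The gap is the estimate. Passing through $\|\cdot\|_{1,\tau}$ and $\tau(y^2)\le\|y\|\,\|y\|_{1,\tau}$ only gives $\|v^*v-a\|_{2,\tau}\le(2\|a\|)^{1/2}\|wbw^*-a\|_{2,\tau}^{1/2}$, and you leave the removal of the constant open. The missing idea is that $G(t)=\min\{t,\|a\|\}$ is $1$-Lipschitz, and Lipschitz functional calculus is Lipschitz for $\|\cdot\|_{2,\tau}$ even when the self-adjoints do not commute. By Connes' joint distribution trick there is a probability measure $\mu$ on $\sigma(c)\times\sigma(a)$ with $\tau(\phi(c)\psi(a))=\int\phi(s)\psi(t)\,d\mu(s,t)$ for continuous $\phi,\psi$. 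Expanding the squares then gives
\[
\|G(c)-G(a)\|_{2,\tau}^2=\int|G(s)-G(t)|^2\,d\mu(s,t)\le\int|s-t|^2\,d\mu(s,t)=\|c-a\|_{2,\tau}^2 .
\]
Since $G(c)=v^*v$ and $G(a)=a$, this yields $\|v^*v-a\|_{2,\tau}\le\|wbw^*-a\|_{2,\tau}$ directly, with no constant and no square root. This is the paper's argument. Your first suggested fix (comparing $y$ with $c-a$ without the factor $2$) points this way, but to compare $G(c)-G(a)$ with $c-a$ when $c$ and $a$ do not commute you need exactly this device.

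You should also stop trying to reach the exponent $\tfrac12$ exactly: read literally, the displayed inequality is false once the quantities exceed $1$. Take $b=0$. Then $vv^*\in\overline{bAb}=\{0\}$ forces $v=0$, and the claim becomes $\|a\|_{2,\tau}\le\|a\|_{2,\tau}^{1/2}$, which fails for instance with $A=\mathbb C$ and $a=4$. What the paper's proof actually establishes is the linear bound above. That bound implies the square-root form whenever $\|wbw^*-a\|_{2,\tau}\le 1$, and the lemma is only ever applied with this quantity below an arbitrary $\epsilon$ (Proposition~\ref{prop:controlled-subequiv2} and the comparison of $A$ with its tracial completion). Your weaker bound would also suffice for those applications, since it tends to $0$ uniformly in $\tau$. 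It does not, however, prove the lemma. Your second proposed fix (normalising $\|a\|\le 1$ and absorbing constants later) changes the statement rather than proving it; even for $\|a\|\le1$ your estimate carries an extra factor $\sqrt 2$.
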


\begin{proof}
	Define $g \colon \mathbb R \rightarrow \mathbb R$ by
	\begin{equation}
		g(t) \coloneqq
		\begin{cases} 
			-\|a\|^{1/2}, & t \in \left(-\infty, \|a\|^{1/2}\right), \\ 
			t, & t \in \left[-\|a\|^{1/2}, \|a\|^{1/2}\right], \\
			\|a\|^{1/2}, & t \in \left(\|a\|^{1/2}, \infty\right), 
		\end{cases}
	\end{equation}
	and note that $g$ is continuous and odd.  By approximating $g(t)$ by linear combinations of monomials of odd degree, one can see that $g\left( \begin{pmatrix} 0 & w b^{1/2} \\ b^{1/2}w^* & 0 \end{pmatrix} \right)$ has the form 
	\begin{equation} 
		g\left( \begin{pmatrix} 0 & w b^{1/2} \\ b^{1/2}w^* & 0 \end{pmatrix} \right) = \begin{pmatrix} 0 & v^* \\ v & 0 \end{pmatrix} 
	\end{equation}
    for some $v \in A$.
	Note also that\footnote{When $g$ is a polynomial, write $g(t) = th(t^2)$ for a polynomial $h$.  Then using $g(t)^2=t^2h(t^2)^2$, we have
    \begin{equation}
    \begin{pmatrix} v^*v & \\ & vv^* \end{pmatrix} =
    \begin{pmatrix} wbw^* & \\ & b^{1/2}w^*wb^{1/2} \end{pmatrix} h\left(\begin{pmatrix} wbw^* & \\ & b^{1/2}w^*wb^{1/2} \end{pmatrix} \right)^2 \end{equation}
    and so $v^*v = wbw^* h(wbw^*)^2 = g((wbw^*)^{1/2})^2$ and likewise for the other one.}
	\begin{equation}
		v^*v = g((wbw^*)^{1/2})^2 \quad \text{and} \quad vv^* = g((b^{1/2} w^*w b^{1/2})^{1/2})^2.
	\end{equation}
	Then $\|v\| \leq \|a\|^{1/2}$ and $vv^* \in \overline{bAb}$.  To see the final estimate, note that for all $\tau \in T(A)$,
	\begin{equation}
		\begin{split}
			\|v^*v -a\|_{2, \tau}
			&= \|g((wbw^*)^{1/2})^2 - g(a^{1/2})^2 \|^2_{2, \tau} \\
			&\leq \| w b w^* - a\|_{2, \tau},
		\end{split}
	\end{equation}
	using Connes' joint distribution trick (\cite[Proposition~1.1]{Connes:Annals}) and noting that the function $G:[0, \infty) \rightarrow \mathbb R$ given by $t \mapsto g(t^{1/2})^2$ satisfies
	\begin{equation}
		G(t) = \begin{cases} 
			t, & t \in \left[0, \|a\|\right], \\ 
			\|a\|, & t \in \left(\|a\|, \infty\right),
		\end{cases}
	\end{equation}
    and so if $H,K \in L^2(\Omega,\mu)_+$ for a Radon measure space $(\Omega,\mu)$ then it follows that $\|G(H)-G(K)\|_{2,\mu} \leq \|H-K\|_{2,\mu}$. 
\end{proof}

  In the sequel, we make use of the continuous functions $f_\gamma \colon [0, \infty)\rightarrow [0,1]$ given by 
  \begin{equation}
        f_\gamma(t)\coloneqq \begin{cases}
            \frac{1}{\gamma}t, & t \in [0, \gamma],\\
            1, & t \in (\gamma, \infty).
        \end{cases}
    \end{equation}
    Note that $\lim_{\gamma \to 0} f_\gamma(t)t = t$ for all $t \in [0,\infty)$.
    
    The following result establishes equivalent characterisations of $(2, X)$-Cuntz subequivalence. This is closely related to R{\o}rdam's lemma in the operator norm setting; see \cite[Theorem 2.7]{GP_Cuntz_semi} and \cite[Proposition 2.4]{Rordam92}.
\begin{proposition}\label{prop:controlled-subequiv2}
	Let $(\mathcal M, X)$ is a tracially complete $C^*$-algebra. Let $a, b \in \mathcal M_+$. Then the following are equivalent:
	\begin{enumerate}
		\item \label{it:controlled-subequiv2-1} The subequivalence $a \precsim_X b$ holds.
		\item \label{it:controlled-subequiv2-2} For all $\epsilon > 0$, there is $v \in \mathcal M$ such that 
		\begin{equation} 
			\|v^*v - a \|_{2, X} < \epsilon \quad \text{and} \quad vv^* \in \overline{b\mathcal Mb}.
		\end{equation}
		\item \label{it:controlled-subequiv2-3} For all $\epsilon > 0$, there is $v \in \mathcal M$ such that 
		\begin{equation} 
			\|v\| \leq \|a\|^{1/2}, \quad  \|v^*v - a \|_{2, X} < \epsilon, \quad \text{and} \quad vv^* \in \overline{b\mathcal Mb}.
		\end{equation}
		\item\label{it:controlled-subequiv2-4} For all $\epsilon > 0$, there is $\delta > 0$ and $v \in \mathcal M$ such that
		\begin{equation} \label{it:controlled-eq-4}
			\|v\| \leq \|a\|^{1/2}, \quad  \|v^*v - a \|_{2, X} < \epsilon, \quad \text{and} \quad vv^* \in \overline{(b-\delta)_+\mathcal M(b - \delta)_+}.
		\end{equation}
	\end{enumerate}
\end{proposition}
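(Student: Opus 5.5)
The plan is to prove the cycle \ref{it:controlled-subequiv2-4}$\Rightarrow$\ref{it:controlled-subequiv2-3}$\Rightarrow$\ref{it:controlled-subequiv2-2}$\Rightarrow$\ref{it:controlled-subequiv2-1}$\Rightarrow$\ref{it:controlled-subequiv2-4}. The first two implications are immediate, since $\overline{(b-\delta)_+\M(b-\delta)_+}\subseteq\overline{b\M b}$ and since \ref{it:controlled-subequiv2-2} just drops the norm bound.

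\textbf{The implication \ref{it:controlled-subequiv2-2}$\Rightarrow$\ref{it:controlled-subequiv2-1}.} Given $v$ with $vv^*\in\overline{b\M b}$ and $\|v^*v-a\|_{2,X}<\epsilon$, we have $vv^*=\lim_{\gamma\to 0} f_\gamma(b)^{1/2}\,vv^*f_\gamma(b)^{1/2}$ in operator norm. Indeed, $f_\gamma(b)$ acts approximately as a unit on $\overline{b\M b}$: $f_\gamma(b)c\to c$ for $c$ in the hereditary subalgebra. So $v^*v\approx v^*f_\gamma(b)v$ in norm. Since $f_\gamma(t)=t\cdot h_\gamma(t)$ with $h_\gamma(t)=\min\{1/\gamma,1/t\}$, we can write $v^*f_\gamma(b)v=w^*bw$ with $w\coloneqq h_\gamma(b)^{1/2}v$. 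Hence $\|w^*bw-a\|_{2,X}<2\epsilon$ for small $\gamma$, and running over $\epsilon=1/m$ gives the sequence required in the definition of $\precsim_X$.

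\textbf{The implication \ref{it:controlled-subequiv2-1}$\Rightarrow$\ref{it:controlled-subequiv2-4}.} Fix $\epsilon>0$. The first step is to move the cut-down onto $b$: because $f_\gamma(t)t\to t$ uniformly on $[0,\|b\|]$, the elements $(b-\delta)_+$, suitably rescaled, approximate $b$ in norm. More concretely, $\|b-b_\delta\|\le\delta$ where $b_\delta\coloneqq(b-\delta)_+$ satisfies $b\ge b_\delta$; what we actually use is a norm estimate of the form $\|b-(b-\delta)_+\|\le\delta$. Choose $w_m$ with $\|w_m^*bw_m-a\|_{2,X}\to0$. The difficulty is that $\|w_m\|$ may be unbounded, so replacing $b$ by $(b-\delta)_+$ costs an error $\delta\|w_m\|^2$, which is not controlled.

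To get around this, first apply Lemma~\ref{lem:controlled-subequiv1} (with the roles arranged so that $v^*v$ approximates $a$; take $w=w_m^*$). This produces $v_m$ with $\|v_m\|\le\|a\|^{1/2}$, $v_mv_m^*\in\overline{b\M b}$ and $\|v_m^*v_m-a\|_{2,X}\le\|w_m^*bw_m-a\|_{2,X}^{1/2}\to0$. So \ref{it:controlled-subequiv2-3} holds, with norm control. Now fix $m$ so that the error is below $\epsilon/3$, and put $v\coloneqq v_m$, $c\coloneqq vv^*\in\overline{b\M b}$. Since $g_\delta(b)\to$ a unit for $\overline{b\M b}$ as $\delta\to0$, where $g_\delta$ is $0$ on $[0,\delta]$, $1$ on $[2\delta,\infty)$ and linear in between, we have $\|g_\delta(b)v-v\|\to0$ in operator norm. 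Set $v'\coloneqq g_\delta(b)v$. Then $v'v'^*=g_\delta(b)cg_\delta(b)\in\overline{(b-\delta)_+\M(b-\delta)_+}$, because $g_\delta(b)$ lies in that hereditary subalgebra. We also have $\|v'\|\le\|v\|\le\|a\|^{1/2}$, and $\|v'^*v'-v^*v\|\le2\|a\|^{1/2}\|v'-v\|$, which is small. Together these give \eqref{it:controlled-eq-4}.

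The main obstacle is the step just described: obtaining bounded witnesses $v_m$ when the given sequence $(w_m)$ is unbounded. The key input is Lemma~\ref{lem:controlled-subequiv1}, whose square-root loss in the estimate is harmless here. A minor point to check is the orientation convention, $v^*bv$ in Definition~\ref{def:tracial-subequiv} versus $wbw^*$ in the lemma; this is handled by substituting $w=w_m^*$.
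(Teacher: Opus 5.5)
Your proposal is correct and follows essentially the paper's proof. The cycle (iv)$\Rightarrow$(iii)$\Rightarrow$(ii)$\Rightarrow$(i)$\Rightarrow$(iv) is the same, with (ii)$\Rightarrow$(i) going through the approximate unit $f_\gamma(b)$ and (i)$\Rightarrow$(iv) going through Lemma~\ref{lem:controlled-subequiv1}. Your exact factorisation $f_\gamma(b)=h_\gamma(b)^{1/2}bh_\gamma(b)^{1/2}$ simply replaces the paper's appeal to $f_\delta(b)\precsim b$.

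The only real difference is in (i)$\Rightarrow$(iv), and there your stated obstacle is not genuine. Condition (iv) allows $\delta$ to depend on $w$, so the paper fixes $w$ first and then takes $\delta$ small enough that $\|w^*(b-\delta)_+w-a\|_{2,X}<\epsilon$. It then applies the lemma to $(b-\delta)_+$ directly. You instead apply the lemma to $b$ and afterwards cut down by $g_\delta(b)$; this also works.
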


\begin{proof}
	The implications \ref{it:controlled-subequiv2-4}$\Rightarrow$\ref{it:controlled-subequiv2-3}$\Rightarrow$\ref{it:controlled-subequiv2-2} are immediate.  We'll show \ref{it:controlled-subequiv2-2}$\Rightarrow$\ref{it:controlled-subequiv2-1}$\Rightarrow$\ref{it:controlled-subequiv2-4}.  
	
	Assume \ref{it:controlled-subequiv2-2} holds and fix $\epsilon > 0$. Choose $v \in \mathcal M$ as in \ref{it:controlled-subequiv2-2}.
    Since the net $\{f_\gamma(b)\}_{\gamma>0}$ forms an approximate unit for the hereditary subalgebra $\overline{b\mathcal M b}$, there exists $\delta > 0$, such that $\|f_\delta(b) v - v\| <  \epsilon/(\|v\|+1)$. Since $f_\delta(b) \precsim b$, there exists $w \in \mathcal M$ with $\|w^* b w - f_\delta(b)\| < \epsilon/(\|v\|^2+1)$.  Then $wv \in \mathcal M$ and satisfies
	\begin{equation}
		\begin{split}
			\|(wv)^* b (wv) - a\|_{2, X}
			&\leq \|v\|^2 \|w^*b w - f_\delta(b)\|_{2, X} + \|v\| \|f_\delta(b) v - v\| \\
			&\quad + \|v^*v - a\|_{2, X} \\
			& < 3\epsilon.
		\end{split}
	\end{equation}
	Therefore, $a \precsim_X b$.
	
	Now assume \ref{it:controlled-subequiv2-1} holds.  Given $\epsilon > 0$, there is $w \in \mathcal M$ such that $\|w^*b w - a\|_{2, X} < \epsilon$.  Then there is $\delta > 0$ such that 
	\begin{equation}
		\|w^*(b - \delta)_+ w - a \|_{2, X} < \epsilon.
	\end{equation}
	Apply Lemma~\ref{lem:controlled-subequiv1} with $a, (b - \delta)_+\in  \mathcal M_+$ and $w^*\in \mathcal M$, to obtain $v \in \mathcal M$ satisfying \eqref{it:controlled-eq-4}.
\end{proof}

Although we shall not need it in this section, it is likely to be of independent interest that the relation $\precsim_X$ is transitive.
\begin{proposition}\label{prop:subequiv-transitive}
	If $(\mathcal M, X)$ is a tracially complete $C^*$-algebra and $a, b, c \in \mathcal M_+$ with $a \precsim_X b$ and $b \precsim_X c$, then $a \precsim_X c$.
\end{proposition}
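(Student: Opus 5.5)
The plan is to compose approximate witnesses, using Proposition~\ref{prop:controlled-subequiv2} to keep all the operators involved bounded. The difficulty with naive composition is that a witness for $a \precsim_X b$ may have large norm, and then $\|\cdot\|_{2,X}$-errors in the second step get amplified. So we normalise the first witness before composing.

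Fix $\epsilon > 0$.

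\textbf{Step 1: a bounded witness for $a \precsim_X b$.} By Proposition~\ref{prop:controlled-subequiv2}\ref{it:controlled-subequiv2-4}, choose $\delta > 0$ and $v \in \mathcal M$ with
\begin{equation}
    \|v\| \leq \|a\|^{1/2}, \quad \|v^*v - a\|_{2,X} < \epsilon, \quad vv^* \in \overline{(b-\delta)_+\mathcal M(b-\delta)_+}.
\end{equation}
Because $f_{\delta/2}(b)$ acts as a unit on $(b-\delta)_+$, we have $f_{\delta/2}(b)v = v$. It is convenient to rewrite this as
\begin{equation}
    v = b^{1/2}\, g(b)\, v, \quad \text{where } g(t) = f_{\delta/2}(t)\,t^{-1/2}.
\end{equation}
The function $g$ is bounded on the spectrum of $b$, with $g \leq (2/\delta)^{1/2}$, so $\|g(b)v\| \leq (2/\delta)^{1/2}\|a\|^{1/2} =: K$. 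Set $u \coloneqq g(b)v$. Then $v^*v = u^*bu$, and $\|u\| \leq K$.

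\textbf{Step 2: a witness for $b \precsim_X c$, chosen after $K$ is known.} Since $b \precsim_X c$, pick $w \in \mathcal M$ with
\begin{equation}
    \|w^*cw - b\|_{2,X} < \epsilon/(K^2+1).
\end{equation}
The point is that $K$ was fixed in Step 1, so we are free to make this error as small as we like relative to it.

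\textbf{Step 3: compose and estimate.} Consider the composite witness $wu$. Then
\begin{equation}
    (wu)^*c(wu) - v^*v = u^*(w^*cw - b)u.
\end{equation}
The main obstacle is the two-sided estimate
\begin{equation}
    \|xyz\|_{2,\tau} \leq \|x\|\,\|y\|_{2,\tau}\,\|z\|,
\end{equation}
which holds because $\tau$ is a trace. Applying it uniformly over $\tau \in X$ with $x = u^*$, $y = w^*cw - b$, $z = u$ gives
\begin{equation}
    \|(wu)^*c(wu) - v^*v\|_{2,X} \leq K^2 \cdot \frac{\epsilon}{K^2+1} < \epsilon.
\end{equation}

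\textbf{Step 4: conclude.} Combining with Step 1,
\begin{equation}
    \|(wu)^*c(wu) - a\|_{2,X} \leq \|(wu)^*c(wu) - v^*v\|_{2,X} + \|v^*v - a\|_{2,X} < 2\epsilon.
\end{equation}
Since $\epsilon > 0$ was arbitrary, $a \precsim_X c$.
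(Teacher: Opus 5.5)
Your proof is correct and follows essentially the same route as the paper's. Both take a bounded witness $v$ from Proposition~\ref{prop:controlled-subequiv2}\ref{it:controlled-subequiv2-4}, renormalise it by a bounded function of $b$ to obtain the composite witness (your $w\,g(b)\,v$; the paper's $w\,h(b)\,v$ with $h(t)^2t=f_\delta(t)$), and then choose $w$ with $\|w^*cw-b\|_{2,X}$ small relative to the resulting norm bound, which the paper takes to be $\delta\epsilon$.
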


\begin{proof}
	Since $a\precsim_X b$, using Proposition~\ref{prop:controlled-subequiv2}\ref{it:controlled-subequiv2-4},  for $\epsilon > 0$,  there exists $\delta > 0$ and $v \in \mathcal M$, such that 
    \begin{equation} \label{110_trans_eq_1}
			\|v\| \leq \|a\|^{1/2}, \quad  \|v^*v - a \|_{2, X} < \epsilon, \quad \text{and} \quad vv^* \in \overline{(b-\delta)_+\mathcal M(b - \delta)_+}.
		\end{equation}
    Moreover, as $b\precsim_X c$,  let $w \in \mathcal M$ be such that 
	\begin{equation}\label{110_trans_eq_2}
		\|w^* c w - b \|_{2, X} < \delta \epsilon. 
	\end{equation}
	Define the continuous function $h \colon [0, \infty) \rightarrow [0, \infty)$ by
	\begin{equation}
		h(t) \coloneqq \begin{cases} \delta^{-1/2}, & t \in [0, \delta), \\t^{-1/2}, & t \in (\delta, \infty), \end{cases}
	\end{equation}
	so that $h(t)^2 t = f_\delta(t)$, for all $t \in [0, \infty)$. Since $f_\delta(b)(b-\delta)_+= (b-\delta)_+$, we see that  $f_\delta(b)v=v$. As $\|v\|\leq \|a\|^{1/2} $, $w h(b) v \in \mathcal M$ satisfies
	\begin{equation}
		\begin{array}{rcl}
			\| (wh(b)v)^* c (wh(b)v) - a \|_{2, X}
			&\leq& \|v^* h(b) (w^*cw - b) h(b) v \|_{2, X}\\
            & &\quad\quad + \| v^*h(b)^2 b v - a \|_{2, X} \\
            &\leq&  \|v\|^2 \|h\|^2 \|w^*c w - b \|_{2, X} \\
            & &\quad \quad +   \| v^* f_\delta(b)v - a \|_{2, X}\\
            & \stackrel{\eqref{110_trans_eq_2}}\leq& \|a\|\delta^{-1}(\delta\epsilon)  + \| v^*v - a \|_{2, X} \\
			&\stackrel{\eqref{110_trans_eq_1}}<& (\|a\| + 1)\epsilon,
		\end{array}
	\end{equation}
	which shows that $a \precsim_X c$.
\end{proof}

It is immediate from Proposition~\ref{prop:dense-subalgebra} that is a $C^*$-algebra $A$ has strict $(2, X)$-comparison for a compact convex set of traces $X \subseteq T(A)$, then the tracial completion $\completion{A}{X}$ has strict $(2, X)$-comparison.  The following result gives a converse. 

\begin{proposition}
	Let $A$ be a $C^*$-algebra, let $X \subseteq T(A)$ be a compact convex set of traces, and let $\mathcal M = \completion{A}{X}$, so that $(\mathcal M, X)$ is a tracially complete $C^*$-algebra.  Then $A$ has strict $(2, X)$-comparison if and only if $(\mathcal M, X)$ has strict $(2, X)$-comparison.
\end{proposition}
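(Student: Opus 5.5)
The forward direction is immediate from Proposition~\ref{prop:dense-subalgebra}, as the paper remarks just before the statement. The only extra point is that $\iota$ preserves rank functions: each $\tau\in X$ extends to $\completion{A}{X}$ with $d_\tau(\iota(a))=d_\tau(a)$, since $\tau(\iota(a)^{1/n})=\tau(a^{1/n})$. Matrix levels are handled by applying the same argument to $M_n(A)$, whose completion with respect to $X_n$ is $M_n(\completion{A}{X})$.

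So the work is the converse. Assume $(\mathcal M,X)$ has strict $(2,X)$-comparison, and let $a,b\in M_n(A)_+$ satisfy $d_\tau(a)<d_\tau(b)$ for all $\tau\in X$. Replacing $A$ by $M_n(A)$ and $X$ by $X_n$ (Remark~\ref{2909_rem_amplification}), I may take $n=1$. The plan has four steps.

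First, I check that the hypothesis transfers unchanged to $\mathcal M$: for $\tau\in X$ the extended trace satisfies $d_\tau(\iota(a))=d_\tau(a)<d_\tau(b)=d_\tau(\iota(b))$. This uses that $\iota$ commutes with continuous functional calculus, so $\iota(a)^{1/n}=\iota(a^{1/n})$, and that the extension $\tilde\tau$ agrees with $\tau$ on $\iota(A)$.

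Second, strict $(2,X)$-comparison of $\mathcal M$ gives $\iota(a)\precsim_X\iota(b)$ in $\mathcal M$.

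Third, Proposition~\ref{prop:dense-subalgebra}, whose proof is Kaplansky density, pulls this back to $a\precsim_X b$ in $A$. That proposition is stated for $a,b\in A_+$, and its argument works verbatim in $M_n(A)$. Alternatively it can be applied to the tracially complete $C^*$-algebra $(M_n(\mathcal M),X_n)=(\completion{M_n(A)}{X_n},X_n)$, using Kaplansky density there (\cite[Proposition~3.28]{carrion2023tracially}).

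The main obstacle is bookkeeping rather than analysis. I have to be sure that the identification $\completion{M_n(A)}{X_n}\cong M_n(\completion{A}{X})$ is compatible with the traces, so that the amplified hypothesis on $\mathcal M$ is exactly the one used. I would verify this by noting that the uniform $2$-norms on $M_n(A)$ for $X_n$ are computed entrywise up to the factor $n$. Hence $\|\cdot\|_{2,X_n}$-Cauchy bounded sequences in $M_n(A)$ are exactly matrices of $\|\cdot\|_{2,X}$-Cauchy bounded sequences, which gives the isomorphism of completions together with the traces.
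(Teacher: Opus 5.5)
You have the two directions the wrong way round. Your Steps 1--3 are correct, but they prove only that strict $(2,X)$-comparison of $(\mathcal M,X)$ passes to $A$. That is the easy half, and it is exactly the first paragraph of the paper's proof. The implication you call immediate, from $A$ to $\mathcal M=\completion{A}{X}$, is where the content lies, and Proposition~\ref{prop:dense-subalgebra} does not give it. That proposition only compares elements of $\iota(A)$. Strict $(2,X)$-comparison of $\mathcal M$ has to be checked for arbitrary $a,b\in M_n(\mathcal M)_+$ with $d_\tau(a)<d_\tau(b)$ on $X$, and these need not lie in $\iota(M_n(A))$. Your ``extra point'' that $\iota$ preserves rank functions again concerns only elements of $A$. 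So both halves of your write-up argue the same implication, and the other one is missing. The sentence just before the proposition in the paper states the easy implication backwards, which is presumably what misled you; the paper's own proof treats $\mathcal M\Rightarrow A$ as the immediate direction.

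The missing direction is not a formality, for two reasons. First, $d_\tau$ is not $\|\cdot\|_{2,X}$-continuous. Kaplansky approximants $a_0,b_0\in M_n(A)_+$ of $a,b$ can have larger rank than $a$: if $A$ is unital, $a_0+\eta1$ is still a good approximant but has $d_\tau\equiv1$. They can also have smaller rank than $b$, for instance on passing to $(b_0-\eta)_+$. Second, even a subequivalence $a_0\precsim_X b_0$ in $A$ cannot be transferred back to $a,b$ without norm control on the implementing elements.

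The paper handles this as follows.
\begin{enumerate}
\item Fix $\epsilon>0$. By Dini's theorem and compactness of $X$, choose $\delta>0$ with $\tau(f_\epsilon(a))<\tau(f_\delta((b-\delta)_+))$ for all $\tau\in X$. By compactness this inequality has a uniform gap.
\item Both sides depend $\|\cdot\|_{2,X}$-continuously on $a$ and $b$ within bounded sets, uniformly in $\tau$. So Kaplansky density gives $a_0,b_0\in M_n(A)_+$ with $\|a_0\|\le\|a\|$, with $\|a-a_0\|_{2,X}$ and $\|f_\delta(b)-f_\delta(b_0)\|_{2,X}$ small, and with the same strict inequality. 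Hence $d_\tau((a_0-\epsilon)_+)\le\tau(f_\epsilon(a_0))<\tau(f_\delta((b_0-\delta)_+))\le d_\tau((b_0-\delta)_+)$ on $X$.
\item Comparison in $A$ together with Lemma~\ref{lem:controlled-subequiv1} gives $v\in M_n(A)$ with $\|v\|^2\le\|a\|$, with $v^*v$ close to $(a_0-\epsilon)_+$ in $\|\cdot\|_{2,X}$, and with $vv^*$ in the hereditary subalgebra generated by $(b_0-\delta)_+$. In particular $f_\delta(b_0)v=v$.
\item Take $w\in M_n(\mathcal M)$ with $w^*bw=f_\delta(b)$, for instance $w=k(b)^{1/2}$ with $k(t)=1/\max\{t,\delta\}$. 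Then estimate $\|(wv)^*b(wv)-a\|_{2,X}$ using $\|v^*(f_\delta(b)-f_\delta(b_0))v\|_{2,X}\le\|v\|^2\|f_\delta(b)-f_\delta(b_0)\|_{2,X}$.
\end{enumerate}
The last estimate is why the norm bound on $v$ is essential.
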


\begin{proof}
	Since the canonical map $A \rightarrow \mathcal M$ is isometric with respect to the uniform 2-norms, we may replace $A$ with its image under this map and identify $A$ with a $\|\cdot\|_{2, X}$-dense $C^*$-subalgebra of $\mathcal M$.
	
	First, suppose that $(\mathcal M, X)$ has strict $(2, X)$-comparison. Fix $n \in \mathbb N$ and let $a, b \in M_n(A)_+$ be such that $d_\tau(a) < d_\tau(b)$ for all $\tau \in X$.  Then $a \precsim_{X} b$ in $M_n(\mathcal M)$, and hence  $a \precsim_{X} b$ in $M_n(A)$ by Proposition~\ref{prop:dense-subalgebra}.
	
	Conversely, suppose that $A$ has strict $(2, X)$-comparison. Let $n \in \mathbb N$ and $a, b \in M_n(\mathcal M)_+$ be such that $d_\tau(a) < d_\tau(b)$.  Fix $\epsilon > 0$.  Then for all $\tau \in X$ and $\epsilon > 0$, there is $\delta > 0$ such that
	\begin{equation}
		\tau(f_\epsilon(a)) < \tau(f_\delta((b-\delta)_+)),\quad\tau\in X.
	\end{equation}
	Let $w \in M_n(\mathcal M)$ be such that
	\begin{equation}\label{0110_2normEq_00}
		\|w^* b w - f_\delta(b)\| < \epsilon.
	\end{equation}
	Let $a_0, b_0 \in M_n(A)$ be such that 
	\begin{equation}\label{0110_2normEq_0}
		\|a_0\| \leq \|a\|, \quad \|a - a_0\|_{2, X} < \epsilon, \quad \|f_\delta(b) - f_\delta(b_0)\|_{2, X} < \epsilon,
	\end{equation}
	and
	\begin{equation}
		\tau(f_\epsilon(a_0)) < \tau(f_\delta((b_0 - \delta)_+)), \quad \tau \in X.
	\end{equation}
	Then for each $\tau \in X$, we have
	\begin{equation}
		d_\tau((a_0 - \epsilon)_+) \leq \tau(f_\epsilon(a_0)) < \tau(f_\delta((b_0-\delta)_+)) < d_\tau((b_0 - \delta)_+),
	\end{equation}
	so $(a_0 - \epsilon)_+ \precsim_{X} (b_0 - \delta)_+$ in $M_n(A)$.  By Proposition~\ref{lem:controlled-subequiv1}, we can choose $v \in M_n(A)$ with $\|v\|^2 \leq \|(a_0 - \epsilon)_+\|$ such that  
	\begin{equation}\label{0110_2normEq_1}
		 \|v^* v - (a_0 - \epsilon)_+ \|_{2, X} < \epsilon \quad \text{and}\quad vv^* \in \overline{(b_0 - \delta)_+ M_n(A) (b_0 - \delta)_+}.
	\end{equation}
	In particular, we have
	\begin{equation}\label{0110_2normEq_2}
		\|v\|^2 \leq \|a\|, \quad \|v^*v - a\|_{2, X} < 2 \epsilon, \quad \text{and} \quad f_\delta(b_0) v = v.
	\end{equation}
	Now, $wv \in M_n(\mathcal M)$ satisfies
	\begin{equation}
		\begin{array}{rcl} 
			\|(wv)^* b (wv) - a \|_{2, X} 
			&\leq& \|v^* w^* b w v - v^* f_\delta(b) v \|_{2, X} \\ 
			&&\quad  + \|v^* f_\delta(b)v - v^* f_\delta(b_0) v\|_{2, X} \\
            &&\quad + \|v^*v - a \|_{2, X} \\
			&\stackrel{\eqref{0110_2normEq_2}}<& \|v\|^2 \|w^*b w - f_\delta(b)\| \\
            &&\quad + \|v\|^2 \|f_\delta(b) - f_\delta(b_0)\|_{2, X} + 2 \epsilon \\
			&\stackrel{\eqref{0110_2normEq_00}, \eqref{0110_2normEq_0}}<& (2 \|a\| + 2) \epsilon. 
		\end{array} 
	\end{equation} 
	Therefore, $a \precsim_{X} b$ in $M_n(\mathcal M)$. 
\end{proof} 

The following is the main technical result of this section, giving a strong connection between strict comparison in operator norm and uniform 2-norm; in particular, they are equivalent for tracial ultrapowers.  

\begin{theorem}\label{Thm:StrictComparisonEquivalences}
	For a tracially complete $C^*$-algebra $(\mathcal M, X)$, the following are equivalent.
	\begin{enumerate}
		\item \label{0110_TCC_eq_1} $(\mathcal M, X)$ has strict $(2, X)$-comparison.
		\item \label{0110_TCC_eq_2} $(\mathcal M^\omega, X^\omega)$ has strict $(2, X^\omega)$-comparison.
		\item\label{0110_TCC_eq_3} $\mathcal M^\omega$ has strict comparison with respect to $X^\omega$.
	\end{enumerate}
\end{theorem}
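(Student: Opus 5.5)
I will prove the cycle (i)$\Rightarrow$(ii)$\Rightarrow$(iii)$\Rightarrow$(i), reducing everything to the case $n=1$ by passing to the matrix amplifications $(M_n(\M),X_n)$. This is allowed because $(M_n(\M))^\omega = M_n(\M^\omega)$ and $(X_n)^\omega=(X^\omega)_n$. Throughout I use Proposition~\ref{prop:controlled-subequiv2}, which lets the witnesses $v$ be bounded by $\|a\|^{1/2}$ with $vv^*$ in a hereditary subalgebra. These bounded witnesses are what make reindexing and Kirchberg's epsilon test applicable.

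For (i)$\Rightarrow$(ii), take $a,b\in\M^\omega_+$ with $d_\tau(a)<d_\tau(b)$ on $X^\omega$. Fix $\epsilon>0$. By compactness and Dini's theorem, as in Lemma~\ref{lem:factorial-commutant}, I pick $\delta>0$ with
\begin{equation*}
\tau(f_\epsilon(a))<\tau(f_\delta((b-\delta)_+))-\eta,\quad \tau\in X^\omega,
\end{equation*}
for some $\eta>0$. Lifting $a,b$ to sequences $(a_m),(b_m)$ and using Lemma~\ref{lem:trace-trick}, for $\omega$-many $m$ I get $\tau(f_\epsilon(a_m))<\tau(f_\delta((b_m-\delta)_+))$ for all $\tau\in X$. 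Hence $d_\tau((a_m-\epsilon)_+)<d_\tau((b_m-\delta)_+)$ on $X$. Now (i) gives $(a_m-\epsilon)_+\precsim_X(b_m-\delta)_+$, with bounded witnesses $v_m$ satisfying $\|v_m\|\le\|a\|^{1/2}$ and $f_\delta(b_m)v_m=v_m$, up to error $1/m$. These define $v\in\M^\omega$ with $v^*v=(a-\epsilon)_+$ and $vv^*\in\overline{b\M^\omega b}$. This step uses that functional calculus commutes with the quotient to $\M^\omega$. Letting $\epsilon\to0$ and applying Proposition~\ref{prop:controlled-subequiv2}(ii)$\Rightarrow$(i) gives $a\precsim_{X^\omega}b$.

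For (ii)$\Rightarrow$(iii), the key point is that in $\M^\omega$ approximate $(2,X^\omega)$-subequivalence with bounded witnesses can be made exact by the epsilon test. Given $d_\tau(a)<d_\tau(b)$ on $X^\omega$, I again shrink to $d_\tau((a-\epsilon)_+)<d_\tau((b-\delta)_+)$. Hypothesis (ii) and Proposition~\ref{prop:controlled-subequiv2}(iv) then give contractive-bounded $v_k$ with $\|v_k^*v_k-(a-\epsilon)_+\|_{2,X^\omega}<1/k$ and $f_{\delta'}(b)v_k=v_k$ for some fixed $\delta'$. Kirchberg's epsilon test, applied with functions encoding these two conditions and the norm bound, produces $v\in\M^\omega$ with $v^*v=(a-\epsilon)_+$ exactly, since in $\M^\omega$ an element of $2$-norm zero is zero. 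This gives $(a-\epsilon)_+\precsim b$ in norm for every $\epsilon$, hence $a\precsim b$.

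For (iii)$\Rightarrow$(i), take $a,b\in\M_+$ with $d_\tau(a)<d_\tau(b)$ on $X$ and view them as constant sequences in $\M^\omega$. I must show that the rank inequality persists on all of $X^\omega$, which I expect to be the main obstacle: rank functions are only lower semicontinuous and $X^\omega$ is larger than $X$. I would again first pass to $\tau(f_\epsilon(a))<\tau(f_\delta((b-\delta)_+))$ on $X$, using Dini's theorem to get a uniform gap. These are continuous affine quantities, so the strict inequality extends to limit traces and then, by weak$^*$-density, to $X^\omega$. There it yields $d_\tau((a-\epsilon)_+)<d_\tau((b-\delta)_+)$. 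Strict comparison of $\M^\omega$ then gives $w\in\M^\omega$ with $\|w^*bw-(a-2\epsilon)_+\|$ small. Lifting $w$ to a sequence and using Lemma~\ref{lem:trace-trick}, some $w_m\in\M$ has $\|w_m^*bw_m-(a-2\epsilon)_+\|_{2,X}$ small. Letting $\epsilon\to0$ gives $a\precsim_X b$.
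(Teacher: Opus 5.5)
Your argument is correct and uses the same ingredients as the paper: the Dini choice of $\epsilon,\delta$ with $\tau(f_\epsilon(a))<\tau(f_\delta((b-\delta)_+))$, coordinatewise application of the hypothesis on an $\omega$-large set, bounded witnesses from Proposition~\ref{prop:controlled-subequiv2}, and lifting/reindexing. The difference is that you run the cycle in the opposite direction.

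The paper proves (i)$\Rightarrow$(iii)$\Rightarrow$(ii)$\Rightarrow$(i).

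\begin{itemize}
\item Its (i)$\Rightarrow$(iii) is exactly your (i)$\Rightarrow$(ii) construction. The paper notes that this construction already gives an exact witness $v\in M_n(\M^\omega)$ with $v^*v=(a-\epsilon)_+$ and $f_\delta(b)v=v$. That is norm Cuntz subequivalence $(a-\epsilon)_+\precsim b$. So your first step proves (iii) outright.
\item Consequently your separate epsilon-test step (ii)$\Rightarrow$(iii) is not needed. It is valid, and has some independent interest: it shows, using only saturation of $\M^\omega$, that $(2,X^\omega)$-comparison in an ultrapower improves itself to norm comparison.
\item The paper closes the loop with the trivial (iii)$\Rightarrow$(ii), since norm subequivalence implies $(2,X^\omega)$-subequivalence, and a one-line lifting for (ii)$\Rightarrow$(i). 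Your (iii)$\Rightarrow$(i) is the composite of these two.
\end{itemize}

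The obstacle you anticipate in (iii)$\Rightarrow$(i) is not really there. Every $\tau\in X^\omega$ restricts to a trace $\tau|_{\M}\in X$: limit traces restrict to $\omega$-limits in the compact set $X$, and restriction is weak$^*$-continuous. For $a\in M_n(\M)_+$ one has $d_\tau(a)=d_{\tau|_{\M}}(a)$, because each $a^{1/k}$ already lies in $M_n(\M)$. Hence $d_\tau(a)<d_\tau(b)$ on $X$ transfers verbatim to $X^\omega$, which is what the paper uses without comment. Your Dini-gap workaround is correct but unnecessary.
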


\begin{proof}
	The implication \ref{0110_TCC_eq_3}$\Rightarrow$\ref{0110_TCC_eq_2} is trivial since for all $n \in \mathbb N$ and $a, b \in M_n(\mathcal M^\omega)_+$, $a \precsim b$ implies $a \precsim_{X^\omega} b$.  
	
	For the implication \ref{0110_TCC_eq_2}$\Rightarrow$\ref{0110_TCC_eq_1}, suppose $n \in \mathbb N$ and $a, b \in M_n(\mathcal M)_+$ with $d_\tau(a) < d_\tau(b)$ for all $\tau \in X$.  Then $d_\tau(a) < d_\tau(b)$ for all $\tau \in X^\omega$, so $a \precsim_{X^\omega} b$ by \ref{0110_TCC_eq_2}.  Hence for  $\epsilon > 0$, there is $v \in M_n(\mathcal M^\omega)$ with $\|v^* b v - a \|_{2, X^\omega} < \epsilon$.  Let $(v_k)_{k=1}^\infty \in M_n(\mathcal M)$ be a sequence representing $v$ and note that
	\begin{equation}
		\lim_{k \rightarrow \omega} \|v_k^*b v_k - a \|_{2, X} = \|v^* b v - a \|_{2, X^\omega} < \epsilon,
	\end{equation}
	so for some $k \in \mathbb N$, we have $v_k \in M_n(\mathcal M)$ and $\|v_k^*b v_k - a \|_{2, X} < \epsilon$.  This shows $a \precsim_{X} b$, so \ref{0110_TCC_eq_1} holds.
	
	For the final implication, \ref{0110_TCC_eq_1}$\Rightarrow$\ref{0110_TCC_eq_3}, suppose $(\mathcal M, X)$ strict $(2, X)$-comparison.  Suppose further $n\in \mathbb N$ and $a, b \in M_n(\mathcal M^\omega)_+$ such that $d_\tau(a) < d_\tau(b)$ for all $\tau \in X^\omega$.  Fix $\epsilon > 0$ and let $\delta > 0$ be such that 
	\begin{equation}
		\tau(f_\epsilon(a)) < \tau(f_\delta((b-\delta)_+)), \quad \tau \in X^\omega.
	\end{equation}
	 Let $(a_k)_{k=1}^\infty, (b_k)_{k=1}^\infty \subseteq M_n(\mathcal M)$  be  sequences representing $a$ and $b$, respectively. Then there is a set $K \in \omega$ such that 
	\begin{equation}
		\tau(f_\epsilon(a_k)) < \tau(f_\delta((b_k-\delta)_+)), \quad \tau \in X,\ k \in K.
	\end{equation}	
	For each $k \in K$, we have
	\begin{equation}
		d_\tau((a_k - \epsilon)_+) \leq \tau(f_\epsilon(a_k)) < \tau(f_\delta((b_k-\delta)_+)) \leq d_\tau((b_k - \delta)_+), \quad \tau \in X, 
	\end{equation}	
	and hence by strict $(2, X)$-comparison of $\mathcal M$, there is $v_k \in M_n(\mathcal M)$ such that
	\begin{equation}
		\|v_k\| \leq \|a\|^{1/2}, \quad \|v_k^*v_k - (a_k - \epsilon)_+ \|_{2, X_n} < \frac1k, \quad \text{and} \quad f_\delta(b_k) v_k = v_k.
	\end{equation}
	For $k \in \omega \setminus K$, set $v_k = 0$ and note that $(v_k)_{k=1}^\infty \in M_n(\mathcal M)$ is a bounded sequence.  If $v \in M_n(\mathcal M^\omega)$ denotes the induced element, then $v^*v = (a - \epsilon)_+$ and $f_\delta(b)v = v$.  This shows $(a - \epsilon)_+ \precsim f_\delta(b) \precsim b$.  As $\epsilon > 0$ was arbitrary, $a \precsim b$, which proves \ref{0110_TCC_eq_3}.
\end{proof}

Theorem~\ref{Thm:StrictComparisonEquivalences} was the final ingredient needed to prove the regularity theorem for tracially complete $C^*$-algebras, which we do now.  For the reader's convenience, we recall that statement here.

\begin{theorem}[Theorem~\ref{PurenessThm}]\label{PurenessThm-body} Let $(\M,X)$ be a II$_1$ factorial tracially complete $C^*$-algebra.  The following are equivalent:
\begin{enumerate}
    \item \label{it:Pureness.1-body} $(\M,X)$ has strict $(2, X)$-comparison and has approximately constant rank functions;
    \item \label{it:Pureness.3-body}  $\mathcal M$ has strict comparison with respect to $X$ and $\mathcal M$ has divisible unit;
    \item \label{it:Pureness.4-body}  $(\M,X)$ has classification of projections and $\mathcal M$ has real rank zero.
\end{enumerate}
In this case, $T(\mathcal M) = X$ and $\mathcal M$ is pure and has stable rank one.
\end{theorem}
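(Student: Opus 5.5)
The plan is to prove the cycle (ii)$\Rightarrow$(iii)$\Rightarrow$(ii) first, then attach (i) through Theorem~\ref{Thm:StrictComparisonEquivalences}, and finally read off the last sentence.

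\emph{(ii)$\Rightarrow$(iii).} Assume $\M$ has strict comparison with respect to $X$ and divisible unit. By Proposition~\ref{strict_compa_A_ultrap}, $\M^\omega$ has strict comparison with respect to $X^\omega$. The unital embeddings $M_n\to\M$ induce unital embeddings into $\M^\omega$, so $\M^\omega$ also has divisible unit. Theorem~\ref{thm:classif-proj} then gives classification of projections in $(\M,X)$, and Theorem~\ref{thm:rr0} gives real rank zero of $\M$.

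\emph{(iii)$\Rightarrow$(ii).} Assume classification of projections and real rank zero. Lemma~\ref{lem:RR0StableClassificationOfProjections} upgrades classification of projections to all $(M_n(\M),X_n)$. In particular, if $\tau(p)<\tau(q)$ on $X$ for projections $p,q\in M_n(\M)$, then $p\precsim q$. Proposition~\ref{prop:rr0-comparison} therefore gives strict comparison with respect to $X$. For divisibility of the unit, realise the constant function $1/n$ by a projection $e$. Uniqueness (Proposition~\ref{Prop:MvN}) then lets us choose $n$ pairwise orthogonal projections of trace $1/n$ summing to $1$, all Murray--von Neumann equivalent. This gives the required unital copy of $M_n$, as in Proposition~\ref{Class:FD}.

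\emph{(ii)$\Rightarrow$(i).} Strict comparison passes to $\M^\omega$ by Proposition~\ref{strict_compa_A_ultrap}, and Theorem~\ref{Thm:StrictComparisonEquivalences} (iii)$\Rightarrow$(i) gives strict $(2,X)$-comparison. Approximately constant rank functions are obtained from projections: for $\lambda>0$, write $\lambda=m/n+$ (small error) and take a projection of trace $m/n$ in some $M_k(\M)$. Its rank function is constant, equal to $\lambda$ up to the error. These projections exist by the divisible unit, or by existence of projections in matrices.

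\emph{(i)$\Rightarrow$(ii).} This is the step I expect to be the main obstacle. By Theorem~\ref{Thm:StrictComparisonEquivalences}, $\M^\omega$ has strict comparison with respect to $X^\omega$, so the issue is getting a divisible unit for $\M^\omega$.
\begin{itemize}
\item From approximately constant rank functions, take $a\in M_k(\M)_+$ with $d_\tau(a)\approx 1/n$ uniformly. Since $\M$ is a unital algebra inside the ultrapower, we have $d_\tau(a)\approx 1/n$ on $X^\omega$ as well; some care is needed, and I would use $(a-\epsilon)_+$ and Dini.
\item Strict comparison in $M_k(\M^\omega)$ then gives $a^{\oplus n}\precsim 1\oplus 0$ after a small cut-down. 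From this, R{\o}rdam-lemma arguments give an order zero map $M_n\to\M^\omega$ whose image has trace close to $1$.
\item Kirchberg's epsilon test, followed by \cite[Proposition~1.4]{BBSTWW}, yields a unital embedding $M_n\to\M^\omega$, exactly as in Proposition~\ref{prop:sc_divisible_tracial_ultra}.
\end{itemize}
Once $\M^\omega$ has strict comparison and divisible unit, Theorems~\ref{thm:classif-proj} and \ref{thm:rr0} give (iii) for $\M$. The implication (iii)$\Rightarrow$(ii) proved above then gives (ii).

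\emph{Final sentence.} Proposition~\ref{prop:MPure} gives that $\M$ is pure and has stable rank one. Proposition~\ref{prop:computing-traces}, applied to strict comparison with respect to $X$, gives $T(\M)=X$.
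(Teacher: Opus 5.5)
Your proposal is correct and uses essentially the same argument as the paper. The ingredients match: Theorem~\ref{Thm:StrictComparisonEquivalences}, together with the order-zero/Kirchberg-epsilon-test trick of Proposition~\ref{prop:sc_divisible_tracial_ultra}, gives strict comparison and a divisible unit in $\M^\omega$; Theorems~\ref{thm:classif-proj} and~\ref{thm:rr0} then give (iii); Propositions~\ref{prop:rr0-comparison} and~\ref{Class:FD} return to (ii); and Propositions~\ref{prop:MPure} and~\ref{prop:computing-traces} give the final sentence.

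The differences are only organisational. The paper runs the cycle (ii)$\Rightarrow$(i)$\Rightarrow$(iii)$\Rightarrow$(ii), and its (ii)$\Rightarrow$(i) is immediate because norm Cuntz subequivalence already gives $\precsim_X$, so your detour through the ultrapower there is unnecessary. On the other hand, you explicitly invoke Lemma~\ref{lem:RR0StableClassificationOfProjections} to get comparison of projections at all matrix levels before applying Proposition~\ref{prop:rr0-comparison}, which the paper's proof leaves implicit.
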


\begin{proof}
    It is clear that \ref{it:Pureness.3-body} implies \ref{it:Pureness.1-body}.
    
    Suppose \ref{it:Pureness.1-body} holds.  Then by Theorem~\ref{Thm:StrictComparisonEquivalences}, $\mathcal M^\omega$ has strict comparison with respect to $X^\omega$.   Fix $k \in \mathbb N$.  Since approximately constant rank functions occur in $\mathcal M$, there are $n \in \mathbb N$ and $a \in \mathcal M_n(A)_+$ be such that
    \begin{equation}
        \frac1{k+1}< d_\tau(a) < \frac1k, \quad \tau \in X,  
    \end{equation}
    and hence also for $\tau \in X^\omega$, so strict comparison implies $ka \leq [1_{\mathcal M^\omega}] \leq (k +1) [a]$, and hence $[1_{\mathcal M^\omega}]$ is approximately divisible in $\mathcal M^\omega$.  Applying Proposition~\ref{prop:sc_divisible_tracial_ultra} and using Kirchberg's epsilon test, we get exact divisibility of $1_{\M^\omega}$.  Then $(\mathcal M, X)$ has classification of projections by Theorem~\ref{thm:classif-proj} and 
    $\mathcal M$ has real rank zero by Theorem~\ref{thm:rr0}.  So \ref{it:Pureness.4-body} holds.  

    Assuming \ref{it:Pureness.4-body}, Theorem~\ref{Class:FD} implies $\mathcal M$ has classification of finite-dimensional subalgebras and hence the unit is divisible, and Proposition~\ref{prop:rr0-comparison} implies $\mathcal M$ has strict comparison with respect to $X$. This shows \ref{it:Pureness.3-body}.

    Now assume $(\mathcal M, X)$ satisfies \ref{it:Pureness.1-body}, \ref{it:Pureness.3-body} and \ref{it:Pureness.4-body}.  It follows from \ref{it:Pureness.4-body} and Proposition~\ref{prop:MPure} that $\mathcal M$ is pure and has stable rank one.  By \ref{it:Pureness.3-body} and Proposition~\ref{prop:computing-traces}, $T(\mathcal M) = X$.
\end{proof}

Recall that a II$_1$ factorial tracially complete $C^*$-algebra is called \emph{regular} if the equivalent conditions of Theorem~\ref{PurenessThm-body} hold.  The following relates regularity, ultrapowers and tracial completions.

\begin{corollary}\label{cor:regular}
    Let $A$ be a $C^*$-algebra such that $T(A)$ is non-empty and compact.  Suppose $A$ has strict comparison with respect to $T(A)$ and the class of a strictly positive element in $A$ is almost divisible.  Then both $(\completion{A}{T(A)}, T(A))$ and $(A^\omega, \overline {T_\omega(A)})$ are regular.
    Further, a II$_1$ factorial tracially complete $C^*$-algebra $(\mathcal M, X)$ is regular if and only if $(\mathcal M^\omega, X^\omega)$ is regular.
\end{corollary}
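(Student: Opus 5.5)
The plan is to reduce both parts to the regularity theorem (Theorem~\ref{PurenessThm-body}). The key observation is that, by the proof of Theorem~\ref{PurenessThm-body} together with Theorem~\ref{thm:classif-proj} and Theorem~\ref{thm:rr0}, condition \ref{it:Pureness.3} for $(\mathcal M,X)$ follows as soon as $\mathcal M^\omega$ has strict comparison with respect to $X^\omega$ and divisible unit.

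For the first part, let $(\mathcal M,X)=(\completion{A}{T(A)},T(A))$. This is factorial by \cite[Proposition 3.23]{carrion2023tracially}. By Proposition~\ref{prop:tracial-ultrapower-is-TC} and Proposition~\ref{prop:sc_divisible_tracial_ultra}, $\mathcal M^\omega\cong A^\omega$ has strict comparison with respect to $X^\omega$ and a divisible unit. By the remark following Proposition~\ref{S4.2:Prop}, $(\mathcal M,X)$ is then automatically type II$_1$. Theorem~\ref{thm:classif-proj} gives classification of projections in $(\mathcal M,X)$ and Theorem~\ref{thm:rr0} gives real rank zero of $\mathcal M$, which is \ref{it:Pureness.4}, so $(\mathcal M,X)$ is regular.

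For $(A^\omega,\overline{T_\omega(A)})\cong(\mathcal M^\omega,X^\omega)$, Proposition~\ref{S4.2:Prop} gives classification of projections and real rank zero directly, i.e.\ \ref{it:Pureness.4}. It remains to check that $(\mathcal M^\omega,X^\omega)$ is II$_1$ and factorial. Factoriality follows because $T(\mathcal M^\omega)=X^\omega$ by Proposition~\ref{prop:computing-traces}, so $X^\omega$ is trivially a face. The II$_1$ property follows because the unit is divisible in $\mathcal M^\omega$, so no fibre can have a type I summand.

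For the equivalence, first suppose $(\mathcal M,X)$ is regular. Then $\mathcal M$ has strict comparison with respect to $X$ and divisible unit, by \ref{it:Pureness.3}. Proposition~\ref{strict_compa_A_ultrap} passes strict comparison to $\mathcal M^\omega$ with respect to $X^\omega$, and divisibility of the unit passes trivially through constant sequences. Then $(\mathcal M^\omega,X^\omega)$ satisfies \ref{it:Pureness.3}, and it is II$_1$ and factorial exactly as above.

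Conversely, suppose $(\mathcal M^\omega,X^\omega)$ is regular. Then $\mathcal M^\omega$ has strict comparison with respect to $X^\omega$ and divisible unit. Theorem~\ref{thm:classif-proj} and Theorem~\ref{thm:rr0}, applied to $(\mathcal M,X)$, yield \ref{it:Pureness.4} for $(\mathcal M,X)$, so it is regular.

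The only point needing care is making sure the II$_1$ factorial hypothesis of Theorem~\ref{PurenessThm-body} is verified for the ultrapower, since factoriality of $(\mathcal M^\omega,X^\omega)$ is not automatic in general. I handle this through $T(\mathcal M^\omega)=X^\omega$, as above.
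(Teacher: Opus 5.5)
Your proof is correct, but it takes a different route from the paper's. The paper works with condition \ref{it:Pureness.1} of Theorem~\ref{PurenessThm-body} throughout. Theorem~\ref{Thm:StrictComparisonEquivalences} shows that $(\M,X)$ has strict $(2,X)$-comparison if and only if $(\M^\omega,X^\omega)$ has strict $(2,X^\omega)$-comparison. Approximately constant rank functions pass up via constant sequences and down by lifting positive matrices. The first statement is then reduced to the second, since the tracial ultrapower of $(\completion{A}{T(A)},T(A))$ is $(A^\omega,\overline{T_\omega(A)})$.

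You instead work with conditions \ref{it:Pureness.3} and \ref{it:Pureness.4}. You go up via Proposition~\ref{strict_compa_A_ultrap} and constant sequences, and come down by applying Theorems~\ref{thm:classif-proj} and~\ref{thm:rr0} directly to $(\M,X)$.

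The trade-off is as follows. The paper's route is more symmetric and uses only soft uniform $2$-norm transfer arguments at this stage, the heavy machinery having been absorbed into Theorem~\ref{PurenessThm-body}. Yours bypasses Section~\ref{Sect8} altogether. That includes the care needed, since $d_\tau$ is only lower semicontinuous, to lift approximately constant rank functions from $\M^\omega$ to $\M$. The cost is re-invoking the intertwining and relative-commutant CPoU results.

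Your version also makes explicit two points the paper's proof leaves implicit:
\begin{itemize}
\item why one of the two algebras in the first statement is regular to begin with;
\item that $(\M^\omega,X^\omega)$ satisfies the II$_1$ factorial hypothesis built into the definition of regularity, which you verify via $T(\M^\omega)=X^\omega$ and divisibility of the unit.
\end{itemize}
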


\begin{proof}
    The tracial completion $(\completion{A}{T(A)}, T(A))$ is a factorial tracially complete $C^*$-algebra by \cite[Proposition~3.23(iv)]{carrion2023tracially}, and the divisibility condition on $A$ readily implies it is type II$_1$.  Since the tracial ultrapower of $(\completion{A}{T(A)}, T(A))$ is $(A^\omega, \overline{T_\omega(A)})$, it suffices to prove the second statement.
    
    Suppose $(\mathcal M, X)$ is regular.  Then $(\mathcal M^\omega, X^\omega)$ has approximately constant rank functions (realised by the same positive elements from matrices over $\mathcal M$), and by Theorem~\ref{Thm:StrictComparisonEquivalences}, $(\mathcal M^\omega, X^\omega)$ has strict $(2, X^\omega)$-comparison.  So $(\mathcal M^\omega, X^\omega)$ is regular.
    
    Conversely, suppose $(\mathcal M^\omega, X^\omega)$ is regular. Then, by Theorem~\ref{Thm:StrictComparisonEquivalences}, $(\mathcal M, X)$ has strict $(2, X)$-comparison, and by lifting positive matrices over $\mathcal M^\omega$ to sequences of positive matrices over $\mathcal M$, we see that $\mathcal M$ has approximately constant rank functions since $\mathcal M^\omega$ does.
\end{proof}

\end{document}